\documentclass{amsart}
\usepackage{amsfonts}
\usepackage{amsmath}
\usepackage{amssymb}
\usepackage{amsthm}
\usepackage{array}
\usepackage{booktabs}
\usepackage{color}
\usepackage{enumerate}
\usepackage{esint}
\usepackage{lineno,hyperref}
\usepackage[left=1in,right=1in,top=1in,bottom=1in]{geometry}
\usepackage{graphicx}
\usepackage{relsize}
\usepackage{subfigure}
\usepackage{times}
\usepackage{color}

\newcommand{\fra}{\mathfrak{a}}
\newcommand{\frb}{\mathfrak{b}}
\newcommand{\frc}{\mathfrak{c}}
\newcommand{\frd}{\mathfrak{d}}
\newcommand{\frf}{\mathfrak{f}}
\newcommand{\frg}{\mathfrak{g}}
\newcommand{\frm}{\mathfrak{m}}
\newcommand{\frn}{\mathfrak{n}}
\newcommand{\frs}{\mathfrak{s}}
\newcommand{\frw}{\mathfrak{w}}
\newcommand{\frX}{\mathfrak{X}}
\newcommand{\frC}{\mathfrak{C}}

\newcommand{\pc}{P_{{\rm c}}}
\newcommand{\po}{P_{0}}

\newcommand{\1}{\mathbf{1}}

\newcommand{\ga}{\gamma}

\newcommand{\tang}{ \boldsymbol{\tau} }

\newcommand{\bveps}{\boldsymbol{\varepsilon}}
\newcommand{\bsigma}{\boldsymbol{\sigma}}

\newcommand{\nrml}{\mathfrak{n}}

\newcommand{\veps}{\varepsilon}
\newcommand{\vrho}{\varrho}

\newcommand{\R}{\mathbb{R}}
\newcommand{\Z}{\mathbb{Z}}
\newcommand{\cm}{\mathfrak{c}}

\newcommand{\vv}{\mathbf{v}}

\newcommand{\ww}{\mathbf{w}}

\newcommand{\rd}{\mathrm{d}}

\newcommand{\re}{\mathrm{e}}

\newcommand{\T}{\mathbb{T}^1}

\newcommand{\N}{\mathbb{N}}

\newcommand{\cH}{\mathcal{H}}
\newcommand{\vth}{\vartheta}
\newcommand{\cG}{\mathcal{G}}
\newcommand{\cF}{\mathcal{F}}
\newcommand{\cV}{\mathcal{V}}

\newcommand{\cT}{\mathcal{T}}
\newcommand{\cR}{\mathcal{R}}

\newtheorem{theorem}{Theorem}[section]
\newtheorem{lemma}[theorem]{Lemma}
\newtheorem{remark}[theorem]{Remark}
\newtheorem{proposition}[theorem]{Proposition}

\begin{document}
\title{Fronts Under Arrest II: Analytical Foundations}
\begin{abstract}{
We study a class of minimal geometric partial differential equations that serves as a framework to understand the evolution of boundaries between states in different pattern forming systems.  The framework combines normal growth, curvature flow and nonlocal interaction terms to track the motion of these interfaces.  This approach was first developed to understand arrested fronts in a bacterial system.  These are fronts that become stationary as they grow into each other. This paper establishes analytic foundations and geometric insight for studying this class of equations.  In so doing, an efficient numerical scheme is developed and employed to gain further insight into the dynamics of these general pattern forming systems.}
\end{abstract}
\author[von Brecht]{James H. von Brecht}
\address[von Brecht]{\newline
	Department of Mathematics and Statistics , California State University, Long Beach, CA 90840	}
	\email[vonBrecht]{\href{James.vonBrecht@csulb.edu}{James.vonBrecht@csulb.edu}}

\author[McCalla]{Scott G. McCalla}
\address[McCalla]{\newline
	Department of Mathematical Sciences, Montana State University, Bozeman, MT 59717 	}
	\email[McCalla]{\href{scott.mccalla@montana.edu}{scott.mccalla@montana.edu}}

\author[Kim]{Eun Heui Kim}\thanks{The work of Kim is supported by and done while serving at the National Science Foundation. Any opinion, findings, and conclusions or recommendations expressed in this material are those of the authors and do not necessarily reflect the views of the National Science Foundation.}

	\address[Kim]{\newline
	Department of Mathematics and Statistics , California State University, Long Beach, CA 90840, and National Science Foundation, Alexandria, Virginia 22314.
	}
	\email[Kim]{\href{EunHeui.Kim@csulb.edu}{EunHeui.Kim@csulb.edu}}
\maketitle

\section{Introduction}

\noindent 
We study the motion of non-degenerate planar interfaces undergoing geometric dynamics governed by a combination of curvature, constant normal growth and nonlocal forcing. To be precise, let $\Gamma = \left\{ \ga_1,\ldots,\ga_m \right\}$ denote a finite collection of closed, planar curves. We consider a family of models where each curve $\ga_i : \T \times [0,T] \to \R^2$ in this collection evolves in time
\begin{equation}\label{eq:introdyn}
\partial_{t} \ga_i  = v_i \nrml_i \qquad \text{for all} \qquad i \in [m] := \{1,\ldots,m\}
\end{equation}
according to a family of normal velocities $v_i : \T \times [0,T] \to \R$ that determine the flow. The normal velocities we consider take the form
$$
v_i = \kappa_i + c_i + f_i,
$$
where $\kappa_i$ denotes the signed curvature along $\ga_i$ and $c_i \in \R$ specifies a constant  growth rate or death rate in the normal direction. We then select a family $\cG = \{ g_{ij}(s) : (i,j) \in [m] \times [m] \}$ of kernel functions and set
\begin{equation}\label{eq:nonlocstructure}
f_i(x,t) := \sum^{m}_{j=1} \int_{\T} g_{ij} \left( \frac12| \ga_i(x,t) - \ga_j(y,t)|^2 \right) | \dot \ga_j(y,t) | \, \rd y
\end{equation}
for the nonlocal force acting on each curve. The kernels $g_{ij}(s)$ encode the interactions between interfaces in the system. 

Models similar to \eqref{eq:introdyn} describe a wide variety of natural systems characterized by interfaces that evolve in a predictable fashion \cite{rubinstein1989fast,petrich1994nonlocal,goldstein1996interface,BeerPNAS09,BeerPNAS10}. Prototypical examples occur in materials science and metallurgy, where the interfaces between different system states evolve under a mean curvature flow \cite{pego1989front,alfaro2008singular}. Such models also describe complex systems whose dynamics reduce to an interfacial evolution between the boundaries of different phases. Early examples of this type of reduction typically exhibited a local flow for the evolving interface. For example, a balance between a fast reaction term and a slow diffusion term in bi-stable equations, such as the Allen--Cahn equation or Fitzhugh--Nagumo equation, can lead to the formation of domain walls that will then evolve according to a mean curvature flow together with a constant normal forcing. The reduction of a reaction-diffusion system may also lead to effective nonlocal interactions between interfaces. As a simple one-dimensional illustration, consider a singularly perturbed system that exhibits a single fast variable in conjunction with some number of slow variables. On a long-enough time-scale, the full dynamics may reduce to a set of coupled ordinary differential equations 
\begin{eqnarray*}
\dot{x}_i=\alpha+\sum^{m}_{j=1} (-1)^j g \left( x_i-x_j \right) 
\end{eqnarray*}
for the transition locations $x_i(t)$ between the two states of the fast variable. This is a one-dimensional instance of \eqref{eq:introdyn}, where the slow variables are now implicitly included through the nonlocal interaction kernel. 
In the planar case, these transitions regions are fast fronts that evolve with respect to an approximately constant slow field. Their dynamics exhibit motion by mean curvature in conjunction with a normal motion, and nonlocal interactions represent a correction to the slow fields. Following \cite{goldstein1996interface}, these can be accounted for in the planar case with the additional assumption that the shape of the front is approximately constant for small curvatures of the boundary.  This assumption then leads to the nonlocal terms only acting in the normal direction at each point on the evolving interface. By contrast, the semistrong regime for singularly perturbed systems is characterized by different components exhibiting strongly different asymptotic decay rates.  Certain fronts might then have a fast component that is exponentially localized but whose evolution experiences algebraic corrections due to the slow components interacting strongly.  In this case, the dynamics of fast fronts can sometimes be reduced to normal growth plus a nonlocal interaction \cite{semistrong,van2010front}. In this way, the evolving interface models \eqref{eq:introdyn} can be seen as a reduction from a standard reaction-diffusion formalism.

\begin{figure}[t]
\centering
        \includegraphics[height=1.3in]{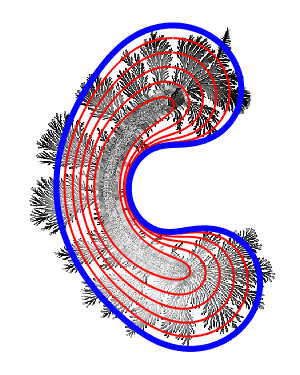}
         \includegraphics[height=1.3in]{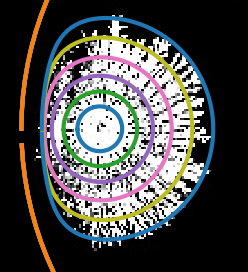}
        \includegraphics[height=1.3in]{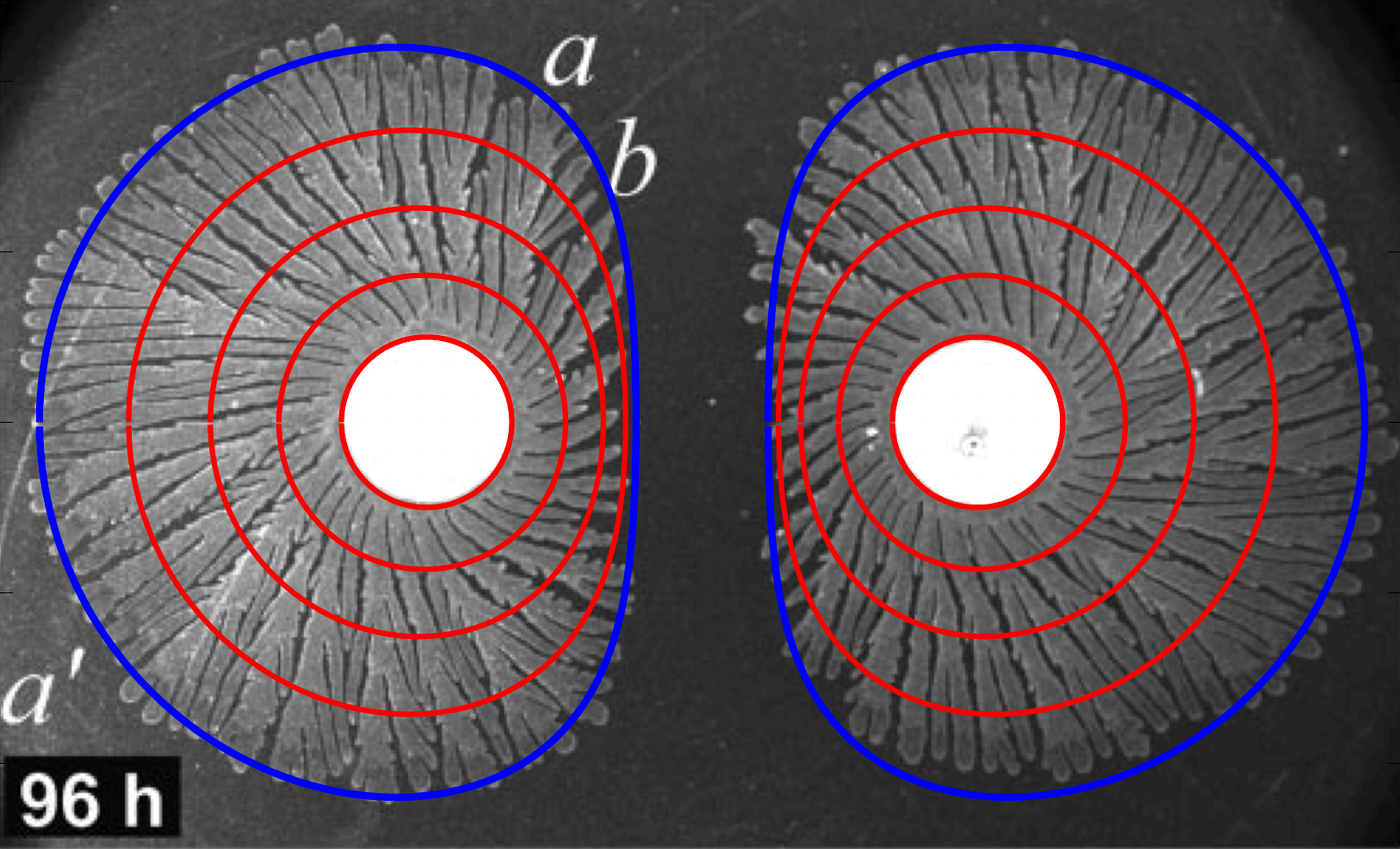}
        \includegraphics[height=1.3in]{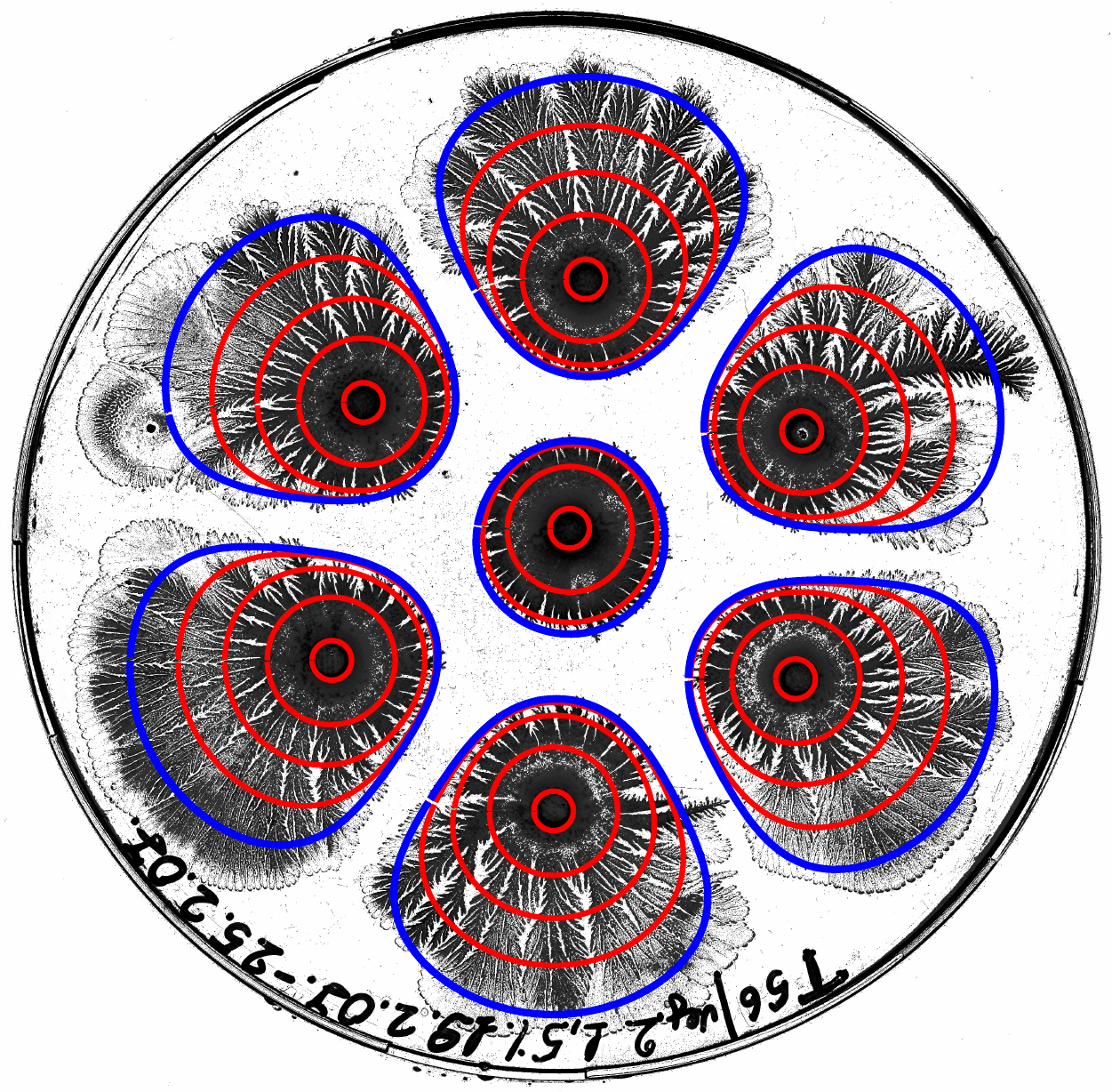}
        \caption{Colonies of \emph{P. dendritiformis} appear to exhibit nonlocal effects as they grow. In the first figure at far left, a single colony leads to self arrest in the interior boundary as it grows \cite{BeerPNAS09,MvB16}.  In the second figure, the colony is repelled from the orange barrier on the left through its own toxin emission (experimental results courtesy of Jason Zeng and James Wilking). In the third \cite{BeerPNAS09,MvB16} and fourth figures \cite{SevenColony,MvB16}, sibling colonies interact and arrest each others' growth creating complex patterns . In each figure, time slices of the evolution \eqref{eq:introdyn} are superimposed over the evolving colony.}
        \label{7colfig}
\end{figure}
The model \eqref{eq:introdyn} can also arise in more general physical settings that are not tied to an underlying reaction-diffusion dynamic. These models can be taken \emph{a priori} as a modeling paradigm wherein almost any interaction kernel could conceivably be used. For example, such a nonlocally forced curvature flow was introduced in \cite{MvB16} to capture bacterial colony formation for a subtype of \emph{Paenibacillis dendritiformis} that diffuses a toxin called sibling lethal factor (Slf) \cite{BeerPNAS09, BeerPNAS10}. This compound prevents the growth of the other competing colonies and leads to self-inhibition which amounts to a repulsive interaction between the colonies.  The interfaces then can approach a stationary front when their forward motions are dominated by these repulsive interactions, and this stationary structure is called an \emph{arrested front}: see Figure~\ref{7colfig}. . The numerical scheme is detailed in Section~\ref{sec:Numeric}). Even for this simple experimental system, a wide range of phenomena from self and boundary interactions to full nonlocal interactions between different colonies is exhibited. A similar model can also arise from studying interfaces between particle types in a collisional model \cite{mccalla2018consistent}. 

The promise of this modeling approach motivates us to develop an analytical foundation for the rigorous analysis of general nonlocal curve flows. While the mathematical properties of curvature flows and related evolutionary interfacial systems have been well studied, the class of these models that allow for different interfaces to interact with each other has received relatively little attention. To illustrate the need for such an analysis, we recall the situation for a pure motion by mean curvature where the long-term behavior of solutions is very well understood. Initially embedded curves in the plane remain embedded for all time \cite{AB11,Huisken}, eventually become circular and shrink to a point (the Gage--Hamilton--Grayson Theorem, c.f. \cite{GH,Grayson,AB11}) while their distortion (in the sense of Gromov) converges to that of the standard circle. By contrast, under motion by mean curvature a generic non-embedded curve will form curvature singularities in finite time. We therefore have a nearly one-to-one correspondence between embeddedness of the initial datum and global well-posedness of the dynamic. Such broad statements fail dramatically when we include nonlocal effects. The curves driven by \eqref{eq:introdyn} can grow, intersect, and establish stationary complex shapes that do not necessarily become circular, in sharp contrast to the curve flow by mean curvature. Moreover, some embedded curves may lose embeddedness in finite time, while even for the same choice of model parameters, others may not. In other words, it is unlikely that an analogous dichotomy can be established between ``globally well-posed'' and ``finite-time singularity'' initial conditions. Nevertheless, it is precisely this variety of dynamic behaviors that makes these systems useful for modeling real world phenomena. 

We therefore wish to develop some broadly-applicable tools to help understand these evolutions over a reasonable range of possible applications. From an analytical point-of-view, we wish to allow for a wide range of nonlocal forcing terms while still having a well-posed dynamic. A good deal of our contribution involves suitable nonlocal estimates that, in turn, allows us to formulate a well-behaved dynamic in as general a setting as possible. Additionally, unlike the motion by mean curvature case we cannot guarantee embededdess for all time. We therefore develop some machinery for computing the evolution of various measures of embeddedness, and this allows us to understand how embeddedness is preserved or lost for various model instances. Finally, we conclude by documenting an efficient and robust numerical scheme for simulations. We proceed as follows. In section~\ref{sec:PreMat} we set up the notation and basic lemmas that are used in the paper, and in
section~\ref{sec:Rep} we derive two alternate representations of \eqref{eq:introdyn} that are found to be useful. We then turn  to the heart of the matter in section~\ref{sec:Nonlocal}, which provides estimates on the nonlocal forcing term for a broad range of kernels under various geometric hypotheses on the data. We leverage these estimates in section~\ref{sec:Wellposed} to provide local well-posedness and regularity results. We then develop some machinery to study how the geometric properties of the curves in section~\ref{sec:Embeded}, and illustrate how broad statements (e.g. a type of global well-posedness/embeddedess dichotomy) necessarily fail. The last section details the numerical scheme used for these analyses, while the appendix ~\ref{sec:Appendix} proves at few technical lemmas that are needed, but not central to the main thrust of the analysis. Apart from the estimates in section ~\ref{sec:Nonlocal}, this analysis does not rely on the specific structure \eqref{eq:nonlocstructure} of the nonlocality. In this sense our analysis holds for a quite general class of nonlocal forcings; anything satisfying the bounds in section ~\ref{sec:Nonlocal} will do.

\section{Preliminary Material}\label{sec:PreMat}
We begin by introducing our preferred notation and collecting a few standard facts that we will refer to frequently. In various estimates we use the notations $a\vee b = \max\{a, b\}$ and $a\wedge b= \min\{a,b\}$ interchangeably. The notation $\frC$ refers to a generic, universal constant whose value may change from line to line. The decorated version
$$
\frC^{*}(x_1,\ldots,x_n)
$$
refers to some generic continuous function of its arguments; the underlying function itself may change from line to line as well.

For spatial dependence we use $I_{\pi} := [-\pi,\pi]$ and $\T$ to denote domains, where use of the former signifies the domain of an arbitrary function and the latter serves to emphasize periodicity when we wish to impose it. Given some integrable $f:\T \mapsto \R^{d}$ we use
\begin{align*}
\bar f = \mu_{f} := \fint_{\T} f(z) \, \rd z
\end{align*}
for its scalar-valued or vector-valued mean. The operator $f \mapsto \pc f$ stands for the mean-zero primitive
$$
\pc f(x) := \fint_{\T} (z-\pi) f(z) \, \rd z + \int^{x}_{-\pi} f(z) \, \rd z 
$$
of an integrable function, which defines an absolutely continuous function $\pc f \in C(I_{\pi})$ in general and a periodic function $\pc f\in C(\T)$ in the specific case that $\mu_f$ vanishes. The analogous notation
$$
\po f(x) := \int^{x}_{-\pi} f(z) \, \rd z - (x+\pi)\mu_{f}
$$
defines the operator $f \mapsto \po f \in C(\T)$ furnishing a zero-Dirichlet primitive of $f - \mu_f,$ which again defines an absolutely continuous, periodic function. In particular, the basic integration by parts identity
\begin{align*}
\pc \big(fg'\big) &= fg -\pc\big( f^{\prime} g \big)  - \mu_{fg}
\end{align*}
holds whenever $f^{\prime}$ and $g^{\prime}$ exist in the weak sense as integrable functions. For a given square-integrable function $\psi: \T \to \R^{d}$ we shall use the notation
$$
\|\psi\|^{2}_{L^{2}(\T)} = \int_{\T} |\psi(x)|^2 \, \rd x := 2\pi \fint_{\T} |\psi(x)|^2 \, \rd x
$$
to denote the $L^{2}(\T)$-norm, and we shall use
\begin{align*}
\hat{\psi}_{k} = \fint_{\T} \gamma(x) \re^{- i k x } \, \rd x \qquad \text{and} \qquad \psi(x) = \sum_{k \in \Z} \hat{\psi}_k \, \re^{ ikx }
\end{align*}
to denote the forward and inverse Fourier transforms, respectively. After setting $\Z_0 := \Z \setminus \{0\},$ the usual relations
\begin{align*}
\|\psi\|^{2}_{L^{2}(\T)} = 2\pi \sum_{k \in \Z} |\hat{\psi}_k|^2, \quad \|\psi\|^{2}_{\dot{H}^{s}(\T)} = 2\pi \sum_{k \in \Z_0} |k|^{2s} |\hat{\psi}_k|^2 \quad \text{and} \quad \|\psi\|^{2}_{H^{s}(\T)} = |\hat{\psi}_0|^2 + \| \psi\|^{2}_{\dot{H}^{s}(\T)}
\end{align*}
then provide the $L^{2}(\T)$ norm an equivalent definition of the $H^{s}(\T)$ Sobolev norm. In a similar fashion, we shall use
\[
\| \psi \|^{p}_{\dot W^{k,p}(\T)} := \int_{\T} \big|\psi^{(k)}(x)\big|^{p} \, \rd x \qquad \text{and} \qquad \|\psi\|^{p}_{W^{k,p}(\T)} = |\hat{\gamma}_0|^p + \| \psi\|^{p}_{\dot{W}^{k,p}(\T)}
\]
for integer-order, non-Hilbertian Sobolev norms while
$$
\| \psi \|_{L^{\infty}(\T) } := \underset{x \in \T}{{\rm ess}\,{\rm sup}} \, |\psi(x)|
$$
denotes the sup-norm. Finally, when dealing with a collection of $m$ functions $\phi_{i},i \in [m]$ we reserve the notation
$$
\mathcal{H}^{s}_{m} := H^{s}\left(\T\right) \times \cdots \times H^{s}\left(\T\right) 
$$
for the set of all $m$-tuples $\Phi = (\phi_1,\ldots,\phi_m)$ with $H^{s}(\T)$ regularity in each component. We use the $\ell^{\infty}$-norm across components
$$
\|\Phi\|_{ \cH^{s}_{m} } := \max_{ i \in [m] } \;\, \| \phi_{i} \|_{H^{s}(\T)}
$$
for such tuples. With these conventions in hand, we may record for later reference the following instances of Gagliardo-Nirenberg embeddings.
\begin{lemma}
Assume that $\psi \in W^{1,1}(\T)$ and let 
$$
\mu_{\psi} := \fint_{\T} \psi(x) \, \rd x
$$
denote its mean. Then the embeddings
\begin{align}\label{eq:gns} 
\|\psi - \mu_{\psi}\|_{L^{p}(\T)} &\leq \left(\frac34\right)^{\alpha} \| \psi_{x} \|^{\alpha}_{L^{2}(\T)} \| \psi -  \mu_{\psi}\|^{1 - \alpha}_{L^{1}(\T)} \qquad &&\alpha = \frac{2p-2}{3p}, \;\; p \geq 1 \nonumber \\
\|\psi - \mu_{\psi}\|_{L^{p}(\T)} &\leq \| \psi_{x} \|^{\alpha}_{L^{2}(\T)} \| \psi -  \mu_{\psi}\|^{1 - \alpha}_{L^{2}(\T)} \qquad &&\alpha = \frac{p-2}{2p}, \;\;\;\, p \geq 2  \\
\|\psi - \mu_{\psi} \|_{L^{p}(\T)} &\leq \left(\frac12\right)^{\alpha}\| \psi_{x} \|^{\alpha}_{L^{1}(\T)} \| \psi -  \mu_{\psi} \|^{1 - \alpha}_{L^{q}(\T)} \qquad &&\alpha = \frac{p-q}{p}, \;\;\;\, p \geq q \nonumber
\end{align}
hold.
\end{lemma}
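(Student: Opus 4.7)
The plan is to derive each of the three embeddings from a pointwise $L^\infty$ bound on $\phi := \psi - \mu_\psi$ and then close via the elementary H\"older interpolation $\|\phi\|_{L^p(\T)}^{p} \leq \|\phi\|_{L^\infty(\T)}^{\,p-q} \|\phi\|_{L^q(\T)}^{\,q}$, valid for $p \geq q \geq 1$. Since $\phi$ has mean zero and is continuous (as $\phi \in W^{1,1}(\T)$), the intermediate value theorem supplies a zero $x_0 \in \T$ of $\phi$; the case $\phi \equiv 0$ is trivial. Using periodicity and $\int_\T \psi_x = 0$, one has the two representations
\[
\phi(x) = \int_{x_0}^{x} \psi_x(z)\, \rd z = -\int_{x}^{x_0 + 2\pi} \psi_x(z)\, \rd z,
\]
which I will average on the two arcs of $\T$ to pick up clean constants.

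The first step is to establish three pointwise bounds on $\|\phi\|_\infty$. Taking absolute values on each arc and averaging immediately gives $\|\phi\|_\infty \leq \tfrac{1}{2} \|\psi_x\|_{L^1(\T)}$. Writing $\phi(x)^2$ via the primitive $(|\phi|^2)_x = 2\phi\psi_x$, averaging the two arc identities, and invoking Cauchy-Schwarz produces $\|\phi\|_\infty^{2} \leq \|\psi_x\|_{L^2(\T)} \|\phi\|_{L^2(\T)}$. For the most delicate bound, I use the fractional primitive $(|\phi|^{3/2})_x = \tfrac{3}{2} |\phi|^{1/2}\sgn(\phi) \psi_x$: averaging the arc identities gives $2|\phi(x)|^{3/2} \leq \tfrac{3}{2} \int_\T |\phi|^{1/2}|\psi_x|$, and a Cauchy-Schwarz step on this pairing yields $\|\phi\|_\infty^{3} \leq \tfrac{9}{16}\|\psi_x\|_{L^2(\T)}^{2}\|\phi\|_{L^1(\T)}$.

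The second step inserts each of these $L^\infty$ estimates into the interpolation inequality and takes $p$-th roots. For the third embedding, the general-$q$ case with the $L^1$ bound on $\psi_x$ produces the constant $(1/2)^{(p-q)/p} = (1/2)^\alpha$. For the second, taking $q=2$ with the quadratic bound gives constant $1$ and recovers $\alpha = (p-2)/(2p)$ together with $1-\alpha = (p+2)/(2p)$. For the first, taking $q=1$ with the cubic bound yields $(9/16)^{(p-1)/(3p)} = (3/4)^{2(p-1)/(3p)} = (3/4)^{\alpha}$ with $\alpha = (2p-2)/(3p)$, matching the claim.

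The main obstacle is recovering the declared constant $(3/4)^\alpha$ in the first embedding: several natural routes (for example combining $\|\phi\|_\infty^{2} \leq \|\psi_x\|_{L^2}\|\phi\|_{L^2}$ with the trivial $\|\phi\|_{L^2}^{2} \leq \|\phi\|_\infty \|\phi\|_{L^1}$, or a pointwise triangular lower bound on $\phi$ near its maximum) only deliver $\|\phi\|_\infty^{3} \leq C \|\psi_x\|_{L^2}^{2}\|\phi\|_{L^1}$ with some $C \in [3/4, 1]$, which after interpolation produces the weaker constant $C^{\alpha/2}$. The exponent $3/2$ in the fractional primitive is precisely what calibrates the subsequent Cauchy-Schwarz pairing against the $L^1(\T) \times L^2(\T)$ norms and delivers the correct constant $9/16 = (3/4)^2$ on the cubic $L^\infty$ bound.
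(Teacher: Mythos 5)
Your proof is correct, and since the paper states this lemma without proof (it is recorded as a collection of standard Gagliardo--Nirenberg embeddings), there is no argument in the text to compare against. Your derivation is the natural one and, importantly, it does reproduce the exact constants claimed, which is the only non-routine part of the lemma.

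The structure is sound: $\phi := \psi - \mu_\psi$ is continuous with mean zero, so it has a zero $x_0$, and the two-sided representation $\phi(x) = \int_{x_0}^{x}\psi_x = -\int_{x}^{x_0+2\pi}\psi_x$ gives, upon averaging, each of the three $L^\infty$ bounds. The elementary step $\|\phi\|_{L^p}^p \leq \|\phi\|_{L^\infty}^{p-q}\|\phi\|_{L^q}^q$ then closes all three. I verified the arithmetic: the third embedding gives $(1/2)^{(p-q)/p}$ from $\|\phi\|_\infty \leq \tfrac12\|\psi_x\|_{L^1}$; the second gives constant $1$ and exponents $(p-2)/(2p)$, $(p+2)/(2p)$ from $\|\phi\|_\infty^2 \leq \|\psi_x\|_{L^2}\|\phi\|_{L^2}$; the first gives $(9/16)^{(p-1)/(3p)} = (3/4)^{(2p-2)/(3p)}$ from $\|\phi\|_\infty^3 \leq \tfrac{9}{16}\|\psi_x\|_{L^2}^2\|\phi\|_{L^1}$. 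Your observation that the exponent $3/2$ in the fractional primitive is what makes the Cauchy--Schwarz pairing land on the $L^1 \times L^2$ norms with constant $\tfrac34$ is precisely the right diagnosis; cruder chains of inequalities indeed degrade this constant. One small point worth recording explicitly (though it does not affect correctness): for $\phi \in W^{1,1}$ continuous, the composition $|\phi|^{3/2}$ is absolutely continuous because $s \mapsto |s|^{3/2}$ is $C^1$ with locally bounded derivative, so the fundamental theorem of calculus step is legitimate. Also implicit is that the first two embeddings are vacuously true when $\psi_x \notin L^2(\T)$, which is consistent with the $W^{1,1}$ hypothesis.
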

\noindent For scalar-valued functions $f(x,t)$ or vector-valued functions $\ga(x,t)$ involving a space-time variable $(x,t)$ we use $Q_{T} := \T \times [0,T]$ for a parabolic cylinder. The interchangeable family of notations
$$
\dot f(x,t) = f^{\prime}(x,t) = \partial_x f(x,t) \qquad \text{and} \qquad \ddot \ga(x,t) = \ga^{\prime\prime }(x,t) = \partial_{xx} \ga(x,t)
$$
will always refer to spatial derivatives; we will always display temporal derivatives $\partial_t f$ or $\partial_{tt} \ga$ explicitly. From time to time we will find it convenient to view bi-variate functions $\phi(x,t)$ as defining a one-parameter family of mappings $\{ \phi(\cdot,t) \}_{t \in (a,b)}$ from a temporal interval $(a,b)$ into some appropriate Banach space. In such cases we find it convenient to omit the spatial dependence and simply use $\{ \phi(t) \}_{t \in (a,b)}$ to refer to the function family. So, an $m$ tuple of the form
$$
\Phi(t) = \left( \phi_1(t),\ldots,\phi_m(t) \right) \in \cH^{s}_{m} \qquad \text{for} \qquad t \in [0,T]
$$
will therefore refer to a mapping $\Phi : [0,T] \mapsto \cH^{s}_{m}$ that, for each $t$, furnishes a collection of $m$ functions with $\cH^{s}_{m}$ regularity. By $C^{s}_m(T) := C([0,T];\mathcal{H}^{s}_m)$ we then mean the collection of such mappings that vary continuously in time with respect to the max-norm
$$
\| \Phi \|_{C^{s}_{m}(T)} := \max_{t \in [0,T]}\;\, \| \Phi(t)\|_{\cH^{s}_{m}}
$$
across components. Similarly, for an $m$-tuple $\bveps(t) := (\veps_1(t),\ldots,\veps_m(t))$ of scalars that vary in time we use the definitions
$$
\| \bveps(t) \|_{\R^m} := \| \bveps(t)\|_{\ell^{\infty}(\R^{m})} \qquad \text{and} \qquad \|\bveps\|_{C_{m}(T)} := \max_{t \in [0,T]} \;\, \|\bveps(t)\|_{\R^{m}}
$$
for the space $C_{m}(T)$ of $m$ continuous functions endowed with the max-norm.
\subsection*{Immersions and Embeddings:} We will frequently need to appeal to elementary notions regarding immersions, embeddings and their various representations. Let $\ga: \T \mapsto \R^d$ denote a closed, $H^{2}(\T)$-regular curve and
$$
\ell(\ga) := \int_{\T} |\dot \ga(x)| \, \rd x
$$
its length. By an $H^{2}(\T)$-\emph{immersion} we simply mean an $H^{2}(\T)$ map $\ga:\T \mapsto \R^{d}$ whose minimal speed
$$
\frs_{*}(\ga) := \min_{x \in \T} \;\, |\dot \ga(x)|
$$
does not vanish. Any immersion induces a pair $(\eta_{\gamma},\xi_{\gamma})$ of $H^{2}\big( I_{\pi} \big)$ maps
$$
\eta_{\ga}(x) := -\pi + \frac{2\pi}{\ell(\ga)}\int^{x}_{-\pi} |\dot \ga(s)|\, \rd s \qquad \text{and} \qquad \xi_{\ga}\big( \eta_{\ga}(x) \big) = x
$$
of $I_{\pi}$ onto $I_{\pi}$ that convert $\ga$ into a canonical parametrization $\psi_{\ga} := \ga \circ \xi_{\ga},$ which we refer to as its constant speed representation. If $
\ga$ has constant speed we shall frequently use some variant of the notation
$$
|\dot \ga(x)| \equiv \sigma(\ga) := \frac{\ell(\ga)}{2\pi}
$$
to denote the constant speed of the curve. As we shall frequently find it convenient to work in the class of constant speed immersions, we need a means of asserting that transitions to constant speed coordinates behave well under compositions in an appropriate sense. The following lemma will suffice for these purposes. 
\begin{lemma}\label{lem:chofvar}
Assume that $f \in H^{1}(\T)$ and that $\ga_i \in W^{1,1}(\T), i \in \{0,1\}$ are immersions. Let
$$
\eta_i(x) := -\pi + \frac{2\pi}{\ell(\ga_i)} \int^{x}_{-\pi} |\dot \ga_i(z)| \, \rd z \qquad \text{and} \qquad \xi_i \circ \eta_i(x) = x
$$
denote their transitions to constant speed coordinates. Then the estimate
$$
\|f - f \circ \eta_1 \circ \xi_0\|_{L^{2}(\T)} \leq \mathfrak{C}\|\dot f\|_{L^{2}(\T)}\frac{\| \dot \ga_1 - \dot \ga_0\|_{L^{1}(\T)}}{\ell(\ga_0) \vee  \ell(\ga_1)}
$$ 
holds for $\mathfrak{C}>0$ a universal constant. Moreover, the map $\ga_i \mapsto \psi_i$ is locally Lipschitz with respect to the $H^{1}(\T)$ topology, in the sense that the bound
$$
\|\psi_1 - \psi_0\|_{H^{1}(\T)} \leq \mathfrak{C}^{*}\left( \frac{\ell(\ga_0)\vee\ell(\ga_1)}{\frs_{*}(\ga_0)\wedge \frs_{*}(\ga_1)},\frac{\|\ddot \ga_0\|_{L^{2}(\T)}\vee\|\ddot \ga_1\|_{L^{2}(\T)}}{\frs_{*}(\ga_0)\wedge \frs_{*}(\ga_1)}\right)\|\ga_1 - \ga_0\|_{H^{1}(\T)}
$$
holds for $\mathfrak{C}^{*} : \R^{2} \mapsto \R$ some continuous, increasing function of its arguments.
\end{lemma}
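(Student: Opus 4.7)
The plan is to deduce both estimates from a single geometric input: a pointwise control of the identity-defect $\eta_{1}\circ\xi_{0} - \mathrm{id}$. Since $\xi_{0}$ inverts $\eta_{0}$, the identity
$$
\eta_{1}\circ\xi_{0}(u) - u = \big(\eta_{1} - \eta_{0}\big)\circ\xi_{0}(u)
$$
reduces this to bounding $\eta_{1} - \eta_{0}$ directly. Writing $A_{i}(x) := \int_{-\pi}^{x}|\dot \ga_{i}(z)|\,\rd z$ and algebraically splitting
$$
\frac{A_{0}(x)}{\ell(\ga_{0})} - \frac{A_{1}(x)}{\ell(\ga_{1})} = \frac{A_{0}(x) - A_{1}(x)}{\ell(\ga_{0})} + \frac{A_{1}(x)\big(\ell(\ga_{1}) - \ell(\ga_{0})\big)}{\ell(\ga_{0})\ell(\ga_{1})},
$$
then using $A_{1} \leq \ell(\ga_{1})$ together with $|\ell(\ga_{0}) - \ell(\ga_{1})| \leq \|\dot \ga_{0} - \dot \ga_{1}\|_{L^{1}(\T)}$ (and its symmetric counterpart), yields $\|\eta_{0} - \eta_{1}\|_{L^{\infty}(\T)} \leq \frC \|\dot \ga_{0} - \dot \ga_{1}\|_{L^{1}(\T)}/(\ell(\ga_{0})\vee\ell(\ga_{1}))$. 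To convert this into the first estimate, I lift $h := \eta_{1}\circ\xi_{0}$ to a monotone map of $\R$, write $f(u) - f(h(u)) = \int_{h(u)}^{u}\dot f(s)\,\rd s$, and apply Cauchy-Schwarz and Fubini. Monotonicity of the lift gives $\big|\{u : s\text{ lies between }u\text{ and }h(u)\}\big| \leq \|h - \mathrm{id}\|_{L^{\infty}(\T)}$, producing $\|f - f\circ h\|_{L^{2}(\T)} \leq \|h - \mathrm{id}\|_{L^{\infty}(\T)}\|\dot f\|_{L^{2}(\T)}$, which combines with the pointwise bound above to give the first estimate.

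For the Lipschitz property I decompose
$$
\psi_{1} - \psi_{0} = (\ga_{1} - \ga_{0})\circ\xi_{0} + \big(\ga_{1}\circ\xi_{1} - \ga_{1}\circ\xi_{0}\big)
$$
and treat each piece separately. The first piece yields to the change of variable $u = \eta_{0}(x)$ together with the Sobolev bound $\|\dot \ga_{0}\|_{L^{\infty}(\T)} \leq \frC\|\ddot \ga_{0}\|_{L^{2}(\T)}^{2/3}\ell(\ga_{0})^{1/3}$ furnished by the Gagliardo-Nirenberg lemma (noting that $\overline{\dot \ga_{0}} = 0$ for a closed curve). The second piece is $\psi_{1} - \psi_{1}\circ h$, so the first estimate of the lemma applies directly with $f = \psi_{1}$. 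At the derivative level, $\dot \psi_{i} = \sigma(\ga_{i})\,\tau_{i}\circ\xi_{i}$ with $\tau_{i} := \dot \ga_{i}/|\dot \ga_{i}|$ leads to the three-term decomposition
$$
\dot \psi_{1} - \dot \psi_{0} = \big(\sigma(\ga_{1}) - \sigma(\ga_{0})\big)\tau_{1}\circ\xi_{1} + \sigma(\ga_{0})\big[(\tau_{1}\circ\xi_{1} - \tau_{1}\circ\xi_{0}) + (\tau_{1} - \tau_{0})\circ\xi_{0}\big].
$$
The scalar term is immediate from $|\sigma(\ga_{1}) - \sigma(\ga_{0})| \leq \frC\|\dot \ga_{1} - \dot \ga_{0}\|_{L^{1}(\T)}$; the pointwise term uses $|\tau_{1} - \tau_{0}| \leq 2|\dot \ga_{1} - \dot \ga_{0}|/(\frs_{*}(\ga_{0})\wedge\frs_{*}(\ga_{1}))$ together with a change of variable; the composition term is again of the form $F - F\circ h$ with $F := \tau_{1}\circ\xi_{1}$, so the first estimate reduces it to bounding $\|\dot F\|_{L^{2}(\T)}$, which follows from $|\dot \tau_{1}| \leq 2|\ddot \ga_{1}|/\frs_{*}(\ga_{1})$ and the Jacobian bound $\|\xi_{1}'\|_{L^{\infty}(\T)} \leq \ell(\ga_{1})/(2\pi\frs_{*}(\ga_{1}))$.

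The main obstacle is the bookkeeping rather than any single hard step: every intermediate estimate introduces some combination of $\ell(\ga_{i})$, $\frs_{*}(\ga_{i})$, and $\|\ddot \ga_{i}\|_{L^{2}(\T)}$, and one must verify that the final constant assembles into a continuous function of the two ratios stated in the lemma rather than a messier dependence on each quantity separately. The key mechanism that makes this work is that both the Jacobian factors appearing in the change-of-variable steps and the $\|\dot f\|_{L^{2}(\T)}$ factors appearing in the appeals to the first estimate can, after invoking $\ell(\ga_{i}) \geq 2\pi\frs_{*}(\ga_{i})$ and the Gagliardo-Nirenberg embeddings of the preceding lemma, be controlled by a continuous function of just those two ratios.
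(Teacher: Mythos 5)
Your proof is correct but follows a genuinely different route from the paper's. For the first estimate the paper defines the periodized function $h(z) := \mathbf{1}_{[-3\pi,3\pi]}|\dot f(z)|$, invokes the Hardy--Littlewood maximal function $h^*$, and uses $L^2$-boundedness of the maximal operator to pass from the pointwise bound $|f(x) - f\circ\eta_1\circ\xi_0(x)| \leq 2\veps(x)h^*(x)$ to the $L^2$ estimate. Your Cauchy--Schwarz-plus-Fubini argument reaches the same conclusion by elementary means (up to a harmless factor of $\sqrt2$ since the set of admissible $u$ for each fixed $s$ has measure at most $2\|h - \mathrm{id}\|_{L^\infty}$, not $\|h - \mathrm{id}\|_{L^\infty}$, though monotonicity of the lift is not actually needed for this measure bound --- it follows directly from $|s-u| \leq |u - h(u)|$). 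For the second estimate the paper interpolates $\ga(\cdot,\vth) := \vth\ga_1 + (1-\vth)\ga_0$ and integrates the transport formula \eqref{eq:cspdder} for $\partial_\vth\psi$, which forces a case distinction: the interpolant fails to be an immersion when $\langle\dot\ga_0,\dot\ga_1\rangle < 0$ somewhere, so the ``not compatibly oriented'' case is handled by an a~priori lower bound on $\|\dot\ga_1 - \dot\ga_0\|$ that makes the claimed inequality vacuously true. Your telescoping decomposition $\dot\psi_1 - \dot\psi_0 = (\sigma_1-\sigma_0)\tau_1\circ\xi_1 + \sigma_0(\tau_1\circ\xi_1 - \tau_1\circ\xi_0) + \sigma_0(\tau_1 - \tau_0)\circ\xi_0$ bypasses the interpolation entirely, and consequently needs no case split: the pointwise bound $|\tau_1 - \tau_0| \leq 2|\dot\ga_1 - \dot\ga_0|/(\frs_*(\ga_0)\wedge\frs_*(\ga_1))$ holds regardless of orientation (it is exactly saturated, up to the constant $2$, when the tangents oppose each other). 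The one place you must be careful is the constant-assembly step you flag at the end --- e.g. the Jacobian factor from the change of variables in the ``pointwise term'' produces $\|\dot\ga_0\|_{L^\infty}/\sigma(\ga_0)$, which only lands inside $\mathfrak{C}^*(R_1,R_2)$ after using the universal inequality $\ell(\ga_i) \geq 2\pi\frs_*(\ga_i)$ to replace the denominator $\ell(\ga_0)$ by $\frs_*$ --- but you have identified precisely the right mechanism, and the bookkeeping does close.
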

\noindent We provide a simple proof in lemmas \ref{lem:chofvarapp} and \ref{lem:equivboundapp} in the appendix. In a similar vein, we shall also have occasion to consider a family
\begin{align}\label{eq:cspdrep}
\eta(x,t) := -\pi + \frac{2\pi}{\ell(\ga(t))}\int^{x}_{-\pi} |\dot \ga(z,t)| \, \rd z \qquad \text{and} \qquad \eta( \xi(x,t),t) = x
\end{align}
of such maps induced by a one-parameter collection $\{\ga(t)\}_{t \in (a,b)}$ of immersions. The corresponding constant speed representation $\psi(x,t) = \ga( \xi(x,t),t)$ of this family then obeys the transport relation
\begin{align}\label{eq:cspdder}
\psi_{t}(x,t) + \left( \frac{2\pi}{\ell(\ga(t))} \right)^2 P_0 \big( \langle \dot \psi , \dot \vv \rangle \big)\dot \psi(x,t) = \ga_{t}( \xi(x,t),t ) := \vv(x,t)
\end{align}
provided that the requisite derivatives exist in the appropriate sense.

For $H^{2}(\T)$-immersions we may define the unit-tangent and squared curvature
$$
\tang_{\ga}(x) := \frac{\dot \ga(x)}{|\dot \ga(x)|} \qquad \text{and} \qquad \kappa^2_{\ga}(x) := \frac{|\dot \tang_\ga(x)|^2}{|\dot \ga(x)|^2}
$$
provided we interpret $\dot \tang_\ga$ in the sense of weak derivatives. The \emph{bending energy} of $\ga$ then refers to the total squared curvature
$$
\kappa^{2}_{2}(\ga) := \int_{\T} \kappa^{2}_{\ga}(x) |\dot \ga(x)| \, \rd x = \left( \frac{2\pi}{\ell(\ga)} \right)^{3} \int_{\T} |\ddot \psi_{\ga}(x)|^2 \, \rd x,
$$
which has units of inverse length and obeys the lower bound
$$
\ell(\ga) \kappa^{2}_{2}(\ga) \geq (2\pi)^{2}
$$
for all immersions. The inverse of the bending energy
$$
\ell_{\kappa}(\ga) := \frac1{\kappa^2_2(\ga)} \leq \frac{\ell(\ga)}{\,\,(2\pi)^{2}}
$$
defines a natural length-scale along the curve, and so the dimensionless ratio
$$
\frac{ \ell(\ga)}{\ell_{\kappa}(\ga)}
$$
provides a dimensionless measure of the deviation of $\ga$ from a standard circle. This quantity will appear frequently in our estimates of nonlocal integrals.

By an \emph{embedding} we simply mean an immersion without self-intersections. We may quantify embeddedness precisely using well-known machinery. Given a pair $(x,y) \in \T \times \T$ with $x<y$ we let
$$
\ell_{\ga}(x,y):= \int^{y}_{x} |\dot \ga(z)| \, \rd z \qquad \text{and} \qquad D_{\ga}(x,y) := \min\left\{ \ell_{\ga}(x,y)  , \ell(\ga) - \ell_{\ga}(x,y) \right\}
$$
denote the arc-length and distance along $\ga$ between the points $\ga(x)$ and $\ga(y),$ respectively. The quantity
$$
\delta_{\infty}(\ga) := \sup_{y\neq x} \;\, \frac{D_{\ga}(x,y)}{|\ga(x) - \ga(y)|}
$$
then defines the \emph{distortion} of a closed curve $\ga$ in the sense of Gromov. A similar notion applies to sub-arcs of a closed curve. Indeed, if $x<y<x+2\pi$ then the pointwise restriction
$$
\alpha_{x,y} := \left.\ga\right|_{[x,y]}
$$
forms an immersed sub-arc of $\ga,$ in which case we use the modified definition
$$
\delta_{\infty}\big( \alpha_{x,y} \big) := \sup_{ x \leq s < t \leq y} \;\, \frac{\ell_{\ga}(s,t)}{|\ga(s) - \ga(t)|} 
$$
for its distortion. 

A few basic properties of the distortion will prove useful. The distortion is both parametrization and scale invariant, and a standard fact (c.f. \cite{Wirtinger}) asserts it attains its minimum value 
$$ \frac{\pi}{2} = \inf_{\ga} \;\, \delta_{\infty}(\ga)$$ 
at the standard embedding of any circle. A simple argument from the Taylor expansion
$$
\ga(y) - \ga(x) = \ell_{\ga}(x,y)\tang(x) + \int^{y}_{x} \ell_{\ga}(v,y) \dot \tang_{\ga}(v) \, \rd v 
$$
and the Cauchy-Schwarz inequality
$$
\left| \int^{y}_{x} \ell_{\ga}(v,y) \dot \tang_{\ga}(v) \, \rd v \right| \leq \frac1{\sqrt{3}}\ell^{\frac32}_{\ga}(x,y)\kappa_{2}(\ga)
$$
suffices to establish the implication
$$
D_{\ga}(x,y) = \ell_{\ga}(x,y) \leq \ell_{\kappa}(\ga)\qquad \longrightarrow \qquad \delta_{\ga}(x,y) := \frac{D_{\ga}(x,y)}{|\ga(x) - \ga(y)|} \leq \frac{\sqrt{3}}{\sqrt{3} - 1},
$$
and so any $\ga \in H^{2}(\T)$ without self-intersections has finite distortion. Similarly, any sub-arc $\alpha_{x,y}$ of length less than $\ell_{\kappa}(\ga)$ has finite distortion. By decomposing $\T$ into 
$$
\T = \cup^{N}_{k=1} \; [x_{k-1},x_{k}) \qquad \text{with} \qquad x_k := \xi_{\ga}\left( -\pi + \frac{2\pi k}{N} \right),
$$
it follows that $\ga$ partitions into at most
$$
N(\ga) := \left \lceil \frac{\ell_{\ga}}{\ell_{\kappa}(\ga)} \right \rceil
$$
almost-disjoint subarcs $\alpha_{k} = \alpha_{x_{k-1},x_k}$ that have distortion bounded by $3$ and are of equal length. Finally, the distortion remains stable with respect to the $H^{2}(\T)$-topology; the inequality
$$
\frac1{\delta_{\infty}(\ga_2)} \geq \frac1{\delta_{\infty}(\ga_1)}\left( \frac{1 - R\delta_{\infty}(\ga_1)}{1+R} \right) \qquad  \qquad R := \frac{\mathrm{Lip}\big(\ga_2-\ga_1\big)}{\frs_{*}(\ga_1)}
$$
and the embedding $H^2(\T) \subset C^{0,1}(\T)$ reveal that the distortion defines a locally Lipschitz function with respect to $H^{2}(\T)$ convergence.

\section{Representations of the Dynamic}\label{sec:Rep}
We elucidate the meaning for the dynamics \eqref{eq:introdyn} by representing it in three equivalent ways. The elementary calculus identity
$$
\tang_i(x,t) := \frac{\dot \ga_i(x,t)}{|\dot \ga_i(x,t)|}  \quad \quad \quad  \kappa_i(x,t)\nrml_i(x,t) = \frac{\dot \tang_{i}(x,t)}{ |\dot \ga_i(x,t)|}
$$ 
leads to the most natural (and standard) representation, for we may simply define the normal vector
\begin{equation}\label{eq:nrmldef}
\nrml_i(x,t) := \tang^{\perp}_i(x,t)\qquad \text{where} \qquad \begin{bmatrix} a \\ b \end{bmatrix}^{\perp} := \begin{bmatrix} \;\;\,b \\ -a \end{bmatrix}
\end{equation}
in some consistent fashion and then use the quasi-linear PDE
\begin{align}\label{eq:basic}
\partial_{t} \ga_i(x,t) := \frac{\dot \tang(x,t)}{|\dot \ga_i(x,t)|} + F_i(x,t)\nrml_{i}(x,t) \qquad \qquad
 F_i(x,t) := f_i(x,t) + c_i
\end{align}
to represent the dynamic. Selecting the opposite orientation $-\tang^{\perp}_i$ for the normal $\nrml_i$ simply amounts to a different choice of sign for the forcing. In addition to the standard formulation \eqref{eq:basic}, two alternate descriptions of the dynamic prove useful in our study.\\

\noindent\textbf{The Shape-Scaling Representation:} Given a solution to \eqref{eq:basic} existing on $\T \times [0,T_*)$ for some $T_*>0$, recall that we may assign to each $\ga_i \in \Gamma$ a corresponding change of variables $x \to \xi_i(x,t)$ via
\begin{align*}
\eta_i(x,t) := -\pi + \frac1{\sigma_i(t)} \int^{x}_{-\pi} |\dot \ga_i(y,t)| \, \rd y \qquad \sigma_{i}(t) := \fint_{\T} |\dot \ga_i(y,t)| \, \rd y \qquad \eta_i( \xi_i(x,t),t) = x,
\end{align*}
that converts $\ga_i$ to its constant speed representation. We may then use this change of variables to define a ``scale'' $\veps_i(t)$ and a unit speed representation $\phi_i(x,t)$ of the ``shape'' of each interface. Specifically, if we let $\psi_i := \psi_{\ga_i} := \ga_i \circ \xi_i$ denote the constant speed representation of the interface $\ga_i$ then the re-scaled interface
\[\phi_i(x,t) := \frac{\psi_{i}( x , t )}{\sigma_i(t)} := \veps^{\frac12}_{i}(t) \psi_i(x,t)\]
has unit speed. Applying this change of variables gives
$$
\veps^{\frac12}_i(t)F_i(x,t) := f_i\big( \xi_i(x,t) ,t \big) + c_i = \sum^{m}_{j=1} \int_{\T} g_{ij}\left( \frac12|\psi_i(x,t) - \psi_j(y,t)|^2 \right)\sigma_j(t)  \, \rd y + c_i,\qquad \psi_i := \sigma_i \phi_i,
$$
for the forcing in constant speed coordinates, while differentiating in space gives the usual relations
$$
\dot \phi_i = \tang_{\ga_i} \circ \xi_i \qquad \dot \phi^{\perp}_{i} = \nrml_{\ga_i}\circ \xi_i \qquad \text{and} \qquad \veps^{\frac12}_i \ddot \phi_i = \left(\kappa_{\ga_i} \circ \xi_i\right)\, \nrml_{\ga_i}\circ \xi_i
$$
for the unit tangent, unit normal and curvature. As a consequence, we obtain the representation
$$
\vv_i(x,t) := \partial_{t} \ga_i( \xi_i(x,t),t) = \veps^{\frac12}_i(t)\left( \ddot \phi_i(x,t) + F_i(x,t) \dot\phi^{\perp}_i(x,t) \right) = \langle \vv_i(x,t),\dot \phi^{\perp}_i(x,t)\rangle \dot \phi^{\perp}_i(x,t)
$$
for the interfacial velocity in constant speed coordinates. Now as any choice of $\perp:\R^2 \mapsto \R^2$ is necessarily an anti-symmetric linear operator, we may deduce that the relation
\begin{align*}
\langle \dot \vv_i, \dot \phi_i \rangle &= -\langle \vv_i,\dot \phi^{\perp}_i\rangle \langle \ddot \phi_i,\dot \phi^{\perp}_i\rangle = -\veps^{\frac12}_i\left( |\ddot \phi_i|^2 + \langle \ddot \phi_i, \dot \phi^{\perp}_i \rangle F_i \right) := -\veps^{\frac12}_i a_i,
\end{align*}
holds. A straightforward application of \eqref{eq:cspdder} and the product rule then gives the evolution
\begin{align*}
\partial_{t} \phi_i = \veps_i\left( \ddot \phi_i + \po\big( a_i\big) \dot \phi_i +  F_i \dot \phi^{\perp}_i + \frac{\rd_t \veps_i }{2 \veps^2_i} \phi_i \right)
\end{align*}
for the shape component of each interface. An integration by parts and a change of variables give the evolution
\begin{align*}
\frac{\rd_t \veps_i}{2 \veps^2_i} = \veps^{-\frac12}_i\fint_{\T} \kappa_{\ga_i} \langle \nrml_{\ga_i}, \partial_t \ga_i \rangle |\dot \ga_i| = \veps^{-\frac12}_i \fint_{\T} \langle \ddot \phi_i, \vv_i \rangle = \mu_{a_i},
\end{align*}
for the scale component, thus closing the system. All-together, we obtain the coupled system
\begin{align}\label{eq:shapedynamic}
\partial_{t} \phi_i &\;= \veps_i \left( \ddot \phi_i + \po\left(a_i\right)\dot \phi_i +  F_i \dot \phi^{\perp}_i + \mu_{a_i}\phi_i\right),\qquad \rd_t \veps_i = 2\veps^2_i \mu_{a_i} \qquad 
a_{i} := |\ddot \phi_i|^2 + \langle \ddot \phi_i,\dot \phi^{\perp}_i\rangle F_i, \nonumber \\
F_i(x,t) &:= \veps^{-\frac12}_i(t)\left( c_i + \sum^{m}_{j=1} \int_{\T} g_{ij}\left(\frac12|\psi_i(x,t)-\psi_j(y,t)|^2\right)|\dot \psi_j(y,t)| \, \rd y \right), \qquad \psi_i := \veps^{-\frac12}_i \phi_i
\end{align}
for representing the dynamic in unit speed coordinates, together with periodic boundary conditions for $\phi_i$ and its derivatives.  Conversely, we may easily recover the original dynamic \eqref{eq:basic} from the shape dynamic \eqref{eq:shapedynamic} in the usual way via the method of characteristics. Given a collection $\Phi = \{\phi_1,\ldots,\phi_m\}$ solving \eqref{eq:shapedynamic} on $\T \times [0,T_*),$ we simply define $\eta_i(x,t)$ through the family
\begin{align*}
\partial_{t} \eta_i(x,t) &= -\veps_i(t) \po\big( a_{i} \big)(\eta_i(x,t),t) \\
\eta_{i}(x,0) &= x
\end{align*}
of ordinary differential equations. We have $\eta_i(-\pi,t) = -\pi, \eta_i(\pi,t) = \pi$ since $\po\big(a_i)$ vanishes at $\pm \pi$, and moreover
$$
\dot \eta_i(x,t) = \mathrm{exp}\left( \int^{t}_{0} \left[ \mu_{a_i}(s) - a_i( \eta_i(x,s) , s ) \right]\veps_i(s) \, \rd s \right)
$$
never vanishes for as long as it exists. The family of functions $x \mapsto \eta_i(x,t)$ therefore defines a collection of invertible maps of $I_{\pi}$ onto $I_{\pi}$ as long as the $\phi_i$ exist. The definitions $\ga_i(x,t) = \sigma_i(t)\phi_i( \eta_i(x,t) , t )$ and the chain rule then suffice to show that the collection $\Gamma = \{\ga_1,\ldots\ga_m\}$ defines a solution to the standard evolution.\\

\noindent \textbf{The Tangent-Angle Representation:}
The representation \eqref{eq:shapedynamic} describes the evolution of each interface $\ga_i \in \Gamma$ in terms of separate evolutions for its shape (i.e. $\phi_i$) and its scale. In contrast to \eqref{eq:basic}, the shape-scaling representation has the benefit of placing a constant $\veps_i(t)$ in front of the diffusion term at the expense of an additional, nonlinear transport term. Together with the dynamic for the diffusion constant, this formulation makes \eqref{eq:shapedynamic} easier to deal with for many analytical purposes. Nevertheless, the shape-scaling representation still manifests a leading-order nonlinearity (i.e. involving $\ddot \phi_i$) in the transport coefficient and in the evolution of the diffusion coefficient. We may fortuitously affect a further reduction that pushes these nonlinearities to lower order as well, yielding a description of each interface in terms of its unit tangent. The resulting tangent-angle representation is somewhat more cumbersome for analysis, but is well-suited for developing stable numerical procedures.

Pick some $i \in [m]$ and let $\phi(x,t) := \phi_i(x,t)$ denote the shape of the resulting interface. For as long as a solution exists, say on some temporal interval $[0,T_*),$ the unit tangent
$$
\tang(x,t) :=  \phi_{x}(x,t)
$$
to this interface is well-defined. For such $\tang$ define a function $\vth: \T \times [0,T_{*}) \mapsto \R$ by the relation
$$
\vth(x,t) := \mu_{\vth_0} - \int^{t}_{0} \fint_{\T} \langle \tang_t(z,s) , \tang^{\perp}(z,s) \rangle \, \rd z \rd s - \pc  \langle \dot \tang , \tang^{\perp}\rangle
$$
where the initial value $\vth_0(x) = \vth(x,0)$ guarantees that the representation
$$
\tang(x,0) = \begin{bmatrix}
-\sin \vth(x,0) \\
\;\;\;\,\cos \vth(x,0) \end{bmatrix}
$$
holds initially. Given $\vth$ defined in this way, let
$$
\hat \tang(x,t) := \begin{bmatrix}
-\sin \vth(x,t) \\
\;\;\;\, \cos \vth(x,t)
\end{bmatrix}
$$
denote a putative representation of the unit tangent. Differentiating in time gives
\begin{align*}
\hat \tang_t(x,t) &= \left( \mu_{ \langle \tang_t , \tang^{\perp} \rangle } + \pc \langle \dot \tang_t , \tang^{\perp}\rangle + \pc \langle \dot \tang , \tang^{\perp}_{t}\rangle \right)(x,t) \hat \tang^{\perp}(x,t)\\
&= \left( \langle \tang_t , \tang ^{\perp} \rangle + \pc \langle \dot \tang , \tang^{\perp}_{t}\rangle - \pc \langle \tang_t , \dot \tang^{\perp}\rangle \right)(x,t) \hat \tang^{\perp}(x,t),
\end{align*}
with the latter inequality holding due to integration by parts. The identities $\tang_t = \langle \tang_t, \tang^{\perp}\rangle \tang^{\perp}$ and $\dot \tang = \langle \dot \tang, \tang^{\perp}\rangle \tang^{\perp}$ hold since $\tang$ has unit length, and so the cancellations
\begin{align*}
\langle \tang^{\perp}_{t},\dot \tang \rangle &= - \left< \langle \tang_t,\tang^{\perp}\rangle\tang, \dot \tang \right> = 0\\
\langle \tang_{t},\dot \tang^{\perp} \rangle &= - \left< \tang_t , \langle \dot \tang,\tang^{\perp}\rangle \tang \right> = 0
\end{align*}
follow as well. Thus both ordinary differential equations 
$$
\hat \tang_{t}(x,t) = \langle \tang_t(x,t) , \tang^{\perp}(x,t) \rangle \hat \tang^{\perp}(x,t)\qquad \text{and} \qquad \tang_{t}(x,t) = \langle \tang_t(x,t) , \tang^{\perp}(x,t) \rangle \tang^{\perp}(x,t),
$$
hold for as long as $\tang$ exists. In other words, both $t \mapsto \hat \tang(x,t)$ and $t \mapsto \tang(x,t)$ solve the linear initial value problem
$$
\sigma^{\prime}(t) = \langle \dot \tang(x,t) , \tang^{\perp}(x,t) \rangle \sigma^{\perp}(t)
$$
and so $\tang(x,t) = \hat \tang(x,t)$ for $t \in [0,T_*)$ provided they agree on $\T$ at time zero. At time zero, a similar argument starting from the relation
$$
\vth(x,0) = \vth(-\pi,0) - \int^{x}_{-\pi} \langle \dot \tang(x,0) , \tang^{\perp}(x,0) \rangle \, \rd x
$$
shows the linear initial value problems
\begin{align*}
\hat \tang_{x}(x,0) &= \langle \dot \tang(x,0) , \tang^{\perp}(x,0) \rangle \hat \tang^{\perp}(x,0) \\
\tang_{x}(x,0) &=  \langle \dot \tang(x,0) , \tang^{\perp}(x,0) \rangle \tang^{\perp}(x,0)
\end{align*}
hold, and so $\hat \tang(x,0)$ and $\tang(x,0)$ agree on $\T$ if they agree at a single point. This latter condition follows by choosing $-\pi \leq \vth(-\pi,0) < \pi$ so that
$$
\tang(-\pi,0) = \hat \tang(-\pi,0) = \begin{bmatrix}
-\sin \vth(-\pi,0) \\
\;\;\;\, \cos \vth(-\pi,0)
\end{bmatrix}
$$
by taking an inverse tangent. We therefore have
$$
\tang(x,t) = \hat \tang(x,t) = \begin{bmatrix}
-\sin \vth(x,t) \\
\;\;\; \cos \vth(x,t)
\end{bmatrix},
$$
on $\T \times [0,T_*)$, and moreover that $\vth(\pi,t) - \vth(-\pi,t) = 2\pi k$ for some fixed $k \in \Z$ since $\tang(x,t)$ is periodic and continuous in time.

This construction gives a well-defined function $\vth(x,t)$ on $[0,T^{*})$ so that the tangent-angle representation
$$
\tang(x,t) = \begin{bmatrix}
-\sin \vth(x,t) \\
\;\;\,\,\cos \vth(x,t)
\end{bmatrix} = \dot \phi(x,t)
$$
holds. Moreover, we assume by convention that
$$
\tang^{\perp}(x,t) = \nrml(x,t) = \begin{bmatrix}
\cos \vth(x,t) \\
\sin \vth(x,t)
\end{bmatrix} 
$$
furnishes the unit normal vector. A  reduction to a dynamic for $\vth$ then follows easily. A straightforward differentiation of the representation shows 
\begin{align*}
\ddot \phi(x,t) & = -\dot \vth(x,t) \nrml(x,t)\\
\dddot \phi(x,t) & = -\ddot \vth(x,t) \nrml(x,t) - \dot \vth^2(x,t) \tang(x,t) \\
\dot \phi_t(x,t) &= -\vth_{t}(x,t) \nrml(x,t)
\end{align*}
on the one hand, while differentiating the evolution \eqref{eq:shapedynamic} for $\phi$ reveals
\begin{align*}
\dot \phi_t &= \veps\big( \dddot \phi + \po\big(a\big) \ddot \phi + a\dot \phi + \dot F \dot \phi^{\perp} + F \ddot \phi^{\perp} \big) \qquad \qquad a = |\ddot \phi|^2 + F \langle \ddot \phi,\dot \phi^{\perp}\rangle = \dot \vth^2 - F\dot \vth \\
&= \veps \big( \dot F  -  \ddot \vth   - \po\big(a\big) \dot \vth \big) \nrml + \veps \big( F\dot \vth + a 
-  \dot \vth^2  \big)\tang \\
&=\veps \big( \dot F  -  \ddot \vth   - \po\big(a\big) \dot \vth \big) \nrml 
\end{align*}
on the other. We therefore obtain the evolution
$$
\vth_{t} = \veps\left( \vth_{xx} + \po\big(a\big) \vth_x - \dot F \right)
$$
for the angular variable, together with the boundary conditions
\begin{align*}
\vth(\pi,t) &= \vth(-\pi,t) + 2\pi k \qquad k \in \Z \\
\vth_{x}(\pi,t) &= \vth_{x}(-\pi,t)
\end{align*}
that follow from periodicity of $\ddot \phi$ and the tangent. We may proceed to close the evolution by making a few observations.  First, if we have knowledge of the tangent
$$
\tang(x,t) = \begin{bmatrix}
-\sin \vth(x,t) \\ \;\;\,\,\cos \vth(x,t)
\end{bmatrix}
$$
and the center of mass $\cm(t) := \mu_{ \psi }$ then we may therefore recover the interface itself
$$
\psi(x,t) = \cm(t) + \sigma(t) \pc \tang(x,t)
$$
by taking a mean-zero primitive. But integrating \eqref{eq:shapedynamic} gives an evolution
$$
\frc_{t} = \veps^{\frac12} \mu_{\vv} \qquad \text{for} \qquad \vv := \po\big(a\big)\tang + F\nrml
$$
for the center of mass, so we retain the ability to compute the nonlocal forcings
\begin{align*}
F_i(x,t) &= \veps^{-\frac12}_i(t)\left( c_i + \sum^{m}_{j=1} \int_{\T} g_{ij}\left(\frac12|\psi_i(x,t)-\psi_j(y,t)|^2\right)\sigma_j(t) \, \rd y \right), \qquad \psi_i := \cm_i + \sigma_i \pc \tang_i\\
\dot F_i(x,t) &= \sum^{m}_{j=1}  \int_{\T} \dot g_{ij}\left( \frac12|\psi_i(x,t) - \psi_j(y,t)|^2 \right)\langle \psi_i(x,t) - \psi_j(y,t) , \tang_i(x,t) \rangle\sigma_j(t) \, \rd y
\end{align*}
from knowledge of the collection $(\Theta,\mathfrak{C}) := \{\theta_1,\ldots,\theta_m\} \times \{\frc_1,\ldots,\frc_m\}$ of angular variables and centroids. To summarize, the shape dynamic \eqref{eq:shapedynamic} induces the dynamic
\begin{align}\label{eq:evo2}
\partial_{t} \vth_i &= \veps_i\left( \ddot \vth_i + \po\big(a_i\big) \dot \vth_i - \dot F_i\right), \qquad  \vth_i(\pi,t) - \vth_i(-\pi,t) = 2\pi k_i,\;\,k_i \in \Z, \qquad \dot \vth_i(\pi,t) = \dot \vth_i(-\pi,t) \nonumber \\
\rd_{t} \veps_i &= 2\veps^2_i \mu_{a_i} \qquad \,\rd_{t} \frc_i = \veps^{\frac12}_i \mu_{\vv_i},\qquad\,  \vv_i := \po\big(a_i\big)\tang_i + F_i\nrml_i, \qquad \qquad \qquad \;\;\;\;\;\, a_i = \dot \vth^2_i - F_i \dot \vth_i  \\
F_i(x,t) &= \sigma_i(t)\left( c_i + \sum^{m}_{j=1} \int_{\T} g_{ij}\left(\frac12|\psi_i(x,t)-\psi_j(y,t)|^2\right)\sigma_j(t) \, \rd y \right)\qquad \qquad \;\;\;\;\; \psi_i = \cm_i + \sigma_i \pc \tang_i \nonumber
\end{align}
for the collection $\Theta = \{\vth_1,\ldots,\vth_m\}$ of angular variables. To show the converse, assume that we have a collection $\Theta = \{\vth_1,\ldots,\vth_m\}$ that solves \eqref{eq:evo2} on $\T \times [0,T_*)$ and let
\begin{align*}
\tang_i(x,t) := \begin{bmatrix}
-\sin \vth_i(x,t) \\
\;\;\,\,\cos \vth_i(x,t)\end{bmatrix} \qquad \qquad \phi_i(x,t): = \sqrt{\veps_i(t)} \cm_i(t) +  \pc \tang_i(x,t)
\end{align*}
denote the putative solution $\Phi =\{\phi_1,\ldots,\phi_m\}$ to the shape dynamic. Then
$$
\phi_i(\pi,t) - \phi_i(-\pi,t) = \int_{\T} \tang_i(x,t) \, \rd x =:  \ww_i(t)
$$
by definition, and so $\phi_i$ defines a closed curve provided $\ww_i$ vanishes. Fixing $\vth = \vth_i$ for some $i \in [m],$ we may note
\begin{align*}
\left[ \int_{\T} \sin \vth(x,t) \, \rd x \right]_{t} &= \int_{\T} \cos \vth(x,t) \vth_t(x,t) \, \rd x = \veps(t) \int_{\T}\big( \vth_{xx} + \po\big( a \big)\vth_x - \dot F\big)  \cos \vth \,\rd x \\
&= \veps \int_{\T} \left[ \big(  \vth^2_x -  (a-\mu_a) \big)\sin \vth - \dot F \cos \vth \right]\, \rd x 
\end{align*}
by periodicity of $\vth_x,\sin \vth$ and the fact that $\po\big(a\big)$ defines a zero-Dirichlet primitive. Now
\begin{align*}
\int_{\T} \dot F \cos \vth \, \rd x &= \big( F(\pi,t)\cos \vth(\pi,t) - F(-\pi,t) \cos \vth(-\pi,t) \big) + \int_{\T} F\dot \vth \sin \vth \, \rd x \\
&= \cos \vth(\pi,t) \big( F(\pi,t) - F(-\pi,t) ) + \int_{\T} F \dot \vth \sin \vth \, \rd x
\end{align*}
by the periodicity of $\cos \vth$ and integration by parts. Set $q(t) = F(\pi,t) - F(-\pi,t)$ as the boundary jump for the nonlocal forcing, so that
\begin{align*}
\left[ \int_{\T} \sin \vth(x,t) \, \rd x \right]_{t} &= \veps(t) \int_{\T} \big( \vth^2_x - (a - \mu_a) - F \dot \vth \big) \sin \vth \, \rd x -  \veps(t) \cos \vth(\pi,t)q(t)\\
&= \frac{ \rd_t \veps(t) }{ 2 \veps(t) } \int_{\T} \sin \vth(x,t) \, \rd x-  \veps(t) \cos \vth(\pi,t)q(t) \\
\left[ \int_{\T} \cos \vth(x,t) \, \rd x \right]_{t} &=  \frac{ \rd_t \veps(t) }{ 2 \veps(t) }\int_{\T} \cos \vth(x,t) \, \rd x + \veps(t)\sin \vth(\pi,t) q(t),
\end{align*}
with the last equality following from a nearly identical calculation. The dynamic
$$
\frac{\rd \sigma_i \ww_i}{\rd t}(t) = \nrml_i(\pi,t)\sqrt{\veps_i(t)}q_i(t)
$$
holds for the mean of each tangent vector. Recalling the definitions of $q_i,F_i$ and $\ww_i$ then shows
\begin{align*}
\sqrt{\veps_i(t)}q_i(t) &= \sum^{m}_{j=1} \int_{\T} \left[ g_{ij}\left( \frac12|\psi_i(\pi,t) - \psi_j(x,t)|^{2} \right) - g_{ij}\left( \frac12|\psi_i(-\pi,t) - \psi_j(x,t)|^{2} \right) \right] \sigma_j(t) \, \rd y,\qquad\psi_i := \veps^{-\frac12}_i \phi_i \\
&= \sum^{m}_{j=1} \int_{\T} \left[ g_{ij}\left( \frac12|\psi_i(-\pi,t) - \psi_j(x,t) + \sigma_i(t)\ww_i(t)|^{2} \right) - g_{ij}\left( \frac12|\psi_i(-\pi,t) - \psi_j(x,t)|^{2} \right) \right] \sigma_j(t) \, \rd y \\
&:= Q_i\big( \sigma_i(t)\ww_i(t) \big)
\end{align*}
for $Q_i(\vv)$ some Lipschitz function that vanishes at the origin. Thus $\sigma_i\ww_i$ vanishes for all time if it vanishes initially, and so the tangents
$$
\tang_i(x,t) = \begin{bmatrix}
-\sin \vth_i(x,t) \\
\;\;\,\,\cos \vth_i(x,t)
\end{bmatrix}
$$
define closed curves for all time. In particular, $\phi_i$ and its derivatives are periodic. Using a straightforward differentiation of
$$
\phi_i(x,t) := \sqrt{\veps_i(t)} \cm_i(t) +  \pc \tang_i(x,t)
$$
and the integration by parts formula for $\pc$ then shows that \eqref{eq:shapedynamic} holds pointwise. All together, we may conclude
\begin{proposition}\label{prop:equivdyn}
The representations (\ref{eq:basic},\ref{eq:shapedynamic}) and \eqref{eq:evo2} are equivalent.
\end{proposition}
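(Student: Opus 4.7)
The plan is to establish the three-way equivalence by proving a cycle of implications: the basic form \eqref{eq:basic} implies the shape-scaling form \eqref{eq:shapedynamic}, which implies the tangent-angle form \eqref{eq:evo2}, together with the two reverse directions that reconstruct the missing geometric data from a solution of the reduced system. Both forward implications are essentially computational: they follow from the chain rule, the transport identity \eqref{eq:cspdder} of Lemma~\ref{lem:chofvar}, the anti-symmetry of $\perp$, and the unit-length constraints $\langle \tang, \dot\tang\rangle = \langle \tang, \tang_t\rangle = 0$. These derivations are already spelled out in the narrative preceding the proposition, yielding in turn the key algebraic identity $\langle \dot{\vv}_i, \dot{\phi}_i\rangle = -\veps^{\frac12}_i a_i$ for the shape-scaling reduction and the collapse of $\dot \phi_t$ to a purely normal expression for the tangent-angle reduction. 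I would only need to check that every integration-by-parts step and differentiation under the integral is valid at the regularity level under consideration.

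For the reverse direction \eqref{eq:shapedynamic}$\Rightarrow$\eqref{eq:basic}, I would follow the method-of-characteristics construction sketched in the text. Given a solution $\Phi$ to \eqref{eq:shapedynamic}, define the time-dependent change of variables $\eta_i(x,t)$ via the ODE system
\begin{equation*}
\partial_t \eta_i(x,t) = -\veps_i(t) \po\big(a_i\big)(\eta_i(x,t),t),\qquad \eta_i(x,0) = x.
\end{equation*}
Since $\po(a_i)$ vanishes at $\pm\pi$, the endpoints are preserved; the explicit exponential formula for $\dot \eta_i(x,t)$ reproduced in the text shows that $\eta_i(\cdot,t)$ remains an orientation-preserving diffeomorphism of $I_\pi$ for as long as the $\phi_i$ exist. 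A direct chain-rule verification then confirms that $\ga_i(x,t) := \sigma_i(t)\phi_i(\eta_i(x,t),t)$ solves \eqref{eq:basic}.

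The main obstacle is the reverse direction \eqref{eq:evo2}$\Rightarrow$\eqref{eq:shapedynamic}, where one must verify that the reconstructed tangents $\tang_i := (-\sin \vth_i, \cos \vth_i)$ integrate to zero so that the primitive $\pc \tang_i$ genuinely parametrizes a closed curve. The strategy is to derive a closed evolution for the mean $\ww_i(t) := \fint_\T \tang_i(x,t)\,\rd x$ and invoke ODE uniqueness. Differentiating each component of $\ww_i$ under \eqref{eq:evo2}, then applying integration by parts together with the identity $a_i = \dot\vth_i^2 - F_i\dot\vth_i$, produces an evolution whose only inhomogeneous term is proportional to the boundary jump $q_i(t) := F_i(\pi,t) - F_i(-\pi,t)$ of the nonlocal forcing. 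The crucial observation is that the nonlocal kernel structure itself forces $q_i$ to factor through $\sigma_i \ww_i$: writing $\psi_i(\pi,t) = \psi_i(-\pi,t) + \sigma_i(t) \ww_i(t)$ and expanding $g_{ij}$ yields $\sqrt{\veps_i}\,q_i = Q_i\big(\sigma_i \ww_i\big)$ for some Lipschitz $Q_i$ vanishing at the origin. The coupled system for $\sigma_i \ww_i$ thereby reduces to a homogeneous linear ODE with Lipschitz coefficients, so vanishing initial data propagates to give $\sigma_i \ww_i \equiv 0$ on $[0,T_*)$. Once closedness is established, periodicity of $\dot \phi_i$ and $\ddot \phi_i$ together with the pointwise validity of \eqref{eq:shapedynamic} follow by differentiating $\phi_i = \sqrt{\veps_i}\,\cm_i + \pc \tang_i$ and invoking the integration-by-parts identity for $\pc$ recorded in Section~\ref{sec:PreMat}.
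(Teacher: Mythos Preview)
Your proposal is correct and follows essentially the same route as the paper: both forward reductions are carried out in the narrative preceding the proposition, the reverse \eqref{eq:shapedynamic}$\Rightarrow$\eqref{eq:basic} direction goes by the method-of-characteristics construction you describe, and the heart of \eqref{eq:evo2}$\Rightarrow$\eqref{eq:shapedynamic} is precisely the closedness argument via the evolution of $\sigma_i\ww_i$ and the factorization $\sqrt{\veps_i}\,q_i = Q_i(\sigma_i\ww_i)$ through the boundary jump. One small correction: the resulting ODE $\rd_t(\sigma_i\ww_i) = \nrml_i(\pi,t)\,Q_i(\sigma_i\ww_i)$ is not linear in general (since $Q_i$ depends on the kernels $g_{ij}$), but it is Lipschitz with $Q_i(0)=0$, which is all that is needed for the uniqueness argument to force $\sigma_i\ww_i\equiv 0$.
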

\noindent We may therefore use these representations interchangeably, depending upon convenience in a particular context.

\section{Nonlocal Forcings}\label{sec:Nonlocal} 
In principle, we may now use any of the representations (\ref{eq:basic},\ref{eq:shapedynamic},\ref{eq:evo2}) to show that a wide class of nonlocal forcings 
$$
f_{(\ga_i,\ga_j)}(x,t) := \int_{\T} g_{ij}\left(\frac12|\ga_i(x,t) - \ga_j(y,t)|^2 \right) |\dot \ga_j(y,t)| \, \rd y
$$
lead to a well-behaved dynamic. The main obstacle we face in this endeavor is that the general dynamic \eqref{eq:basic} models systems that exhibit vastly different physical behaviors, yet we want to allow enough freedom in choosing the $g_{ij}$ to cover as many potential applications as possible. For example, if we consider an instance of \eqref{eq:basic} that describes a coarsening dynamic then we should allow for the possibility that distinct interfaces $\ga_i,\ga_j$ intersect when defining the nonlocal force. Such intersections place limits on the types of kernels $g_{ij}$ that lead to well-behaved nonlocal forces. Conversely, if we rule out such intersections \emph{a-priori} then we may allow for a much wider range of kernels. In this way, the geometric properties of interfaces in the flow determine the analytical properties of the kernels we may use in a dynamical model.

As the shape-scaling representation \eqref{eq:shapedynamic} demands at least $H^{2}(\T)$ regularity of each interface, it suggests a way to phrase these considerations in analytical terms. The main insight is that allowable selections for the nonlocal kernels $g_{ij}(s)$ should induce nonlocal forces obeying some type of $H^{1}(\T) \mapsto L^{2}(\T)$ Lipschitz bound of the form
\begin{align}\label{eq:gummybounds}
\|f_{(\ga_i,\ga_j)} - f_{(\tilde \ga_i,\ga_j)}\|_{L^{2}(\T)} \lesssim \|\ga_i - \tilde \ga_i\|_{H^{1}(\T)} \qquad \text{and} \qquad \|f_{(\ga_i,\ga_j)} - f_{(\ga_i,\tilde \ga_j)}\|_{L^{2}(\T)} \lesssim \|\ga_j - \tilde \ga_j\|_{H^{1}(\T)},
\end{align}
for then \eqref{eq:shapedynamic} leads, by the results of the next section, to a well-behaved dynamic in the $H^{2}(\T)$-sense. We therefore focus on showing that \eqref{eq:gummybounds} indeed holds under reasonably broad yet easily verifiable hypotheses.

By a kernel $g_{ij}$ we will always mean a function $g_{ij} \in C^{2}\big( \mathbb{R}^{+} \setminus \{0\} \big)$ that has at least two derivatives away from the origin, and we will use the notation
$$
\mathcal{G}^{*}_{ij}(s) := \sup_{t \geq s} \;\, |g_{ij}(t)|
$$
to denote its monotone non-increasing envelope of such a kernel. Consider the standard embedding $\ga(x)$ of the circle of radius $R$ and a compactly supported kernel $g = g_{ii}$ that agrees with its envelope. Finiteness of the corresponding force
\begin{align*}
f(x) &:=  R \int_{\T} \cG^{*}\left( \frac12|\ga(x) - \ga(y)|^{2} \right) \, \rd y = 2R \int^{\pi}_{0} \cG^{*}\left( R^2\big( 1 - \cos z \big) \right) \, \rd z \\
&\,\approx 2R \int^{\infty}_{0} \cG^{*}\left( \frac{R^2 z^2}{2} \right) \, \rd z = \sqrt{2} \int^{\infty}_{0} \cG^{*}\left(u^2\right) \, \rd u := \sqrt{2}\,\frc^{*}_{0}(g)
\end{align*}
necessitates integrability of the envelope. We therefore introduce the integrability hypothesis
\begin{enumerate}[\indent\indent{\rm H0)}]
\item The kernel $g$ has integrable envelope: $\frc^{*}_{0}(g) < +\infty.$
\end{enumerate}
to encode local information about singularity of the kernel near the origin; the far-field behavior does not matter to the dynamics on finite time intervals, for we can always truncate a kernel at some finite length-scale without affecting the forcing. Additionally, we must impose some type of regularity on $\ga$ if we want to obtain a uniform estimate for such nonlocal forcings. For example, for each $m \in \mathbb{N}$ and $\sigma > 0$ consider the $m$-covered circle 
$$
\ga_{m,\sigma}(x) = \frac{ \sigma }{m} \begin{bmatrix}
\cos mx \\
\sin mx
\end{bmatrix}
$$
of length $2 \pi \sigma$ and let $g(s) := \mathrm{exp}(-s)$ denote a simple Gaussian kernel. A simple computation then gives the nonlocal force
$$
f_{m,\sigma}(x) := \int_{\T} g\left( \frac{|\ga_{m,\sigma}(x) - \ga_{m,\sigma}(y)|^2}{2} \right) \sigma \, \rd y =  \frac{ m \sqrt{\lambda} I_0(\lambda) }{ \re^{\lambda}} \qquad \qquad \lambda := \frac{\sigma^2}{m^2}
$$
in terms of a modified Bessel function $I_0(x)$ of the first kind. In particular, if we take $\lambda$ constant as $m \to \infty$ then the nonlocal force $f_{m,\sigma}$ becomes unbounded. We must therefore impose some regularity measure, such as finite length or finite total curvature, if we want a uniform bound on the nonlocal force.

A broad but relatively simple class of immersions, which we call \emph{$(N,\lambda)$-regular immersions}, will suffice for these purposes. Given an integer $N \in \mathbb{N}$ and a real $\lambda \geq 1$ we call $\gamma \in C^{0,1}(\T)$ an $(N,\lambda)$-regular immersion if $I_{\pi}$ decomposes into $N$ almost-disjoint subintervals
$$
I_{\pi} = \bigcup^{N}_{k=1} [x_{k-1},x_{k}] \qquad \text{where} \qquad  - \pi = x_0 < x_1 < \cdots < x_{N-1} < x_N = \pi,
$$
and each of the $N$ corresponding sub-arcs 
$$\alpha_{k} := \left.\ga\right|_{[x_{k-1},x_{k}]}$$
of $\ga$ are $\leq \lambda$-distorted. For example, any $H^{2}(\T)$-immersion is necessarily an $(N(\ga),3)$-regular immersion while any embedded curve is a $(1,\delta_{\infty}(\ga))$-regular immersion. Under this hypothesis we may relatively easily establish the following bound ---
\begin{lemma}\label{lem:velbound}
Assume the kernel $g$ satisfies {\rm H0)} and that $\ga_i \in C^{0,1}(\T),\,i \in {1,2}$ have constant speed. Then for any $\veps>0$ the nonlocal force
$$
f^{\veps}(x) := \int_{\T} g\left( \frac12|\ga_1(x) - \ga_2(y)|^2 + \frac{\veps^2}{2}\right)|\dot \ga_2(y)| \, \rd y
$$
obeys the bound
$$
\|f^{\veps}\|_{L^{\infty}(\T)} \leq 2\pi N\lambda \,\frc^{*}_{0}(g)$$
for any $(N,\lambda)$-regular immersion.
\end{lemma}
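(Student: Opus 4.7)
The plan is to reduce the bound to an integral of the envelope $\cG^{*}$ along arc-length, and then use the distortion on each sub-arc of the $(N,\lambda)$-regular immersion (here applied to $\ga_{2}$) to compare the integrand against a one-dimensional tail integral of $\cG^{*}$. First I would exploit that $\cG^{*}$ is monotone non-increasing to discard the $\veps^{2}/2$ shift and pass from $g$ to its envelope, yielding the pointwise bound
\begin{equation*}
|f^{\veps}(x)| \leq \int_{\T} \cG^{*}\!\left(\tfrac12|\ga_1(x)-\ga_2(y)|^{2}\right) |\dot \ga_{2}(y)| \, \rd y.
\end{equation*}
Next, since $\ga_{2}$ has constant speed I change variables from $y$ to arc-length $s$ along $\ga_{2}$, so that $|\dot \ga_{2}(y)|\,\rd y = \rd s$, and partition the resulting arc-length interval $[0,\ell(\ga_{2})]$ according to the $N$ sub-arcs $\alpha_{k}$ of the $(N,\lambda)$-regular decomposition.

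Fix the point $\pp := \ga_{1}(x)$ and one sub-arc $\alpha_{k}$, parametrized by arc-length $s$ on an interval $I_{k}$ of length $L_{k}$. Let $s_{*} \in I_{k}$ minimize the distance $r(s) := |\pp - \ga_{2}(s)|$. The key geometric estimate is
\begin{equation*}
r(s) \;\geq\; \tfrac12 |\ga_{2}(s) - \ga_{2}(s_{*})| \;\geq\; \tfrac{1}{2\lambda} |s - s_{*}|,
\end{equation*}
where the first inequality comes from the triangle inequality combined with minimality ($|\ga_{2}(s)-\ga_{2}(s_{*})| \leq r(s)+r(s_{*}) \leq 2r(s)$), and the second from the fact that the sub-arc $\alpha_{k}$ has distortion at most $\lambda$. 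Substituting this lower bound into the monotone envelope and extending the arc-length interval to all of $\R$ gives
\begin{equation*}
\int_{I_{k}} \cG^{*}\!\left(\tfrac12 r(s)^{2}\right) \rd s \;\leq\; \int_{\R} \cG^{*}\!\left(\tfrac{(s-s_{*})^{2}}{8\lambda^{2}}\right) \rd s \;=\; C\lambda\, \frc_{0}^{*}(g),
\end{equation*}
after the substitution $u = (s-s_{*})/(2\sqrt{2}\lambda)$, which recovers the one-dimensional envelope integral $\frc_{0}^{*}(g) = \int_{0}^{\infty} \cG^{*}(u^{2})\,\rd u$. Summing over the $N$ sub-arcs and taking the supremum over $x$ yields a bound of the form $CN\lambda\,\frc_{0}^{*}(g)$.

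The only real content is the geometric inequality in the second paragraph, which is where the hypotheses (envelope integrability, constant speed, and $(N,\lambda)$-regularity) all enter; the rest is bookkeeping through a change of variables. The main obstacle I would expect is not analytic but cosmetic: getting the declared constant $2\pi$ requires a careful accounting, since the triangle-inequality halving costs a factor of $2$ and the distortion costs a factor of $\lambda$, and a naive substitution produces $4\sqrt{2}$ rather than $2\pi$. One can tighten this by splitting $\alpha_{k}$ at $s_{*}$ and applying the comparison on each side, or by optimizing the trade-off between the minimum-distance bound and the chord-length bound, but either way the structural estimate $\|f^{\veps}\|_{L^{\infty}(\T)} \lesssim N\lambda\, \frc_{0}^{*}(g)$ is what matters for the subsequent use in verifying the Lipschitz-type bounds \eqref{eq:gummybounds}.
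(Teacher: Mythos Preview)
Your proposal is correct and follows essentially the same argument as the paper: partition into the $N$ sub-arcs, pick a closest point on each, use the triangle inequality plus the distortion bound to get $|\ga_1(x)-\ga_2(y)| \geq \sigma_2|y-y_k|/(2\lambda)$, and then reduce to the one-dimensional envelope integral. Your cosmetic worry about the constant is unfounded: the paper's own computation also produces $2\sqrt{8}=4\sqrt{2}$ per sub-arc and then simply bounds this above by $2\pi$ to state a cleaner constant, so no sharpening of your argument is needed.
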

\begin{proof}
Let $I_k = [x_{k-1},x_{k}]$ and decompose the total force
$$
f_{\veps}(x) = \sum^{N}_{k=1} \int_{I_k} g\left( \frac12|\ga_1(x) - \ga_2(y)|^2 + \frac{\veps^2}{2} \right) \sigma_2 \, \rd y
$$
into $N$ pieces, where $\sigma_2 := |\dot \ga_2(y)|$ denotes the constant speed of the second curve. For each $k$ let $y_k = y_{k}(x)$ denote any solution to the minimization
$$
\min_{y \in I_k} \;\, |\ga_1(x) - \ga_2(y)|,
$$
and note that the bound
$$
|\ga_2(y) - \ga_2(y_k)| \leq |\ga_2(y) - \ga_1(x)| + |\ga_1(x) - \ga_2(y_k)| \leq 2 | \ga_2(y) - \ga_1(x)|
$$
holds for $y \in I_k$ by the triangle inequality. Thus
$$
|\ga_2(y) - \ga_1(x)| \geq \frac{|\ga_2(y) - \ga_2(y_k)|}{2} \geq \frac{ \ell_{\ga_2}(y,y_k)}{2 \lambda} = \frac{ \sigma_2 |y - y_k| }{2 \lambda}
$$
since $\ga_2$ is $(N,\lambda)$-regular and has constant speed. The pointwise bound
$$
\left| g\left( \frac{|\ga_1(x) - \ga_2(y)|^2}{2} + \frac{\veps^2}{2} \right) \right| \leq g^{*}\left( \frac{|\ga_1(x) - \ga_2(y)|^2}{2} + \frac{\veps^2}{2} \right) \leq g^{*}\left( \frac{\sigma^2_2(y-y_k)^2}{8 \lambda^2}\right)
$$
therefore holds on $I_k$ by definition of the monotone non-increasing envelope. The bound
$$
\left| \int_{I_k} g\left( \frac{|\ga_1(x) - \ga_2(y)|^2}{2} + \frac{\veps^2}{2} \right) \sigma_2 \, \rd y \right| \leq \int_{\R} g^{*}\left( \frac{\sigma^2_2(y-y_k)^2}{8 \lambda^2}\right) \sigma_2 \, \rd y = 2\sqrt{8} \lambda \frc^{*}_{0}(g) \leq 2 \pi \lambda \frc^{*}_{0}(g)
$$
therefore follows from a change of variables, which gives the claim upon summing.
\end{proof}
\noindent We may also infer that the limit
$$
f_{(\ga_1,\ga_2)}\left(x\right) := \lim_{\veps \downarrow 0} \; f^{\veps}(x) = \int_{\T} g\left( \frac12|\ga_1(x) - \ga_2(y)|^2\right)|\dot \ga_2(y)| \, \rd y
$$
converges in $L^{p}(\T),\, p < \infty$ and obeys the $L^{\infty}(\T)$ bound asserted in the lemma. We shall apply lemma \ref{lem:velbound} with $(N,\lambda)=(N(\ga_2),3)$ for $H^{2}(\T)$ immersions in the context of our well-posedness arguments. Examples show that both of the na\"{i}ve choices
$$
N\lambda = \delta_{\infty}(\gamma) \qquad \text{or} \qquad N\lambda = 3N(\ga_2)
$$
can prove far too pessimistic, so the ability to specify some alternative $(N,\lambda)$ decomposition of $\ga$ is worthwhile.

We also need some type of continuity of $f_{(\ga_1,\ga_2)}$ with respect to the $\ga_i$ in the context of our existence arguments. To guarantee such continuity properties of the nonlocal force we must impose more hypotheses. For example, if $|\dot g(s)| \sim C s^{-p}$ for some $p>1$ then the nonlocal velocity cannot define an $H^{1}(\T) \mapsto L^{1}(\T)$ Lipschitz map. In other words, a bound of the form
$$
\|f_{(\ga_1,\ga_2)} - f_{(\ga_1,\tilde \ga_2)}\|_{L^{1}(\T)} \lesssim \|\ga_2 - \tilde \ga_2\|_{H^{1}(\T)}
$$
cannot hold in general. (It suffices to consider an infinite line and an infinitesimally rotated version of the same line to see this.) We must therefore impose either regularity with respect to the kernel or with respect to the geometry of the immersions. We consider each case in turn. We shall employ the hypothesis
\begin{enumerate}[\indent\indent{\rm H1)}]
\item The kernel $g$ is regular: $\sup_{s>0} s|\dot g(s)| \leq \frc^{*}_{1}(g)$ and $\sup_{s>0} s^2|\ddot g(s)| \leq \frc^{*}_{1}(g)$ for some $\frc^{*}_{1}(g) < +\infty$.
\end{enumerate}
to force that the kernel behaves no worse than $\dot g(s) \sim C s^{-1}$ in some fashion. Once again, this hypothesis only encodes local information about singularity of the kernel near the origin; we may assign far-field behavior to $g(s)$ at will. Under this assumption lemma \ref{lem:derbound} will allow us to show that the nonlocal force obeys further continuity properties, in addition to simply boundedness. While bounds of this type follow from the results of \cite{DvD}, we will work explicitly to not have to take a detour through the requisite machinery. We shall try to keep various constants reasonably explicit, but not at all sharp, while pursuing our analysis.

\begin{lemma}\label{lem:derbound}
Assume the kernel $g$ satisfies {\rm H1)}, that $\vv \in H^{1}(\T)$, that $\ga_1 \in C(\T)$ and that $\ga_2 \in H^{2}(\T)$ has constant speed. Then for any $\veps>0$ the function
$$
f^{\veps}_{\vv}(x) := \int_{\T} \dot g\left( \frac12|\ga_1(x) - \ga_2(y)|^2 + \frac{\veps^2}{2} \right)\langle \ga_1(x) - \ga_2(y),\vv(y)\rangle \, \rd y
$$
obeys the estimate
\begin{align}\label{eq:regularest}
\|f^{\veps}_{\vv}\|_{L^{\infty}(\T)} \leq (8\pi)^{3}\frc^{*}_1(g)\kappa^2_2(\ga_2)\max\left\{ \|\vv\|_{L^{\infty}(\T)}, \|\dot \vv\|_{L^{2}(\T)} \right\}.
\end{align}
\end{lemma}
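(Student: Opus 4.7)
The plan is to localize the integral using the subarc decomposition developed in the proof of Lemma~\ref{lem:velbound}. Since $\ga_2 \in H^2(\T)$ is an immersion, $\T$ decomposes into $N = N(\ga_2) \leq 1 + \ell(\ga_2)\kappa^2_2(\ga_2)$ almost-disjoint subarcs $I_k$ of distortion at most $3$. Fix $x$ and write $\ww(y) := \ga_1(x) - \ga_2(y)$, $h(y) := \tfrac12|\ww(y)|^2 + \tfrac12\veps^2$; let $y_k \in I_k$ realize $\min_{y\in I_k}|\ww(y)|$. The triangle inequality exactly as in Lemma~\ref{lem:velbound} then yields the key lower bound $|\ww(y)| \geq \sigma_2|y-y_k|/6$ on $I_k$. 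I split $\vv(y) = \vv(y_k) + [\vv(y) - \vv(y_k)]$ and treat the two contributions separately.

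For the fluctuation $\vv(y) - \vv(y_k)$, Cauchy--Schwarz gives $|\vv(y) - \vv(y_k)| \leq \|\dot\vv\|_{L^2(\T)}|y-y_k|^{1/2}$. Combining with the bound $|\dot g(h)|\cdot|\ww| \leq 2\frc^*_1(g)/|\ww|$ afforded by H1) and the distance estimate produces a pointwise integrand controlled by $12\,\frc^*_1(g)\|\dot\vv\|_{L^2}/(\sigma_2|y-y_k|^{1/2})$, whose integrable singularity at $y_k$ furnishes a contribution of order $\frc^*_1(g)\|\dot\vv\|_{L^2}/\sigma_2$ on each piece. Summing over $k$ and invoking $N(\ga_2)/\sigma_2 \leq 4\pi\kappa^2_2(\ga_2)$ (which follows from $\ell\kappa^2_2 \geq (2\pi)^2$) recovers the $\kappa^2_2(\ga_2)\frc^*_1(g)\|\dot\vv\|_{L^2}$ piece of the desired bound.

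The constant piece requires bounding $|\vv(y_k)\cdot\int_{I_k}\dot g(h)\ww\,\rd y|$. Decompose $\ww = r\tang_2 + s\nrml_2$ where $r := \langle\ww,\tang_2\rangle$ and $s := \langle\ww,\nrml_2\rangle$; the defining property of $y_k$ gives $r(y_k)=0$ and $|s(y_k)| = |\ww(y_k)|$. The identity $\partial_y g(h) = -\sigma_2 r\dot g(h)$ lets me convert the tangential piece into a total derivative, whose integration by parts against the piecewise-constant functional $\vv(y_k)$ produces boundary contributions that telescope under Abel summation across $k$ together with a bulk curvature term from $\dot\tang_2 = \sigma_2\kappa_2\nrml_2$. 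The normal piece $\int_{I_k}\dot g(h) s\nrml_2\,\rd y$ is estimated by partitioning $I_k$ into a near-field $|y-y_k| \leq 6d(y_k)/\sigma_2$, where direct bounds using $|\dot g(h)| \leq 2\frc^*_1/d(y_k)^2$ and $|s| \leq d(y_k)$ produce a contribution of order $\frc^*_1/\sigma_2$, and a far-field, where $|\dot g(h)||s| \leq 2\frc^*_1/|\ww| \leq 12\frc^*_1/(\sigma_2|y-y_k|)$ is paired with the telescoping tangential boundary terms to absorb the logarithmic tail. Summing and using $N(\ga_2)/\sigma_2 \leq 4\pi\kappa^2_2$ produces the $\kappa^2_2(\ga_2)\frc^*_1(g)\|\vv\|_\infty$ piece.

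The principal obstacle is that the naive pointwise bound $|\dot g(h)\langle\ww,\vv\rangle| \leq 2\frc^*_1\|\vv\|_\infty/|\ww|$ diverges logarithmically in $d(y_k)$, so genuine cancellation between the odd-in-$(y-y_k)$ tangential factor $r$ and the nearly-even normal factor $s$ is required to produce a uniform bound. This is the role of the second-derivative hypothesis $|\ddot g(h)|h^2 \leq \frc^*_1(g)$ in H1): it controls the variation of $\dot g(h(y))$ on the natural scale $d(y_k)/\sigma_2$ around the nearest point, so that $r$ integrates against the nearly-even kernel $\dot g(h)$ cleanly enough to neutralize the logarithmic divergence.
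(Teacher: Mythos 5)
Your outline (subarc decomposition of $\T$, localization near the foot-point $y_k$, Cauchy--Schwarz for the $\vv$-fluctuation) is in the same spirit as the paper, but the treatment of the ``constant piece'' has a genuine gap that the rest of the argument does not close.

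The problem is the integration by parts. Writing the tangential contribution as
$$
\int_{I_k}\dot g(h)\,r\,\langle\vv(y_k),\tang_2\rangle\,\rd y
=-\frac1{\sigma_2}\bigl[g(h)\langle\vv(y_k),\tang_2\rangle\bigr]_{\partial I_k}
+\int_{I_k} g(h)\,\kappa_2\,\langle\vv(y_k),\nrml_2\rangle\,\rd y
$$
introduces $g$ itself, not $\dot g$. Hypothesis {\rm H1)} bounds only $s|\dot g(s)|$ and $s^2|\ddot g(s)|$; it gives no control on $g(s)$, which may grow like $\frc^{*}_1(g)\,|\log s|$ near the origin. Neither Abel summation of the boundary terms nor the bulk curvature term eliminates this: after telescoping you are left with $g\bigl(h(x_k)\bigr)\langle\vv(y_{k+1})-\vv(y_k),\tang_2(x_k)\rangle$, and $g\bigl(h(x_k)\bigr)$ is not expressible in terms of $\frc^*_1(g)$. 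Likewise the bulk term $\int_{I_k} g(h)\kappa_2\langle\vv(y_k),\nrml_2\rangle\,\rd y$ cannot be ``paired'' with the far-field normal integral $\int \dot g(h)\,s\,\langle\vv(y_k),\nrml_2\rangle\,\rd y$ to cancel: one involves $g$ and the other $\dot g$, with no Frenet or algebraic identity forcing their sum to collapse. (Note also that $r(y_k)=0$ only holds at interior minimizers; at the sub-interval endpoints first-order optimality is one-sided, which already spoils any exact antisymmetry you might want to lean on.)

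The paper avoids this cul-de-sac by never integrating by parts in $y$. Instead it uses the mean-value theorem on $\dot g$ itself: write $|\ga_1(x)-\ga_2(y)|^2 = d_k^2 + \sigma_2^2(y-y_k)^2 + R(x,y)$ with $R$ quadratic-or-higher at the minimizer, so that
$$
\dot g\Bigl(\tfrac12|\ga_1(x)-\ga_2(y)|^2+\tfrac{\veps^2}{2}\Bigr)
=\dot g\Bigl(\tfrac{d_k^2+\sigma_2^2(y-y_k)^2}{2}+\tfrac{\veps^2}{2}\Bigr)
+\ddot g\bigl(\Xi+\tfrac{\veps^2}{2}\bigr)R(x,y).
$$
The leading kernel is even in $z=y-y_k$, so its pairing with the odd factor $(y_k-y)$ kills the ``constant'' part of the integrand exactly; the modulus of continuity $|Q_k(y_k-z)-Q_k(y_k)|\le\|\dot Q_k\|_{L^2}|z|^{1/2}$ then yields integrability of what remains. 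The $\ddot g$ remainder is controlled by the \emph{second} clause of {\rm H1)} — precisely the bound $s^2|\ddot g(s)|\le\frc^*_1(g)$ that your closing paragraph correctly identifies as essential, but that your integration-by-parts mechanism never actually deploys. Two further technical devices the paper needs and you omit: the refined subdivision into $N=4N(\ga_2)$ pieces, so the Taylor remainder bound $(\ell_{\ga_2}(x_{k-1},x_{k+1})/\ell_\kappa)^{1/2}\le1/2$ holds, and a smooth partition of unity $\{\zeta_k\}$ in place of sharp indicators, which removes all boundary contributions and supplies the $\|\dot\zeta_k\|_\infty\le N$ factor in the $\|\dot Q_k\|_{L^2}$ estimate. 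I would redo the constant-piece step along the paper's lines rather than by parts.
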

\begin{proof}
Put $\frc^{*}(g) := \frc^{*}_1(g)$ for ease of notation. Fix an integer $N = 4N(\ga_2)$ with
$$
N(\ga_2) := \left \lceil \frac{\ell(\ga_2)}{\ell_{\kappa}(\ga_2)} \right \rceil =  \left \lceil \frac{2\pi \sigma_2}{\ell_{\kappa}(\ga_2)} \right \rceil 
$$
and $\sigma_2 := |\dot \ga_2(x)|$ the constant speed of the inducing curve. Note that $N \geq 160$ by the lower bound $\ell(\ga)\kappa^2_2(\ga) \geq (2\pi)^2$ for the bending energy.

Divide $I_{\pi} = [-\pi,\pi]$ into the $N$ equal length sub-intervals $[x_{k-1},x_{k}], \, k \in [N]$ with $x_k := -\pi + 2\pi k/N$. Let $\chi \geq 0$ denote a smooth function on $\R$ obeying $\chi \equiv 1$ on $[-1,1],$ $|\dot \chi(x)|\leq 2$ and $\mathrm{supp}(\chi) \subset [-2,2]$. For $0 \leq k \leq N$ put
$$
\chi_{k}(x) :=  \chi \left( \frac{N}{\pi}(x-x_k) \right) \qquad \text{with} \qquad \mathrm{supp}\big( \chi_{k} \big) \subset [x_{k-1},x_{k+1}].
$$
In particular, if $-\pi \leq x \leq \pi$ then at most two of the $\chi_k$ are non-trivial and $\chi_j(x) = 1$ for at least one $j$ at any point. The collection
$$
\zeta_{k}(x) := \frac{\chi_{k}(x)}{\sum^{N}_{j=0} \chi_j(x) }
$$
therefore forms a smooth partition of unity on $[-\pi,\pi]$ obeying the properties
$$
\mathrm{(i)}\;\, \mathrm{supp}\big(\zeta_k\big) \subset [x_{k-1},x_{k+1}] \qquad \text{and} \qquad \mathrm{(ii)} \;\, |\dot \zeta_k(x)| \leq \|\dot \chi\|_{L^{\infty}(\R)} \frac{N}{\pi} \leq N
$$
at all points in the interval. Thus the decomposition
\begin{align*}
f^{\veps}_{\vv}(x) &=\sum^{N}_{k=0} \int_{\T} \dot g\left( \frac12|\ga_1(x) - \ga_2(y)|^{2} + \frac{\veps^2}{2} \right)\langle \ga_1(x) - \ga_2(y) , \vv(y) \rangle \zeta_{k}(y) \, \rd y
\end{align*}
of the velocity holds, and it suffices to show that each
$$
f^{\veps}_{k}(x) := \int_{\T} \dot g \left( \frac12|\ga_1(x) - \ga_2(y)|^2 + \frac{\veps^2}{2} \right)\langle \mathbf{v}(y), \ga_1(x) - \ga_2(y) \rangle \zeta_k(y) \, \rd y
$$
obeys a uniform bound of the form
\begin{align}\label{eq:goal1}
|f^{\veps}_{k}(x)| \leq  34\pi^{2} \frac{\frc^{*}(g)[\vv]_{H^{1}(\T)}}{\sigma_2}, \qquad \qquad [\vv]_{H^{1}(\T)} := \max\left\{\|\vv\|_{L^{\infty}(\T)},\|\dot \vv\|_{L^{2}(\T)} \right\}.
\end{align}
Indeed, summing over $k$ gives the claimed estimate
$$
|f^{\veps}_{\vv}(x)| \leq 34\pi^{2}\frac{\frc^{*}(g)[\vv]_{H^{1}(\T)}(N+1)}{\sigma_2} \leq 256\pi^{2}\frc^{*}(g) [\vv]_{H^{1}(\T)} \frac{\ell(\ga_2)}{\sigma_2 \ell_{\kappa}(\ga_2)} = (8\pi)^{3}\frc^{*}(g)[\vv]_{H^{1}(\T)}\kappa^2_2(\ga_2)
$$
for the velocity.

To show \eqref{eq:goal1} fix $0 \leq k \leq N$ arbitrary, then let $y_{k}(x)$ denote the least solution to the minimization
$$
\min_{ x_{k-1} \leq y \leq x_{k+1} } \;\, |\ga_1(x) - \ga_2(y)|^{2}
$$
and $d_{k}(x) := |\ga_1(x) - \ga_2(y_{k}(x))|$ the corresponding distance at the minimizer. A simple expansion of $f(y) := |\ga_1(x) - \ga_2(y)|^2$ yields
\begin{align*}
f(y) &= d^2_{k}(x) + \sigma^{2}_{2}( y - y_{k}(x) )^{2} + \dot f( y_{k}(x) )(y - y_{k}(x)) + 2\int^{y}_{y_{k}(x)} \langle \ga_2(\vth) - \ga_1(x) , \ddot \ga_2(\vth) \rangle (y-\vth)\,\rd \vth,
\end{align*}
where $\sigma_2 := |\dot \ga_2(y)|$ denotes the constant speed of the corresponding curve.

Consider first the case when $d_{k}(x)>0,$ and note that the inequality $\dot f( y_k(x) )(y - y_k(x)) \geq 0$ holds on $[x_{k-1},x_{k+1}]$ by first-order optimality. Thus the inequality
\begin{align*}
\frac{f(y)}{d^2_k(x) + \sigma^{2}_2 (y-y_k(x))^2} \geq 1 + \frac{2 \int^{y}_{y_k(x)} (y - \vth) \langle \ga_2(\vth) - \ga_1(x),\ddot \ga_2(\vth) \rangle \,\rd \vth}{d^2_k(x) + \sigma^{2}_2 (y-y_k(x))^2} := 1 + \frac{ 2 R(x,y) }{d^2_k(x) + \sigma^{2}_2 (y-y_k(x))^2}
\end{align*}
must hold as well. The simple estimate
\begin{align}\label{eq:remest1}
|R(x,y)| &= \left| \int^{y}_{y_k(x)} (y-\vth)\left( \langle \ga_2(\vth) - \ga_2(y_k(x)) , \ddot \ga_2(\vth) \rangle + \langle \ga_2(y_k(x)) - \ga_1(x) , \ddot \ga_2(\vth) \rangle\right)\, \rd \vth  \right| \nonumber \\
&\leq \sigma_2 \int^{y}_{y_k(x)} (y-\vth)(\vth-y_k(x))|\ddot \ga_2(\vth)| \, \rd \vth + d_k(x) \int^{y}_{y_k(x)} (y-\vth)|\ddot \ga_2(\vth)| \, \rd \vth  \\
&\leq \left( \frac{\sigma^2_2}{4}(y-y_k(x))^2 + \frac{d_k(x)|y-y_k(x)|\sigma_2}{\sqrt{3}}\right) \left( \frac{ \ell_{\ga_2}(x_{k-1,k+1})}{\ell_{\kappa}(\ga_2) } \right)^{\frac12} \nonumber
\end{align}
then follows by Cauchy-Schwarz. Applying Young's inequality $ab \leq c a^2/2 + b^2/2c$ for $c>0$ appropriately gives
\begin{align*}
|R(x,y)| \leq \frac{\sigma^2_2(y-y_k(x))^2 + d^2_k(x)}{2} \left( \frac{ \ell_{\ga_2}(x_{k-1},x_{k+1})}{\ell_{\kappa}(\ga_2) } \right)^{\frac12},
\end{align*}
and so the inequality
$$
|\ga_1(x) - \ga_2(y)|^2 \geq \left( 1 - \left( \frac{ \ell_{\ga_2}(x_{k-1},x_{k+1})}{\ell_{\kappa}(\ga_2) } \right)^{\frac12} \right) \left( \sigma^2_2(y-y_k(x))^2 + d^2_k(x) \right)
$$
holds on $\mathrm{supp}(\zeta_k)$. In particular, for $N = 4N(\ga_2)$ the lower bound
$$
|\ga_1(x) - \ga_2(y)|^2 \geq \frac14 \left( \sigma^2_2(y-y_k(x))^2 + d^2_k(x) \right) \qquad \text{for all} \qquad y \in \mathrm{supp}(\zeta_k)
$$
follows. Now decompose
$$
f^{\veps}_{k}(x) := \int_{\T} \dot g \left( \frac12|\ga_1(x) - \ga_2(y)|^2 + \frac{\veps^2}{2} \right)\langle \mathbf{v}(y), \ga_1(x) - \ga_2(y) \rangle \zeta_k(y) \, \rd y = \mathrm{I}_k(x) + \mathrm{II}_k(x)
$$ 
according to the definitions
\begin{align*}
\mathrm{I}_k(x) &:= \int_{\T} \dot g\left( \frac12|\ga_1(x) - \ga_2(y)|^2 + \frac{\veps^2}{2}\right)\langle \ga_1(x) - \ga_2( y_k(x) ), \mathbf{v}(y) \rangle \zeta_{k}(y) \, \rd y, \\
\mathrm{II}_k(x) &:= \int_{\T} \dot g\left( \frac12|\ga_1(x) - \ga_2(y)|^2 + \frac{\veps^2}{2}\right)\langle \ga_2(y_k(x) ) - \ga_2(y), \mathbf{v}(y) \rangle \zeta_{k}(y) \, \rd y.
\end{align*}
As the kernel $g$ is regular, there exists a constant $\frc^{*}(g)$ so that
$$
\sup_{ s > 0 } \; |s \dot g(s)| \leq \frc^{*}(g)
$$
holds. For the first term, the estimate
\begin{align*}
|\mathrm{I}_k(x)| &\leq 8\frc^{*}(g) d_{k}(x)\|\vv\|_{L^{\infty}(\T)} \int^{x_{k+1}}_{x_{k-1}} \frac{\zeta_k(y)}{\sigma^2_2(y-y_k(x))^2 + d^2_k(x)} \, \rd y \\
&\leq \frac{8 \frc^{*}(g) \|\vv\|_{L^{\infty}(\T)}}{\sigma_2} \int_{\R} \frac{ \zeta_k\left( y_k(x) + \frac{d_k(x)}{\sigma_2}v \right)}{1 + v^2} \, \rd v  \leq \frac{8\pi \frc^{*}(g) \|\vv\|_{L^{\infty}(\T)}}{\sigma_2}
\end{align*}
will suffice. For the second term, note that there exists a $\Xi(x,y)$ so that
\begin{align*}
\dot g\left(\frac12|\ga_1(x) - \ga_2(y)|^2 + \frac{\veps^2}{2} \right) &= \dot g\left(\frac{d^2_k(x) + \sigma^2_2(y-y_k(x))^2}{2} + \frac{\veps^2}{2} \right) + \ddot g\left( \Xi(x,y) + \frac{\veps^2}{2} \right)R(x,y)\\
R(x,y) &= \dot f\big( y_k(x) \big)(y - y_k(x) )+ 2\int^{y}_{y_{k}(x)} \langle \ga_2(\vth) - \ga_1(x) , \ddot \ga_2(\vth) \rangle (y-\vth)\,\rd \vth\\
\Xi(x,y) & \geq \frac14 \left( \sigma^2_2(y-y_k(x))^2 + d^2_k(x) \right) \qquad \text{for all} \qquad y \in \mathrm{supp}(\zeta_k)
\end{align*}
by the mean value theorem. The decomposition $\mathrm{II}_{k}(x) = \mathrm{II}^{{\rm i}}_{k}(x) + \mathrm{II}^{{\rm ii}}_{k}(x)+ \mathrm{II}^{{\rm iii}}_{k}(x)$ with
\begin{align*}
\mathrm{II}^{{\rm i}}_k(x) &= \int_{\R} \dot g\left( \frac{d^2_k(x) + \sigma^2_2(y-y_k(x))^2}{2} + \frac{\veps^2}{2} \right) (y_k(x) - y)\langle \vv(y), \dot \ga_2( y_k(x) )\rangle   \zeta_k(y) \, \rd y \\
\mathrm{II}^{{\rm ii}}_k(x) &= \int_{\R} \dot g\left( \frac{d^2_k(x) + \sigma^2_2(y-y_k(x))^2}{2} + \frac{\veps^2}{2} \right)\left( \int^{y}_{y_k(x)} (\vth-y) \langle \vv(y) , \ddot \ga_2( \vth )\rangle \, \rd \vth \right) \zeta_k(y) \, \rd y \\
\mathrm{II}^{{\rm iii}}_k(x) &= \int_{\R} \ddot g\left( \Xi(x,y) + \frac{\veps^2}{2} \right)\langle \ga_2( y_k(x) ) - \ga_2(y),\vv(y)\rangle R(x,y) \zeta_k(y) \, \rd y
\end{align*}
then holds by expansion of $\langle \ga_2( y_k(x) ) - \ga_2(y),\vv(y)\rangle$ to leading-order. Put $Q_{k}(y) := \langle \vv(y), \dot \ga_2( y_k(x) )\rangle   \zeta_k(y),$ so that $\mathrm{supp}(Q_k) \subset [x_{k-1},x_{k+1}]$ and
\begin{align*}
&\int_{\R} \dot g\left( \frac{d^2_k(x) + \sigma^2_2(y-y_k(x))^2}{2} + \frac{\veps^2}{2} \right) (y_k(x) - y) Q_k(y) \, \rd y = \int_{\R} \dot g\left( \frac{d^2_k(x) + \sigma^2_2 z^2}{2} + \frac{\veps^2}{2} \right) Q_k( y_k(x) - z )z \, \rd z \\
&= \int^{\delta}_{-\delta} \dot g\left( \frac{d^2_k(x) + \sigma^2_2 z^2}{2} + \frac{\veps^2}{2} \right)\big( Q_k( y_k(x) - z ) - Q_k(y_k(x)) \big)z \, \rd z
\end{align*}
for any $\delta>0$ as long as $\mathrm{supp}( Q_k( y_k(x) - \cdot ) ) \subset (-\delta,\delta);$ in particular, the choice $\delta = 4\pi/N$ will do. Now
$$
|Q_{k}(y_k(x)-z) - Q_{k}(y_k(x))| = \left|\int^{y_{k}(x)}_{y_{k}(x)-z} \dot Q_k(\vth) \, \rd \vth \right| \leq \|\dot Q_k\|_{L^{2}(\R)}|z|^{\frac12}
$$
by Cauchy-Schwarz, and since $g$ is regular the bound
$$
|\mathrm{II}^{{\rm i}}_k(x)| \leq 2 \frc^{*}(g) \|\dot Q_k\|_{L^{2}(\R)} \int^{\delta}_{-\delta} \frac{ z^{\frac32} }{ d^2_k(x) + \sigma^2_2 z^2 } \, \rd z \leq \frac{8\frc^{*}(g) \|\dot Q_k\|_{L^{2}(\R)} }{\sigma^2_2}\left(\frac{4\pi}{N}\right)^{\frac12}
$$
then follows. As $\|\dot \zeta_k\|_{L^{\infty}(\R)} \leq N$ the crude upper bound
\begin{align*}
\| \dot Q_k\|_{L^{2}(\R)} &= \left( \int^{x_{k+1}}_{x_{k-1}} \big(  \langle \dot \vv(y),\dot \ga_2(y_k(x)) \rangle\zeta_k(y) + \langle \vv(y),\dot \ga_2(y_k(x)) \rangle \dot \zeta_k(y) \big)^{2} \, \rd y \right)^{\frac12} \\
& \leq \sigma_2 \|\dot \vv\|_{L^{2}(\T)} + \sigma_2 \|\vv\|_{L^{\infty}(\T)}N\sqrt{x_{k+1}-x_{k-1}} \leq \sigma_2\big( 1 + \sqrt{4\pi N} \big)\max\{ \|\dot \vv\|_{L^{2}(\T)}, \|\vv\|_{L^{\infty}(\T)} \}
\end{align*}
holds, and so the upper bound
$$
|\mathrm{II}^{{\rm i}}_k(x)| \leq \frac{36\pi \frc^{*}(g)  }{\sigma_2}\max\{ \|\dot \vv\|_{L^{2}(\T)},\|\vv\|_{L^{\infty}(\T)} \}
$$
holds as well. The simple estimate
$$
\left| \int^{y}_{y_k(x)} \langle  \vv(y) , \ddot \ga_2(\vth) \rangle (y - \vth) \, \rd \vth \right| \leq   \frac{ \|\vv\|_{L^{\infty}(\T)} \sigma^{\frac32}_{2}(y-y_k(x))^{\frac32} }{\ell^{\frac12}_{\kappa}(\ga_2)}
$$
and the regularity of $g$ give
\begin{align*}
|\mathrm{II}^{{\rm ii}}_k(x)| &\leq \frac{2\frc^{*}(g)\|\vv\|_{L^{\infty}(\T)}}{\ell^{\frac12}_{\kappa}(\ga_2)} \int^{x_{k+1}}_{x_{k-1}}  \sigma^{-\frac12}_2 |y-y_k(x)|^{-\frac12} \, \rd y \leq \frac{4 \frc^{*}(g)\|\vv\|_{L^{\infty}(\T)}}{\sigma^{\frac12}_2 \ell^{\frac12}_{\kappa}(\ga_2)} \left( \sqrt{ x_{k+1} - y_k(x) } + \sqrt{ y_k(x) - x_{k-1} } \right) \\
& \leq 8\frc^{*}(g)\|\vv\|_{L^{\infty}(\T)} \left( \frac{2\pi}{N \sigma_2 \ell_{\kappa}(\ga_2)} \right)^{\frac12} \leq \frac{4\frc^{*}(g)\|\vv\|_{L^{\infty}(\T)}}{\sigma_2}
\end{align*}
in a similar fashion, with the choice $N = 4N(\ga_2) \geq 4 \ell_{\ga_2}/\ell_{\kappa}(\ga_2) = 8\pi\sigma_2/\ell_{\kappa}(\ga_2)$ being sufficient to justify the final inequality. To conclude the case when $d_{k}(x)>0$, note that $\dot f(y_k(x)) = \langle \ga_2( y_k(x) ) - \ga_1(x) , \dot \ga_2(x) \rangle$ and so the upper bound (c.f. \ref{eq:remest1})
\begin{align*}
|R(x,y)| &\leq d_{k}(x)\sigma_2|y-y_k(x)| + \frac{d_k(x)\sigma^{\frac32}_2|y-y_k(x)|^{\frac32}}{\sqrt{3}\ell^{\frac12}_{\kappa}(\ga_2)} +\frac{\sigma^{\frac52}_2|y-y_k(x)|^{\frac52}}{2\ell^{\frac12}_{\kappa}(\ga_2)} \\
& \leq d_{k}(x)\sigma_2|y-y_k(x)|\left( 1 + \left(\frac{4\pi \sigma_2 }{3N\ell_{\kappa}(\ga_2)}\right)^{\frac12} \right) +  \frac{\sigma^{\frac52}_2|y-y_k(x)|^{\frac52}}{2\ell^{\frac12}_{\kappa}(\ga_2)} \qquad \text{on} \qquad \mathrm{supp}(\zeta_k)\\
&\leq 2 d_{k}(x)\sigma_2|y-y_k(x)| +  \frac{\sigma^{\frac52}_2|y-y_k(x)|^{\frac52}}{2\ell^{\frac12}_{\kappa}(\ga_2)} \qquad \text{on} \qquad \mathrm{supp}(\zeta_k)
\end{align*}
holds for the remainder. The regularity of $g$ ensures
$$
\sup_{s > 0} \;\, |s^{2}\ddot g(s)| \leq \frc^{*}(g)
$$
by assumption, and so the estimate
\begin{align*}
|\mathrm{II}^{{\rm iii}}_{k}(x)| &\leq 2\frc^{*}(g)\|\vv\|_{L^{\infty}(\T)}\int_{\R} \frac{d_{k}(x)\sigma^2_2|y-y_k(x)|^2 + \frac12\ell^{-\frac12}_{\kappa}(\ga_2)\sigma^{\frac72}_2 (y-y_k(x))^{\frac72}}{\Xi^{2}(x,y)} \zeta_k(y) \, \rd y\\
& \leq 8 \frc^{*}(g)\|\vv\|_{L^{\infty}(\T)} \int_{\R} \frac{d_{k}(x)\sigma^2_2|y-y_k(x)|^2 + \frac12\ell^{-\frac12}_{\kappa}(\ga_2)\sigma^{\frac72}_2 |y-y_k(x)|^{\frac72}}{ \left(d^2_k(x) + \sigma^2_2 (y-y_k(x))^2 \right)^2} \zeta_k(y) \, \rd y \\
& \leq 8\frc^{*}(g)\|\vv\|_{L^{\infty}(\T)} \left( \frac1{\sigma_2}\int_{\R} \frac{v^2}{(1+v^2)^2} \, \rd v + \frac1{2\ell^{\frac12}_{\kappa}(\ga_2)}\int^{x_{k+1}}_{x_{k-1}} \sigma^{-\frac12}_2|y-y_k(x)|^{-\frac12}\,\rd y \right) \\
& \leq \frac{8\pi\frc^{*}(g)\|\vv\|_{L^{\infty}(\T)}}{\sigma_2}
\end{align*}
holds. All together, the desired estimate
\begin{align*}
|f^{\veps}_k(x)| &\leq |\mathrm{I}_k(x)| + |\mathrm{II}^{{\rm i}}_k(x)| + |\mathrm{II}^{{\rm ii}}_k(x)| + |\mathrm{II}^{{\rm iii}}_k(x)| \leq \left(52 \pi + 4 \right)\frac{ \frc^{*}(g)\max\{\|\vv\|_{L^{\infty}(\T)},\|\dot \vv\|_{L^{2}(\T)}\} }{\sigma_2} \\
&\leq 34\pi^2\frc^{*}(g)\frac{[ \vv]_{H^{1}(\T)}}{\sigma_2}
\end{align*} 
holds in this case.

It remains to consider the case when the distance $d_k(x) = 0$ vanishes. Then $\ga_1(x) = \ga_2(y_k(x))$ and so $f^{\veps}_k(x) = \mathrm{I}_k(x) + \mathrm{II}_k(x) + \mathrm{III}_k(x)$ decomposes according to
\begin{align*}
\mathrm{I}_k(x) &= \int_{\R} \dot g \left( \frac{\sigma^2_2(y-y_k(x))^2}{2} + \frac{\veps^2}{2} \right)(y-y_k(x))Q_{k}(y)\, \rd y \\
\mathrm{II}_k(x) &= \int_{\R} \dot g\left( \frac{\sigma^2_2(y-y_k(x))^2}{2} + \frac{\veps^2}{2} \right)\left( \int^{y}_{y_k(x)} (\vth-y) \langle \vv(y) , \ddot \ga_2( \vth )\rangle \, \rd \vth \right) \zeta_k(y) \, \rd y \\
\mathrm{III}_k(x) &= \int_{\R} \ddot g\left( \Xi(x,y) + \frac{\veps^2}{2} \right)\langle \ga_2( y_k(x) ) - \ga_2(y),\vv(y)\rangle R(x,y) \zeta_k(y) \, \rd y,
\end{align*}
where since $\dot f(y_k(x)) = 0$ the error $\Xi(x,y)$ and remainder $R(x,y)$ satisfy
$$
\Xi(x,y) \geq \frac14 \left( \sigma^2_2(y-y_k(x))^2 \right) \qquad \text{and} \qquad |R(x,y)| \leq  \frac{\sigma^{\frac52}_2|y-y_k(x)|^{\frac52}}{2\ell^{\frac12}_{\kappa}(\ga_2)} 
$$
on $\mathrm{supp}(\zeta_k)$. The desired estimate of $\mathrm{I}_k(x)$ follows by mimicking the estimate of $\mathrm{II}^{{\rm i}}_k(x),$ the estimate of $\mathrm{II}_k(x)$ follows by mimicking the estimate of $\mathrm{II}^{{\rm ii}}_k(x)$ and the estimate of $\mathrm{III}_k(x)$ follows by mimicking the estimate of the second term of $\mathrm{II}^{{\rm iii}}_k,$ respectively. In all cases, the desired bound
\begin{align*}
|f^{\veps}_k(x)| &\leq 34\pi^{2}\frc^{*}(g)\frac{[\vv]_{H^{1}(\T)}}{\sigma_2}
\end{align*}
therefore holds.
\end{proof}

\noindent With this lemma in hand, we may now proceed to prove further regularity and continuity properties of the nonlocal forcing. More specifically, we may show ---
\begin{proposition}\label{prop:nlest}
Assume the kernel $g$ satisfies {\rm H0)}, {\rm H1)} and that $\ga_{i} \in H^{2}(\T),\, i \in \{0,1,2\}$ have constant speed. Then the nonlocal forcings
$$
f_{i}(x) := \int_{\T} g\left( \frac12|\ga_2(x) - \ga_i(y)|^{2} \right)|\dot \ga_i(y)|\,\rd y
$$
obey $f_i \in W^{1,\infty}(\T),$ the bound
$$
\|\dot f_i\|_{L^{\infty}(\T)} \leq (16\pi)^{2} \frc^{*}_1(g) \left( \frac{\ell(\ga_i)}{\ell_{\kappa}(\ga_i)}\right)\sigma(\ga_2) \qquad \text{for all} \qquad i \in \{0,1\}
$$
as well as the Lipschitz estimate
\begin{align}\label{eq:lip0}
\|f_1 - f_0\|_{L^{\infty}(\T)} &\leq \mathfrak{C}^{*}\left(\kappa^2_2(\ga_0)\vee \kappa^2_2(\ga_1),\frac{\ell(\ga_1)}{\ell_{\kappa}(\ga_1)}\vee\frac{\ell(\ga_0)}{\ell_{\kappa}(\ga_0)},\frac{\ell(\ga_1)\vee\ell(\ga_2)}{\ell(\ga_1)\wedge\ell(\ga_2)}\right)\frc^{*}_{0,1}(g)\| \ga_1 -  \ga_0\|_{H^{1}(\T)} \nonumber \\
\frc^{*}_{0,1}(g) &:= \frc^{*}_{0}(g)+\frc^{*}_{1}(g)
\end{align}
for $\mathfrak{C}^{*}: \R^{3} \mapsto [0,\infty)$ some continuous, coordinate-wise increasing function. The nonlocal forcings
$$
f_i(x) := \int_{\T} g\left( \frac12|\ga_i(x) - \ga_2(y)|^2 \right) |\dot \ga_2(y)| \, \rd y
$$
obey the Lipschitz estimate
\begin{align}\label{eq:lip1}
\|f_1 - f_0\|_{L^{\infty}(\T)} & \leq (16\pi)^2 \frc^{*}_{1}(g) \left( \frac{\ell(\ga_2)}{\ell_{\kappa}(\ga_2)}\right) \| \ga_1 -  \ga_0\|_{L^{\infty}(\T)}
\end{align}
for $\ga_2 \in H^{2}(\T)$ arbitrary.
\end{proposition}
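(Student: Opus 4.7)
The proposition packages three independent assertions: a pointwise bound on $\dot f_i$, and the Lipschitz estimates \eqref{eq:lip0} and \eqref{eq:lip1}. Both the derivative bound and \eqref{eq:lip1} reduce cleanly to a single application of Lemma \ref{lem:derbound} after choosing an auxiliary field $\vv$ that is constant in $y$; the substantive piece is \eqref{eq:lip0}, where the natural linear interpolant between $\ga_0$ and $\ga_1$ fails to be constant speed, so Lemma \ref{lem:derbound} does not apply directly.

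\textbf{The easy pieces.} For the $W^{1,\infty}$ bound, differentiating under the integral yields
\[
\dot f_i(x) = \int_\T \dot g\!\left(\tfrac12|\ga_2(x) - \ga_i(y)|^2\right)\big\langle \ga_2(x) - \ga_i(y), \dot\ga_2(x)\big\rangle\sigma_i\,\rd y,
\]
which matches the integrand of Lemma \ref{lem:derbound} applied with $\ga_i$ as the inner constant-speed curve and $\vv(y) := \sigma_i\dot\ga_2(x)$, a constant-in-$y$ field of norm $\sigma_i\sigma(\ga_2)$ and vanishing derivative. Invoking the lemma and simplifying via $\kappa^2_2(\ga_i) = 1/\ell_\kappa(\ga_i)$, $\sigma_i = \ell(\ga_i)/(2\pi)$, and the numerical identity $(8\pi)^3/(2\pi) = (16\pi)^2$ produces the claimed pointwise bound. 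For \eqref{eq:lip1}, I fix $x$ and apply the fundamental theorem of calculus along the straight segment $\lambda \mapsto (1-\lambda)\ga_0(x) + \lambda\ga_1(x)$ in $\R^2$---only the two \emph{points} $\ga_i(x)$ are being interpolated---to recast $f_1(x) - f_0(x)$ as a $\lambda$-integral of an integrand again of Lemma \ref{lem:derbound} form, now with inner curve $\ga_2$ and constant-in-$y$ field $\vv(y) := \sigma_2(\ga_1(x) - \ga_0(x))$ of norm $\sigma_2\|\ga_1 - \ga_0\|_{L^\infty(\T)}$; the same simplification yields \eqref{eq:lip1}.

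\textbf{The Lipschitz estimate \eqref{eq:lip0} and its main obstacle.} Writing $g_i(y) := g(\tfrac12|\ga_2(x) - \ga_i(y)|^2)$, I decompose
\[
f_1(x) - f_0(x) = \sigma_1\!\int_\T \big[g_1 - g_0\big]\,\rd y + (\sigma_1 - \sigma_0)\!\int_\T g_0\,\rd y.
\]
The second piece is handled by Lemma \ref{lem:velbound} applied to $\ga_0$ as the inner constant-speed curve, which supplies the $\frc^*_0(g)$-contribution to $\frc^*_{0,1}(g)$, together with $|\sigma_1 - \sigma_0| = |\ell(\ga_1) - \ell(\ga_0)|/(2\pi) \leq (2\pi)^{-1/2}\|\ga_1 - \ga_0\|_{H^1(\T)}$ from the definition of length as the $L^1$-norm of the speed. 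For the first piece I expand along the linear path $\ga_s := (1-s)\ga_0 + s\ga_1$ via $g_1 - g_0 = \int_0^1 \dot g(\tfrac12|\ga_2 - \ga_s|^2)\langle\ga_2 - \ga_s, -(\ga_1 - \ga_0)\rangle\,\rd s$ and want to apply Lemma \ref{lem:derbound} to each $s$-slice. The main obstacle is that $\ga_s$ is generically \emph{not} constant speed---its speed $|\dot\ga_s(y)| = |(1-s)\dot\ga_0(y) + s\dot\ga_1(y)|$ may even vanish at points where $\dot\ga_0, \dot\ga_1$ disagree in direction---so Lemma \ref{lem:derbound} does not apply as written. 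I resolve this with a dichotomy in $r := \|\ga_1 - \ga_0\|_{H^1(\T)}$ keyed to a threshold $r_*$ depending on $\frs_*(\ga_0)\wedge\frs_*(\ga_1)$ and $\kappa^2_2(\ga_0)\vee\kappa^2_2(\ga_1)$: for $r \leq r_*$ the path $\ga_s$ remains an immersion throughout, and Lemma \ref{lem:chofvar} supplies a change of variables $z = \eta_s(y)$ to its constant-speed representation $\psi_s := \ga_s\circ\xi_s$ with Jacobian $\dot\xi_s = \sigma(\ga_s)/|\dot\ga_s\circ\xi_s|$; after substitution the inner integral matches Lemma \ref{lem:derbound} with inner curve $\psi_s$ and field $\tilde\vv(z) := -\sigma_1(\ga_1-\ga_0)(\xi_s(z))\dot\xi_s(z)$, whose $\max\{\|\tilde\vv\|_{L^\infty(\T)},\|\dot{\tilde\vv}\|_{L^2(\T)}\}$ is controlled by $\mathfrak{C}^*\|\ga_1-\ga_0\|_{H^1(\T)}$ via Lemma \ref{lem:chofvar} together with the Jacobian identity, while the parametrization-invariance $\kappa^2_2(\psi_s) = \kappa^2_2(\ga_s)$ bounds the bending-energy prefactor in Lemma \ref{lem:derbound} in terms of $\kappa^2_2(\ga_0)\vee\kappa^2_2(\ga_1)$. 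On the complementary regime $r > r_*$, the trivial bound $|f_1 - f_0| \leq |f_1| + |f_0|$ from two applications of Lemma \ref{lem:velbound}, divided by $r_*$, furnishes a (non-sharp) Lipschitz estimate that is absorbed into a single continuous constant $\mathfrak{C}^*$; the length ratio $\ell(\ga_1)\vee\ell(\ga_2)/\ell(\ga_1)\wedge\ell(\ga_2)$ in $\mathfrak{C}^*$ reconciles the various $\sigma_j$ prefactors across these two regimes.
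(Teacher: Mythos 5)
Your overall strategy coincides with the paper's: all three assertions are reduced to Lemmas \ref{lem:velbound} and \ref{lem:derbound}, the derivative bound and \eqref{eq:lip1} via a constant-in-$y$ field exactly as in the paper, and \eqref{eq:lip0} via linear interpolation between $\ga_0$ and $\ga_1$. Where you differ is in the bookkeeping for \eqref{eq:lip0}: the paper differentiates the forcing along the one-parameter family of \emph{constant-speed representatives} $\psi(\cdot,\vth)$ of the interpolants, so the speed variation enters as $\dot\sigma(\vth)/\sigma(\vth)$ times the forcing (handled by Lemma \ref{lem:velbound}) and the shape variation enters through $\psi_{\vth}$, computed via the transport identity \eqref{eq:cspdder} (handled by Lemma \ref{lem:derbound}); you instead split the speed difference $(\sigma_1-\sigma_0)$ off at the endpoints, apply the fundamental theorem of calculus only to the kernel factor, and reparametrize each $s$-slice separately, so the Jacobian $\dot\xi_s$ (rather than the tangential correction $\po(\langle \tang_{\psi},\dot\vv\rangle)\tang_{\psi}$) carries the reparametrization. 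Your dichotomy in $r=\|\ga_1-\ga_0\|_{H^1(\T)}$ plays the same role as the paper's compatible-orientation dichotomy: both guarantee the interpolants are uniformly non-degenerate immersions, with a trivial Lemma \ref{lem:velbound} fallback otherwise. The paper's condition is a bit cleaner (it gives $|\dot\ga(x,\vth)|\geq(\sigma(\ga_0)\wedge\sigma(\ga_1))/\sqrt{2}$ with no threshold), while yours needs the Gagliardo--Nirenberg step the paper itself uses in the incompatible case, and you should confirm that $1/r_*$ assembles into a constant of the admissible form --- it does, since $1/\sigma(\ga_i)\leq \kappa^2_2(\ga_i)/(2\pi)$ for constant-speed curves, so this is only a matter of writing it out.

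One step you must add: you cannot simply ``differentiate under the integral'' (or apply the fundamental theorem of calculus in $s$) with $\dot g$ in the integrand, because H1) only gives $|\dot g(s)|\lesssim s^{-1}$, so at points where $\ga_2(x)$ meets the inner curve the written integrand behaves like $|y-y_0|^{-1}$ and is not absolutely integrable; finiteness comes from the cancellation exploited in Lemma \ref{lem:derbound}, which is stated for the mollified kernels with the $+\veps^2/2$ shift. The paper therefore carries the $\veps$-regularization through every estimate, uniformly in $\veps$, and obtains $f_i\in W^{1,\infty}(\T)$ together with the Lipschitz bounds by letting $\veps\downarrow 0$ (weak limits of $\dot f^{\veps}_i$ tested against periodic test functions). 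Your argument goes through once you insert this regularization and limiting step; without it, your displayed formula for $\dot f_i$ and your intermediate $s$-integrals are not literally defined when the curves intersect.
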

\begin{proof}
For $\veps>0$ the forcings
$$
f^{\veps}_{i}(x) := \sigma(\ga_i)\int_{\T} g\left( \frac12|\ga_2(x) - \ga_i(y)|^{2} + \frac{\veps^2}{2}\right)\,\rd y
$$
are sufficiently regular to differentiate
$$
\dot f^{\veps}_{i}(x) = \sigma(\ga_i) \int_{\T} \dot g\left( \frac12|\ga_2(x) - \ga_i(y)|^{2} + \frac{\veps^2}{2}\right)\langle \ga_2(x) - \ga_i(y), \dot \ga_2(x) \rangle \,\rd y.
$$
Applying lemma \ref{lem:derbound} with $\vv(y) = \dot \ga_2(x)$ a constant function then gives
$$
\|\dot f^{\veps}_i\|_{L^{\infty}(\T)} \leq  (16\pi)^{2} \frc^{*}_1(g) \left( \frac{\ell(\ga_i)}{\ell_{\kappa}(\ga_i)}\right)\sigma(\ga_2),
$$
and moreover the integral identity
$$
\int_{\T} f^{\veps}_{i}(x) \dot \phi(x) \, \rd x = - \int_{\T} \dot f^{\veps}_i(x) \phi(x) \, \rd x
$$
holds for $\phi$ any smooth, periodic test function. But $f^{\veps}_i \to f_i$ in $L^{1}(\T)$ as $\veps \to 0$ and the $\dot f^{\veps}_i$ are uniformly bounded in $L^{2}(\T)$, so by passing to a subsequence if necessary, for all $\phi$ the identity
$$
\int_{\T} f_{i}(x) \dot \phi(x) \, \rd x = - \int_{\T} \dot f_i(x) \phi(x) \, \rd x
$$
holds with $\dot f_i$ an $L^{2}(\T)$ weak limit along a subsequence. Thus $f_i \in W^{1,\infty}(\T)$ and the claimed bound holds.

To prove the first Lipschitz estimate, assume first that $\ga_0,\ga_1$ are compatibly oriented. That is, $\langle \dot \ga_0(x),\dot \ga_1(x) \rangle \geq 0$ holds everywhere on $\T$.  Put $\ga(x,\vth) := \vth \ga_1(x) + (1-\vth)\ga_0(x)$ and let $\psi(x,\vth)$ denote the constant speed representation of this family (c.f. \eqref{eq:cspdrep}). Then $\ga(x,0) = \ga_0(x),\,\ga(x,1) = \ga_1(x)$ and as long as
$$
0 < |\dot \ga(x,\vth)|^2 = \vth \sigma^2(\ga_1) + (1-\vth)\sigma^2(\ga_1)  - \vth(1-\vth)|\dot \ga_0(x) - \dot \ga_1(x)|^2
$$
the map $\vth \mapsto \ga(x,\vth)$ defines a one-parameter family of immersions. In particular, if $\ga_0,\ga_1$ are compatibly oriented then
$$
|\dot \ga(x,\vth)| \geq \frac{ \min\{ \sigma(\ga_0),\sigma(\ga_1) \} }{\sqrt{2}}
$$
for all $\vth$ in the unit interval. Assume, without loss of generality, that $\sigma(\ga_0)$ achieves the minimum. Let $\sigma(\vth) := |\dot \psi(x,\vth)|$ denote the speed of the interpolant, $\ell(\vth)$ its length, $\ell_{\kappa}(\vth)$ its curvature length-scale and
$$
f^{\veps}_{\vth}(x) := \int_{\T} g\left( \frac12|\ga_2(x) - \psi(y,\vth)|^{2} + \frac{\veps^2}{2} \right)\sigma(\vth) \, \rd y
$$
the corresponding nonlocal force. The relation $\ga_{\vth}(x,\vth) = \Delta \ga(x)$ and \eqref{eq:cspdder} give 
\begin{align*}
\dot \ell(\vth) = \int_{\T} \langle \tang_{\psi},\dot \vv \rangle \, \rd x,  \quad \vv(x,\vth) = \Delta \ga( \xi(x,\vth) ) \quad \text{and} \quad \psi_{\vth} = \vv - \po\left( \langle \tang_{\psi} , \dot \vv \rangle \right)\tang_{\psi},
\end{align*}
while the identity
\begin{align*}
f^{\veps}_1(x) -  f^{\veps}_0(x) &= \int^{1}_{0}\left( I_1(x,\vth) + I_2(x,\vth) \right) \, \rd \vth \qquad I_{1}(x,\vth) = \frac{\dot \sigma(\vth)}{\sigma(\vth)} f^{\veps}_{\vth}(x)\\
I_{2}(x,\vth) &= \int_{\T} \dot g \left( \frac12|\ga_2(x) - \psi(y,\vth)|^2 +\frac{\veps^2}{2}\right)\langle \ga_2(x) - \psi(y,\vth),\psi_{\vth}(y,\vth)\rangle \sigma(\vth) \, \rd y
\end{align*}
holds simply by differentiating. On one hand, lemma \ref{lem:velbound} gives
$$
|I_1(x,\vth)| \leq 8\pi\,\frc^{*}_{0}(g)\left(\frac{\ell(\vth)}{\ell_{\kappa}(\vth)}\right)\frac{\dot \sigma(\vth)}{\sigma(\vth)} \leq 8\pi \frc^{*}_{0}(g)\left(\frac{\|\dot \ga_1 - \dot \ga_0\|_{L^{1}(\T)}}{\ell_{\kappa}(\vth)}\right)
$$
since $\psi$ has constant speed. On the other hand, lemma \ref{lem:derbound} gives
\begin{align*}
|I_2(x,\vth)| &\leq  (8\pi)^{3} \frc^{*}_{1}(g)\left( \frac{\sigma(\vth)}{\ell_{\kappa}(\vth)} \right)[\psi_{\vth}]_{H^{1}(\T)} \\
[\psi_{\vth}]_{H^{1}(\T)}  &:= \max\{ \|\psi_{\vth}\|_{L^{\infty}(\T)} , \| \dot \psi_{\vth} \|_{L^{2}(\T)} \}
\end{align*}
for the same reason. Now
\begin{align*}
\|\psi_{\vth}\|_{L^{\infty}(\T)} &\leq \|\Delta \ga\|_{L^{\infty}(\T)} + 2\|\dot \ga_1 - \dot \ga_0\|_{L^{2}(\T)} \leq \frac52 \|\ga_1 - \ga_0\|_{W^{1,1}(\T)} \leq \frac{5\sqrt{1+2\pi}}{2} \|\ga_1 - \ga_0\|_{H^{1}(\T)}
\end{align*}
since the embedding \eqref{eq:gns} holds. The triangle inequality, a direct computation and Jensen's inequality then give
\begin{align*}
\| \dot \psi_{\vth} \|_{L^{2}(\T)} &\leq \| \dot \vv\|_{L^{2}(\T)} + \|\po\big( \langle \dot \vv,\tang_{\psi} \rangle \big)\|_{L^{\infty}(\T)}\| \dot \tang_{\psi}\|_{L^{2}(\T)}\\
&= \| \dot \vv\|_{L^{2}(\T)} + \|\po\big( \langle \dot \vv,\tang_{\psi} \rangle \big)\|_{L^{\infty}(\T)} \left( \frac{\ell(\vth)}{2\pi\ell_{\kappa}(\vth)} \right)^{\frac12},
\end{align*}
which yields the overall bound
$$
\| \dot \psi_{\vth} \|_{L^{2}(\T)}\leq \left( \left( \frac{2\sigma(\vth)}{\sigma(0)} \right)^{\frac12} +\left( \frac{\ell(\vth)}{\ell_{\kappa}(\vth)} \right)^{\frac12} \right)\| \ga_1 - \ga_0\|_{H^{1}(\T)}
$$
after undoing the change of variables. In particular, the latter bound will achieve the maximum. All together, this yields the bound
\begin{align*}
\|f_1 - f_0\|_{L^{\infty}(\T)} &\leq 8\pi \frc^{*}_{0}(g)\left( \int^{1}_{0} \kappa^2_2(\vth) \, \rd \vth\right)\|\dot \ga_1 - \dot \ga_0\|_{L^{1}(\T)} \\
&+ (16\pi)^2 \frc^{*}_{1}(g) \left[\int^{1}_{0} \left(  \left(\frac{2\sigma(\vth)}{\sigma(0)} \right)^{\frac12} + \left( \frac{\ell(\vth)}{\ell_{\kappa}(\vth)} \right)^{\frac12} \right)\frac{\ell(\vth)}{\ell_{\kappa}(\vth)}\, \rd \vth \right]  \| \ga_1 -  \ga_0\|_{H^{1}(\T)}
\end{align*}
for the difference. But $\sigma(1) \geq \sigma(0),$ $\sigma(\vth) \leq \sigma(1)$ and $|\dot \ga(x,\vth)| \geq \sigma(0)/\sqrt{2}$ combine to yield
\begin{align*}
\int^{1}_{0} \kappa^2_2(\vth) \, \rd \vth &\leq 2\left( 1 + \left(\frac{\sigma(\ga_1)}{\sigma(\ga_0)}\right)^3 \right)\max\{ \kappa^2_2(0),\kappa^2_2(1) \} \\
\frac{\ell(\vth)}{\ell_{\kappa}(\vth)}& \leq  \frac{\sqrt{2}\sigma(\ga_1)}{\sigma(\ga_0)}\left( 2\vth \left(\frac{\sigma(\ga_1)}{\sigma(\ga_0)}\right)^{2} + (1-\vth) \right)\max\left\{ \frac{\ell(0)}{\ell_{\kappa}(0)} ,\frac{\ell(1)}{\ell_{\kappa}(1)}\right\},
\end{align*}
and so all together there exists some continuous, coordinate-wise increasing function $\mathfrak{C} : \R^{3} \mapsto [0,\infty)$ so that the claimed bound
$$
\|f_1 - f_0\|_{L^{\infty}(\T)} \leq \mathfrak{C}^{*}\left(\kappa^2_2(\ga_0)\vee \kappa^2_2(\ga_1),\frac{\ell(\ga_1)}{\ell_{\kappa}(\ga_1)}\vee\frac{\ell(\ga_0)}{\ell_{\kappa}(\ga_0)},\frac{\ell(\ga_1)\vee\ell(\ga_2)}{\ell(\ga_1)\wedge\ell(\ga_2)}\right)\frc^{*}_{0,1}(g)\| \ga_1 -  \ga_0\|_{H^{1}(\T)}
$$
holds. If $\ga_0,\ga_1$ are not compatibly orientated then there exists some $x$ for which $\langle \dot \ga_1(x),\dot \ga_0(x)\rangle <0,$ and so necessarily the lower bound
$$
\|\ga_1 - \ga_0\|^2_{L^{\infty}(\T)} \geq |\dot \ga_1(x) - \dot \ga_0(x)|^2 = \sigma^2(1) + \sigma^2(0) - 2 \langle \dot \ga_1(x) ,\dot \ga_0(x) \rangle \geq  2\sigma^2(0)
$$
must hold. By the embedding
\begin{align*}
\|\dot \ga_1 - \dot \ga_0\|_{L^{\infty}(\T)} &\leq \|\dot \ga_1 - \dot \ga_0\|^{\frac12}_{L^{2}(\T)}\|\ddot \ga_1 - \ddot \ga_0\|^{\frac12}_{L^{2}(\T)} \leq \sqrt{2}\|\dot \ga_1 - \dot \ga_0\|^{\frac12}_{L^{2}(\T)}\left( \|\ddot \ga_0\|_{L^{2}(\T)} \vee  \|\ddot \ga_1\|_{L^{2}(\T)} \right)^{\frac12} \\
& \leq \frac{\sigma(1)}{\sqrt{\pi}}\left( \frac{\ell(\ga_1)}{\ell_{\kappa}(\ga_1)}\vee\frac{\ell(\ga_0)}{\ell_{\kappa}(\ga_0)} \right)^{\frac12}\|\dot \ga_1 - \dot \ga_0\|^{\frac12}_{L^{2}(\T)}
\end{align*}
the lower bound
$$
\|\dot \ga_1 - \dot \ga_0\|_{L^{2}(\T)}\left( \frac{\ell(\ga_1)}{\ell_{\kappa}(\ga_1)}\vee\frac{\ell(\ga_0)}{\ell_{\kappa}(\ga_0)} \right)\frac{\sigma^{2}(1)}{\sigma^2(0)} \geq 2\pi
$$
must hold as well. But then by lemma \ref{lem:velbound} and the triangle inequality, the claimed bound
\begin{align*}
\|f_1 - f_0\|_{L^{\infty}(\T)} \leq 16 \pi \frc^{*}_{0}(g) \left( \frac{\ell(\ga_1)}{\ell_{\kappa}(\ga_1)}\vee\frac{\ell(\ga_0)}{\ell_{\kappa}(\ga_0)} \right) \leq 8 \frc^{*}_{0}(g)\left( \frac{\ell(\ga_1)}{\ell_{\kappa}(\ga_1)}\vee\frac{\ell(\ga_0)}{\ell_{\kappa}(\ga_0)} \right)^2\left(\frac{\sigma(1)}{\sigma(0)}\right)^{2}\|\dot \ga_1 - \dot \ga_0\|_{L^{2}(\T)}
\end{align*}
holds in this case as well. For the second Lipschitz estimate it suffices, by changing variables if necessary, to assume that $\ga_2$ has constant speed. The estimate then follows easily from lemma \ref{lem:derbound}, for
\begin{align*}
| f^{\veps}_{0}(x) - f^{\veps}_{1}(x) | &= \sigma(\ga_2)\left| \int_{\T} g\left( \frac12|\ga_0(x) - \ga_2(y)|^2 + \frac{\veps^2}{2} \right) \, \rd y - \int_{\T} g\left( \frac12|\ga_1(x) - \ga_2(y)|^2 + \frac{\veps^2}{2}\right) \, \rd y \right| \\
&= \sigma(\ga_2)\left| \int^{1}_{0} \int_{\T} \dot g\left( \frac12|\ga_{\vth}(x) - \ga_2(y)|^2 + \frac{\veps^2}{2} \right) \langle \ga_{\vth}(x) - \ga_2(y),\ga_1(x) - \ga_0(x) \rangle \, \rd y \right| \\
&\leq (8\pi)^{3}\frc^{*}_{1}(g) \sigma(\ga_2)\kappa^2_2(\ga_2)\| \ga_1 - \ga_0\|_{L^{\infty}(\T)} = (16\pi)^2\frc^{*}_1(g)\frac{\ell(\ga_2)}{\ell_{\kappa}(\ga_2)}\|\ga_1 - \ga_0\|_{L^{\infty}(\T)}
\end{align*}
as claimed.
\end{proof}

Regular kernels form the widest class of kernels for which the corresponding nonlocal force behaves well with respect to the $H^{2}(\T)$ topology. Essentially, the occurrence of intersections between disjoint arcs forces the restriction to regular kernels. We may move beyond this class by ruling out the possibility of such intersections \emph{a-priori}, or in other words, by imposing additional geometric structure. For the case of self-interactions
\begin{align}\label{eq:self}
f_{ii}(x) := \int_{\T} g_{ii}\left( \frac12|\ga_i(x) - \ga_i(y)|^2 \right) |\dot \ga_i(y)| \, \rd y 
\end{align}
this means imposing that $\ga_i$ has finite distortion, while for the case of cross-interactions
\begin{align}\label{eq:cross}
f_{ij}(x) := \int_{\T} g_{ij}\left( \frac12|\ga_i(x) - \ga_j(y)|^2 \right) |\dot \ga_j(y)| \, \rd y 
\end{align}
this means imposing that $\ga_i,\ga_j$ lie at a positive distance from one another. Given a kernel $g \in C^{2}\big(\R^{+} \setminus \{0\} \big)$ we define
$$
 \dot{\mathcal{G}}^{*}(s) := \sup_{t \geq s} \;t |\dot g(t)| \qquad \text{and} \qquad \frc^{*}_{2}(g) := \int^{\infty}_{0} \dot{\mathcal{G}}^{*}\left( u^2 \right) \, \rd u 
$$
as the envelope function and corresponding integrability constant of its derivative. We shall employ the hypothesis
\begin{enumerate}[\indent\indent{\rm H2)}]
\item The kernel $g$ is singular: $\frc^{*}_{2}(g) < +\infty$.
\end{enumerate}
to force that the kernel behaves no worse than $ g(s) \sim C s^{-\frac12}$ locally near the origin. Under these assumptions, we may show ---
\begin{proposition}\label{prop:nlest0}
Assume the kernel $g$ satisfies {\rm H0)}, {\rm H2)} and that $\ga \in H^{2}(\T)$ has finite distortion. Then the nonlocal force
$$
f(x) := \int_{\T} g\left( \frac12|\ga(x) - \ga(y)|^2 \right) \, |\dot \ga(y)| \, \rd y
$$
lies in $H^{1}(\T)$ and obeys the bound
$$
\|f\|_{H^{1}(\T)} \leq \mathfrak{C}^{*}\left( \frac{\ell(\ga)}{\ell_{\kappa}(\ga)}, \delta_{\infty}(\ga) \right)\left( \frc^{*}_{0}(g) + \frc^{*}_{2}(g) \right)
$$
for $ \mathfrak{C}^{*}:\R^{2}_{+} \mapsto [0,\infty)$ some continous, increasing function of its arguments. If $\ga_i \in H^{2}(\T) , \, i \in \{0,1\}$ have constant speed then the difference $f_0 - f_1$ between their corresponding nonlocal forces obeys the $H^{1}(\T) \mapsto L^{2}(\T)$ estimate
\begin{align*}
\|f_0 - f_1\|_{L^{2}(\T)} &\leq \mathfrak{C}^{*}\left( \delta_{\infty}(\ga_0)\vee\delta_{\infty}(\ga_1),\frac{\ell(\ga_0)\vee\ell(\ga_1)}{\ell(\ga_0)\wedge\ell(\ga_1)} \right)\left( \frc^{*}_{0}(g) + \frc^{*}_{2}(g) \right)\left(\frac{ \|\dot \ga_1 - \dot \ga_0\|_{L^{2}(\T)}}{\ell(\ga_0)\vee\ell(\ga_1)}\right)
\end{align*}
for $\mathfrak{C^{*}}:\R^{2}_{+} \mapsto [0,\infty)$ some continuous, increasing function of its arguments.
\end{proposition}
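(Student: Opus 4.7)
My strategy rests on an integration by parts in $y$ that compensates for the enhanced singularity permitted by H2) by replacing $\dot\gamma(x)$ with $\dot\gamma(x)-\dot\gamma(y)$ in the integrand. Concretely, starting from the regularized derivative
$$
\dot f^{\veps}(x) = \sigma\int_{\T} \dot g\!\left(\tfrac12|\gamma(x)-\gamma(y)|^2+\tfrac{\veps^2}{2}\right)\langle \gamma(x)-\gamma(y),\dot\gamma(x)\rangle\,\rd y,
$$
and writing $\dot\gamma(x)=\dot\gamma(y)+(\dot\gamma(x)-\dot\gamma(y))$, the $\dot\gamma(y)$ contribution is a full $y$-derivative of $g(\tfrac12|\gamma(x)-\gamma(y)|^2+\tfrac{\veps^2}{2})$ and integrates to zero by periodicity. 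This yields the representation
$$
\dot f^{\veps}(x) = \sigma\int_{\T} \dot g\!\left(\tfrac12|\gamma(x)-\gamma(y)|^2+\tfrac{\veps^2}{2}\right)\langle \gamma(x)-\gamma(y),\dot\gamma(x)-\dot\gamma(y)\rangle\,\rd y,
$$
whose integrand vanishes to first order at the diagonal.

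For the bounds themselves, I would first observe that a finite-distortion closed curve is a $(1,\delta_{\infty}(\gamma))$-regular immersion, so Lemma \ref{lem:velbound} immediately gives $\|f\|_{L^{\infty}(\T)}\leq 2\pi\,\delta_{\infty}(\gamma)\,\frc^{*}_{0}(g)$. For $\|\dot f\|_{L^{2}(\T)}$ I would apply H2) in the form $|\dot g(s)|\leq \dot{\mathcal{G}}^{*}(s)/s$, then invoke the distortion lower bound $|\gamma(x)-\gamma(y)|\geq \sigma D_{\gamma}(x,y)/\delta_{\infty}(\gamma)$ to obtain the pointwise inequality
$$
|\dot f^{\veps}(x)|\leq 2\,\delta_{\infty}(\gamma)\int_{\T}\frac{\dot{\mathcal{G}}^{*}\!\left(\sigma^{2}D_{\gamma}(x,y)^{2}/(2\delta_{\infty}^{2}(\gamma))\right)}{D_{\gamma}(x,y)}\,|\dot\gamma(x)-\dot\gamma(y)|\,\rd y.
$$
Since $\gamma$ has constant speed, $D_{\gamma}$ depends only on $x-y$, so the right-hand side is a convolution-type operator to which Minkowski's integral inequality applies. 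Combined with the elementary bound $\|\dot\gamma(\cdot)-\dot\gamma(\cdot-z)\|_{L^{2}(\T)}\leq |z|_{\T}\|\ddot\gamma\|_{L^{2}(\T)}$ and the change of variables $u=\sigma|z|_{\T}/(\delta_{\infty}(\gamma)\sqrt{2})$, the remaining integral is $\lesssim (\delta_{\infty}(\gamma)/\sigma)\,\frc^{*}_{2}(g)$. Expressing $\|\ddot\gamma\|_{L^{2}(\T)}$ through $\kappa^{2}_{2}(\gamma)=1/\ell_{\kappa}(\gamma)$ closes the bound in the stated form. The passage from $f^{\veps}$ to $f$ is by weak-$L^{2}$ compactness as in the proof of Proposition \ref{prop:nlest}.

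For the Lipschitz estimate I would mimic the interpolation argument of Proposition \ref{prop:nlest}. In the compatibly-oriented case set $\gamma_{\vth}=\vth\gamma_{1}+(1-\vth)\gamma_{0}$, pass to its constant-speed representation $\psi(\cdot,\vth)$, and differentiate $f_{\vth}$ in $\vth$. The change-of-speed contribution is controlled by Lemma \ref{lem:velbound}, while the main term admits the very same IBP as above, now yielding the factor $\psi_{\vth}(x,\vth)-\psi_{\vth}(y,\vth)$ in place of $\dot\gamma(x)-\dot\gamma(y)$. Applying the Minkowski-plus-distortion estimate to this term and integrating over $\vth\in[0,1]$ delivers the desired $L^{2}(\T)$ Lipschitz bound, with the ratio $\ell(\gamma_{0})\vee\ell(\gamma_{1})/\ell(\gamma_{0})\wedge\ell(\gamma_{1})$ arising from comparing $\sigma(\vth)$ to $\sigma(\ga_0)$ and $\sigma(\ga_1)$ along the path. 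The incompatible-orientation case is disposed of exactly as in Proposition \ref{prop:nlest}: a Gagliardo--Nirenberg inequality combined with $\langle\dot\gamma_{0},\dot\gamma_{1}\rangle<0$ forces a lower bound on $\|\dot\gamma_{1}-\dot\gamma_{0}\|_{L^{2}(\T)}$, and the triangle inequality applied to the $L^{\infty}$ bound from the first part closes the estimate.

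The main obstacle in executing this plan is controlling the distortion of the interpolant $\gamma_{\vth}$ uniformly in $\vth\in[0,1]$, since the linear interpolation of two finite-distortion curves need not itself have bounded distortion. I would resolve this by the dichotomy suggested by the stability bound at the end of Section \ref{sec:PreMat}: when $\|\gamma_{1}-\gamma_{0}\|_{H^{2}(\T)}$ is small enough (quantitatively, when the ratio $R=\mathrm{Lip}(\gamma_{1}-\gamma_{0})/\frs_{*}(\gamma_{0})$ satisfies $R\delta_{\infty}(\gamma_{0})<\tfrac12$, say), the stability inequality keeps $\delta_{\infty}(\gamma_{\vth})$ within a fixed multiple of $\delta_{\infty}(\gamma_{0})\vee\delta_{\infty}(\gamma_{1})$ for all $\vth$, and the interpolation argument closes. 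Outside this regime $\|\gamma_{1}-\gamma_{0}\|_{H^{1}(\T)}$ is already bounded below by a quantity depending on the geometric data, and the $L^{\infty}$ bound from the first part together with the triangle inequality trivially delivers the Lipschitz estimate. Amalgamating the two regimes produces the asserted continuous, coordinate-wise increasing function $\mathfrak{C}^{*}$.
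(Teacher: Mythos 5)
Your first part is sound, and it is essentially the calculation the paper outsources to the appendix of \cite{knot}: the integration by parts in $y$ that trades $\dot\ga(x)$ for $\dot\ga(x)-\dot\ga(y)$, the bound $|\dot g(s)|\leq \dot{\mathcal{G}}^{*}(s)/s$ from H2), the distortion lower bound $|\ga(x)-\ga(y)|\geq \sigma|x-y|_{\T}/\delta_{\infty}(\ga)$, and Minkowski plus $\|\dot\ga(\cdot)-\dot\ga(\cdot-z)\|_{L^{2}}\leq |z|\,\|\ddot\ga\|_{L^{2}}$ do give $\|\dot f\|_{L^{2}}\lesssim \delta^{2}_{\infty}(\ga)\,(\ell(\ga)/\ell_{\kappa}(\ga))^{1/2}\,\frc^{*}_{2}(g)$, which together with Lemma \ref{lem:velbound} yields the stated $H^{1}$ bound (note you implicitly use constant speed, which is the intended setting). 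Also, for the self-interaction difference no integration by parts is even needed: differentiating in $\vth$ produces the factor $\psi_{\vth}(x,\vth)-\psi_{\vth}(y,\vth)$ directly.

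The second part, however, has a genuine gap: your route cannot produce a constant depending only on $\delta_{\infty}(\ga_0)\vee\delta_{\infty}(\ga_1)$ and the length ratio, which is what the proposition asserts. In the ``near'' regime, estimating the interpolation term requires $\|\delta_z\psi_{\vth}\|_{L^{2}_x}\leq |z|\,\|\dot\psi_{\vth}\|_{L^{2}}$, and $\dot\psi_{\vth}$ contains $\po\big(\langle\dot\vv,\tang_{\psi}\rangle\big)\dot\tang_{\psi}$, whose $L^{2}$ norm is the square root of the interpolant's bending energy $\ell(\vth)/\ell_{\kappa}(\vth)$ --- a quantity not among the admissible arguments of $\mathfrak{C}^{*}$; the same objection applies to your control of $\delta_{\infty}(\psi(\cdot,\vth))$ only up to a Lipschitz smallness threshold. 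In the ``far'' regime your fallback is worse: the threshold is an $L^{\infty}$ lower bound on $\dot\ga_1-\dot\ga_0$, and converting it to the $L^{2}$ lower bound you need via Gagliardo--Nirenberg necessarily brings in $\|\ddot\ga_i\|_{L^{2}}$, so the triangle-inequality step again yields a constant involving the bending energies. (Such a weaker estimate would still suffice for the well-posedness argument, where everything is confined to an $\cH^{2}_{m}$ ball, but it does not prove the proposition as stated.) The fix is to avoid interpolating the curves altogether: apply the mean value theorem to $g$ between the two arguments $a_i:=\tfrac12|\ga_i(x)-\ga_i(y)|^{2}$, $i\in\{0,1\}$. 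The intermediate point $\Xi$ satisfies $\Xi\geq a_0\wedge a_1\geq (\sigma_0\wedge\sigma_1)^{2}|x-y|^{2}_{\T}/\big(2(\delta_{\infty}(\ga_0)\vee\delta_{\infty}(\ga_1))^{2}\big)$, since each chord is bounded below by its \emph{own} curve's distortion --- no interpolant distortion is needed. Combining $|\dot g(\Xi)|\leq\dot{\mathcal{G}}^{*}(a_0\wedge a_1)/(a_0\wedge a_1)$ with $|a_0-a_1|\leq\tfrac12\big(|\delta_{xy}\ga_0|+|\delta_{xy}\ga_1|\big)\,|\delta_{xy}(\ga_0-\ga_1)|$, the upper bounds $|\delta_{xy}\ga_i|\leq\sigma_i|x-y|_{\T}$, Minkowski with $\|\delta_z(\ga_0-\ga_1)\|_{L^{2}}\leq|z|\,\|\dot\ga_0-\dot\ga_1\|_{L^{2}}$, and a separate treatment of the speed-difference term $(\sigma_0-\sigma_1)\int g\,\rd y$ via the $L^{\infty}$ bound of the first part, one lands exactly on the stated estimate with $\mathfrak{C}^{*}$ a power of the maximal distortion times a power of the length ratio.
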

\noindent The proof simply reiterates calculations from \cite{knot} (c.f. the appendix therein), to which we refer for the details.

While embeddedness properties ensure that self-interactions \eqref{eq:self} behave well, cross interactions \eqref{eq:cross} necessarily involve intersections between a pair $(\ga_i,\ga_j)$ of distinct immersions unless we enforce that they lie at a positive distance
$$
\mathrm{dist}\big(\ga_i,\ga_j\big) := \min_{(x,y) \in \T \times \T} \; |\ga_i(x) - \ga_j(y)|
$$
from one another. As for the allowable kernels in this setting, to make life easy we shall impose the hypothesis
\begin{enumerate}[\indent\indent{\rm H3)}]
\item The kernel $g$ is asymptotically finite: if $s>0$ then $\mathcal{G}^{*}(s)\vee \dot{\mathcal{G}}^{*}(s) < +\infty$. 
\end{enumerate}
where $\cG^{*}$ and $\dot{\mathcal{G}}^{*}$ denote the monotone non-increasing envelopes of $g(s)$ and $s \dot g(s),$ respectively. We allow the kernel to blow up arbitrarily quickly at the origin, but otherwise impose that it defines a bounded function. The far-field behavior does not matter locally in time, so we pay no price for enforcing that $\dot g(s)$ vanishes for $s$ large; we simply want $g$ and its derivatives bounded away from the origin. We adopt the hypothesis in this form simply to avoid introducing new notation. Under this hypothesis we may easily establish the needed properties ---
\begin{proposition}\label{prop:nlest2}
Assume the kernel $g$ satisfies {\rm H3)}, that $\mathrm{d}_{ij} := \mathrm{dist}\big(\ga_i,\ga_j\big) > 0$ for all $(i,j)$ and that $\ga_i \in H^{2}(\T),\, i \in\{0,1,2\}$ have constant speed. Then the nonlocal forcings
$$
f_{ij}(x) := \int_{\T} g\left( \frac12|\ga_i(x) - \ga_j(y)|^2 \right)|\dot \ga_j(y)| \, \rd y
$$
lie in $W^{1,\infty}(\T)$ with the estimate
$$
\|f_{ij}\|_{L^{\infty}(\T)} \leq \cG^{*}\left( \frac{\rd^2_{ij}}{2}\right)\ell(\ga_j) \qquad \text{and} \qquad \|\dot f_{ij}\|_{L^{\infty}(\T)} \leq \frac{2}{\rd_{ij}}\dot\cG^{*}\left( \frac{\rd^2_{ij}}{2}\right)\ell(\ga_i)\ell(\ga_j),
$$
and the corresponding Lipschitz estimates
\begin{align*}
\|f_{02} - f_{12}\|_{L^{\infty}(\T)} &\leq \dot \cG^{*}\left( \frac{\rd^2_{02}\wedge \rd^2_{12}}{2} \right)\frac{2}{\rd_{02}\wedge \rd_{12}}\|\ga_0 - \ga_1\|_{L^{\infty}(\T)}\left( 1 + \frac{2}{\rd_{02}\wedge \rd_{12}}\|\ga_0 - \ga_1\|_{L^{\infty}(\T)}\right)\ell(\ga_2) \\
\|f_{20} - f_{21}\|_{L^{\infty}(\T)} &\leq 2\pi \cG^{*}\left( \frac{\rd^2_{12}\wedge \rd^2_{02}}{2} \right)\|\dot \ga_0 - \dot \ga_1\|_{L^{1}(\T)}\\
&\dot \cG^{*}\left( \frac{\rd^2_{02}\wedge \rd^2_{12}}{2} \right)\frac{2}{\rd_{02}\wedge \rd_{12}}\|\ga_0 - \ga_1\|_{L^{\infty}(\T)}\left( 1 + \frac{2}{\rd_{02}\wedge \rd_{12}}\|\ga_0 - \ga_1\|_{L^{\infty}(\T)}\right)\ell(\ga_0)\wedge \ell(\ga_1)
\end{align*}
hold with a decreasing dependence on the minimal distance.
\end{proposition}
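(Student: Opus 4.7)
The plan is to exploit the positive lower bound $\rd_{ij} > 0$ on the separations to keep every argument of $g$ and $\dot g$ uniformly bounded away from the singular set at the origin. Under H3), this lets the envelopes $\cG^*$ and $\dot\cG^*$ control the integrands pointwise, so all four estimates reduce to elementary integration. In short, the proof is routine once one recognizes that positive separation trivializes the kernel-singularity issue that complicated the estimates for regular and singular kernels in Propositions~\ref{prop:nlest} and~\ref{prop:nlest0}.

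For the $L^\infty$ bound on $f_{ij}$ the inequality $\tfrac12|\ga_i(x)-\ga_j(y)|^2 \geq \rd_{ij}^2/2$ holds throughout $\T\times\T$, so $|g(\cdot)| \leq \cG^*(\rd_{ij}^2/2)$ pointwise and integration against $|\dot \ga_j|\,\rd y$ delivers $\ell(\ga_j)$. For the derivative bound, differentiating under the integral (justified since the separation keeps $g$ and its derivatives uniformly $C^1$ on the integration domain) yields
$$
\dot f_{ij}(x) = \int_\T \dot g\bigl(\tfrac12|\ga_i(x)-\ga_j(y)|^2\bigr)\langle \ga_i(x) - \ga_j(y), \dot \ga_i(x)\rangle |\dot \ga_j(y)|\,\rd y,
$$
and combining $s|\dot g(s)| \leq \dot\cG^*(s)$ from H3) with $|\ga_i - \ga_j| \geq \rd_{ij}$ gives $|\dot g(\cdot)|\,|\ga_i - \ga_j| \leq 2\dot\cG^*(\rd_{ij}^2/2)/\rd_{ij}$; the constant-speed hypothesis controls $|\dot\ga_i(x)|$ in terms of $\ell(\ga_i)$, and integrating $|\dot\ga_j|\,\rd y$ gives $\ell(\ga_j)$.

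For the first Lipschitz estimate, set $t_i(x,y) := \tfrac12|\ga_i(x) - \ga_2(y)|^2$, both $\geq \rd^2/2$ with $\rd := \rd_{02}\wedge \rd_{12}$. Writing $g(t_0)-g(t_1) = \int_{t_1}^{t_0}\dot g(s)\,\rd s$ and bounding $|\dot g(s)| \leq \dot\cG^*(s)/s \leq \dot\cG^*(\rd^2/2)/s$ on $s \geq \rd^2/2$ produces
$$
|g(t_0)-g(t_1)| \leq 2\dot\cG^*(\rd^2/2)\bigl|\log|\ga_0(x)-\ga_2(y)| - \log|\ga_1(x)-\ga_2(y)|\bigr|.
$$
The reverse triangle inequality gives $\bigl||\ga_0-\ga_2|-|\ga_1-\ga_2|\bigr| \leq \|\ga_0-\ga_1\|_{L^\infty(\T)}$, and Lipschitzness of $\log$ on $[\rd,\infty)$ absorbs this into a factor of $\|\ga_0-\ga_1\|_{L^\infty(\T)}/\rd$; integrating against $|\dot \ga_2|\,\rd y$ yields the stated bound, with the $(1+2\alpha/\rd)$ slack factor being more generous than strictly required.

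The second Lipschitz estimate follows from the additive decomposition
$$
f_{20}(x) - f_{21}(x) = \int_\T \bigl[g(\tau_0) - g(\tau_1)\bigr]|\dot\ga_0(y)|\,\rd y + \int_\T g(\tau_1)\bigl[|\dot\ga_0(y)| - |\dot\ga_1(y)|\bigr]\,\rd y,
$$
where $\tau_i := \tfrac12|\ga_2(x) - \ga_i(y)|^2$. The first piece is handled verbatim by the preceding $\dot\cG^*$ argument, producing $\ell(\ga_0)$ as the length factor (or $\ell(\ga_1)$ under the symmetric decomposition that peels off $|\dot\ga_1|$ first, and hence their minimum after optimizing), while the second piece is dispatched by $|g(\tau_1)| \leq \cG^*(\rd^2/2)$ together with $\int_\T \bigl||\dot\ga_0|-|\dot\ga_1|\bigr|\,\rd y \leq \|\dot\ga_0-\dot\ga_1\|_{L^1(\T)}$. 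No genuine obstacle arises; the only technical point worth highlighting is that the logarithmic-derivative trick in the third paragraph is essential for keeping the pointwise kernel-difference bound in terms of $\dot\cG^*$ alone, without picking up an uncontrolled $|\ga_i - \ga_2|$ factor from the naive mean-value expansion.
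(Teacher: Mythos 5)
Your proof is correct, and for the heart of the matter --- the pointwise bound on $g\bigl(\tfrac12|\ga_0(x)-\ga_2(y)|^2\bigr)-g\bigl(\tfrac12|\ga_1(x)-\ga_2(y)|^2\bigr)$ --- you take a genuinely different route from the paper. The paper applies the mean value theorem, writing the difference as $\tfrac12\dot g(\Xi)\langle \ga_0+\ga_1-2\ga_2,\ga_0-\ga_1\rangle$ with $\Xi$ at least the smaller of the two arguments, and then separately bounds $|\ga_0-\ga_2|/\Xi \leq 2/(\rd_{02}\wedge\rd_{12})$ and $1/\Xi \leq 2/(\rd_{02}^2\wedge\rd_{12}^2)$; this is why the quadratic slack factor $\bigl(1+\tfrac{2}{\rd}\|\ga_0-\ga_1\|_{L^\infty(\T)}\bigr)$ appears in the statement. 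You instead write the difference as $\int_{t_1}^{t_0}\dot g(s)\,\rd s$, use $|\dot g(s)|\leq \dot\cG^*(\rd^2/2)/s$ on $s\geq \rd^2/2$, and control the resulting logarithm by the reverse triangle inequality and the $1/\rd$-Lipschitz bound on $\log$ over $[\rd,\infty)$; this yields a purely linear bound that implies the stated estimate, so the slack factor is indeed superfluous in your version. The rest of your argument (the trivial sup bounds, differentiation under the integral for $\dot f_{ij}$, and the add-and-subtract decomposition for $f_{20}-f_{21}$, with the symmetric peeling giving $\ell(\ga_0)\wedge\ell(\ga_1)$) coincides with the paper's proof up to bookkeeping with the constant speeds. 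One caveat: your closing claim that the logarithmic-derivative trick is \emph{essential} to avoid an uncontrolled factor of $|\ga_i-\ga_2|$ is overstated --- the paper's mean-value expansion does not lose control of that factor, since the chord length in the numerator is absorbed by the same $\Xi$ in the denominator; the MVT route simply lands on the slightly weaker bound with the quadratic correction, while yours is cleaner and sharper.
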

\begin{proof}
The $W^{1,\infty}(\T)$ bounds are completely trivial; only the Lipschitz estimates require any justification. For the first estimate, note
\begin{align*}
f_{02}(x) - f_{12}(x) = \sigma(\ga_2) \int_{\T} \left( g\left( \frac12|\ga_0(x) - \ga_2(y)|^2 \right) - g\left( \frac12|\ga_1(x) - \ga_2(y)|^2 \right) \right) \, \rd y
\end{align*}
and that there exists some $\Xi(x,y)$ between $|\ga_0(x) - \ga_2(y)|^2/2$ and $|\ga_1(x) - \ga_2(y)|^2/2$ so that
$$
g\left( \frac12|\ga_0(x) - \ga_2(y)|^2 \right) - g\left( \frac12|\ga_1(x) - \ga_2(y)|^2 \right) = \frac{\dot g\left( \Xi(x,y) \right)}{2}\langle \ga_0(x) - \ga_2(y) + \ga_1(x) - \ga_2(y),\ga_0(x) - \ga_1(x) \rangle 
$$
by the mean value theorem. In particular, for $(x,y)$ fixed the lower bound 
$$\Xi(x,y) \geq \left( \frac12|\ga_0(x) - \ga_2(y)|^2\right) \wedge \left( \frac12|\ga_1(x) - \ga_2(y)|^2\right)$$
holds. Assume $|\ga_0(x) - \ga_2(y)|$ achieves the minimum, so that
\begin{align*}
\frac{|\ga_0(x) - \ga_2(y)|}{\Xi(x,y)} \leq \frac{2}{|\ga_0(x) - \ga_2(y)|} \leq \frac{2}{\rd_{02}\wedge \rd_{12}} \qquad \text{and} \qquad \frac1{\Xi(x,y)} \leq \frac{2}{\rd^2_{02} \wedge \rd^2_{12}} 
\end{align*}
both hold. Write $h(x,y) := $
\begin{align*}
\langle \ga_0(x) - \ga_2(y) + \ga_1(x) - \ga_2(y),\ga_0(x) - \ga_1(x) \rangle = 2\langle \ga_0(x) - \ga_2(y),\ga_0(x) - \ga_1(x) \rangle - |\ga_1(x) - \ga_0(x)|^2 
\end{align*}
for the inner product, so that the upper bound
$$
\frac{h(x,y)}{\Xi(x,y)} \leq \frac{4}{\rd_{02}\wedge \rd_{12}}\|\ga_0 - \ga_1\|_{L^{\infty}(\T)} + \frac{2}{\rd^2_{02}\wedge \rd^2_{12}}\|\ga_1 - \ga_0\|^2_{L^{\infty}(\T)}
$$
holds. Thus for $(x,y)$ fixed the bound
\begin{align*}
&\left| \frac{\dot g\left( \Xi(x,y) \right)}{2}\langle \ga_0(x) - \ga_2(y) + \ga_1(x) - \ga_2(y),\ga_0(x) - \ga_1(x) \rangle \right| \\
&\leq \dot \cG^{*}\left( \frac{\rd^2_{02}\wedge \rd^2_{12}}{2} \right)\frac{2}{\rd_{02}\wedge \rd_{12}}\|\ga_0 - \ga_1\|_{L^{\infty}(\T)}\left( 1 + \frac{2}{\rd_{02}\wedge \rd_{12}}\|\ga_0 - \ga_1\|_{L^{\infty}(\T)}\right)
\end{align*}
follows, and so the claimed Lipschitz estimate follows upon integrating. The second Lipschitz estimate follows similarly. Simply write
\begin{align*}
f_{20}(x) - f_{21}(x) &= \frac{\sigma(\ga_0) - \sigma(\ga_1)}{\sigma(\ga_1)}f_{21}(x) \\
&\sigma(\ga_0) \int_{\T}\left[ g\left( \frac12|\ga_2(x) - \ga_0(y)|^2 \right) - g\left( \frac12|\ga_2(x) - \ga_1(y)|^2 \right) \right] \, \rd y := \mathrm{I}(x) + \mathrm{II}(x)
\end{align*}
and use the first part of the lemma to obtain the bound
$$
|\mathrm{I}(x)| \leq \frac{\|\dot \ga_1 - \dot \ga_0\|_{L^{1}(\T)}}{\sigma(\ga_1)}\cG^{*}\left( \frac{\rd^2_{21}}{2} \right)\ell(\ga_1) = 2\pi \cG^{*}\left( \frac{\rd^2_{21}}{2} \right)\|\dot \ga_0 - \dot \ga_1\|_{L^{1}(\T)}
$$
for the first term. An appropriate bound for the second term
\begin{align*}
|\mathrm{II}(x)|
&\leq \dot \cG^{*}\left( \frac{\rd^2_{02}\wedge \rd^2_{12}}{2} \right)\frac{2}{\rd_{02}\wedge \rd_{12}}\|\ga_0 - \ga_1\|_{L^{\infty}(\T)}\left( 1 + \frac{2}{\rd_{02}\wedge \rd_{12}}\|\ga_0 - \ga_1\|_{L^{\infty}(\T)}\right)\ell(\ga_0)
\end{align*}
follows by appealing to the mean value theorem and arguing as before.
\end{proof}

\noindent The exact form of these Lipschitz estimates will not prove too important in our analysis; we only need to know that the nonlocal forces satisfy some sort of uniform $H^{1}(\T)$  bound and an $H^{1}(\T) \mapsto L^{2}(\T)$ Lipschitz property locally near an initial condition. Under the requisite combination of analytic and geometric hypotheses, any of the three propositions \ref{prop:nlest},\ref{prop:nlest0} and \ref{prop:nlest2} will suffice. 

As we shall briefly need to work with non-canonical representations of immersions when pursuing local existence, we shall make a few brief observations that will allow us to apply these estimates for arbitrary immersions. Let $\ga_0,\ga_1,\ga_2 \in H^{2}(\T)$ denote any triple of $H^{2}(\T)$-immersions and let $\psi_i := \ga_i \circ \xi_i$ denote their constant speed equivalents. All three propositions assert that the nonlocal force
$$
f_{(\psi_0,\psi_1)} (x) := \int_{\T} g\left( \frac12|\psi_0(x) - \psi_1(y)|^{2} \right) |\dot \psi_1(y)| \, \rd y
$$
satisfies an $H^{1}(\T)$ bound of the form
$$
\|f_{(\psi_0,\psi_1)}\|_{H^{1}(\T)} \leq \mathfrak{C}^{*}\big(\ga_0,\ga_1\big),
$$
where $\mathfrak{C}^{*}\big(\ga_0,\ga_1\big)$ represents some continuous function depending on some combination of the parametrization invariant, geometric quantities
$$
\ell(\ga_i), \qquad \ell_{\kappa}(\ga_i), \qquad \delta_{\infty}(\ga_i) \qquad \text{and} \qquad \mathrm{dist}\big( \ga_0,\ga_1 \big) 
$$
that vary continuously with respect to the $H^{2}(\T)$ topology. A simple argument based on the identities
$$
f_{(\ga_0,\ga_1)}(x) = f_{(\psi_0,\psi_1)}\big(\eta_0(x)\big) \qquad \text{and} \qquad \dot f_{(\ga_0,\ga_1)}(x) = \dot f_{(\psi_0,\psi_1)}\big( \eta_0(x)\big) \dot \eta_0(x)
$$
and a straightforward change of variables shows that a similar $H^{1}(\T)$ bound
\begin{align}\label{eq:h1boundtmp}
\|f_{(\ga_0,\ga_1)}\|_{H^{1}(\T)} \leq \mathfrak{C}^{*}\big(\ga_0,\ga_1\big)
\end{align}
holds, provided we allow $\mathfrak{C}^{*}\big(\ga_0,\ga_1\big)$ to also depend upon the minimal speed $\frs_{*}(\ga_0)$ of the immersion. The local $H^{1}(\T) \mapsto L^{2}(\T)$ Lipschitz property follows from similar considerations. Propositions \ref{prop:nlest}, \ref{prop:nlest0} and \ref{prop:nlest2} furnish $H^{1}(\T) \mapsto L^{2}(\T)$ bounds of the form
\begin{align}\label{eq:dummybounds}
\|f_{(\psi_0,\ga_2)} - f_{(\psi_1,\ga_2)}\|_{L^{2}(\T)} &\leq \mathfrak{C}^{*}\big(\ga_0,\ga_1,\ga_2\big)\|\psi_0 - \psi_1\|_{H^{1}(\T)} \nonumber \\
\|f_{(\psi_2,\ga_0)} - f_{(\psi_2,\ga_1)}\|_{L^{2}(\T)} &\leq \mathfrak{C}^{*}\big(\ga_0,\ga_1,\ga_2\big)\|\psi_0 - \psi_1\|_{H^{1}(\T)} \\
\|f_{(\psi_0,\ga_0)} - f_{(\psi_1,\ga_1)}\|_{L^{2}(\T)} &\leq \mathfrak{C}^{*}\big(\ga_0,\ga_1\big)\|\psi_0 - \psi_1\|_{H^{1}(\T)}\nonumber
\end{align}
as the $\psi_i$ have constant speed. Lemma \ref{lem:chofvar} allows us to replace $\|\psi_0 - \psi_1\|_{H^{1}(\T)}$ with $\|\ga_0 - \ga_1\|_{H^{1}(\T)}$ by modifying the functions $\mathfrak{C}^{*}$ appropriately. We may then simply appeal to a change of variables, the $H^{1}(\T)$ bound \eqref{eq:h1boundtmp}, the triangle inequality and lemma \ref{lem:chofvar} to find that the appropriate variants
\begin{align}\label{eq:l2liptmp}
\|f_{(\ga_0,\ga_2)} - f_{(\ga_1,\ga_2)}\|_{L^{2}(\T)} &\leq \mathfrak{C}^{*}\big(\ga_0\big)\|f_{(\psi_0,\ga_2)} - f_{(\psi_1,\ga_2)}\circ \eta_1 \circ \xi_0\|_{L^{2}(\T)} \leq \mathfrak{C}^{*}\big(\ga_0,\ga_1,\ga_2\big)\|\ga_0 -\ga_1\|_{H^{1}(\T)},\nonumber \\
\|f_{(\ga_2,\ga_0)} - f_{(\ga_2,\ga_1)}\|_{L^{2}(\T)} &\leq \mathfrak{C}^{*}\big(\ga_2\big)\|f_{(\psi_2,\ga_0)} - f_{(\psi_2,\ga_1)}\|_{L^{2}(\T)} \leq  \mathfrak{C}^{*}\big(\ga_0,\ga_1,\ga_2\big)\|\ga_0 - \ga_1\|_{H^{1}(\T)} \\
\|f_{(\ga_0,\ga_0)} - f_{(\ga_1,\ga_1)}\|_{L^{2}(\T)} &\leq \mathfrak{C}^{*}\big(\ga_0\big)\|f_{(\psi_0,\ga_0)} - f_{(\psi_1,\ga_1)}\circ \eta_1 \circ \xi_0\|_{L^{2}(\T)} \leq \mathfrak{C}^{*}\big(\ga_0,\ga_1\big)\|\ga_0 -\ga_1\|_{H^{1}(\T)}, \nonumber
\end{align}
of \eqref{eq:dummybounds} hold for non-canonical representations as well.

\section{Local Well-Posedness and Regularity}\label{sec:Wellposed}

We now turn our attention toward our earlier claim, i.e. that nonlocal estimates of the form (\ref{eq:dummybounds},\ref{eq:l2liptmp}) are sufficient for a well-behaved dynamic. We base our analysis on the shape dynamic \eqref{eq:shapedynamic} and rely upon proposition \ref{prop:equivdyn} to infer local well-posedness for the alternative formulations of the dynamic. Our approach uses standard techniques; the main novelty is realizing that (\ref{eq:dummybounds},\ref{eq:l2liptmp}) are enough. We therefore proceed in somewhat a cursory fashion and only dwell on those aspects unique to the present context. The roadmap is a familiar one --- we show existence in the class of mild solutions, and then proceed to establish regularity and uniqueness results.

Recall the shape evolution equations \eqref{eq:shapedynamic}
\begin{align*}
\partial_{t} \phi_i &= \veps_i\left( \ddot \phi_i +  \po\left(a_i\right)\dot \phi_i +  F_i \dot \phi^{\perp}_i + \mu_{a_i}\phi_i\right), \qquad
a_{i} := |\ddot \phi_i|^2 + \langle \ddot \phi_i,\dot \phi^{\perp}_i\rangle F_i, \qquad
\rd_t \veps_i = 2\veps^2_i \mu_{a_i}, \nonumber \\
F_i(x,t) &= \veps^{-\frac12}_i(t)\left( c_i + \sum^{m}_{j=1} \int_{\T} g_{ij}\left(\frac12|\psi_i(x,t)-\psi_j(y,t)|^2\right)|\dot \psi_j(y,t)| \, \rd y \right), \qquad \quad \,\psi_i = \veps^{-\frac12}_i \phi_i,
\end{align*}
and let $\Phi(t) = (\phi_1(\cdot,t),\ldots,\phi_m(\cdot,t) \big)$ denote the $m$-tuple of interfacial shapes at any instant in time. We shall use the notation
$$
\mathcal{H}^{s}_{m} := H^{s}(\T) \times \cdots \times H^{s}(\T) 
$$
to denote the set of all such $m$-tuples with $H^{s}(\T)$ regularity in each component, and endow $C^{s}_m(T) := C([0,T];\mathcal{H}^{s}_m)$ with the max-norm
$$
\| \Phi \|_{C^{s}_{m}(T)} := \max \left\{ \|\phi_1\|_{C([0,T];H^{s}(\T))} , \ldots ,   \|\phi_m\|_{C([0,T];H^{s}(\T))} \right\}
$$
across components. Similarly, let $\bveps(t) := (\veps_1(t),\ldots,\veps_m(t))$ denote the $m$-tuple of (inverse squared) interfacial lengths at any instant in time, which lies in $\R^{m}_{+}$ endowed once again
$$
\| \bveps(t) \|_{\R^m} := \| \bveps(t)\|_{\ell^{\infty}(\R^{m})} \qquad \text{and} \qquad \|\bveps\|_{C_{m}(T)} := \max\left\{ \|\veps_1\|_{C([0,T];\R)},\ldots,\|\veps_m\|_{C([0,T];\R)} \right\}
$$
with the max-norm. For $T>0$ given, let $Q_{T} := \T \times [0,T]$ denote the corresponding parabolic cylinder, and
$$
C^{\infty}_{\rm p,0} := \left\{ \gamma \in C^{\infty}(Q_T) : \forall \, k \geq 0,\;\;\, \gamma^{(k)}(-\pi,t) = \gamma^{(k)}(\pi,t), \;\;\; \partial^{(k)}_{t} \gamma(x,T) = \partial^{(k)}_{t} \gamma(x,0) = 0 \right\}  
$$
of smooth, periodic functions vanishing on the boundary of the parabolic cylinder. We require at least $H^{2}(\T)$ regularity for the transport coefficient in \eqref{eq:shapedynamic} to make sense, and so we will search for solutions in this class. Specifically, if $\Phi \in C^{2}_{m}(T)$ and $\bveps \in C^{1}([0,T])$ satisfy the usual weak relation
\begin{align*}
0 &= \int_{Q_T}  \langle \phi_i(x,t), \gamma_{t}(x,t) + \veps_i(t) \gamma_{xx}(x,t) \rangle \, \rd x \rd t + \int_{Q_{T}} \veps_i(t)\langle f_i(x,t), \gamma(x,t)\rangle \, \rd x \rd t \\
f_i &:= \po\left( a_i \right)\dot \phi_i +  F_i \dot \phi^{\perp}_i + \mu_{a_i}\phi_i
\end{align*}
for all $\gamma \in C^{\infty}_{\rm p,0}(Q_T),$ the $\veps_i$ satisfy \eqref{eq:shapedynamic} in the classical sense and $(\Phi(t),\bveps(t))$ attain a given initial condition $(\Phi(0),\bveps(0)) = (\Phi_0,\bveps_0)$ in the strong sense then we refer to $(\Phi,\bveps)$ as a mild solution. It is easy to see that this definition is equivalent to solving directly for the Fourier coefficients
$$
\widehat \phi^{\prime}_{i,k}(t) = -\veps_i(t)k^{2}\widehat\phi_{i,k}(t) + \veps_i(t) \hat f_{i,k}(t) \qquad i \in [m], \; \, k \in \Z
$$
in the usual manner, and then verifying that the constructed solution 
$$
\phi_i(x,t) = \sum_{k \in \Z} \widehat \phi_{i,k}(t) \re^{ikx}
$$
has the requisite regularity properties.

Constructing such solutions will not prove too difficult, with one caveat: we need hypotheses guaranteeing that the nonlocal force
$$
F_i(x,t) := \veps^{-\frac12}_i(t)\left(c_i + \sum^{m}_{j=1} \int_{\T} g_{ij}\left(\frac1{2}|\psi_i(x,t) - \psi_j(x,t)|^{2}\right) |\dot \psi_j(y,t)| \, \rd y \right)\qquad \psi_i(x,t) := \veps^{-\frac12}_i(t)\phi_i(x,t)
$$
is well-behaved near a prescribed initial condition. Fix an initial condition $(\Phi_0,\bveps_0) \in \cH^{2}_{m} \times \R^{m}_{+}$ where each $\phi_i(x,0)$ has unit speed and let
$$
\psi_i(x,0) := \veps^{-\frac12}_i(0)\phi_i(x,0)
$$
denote the corresponding immersions that represent the initial interfaces.  Let $\mathcal{G} = \{ g_{ij}(s) : (i,j) \in [m]\times[m] \}$ denote a given collection of nonlocal kernels. If at least one of the conditions
\begin{enumerate}[{\rm I)}]
\item The kernel $g_{ij}$ satisfies {\rm H0)}, {\rm H1)}, or
\item The kernel $g_{ii}$ satisfies {\rm H0)}, {\rm H2)} and the interface $\psi_i(\cdot,0)$ has finite distortion, or
\item The kernel $g_{ij}$ satisfies {\rm H3)} and the interfaces $\psi_i(\cdot,0),\psi_j(\cdot,0)$ do not intersect,
\end{enumerate}
holds for all $(i,j) \in [m] \times [m]$ then we call the triple $(\Phi_0,\bveps_0,\cG)$ \emph{compatible}. For a given $\delta>0$ let
$$
B_{\delta}\big( (\Phi_0,\bveps_0) \big) := \left\{ (\Phi,\bveps) \in \cH^{2}_{m} \times \R^{m}_{+} : \|\Phi-\Phi_0\|_{\cH^{2}_{m}} + \|\bveps - \bveps_0\|_{\R^m} < \delta \right\}
$$
denote the corresponding $\cH^{2}_{m} \times \ell^{\infty}(\R^{m}_{+})$ ball near such a compatible initial condition. If we view the construction of the nonlocal force as a mapping
$$
(\Phi,\bveps) \in \cH^{2}_{m} \times \R^{m}_{+} \qquad \mapsto \qquad F_{i}[(\Phi,\bveps)] \in H^{1}(\T)
$$
then by propositions \ref{prop:nlest}, \ref{prop:nlest0} and \ref{prop:nlest2} (more specifically, by \eqref{eq:h1boundtmp}) the corresponding nonlocal forces obey a uniform $H^{1}(\T)$ bound
\begin{align*}
\|F_{i}[(\Phi,\bveps)]\|_{H^{1}(\T)} \leq \mathfrak{F}^{*}_{i}
\end{align*}
provided we choose $\delta = \delta( (\Phi_0,\bveps_0) ) > 0$ small enough. Similarly, by \eqref{eq:l2liptmp} and the triangle inequality we may infer a local $H^{1} \mapsto L^{2}$ Lipschitz property
\begin{align*}
\| F_i[(\hat \Phi,\hat \bveps)] - F_i[(\Phi,\bveps)]\|_{L^{2}(\T)} \leq \mathfrak{F}^{*}_i\left(  \|\Phi - \hat \Phi \|_{\cH^{1}_m(\T)} + \|\bveps - \hat \bveps\|_{\R^m} \right)
\end{align*}
within some $B_{\delta}\left( (\Phi_0,\bveps_0) \right)$ for $\delta>0$ small enough as well. A simple estimate based on the embeddings \eqref{eq:gns} shows that the overall forcing
$$
\po(a_i)\dot \phi_i + F_i[(\Phi,\bveps)] + \mu_{a_i}\phi_i \qquad \qquad a_{i} := |\ddot \phi_i|^2 + \langle \ddot \phi_i,F_i[(\Phi,\bveps)]\rangle
$$
defines a mapping $(\Phi,\bveps) \in \cH^{2}_{m} \times \R^{m}_{+} \longmapsto \cF_{i}[(\Phi,\bveps)] \in W^{1,1}(\T)$ that obeys a uniform bound of the form
\begin{align}\label{eq:loc1}
\|\cF_{i}[(\Phi,\bveps)]\|_{W^{1,1}(\T)} \leq \mathfrak{F}^{*}_{i}
\end{align}
locally near an initial condition. Analogously, the local $H^{1}(\T) \mapsto L^{2}(\T)$ Lipschitz property of the nonlocal force $F_i$ induces a weaker local $H^{2}(\T) \mapsto L^{2}(\T)$ Lipschitz property
\begin{align}\label{eq:loc2}
\| \cF_i[(\hat \Phi,\hat \bveps)] - \cF_i[(\Phi,\bveps)]\|_{L^{2}(\T)} \leq \mathfrak{F}^{*}_i\left(  \|\Phi - \hat \Phi \|_{\cH^{2}_m(\T)} + \|\bveps - \hat \bveps\|_{\R^m} \right)
\end{align}
for the overall force. Finally, the scalar coefficient $\mu_{a_i}$ defines a mapping $(\Phi,\bveps) \mapsto \mu_{i}[(\Phi,\bveps)] \in \R$ with
\begin{align}\label{eq:loc3}
| \mu_i[(\hat \Phi,\hat \bveps)] | \leq \mathfrak{F}^{*}_i \qquad \text{and}  \qquad |\mu_i[(\Phi,\bveps) - \mu_i[(\hat \Phi,\hat \bveps)]| \leq \mathfrak{F}^{*}_i\left(  \|\Phi - \hat \Phi \|_{\cH^{2}_m(\T)} + \|\bveps - \hat \bveps\|_{\R^m} \right)
\end{align}
holding locally as well. We may therefore view \eqref{eq:shapedynamic} as a specific instance of the more general family of evolutions
\begin{align}\label{eq:genev}
\partial_{t} \phi_i = \veps_i \ddot \phi_i + \veps_i \cF_i[(\Phi,\bveps)] \qquad \text{and} \qquad \rd_t \veps_i = 2\veps^2_i \mu_i[(\Phi,\bveps)],
\end{align}
driven by mappings $\cF_i,\mu_i$ obeying (\ref{eq:loc1},\ref{eq:loc2}) and \eqref{eq:loc3} locally (in $\cH^2_m \times \R^m$) near an initial condition.

We now show local existence, which we pursue via a standard fixed point argument. This approach requires a few preliminary lemmas that require some exposition. When $\cF_i[(\Phi,\bveps)]=0$ the evolution of $\veps_i$ in \eqref{eq:genev} is passive; a change of the temporal variable $t \mapsto \tau$ decouples the length evolution from the shape of the interface and defines a natural time-scale for the evolution. So for a single, uncoupled interface we may consider the shape evolution separately from its scale. When $\cF_i[(\Phi,\bveps)]\neq 0$ the evolution of the $m$ interfaces couple and we cannot separate out the scale evolution from the shape evolution as in the scalar ($m=1$) case. In this way, the coupling of the system makes existence a more tedious matter. Nevertheless, we may still pursue a similar idea to circumvent this difficulty. Consider the coupled dynamic
\begin{align*}
\phi_{t}(x,t) = \veps(t)\phi_{xx}(x,t) + \veps(t)g(x,t) \qquad \text{and} \qquad \dot \veps(t) = - \veps^2(t)\mu_{f}(t)
\end{align*}
with $f,g \in C([0,\infty),L^{1}(\T))$ a pair of a-priori given functions. Let $\fra(t),\frb(t)$ denote a solution to the system
\begin{align}\label{eq:timemaps}
\dot \fra(t) &= \frb(t) && \fra(0) = 0 \nonumber \\
\dot \frb(t) &= \frb(t)\mu_{f}\big( \fra(t)\big) && \frb(0) = \frac1{\veps_0}
\end{align}
of ordinary differential equations and $\frw( \fra(t) ) = t$ denote the corresponding inverse map. Then the function
$$
h(t) = \int^{\fra(t)}_{0} \veps(s) \, \rd s
$$
satisfies $h(0) = 0, \dot h(0) = 1$ as well as the differential equations
$$
\dot h(t) = \veps\big( \fra(t) \big) \dot \fra(t) \qquad \text{and} \qquad \ddot h(t) = \dot \veps\big( \fra(t) \big) \dot \fra^2(t) + \veps\big( \fra(t) \big)\ddot \fra(t) = \dot h(t)(1 - \dot h(t))\mu_{f}\big( \fra(t)\big),
$$
so that $\dot h(t) = 1$ and thus $h(t) = t$ is the identity. Setting $\eta(x,t) = \phi\big( x, \fra(t) \big)$ and differentiating in time
$$
\eta_{t}(x,t) = \veps\big( \fra(t) \big)\dot \fra(t) \left( \eta_{xx}(x,t) + g\big(x, \fra(t) \big) \right) = \eta_{xx}(x,t) + g\big(x,\fra(t))
$$
reveals that $\eta$ satisfies an ordinary forced heat equation. In other words, by constructing any solution to \eqref{eq:timemaps} and solving the forced heat equation we see
$$
\phi(x,t) := \eta\big( x,\frw(t) \big)
$$
gives the solution of the original problem as long as the inverse map $\frw$ exists. Composing the change-of-variable maps $\fra_2 \circ \frw_1$ from two different such evolutions then provides a way to obtain comparison estimates by accounting for changes in time-scale. We exploit these ideas in the following proposition, which forms the basis of a fixed point approach to existence.
\begin{proposition}
\label{prop:linearA}
For $i=1,2$ let $f_i\in C([0,\infty);L^{1}(\T))$ and $g_i \in L^{\infty}([0,\infty);W^{1,1}(\T))$ obey the global bounds
\begin{align*}
\sup_{t \geq 0}\;\, |\mu_{f_i}(t)| &\leq \frf^{*}, \qquad\, \text{and} \qquad \quad \,\sup_{t \geq 0}\;\,\left| \mu_{f_1}(t) - \mu_{f_2}(t)\right| \leq \frd \frf^{*} \\
\sup_{t \geq 0}\;\, \|g_i(\cdot,t)\|_{W^{1,1}(\T)} &\leq \frg^{*}, \qquad \text{and} \qquad \|g_1(\cdot,t) - g_2(\cdot,t)\|_{L^{2}(\T)} \leq \frd \frg^*,
\end{align*}
and let $\vth_0 \in H^{2}(\T)$ and $\veps_0 > 0$ denote arbitrary initial data. Then the following hold ---
\begin{enumerate}[{\rm i)}]
\item For any time $T < T_{*}$ with
$$
T_{*} := \frac{1}{ \veps_0 \frf^{*}},
$$
there exist unique mild solutions $\vth_i \in C( [0,T];H^{2}(\T) ), \veps_{i} \in C^{1}([0,T];\R)$ to the initial value problems
\begin{align*}
\partial_{t} \vth_{i} &= \veps_i(t)\ddot \vth_i + \veps_i(t)g_i &&\vth(x,0) = \vth_0(x) \in H^{2}(\T)\\
\dot \veps_i(t) &= - \veps^2_i(t)\mu_{f_i}(t) && \;\,\,\, \veps_i(0) = \veps_0 > 0,
\end{align*}
and the solution $\vth_i$ exists for as long as $\veps_i$ remains finite.
\item There exists a continuous, increasing function $\Gamma_{\vth_0} : \R^{+} \mapsto \R^{+},$ depending only on the initial datum $\vth_0,$ so that the properties
$$
\Gamma_{\vth_0}(0) = 0 \qquad \text{and} \qquad \Gamma_{\vth_0}(t) \nearrow \Gamma_{\vth_0}(\infty) = \| \vth_0 \|_{\dot{H}^{2}(\T)}
$$
hold.
\item The solutions $\vth_i,\veps_i$ obey the bounds
\begin{align*}
|\veps_i(t) - \veps_i(s)| & \leq \veps_0\left(1 - \frac{t}{T_{*}}\right)^{-2}\frac{t-s}{T_{*}}\\
\left| \mu_{\vth_i}(t) - \mu_{\vth_i}(s)\right| &\leq \frg^{*}\left( \zeta_i(t) - \zeta_i(s) \right),\qquad \quad \zeta_i(t) := \int^{t}_{0} \veps_i(s) \, \rd s,\\
\| \vth_i(\cdot,t) - \vth_i(\cdot,s) \|_{\dot{H}^{2}(\T)} &\leq \Gamma_{\vth_0}\left( \zeta_i(t) - \zeta_i(s) \right) + \frg^{*}\left( \zeta_i(t) - \zeta_i(s) \right)^{\frac14}
\end{align*}
for all pairs of times $0 \leq s < t$ at which the solutions exist.
\item If $g_1=g_2=0$ then the corresponding homogeneous solutions $\vth^{h}_{i}$ obey the difference bound
$$
\| \vth^{h}_1(\cdot,t) - \vth^{h}_2(\cdot,t)\|_{H^{2}(\T)} \leq  \frac{ \frd\frf^{*} }{4\frf^{*}}\left(1 - \frac{t}{T_*}\right)^{-1}\log^2\left(1-\frac{t}{T^*}\right)\|\vth_0\|_{\dot H^{2}(\T)}
$$
for as long as both exist. 
\item For any $0 < s < 1/2$ the differences $\xi_i := \vth_{i} - \vth^{h}_{i}$ obey the estimates
$$
\|\xi_i(\cdot,t)\|_{H^{2+s}(\T)} \leq \frg^{*}\big( \frc_{s} + \zeta_i(t) \big)
$$
for $0 < \frc_s < \infty$ a finite constant depending only on the modulus $s$ of regularity. Moreover, the estimate
$$
\|\xi_i(\cdot,t) - \xi_i(\cdot,s)\|_{H^{2}(\T)} \leq \frg^{*}\left( \big(\zeta_i(t) - \zeta_i(s)\big)^{\frac14} + \zeta_i(t) - \zeta_i(s) \right)
$$
holds for all pairs of times $0 \leq s < t$ at which the solutions exist.
\item The non-homogeneous solution map $(f_i,g_i) \mapsto \xi_i$ is H\"older continuous with respect to the $H^{2}(\T)$ topology, in the sense that the difference $\delta \xi = \xi_1 - \xi_2$ obeys the bound
\begin{align*}
&\|\delta \xi(\cdot,t) \|_{H^{2}(\T)} \leq \mathfrak{C}^{*}\left( \frac{T_*}{\frf^*(T_*-t)},\frf^* \right)\left[ \frd \frg^{*} \left( 1 + \log^{+}\left( \frac{\frg^* }{\frd \frg^{*}}\zeta^{\frac14}_1(t) \vee \zeta^{\frac14}_2(t) \right) \right) + \frac{\frd \frf^*}{\frf^*} \vee \left( \frac{\frd \frf^*}{\frf^*} \right)^{\frac14}\right]
\end{align*}
on the common interval $0\leq t < T_*$  where both solutions exist.
\end{enumerate}
\end{proposition}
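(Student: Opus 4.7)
The plan is to first solve the scalar ODE for $\veps_i$ explicitly and then use a time rescaling to reduce the PDE to a classical forced heat equation on $\T$. Since $\rd_t(1/\veps_i) = \mu_{f_i}(t)$, we obtain $1/\veps_i(t) = 1/\veps_0 + \int_0^t \mu_{f_i}(s)\,\rd s$, which remains strictly positive on $[0,T_*)$ and yields the two-sided bound $\veps_0(1+t/T_*)^{-1} \leq \veps_i(t) \leq \veps_0(1-t/T_*)^{-1}$; combined with $|\dot\veps_i|\leq \veps_i^2 \frf^*$, this handles the $\veps_i$ Lipschitz estimate in (iii). Introducing $\zeta_i(t) := \int_0^t \veps_i(s)\,\rd s$ and setting $\tilde\vth_i(x,\zeta) := \vth_i(x, t_i(\zeta))$, where $t_i$ inverts $\zeta_i$, converts the PDE into the standard heat equation $\partial_\zeta \tilde\vth_i = \partial_{xx}\tilde\vth_i + \tilde g_i$. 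Duhamel in Fourier gives
$$
\hat{\tilde\vth}_{i,k}(\zeta) = \re^{-k^2\zeta}\hat\vth_{0,k} + \int_0^\zeta \re^{-k^2(\zeta-\sigma)}\hat{\tilde g}_{i,k}(\sigma)\,\rd\sigma,
$$
establishing existence and uniqueness of mild solutions in (i); the $k=0$ mode produces the Lipschitz bound on $\mu_{\vth_i}$ in (iii).

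For (ii), I will set $\Gamma_{\vth_0}(\delta) := \|\vth_0 - \re^{\delta\partial_{xx}}\vth_0\|_{\dot H^2(\T)}$. This is continuous and non-decreasing in $\delta$ with $\Gamma_{\vth_0}(0) = 0$ and, by monotone convergence applied to $2\pi\sum_{k\neq 0} k^4|\hat\vth_{0,k}|^2(1-\re^{-k^2\delta})^2$, limit $\|\vth_0\|_{\dot H^2}$ as $\delta\to\infty$. The $\dot H^2$ bound in (iii) on the homogeneous part follows immediately from $\vth_i^h(t) = \re^{\zeta_i(t)\partial_{xx}}\vth_0$ and contractivity of the heat semigroup on $\dot H^2$, while the $(\zeta_i(t)-\zeta_i(s))^{1/4}$ Hölder modulus on the nonhomogeneous part is standard parabolic regularity: the embedding $W^{1,1}(\T)\hookrightarrow L^\infty(\T)$ gives $(1+|k|)|\hat{\tilde g}_{i,k}|\lesssim \frg^*$, and splitting the Duhamel integral at the parabolic threshold $k^2(\zeta-\sigma) \sim 1$ produces the $1/4$ exponent.

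Part (iv) exploits that the homogeneous Fourier modes differ only through the time rescaling:
$$
\|\vth_1^h(t)-\vth_2^h(t)\|_{\dot H^2}^2 = 2\pi\sum_{k\neq 0} k^4|\hat\vth_{0,k}|^2 \bigl(\re^{-k^2\zeta_1(t)} - \re^{-k^2\zeta_2(t)}\bigr)^2.
$$
I will first control $|\zeta_1(t)-\zeta_2(t)|$ using $|1/\veps_1 - 1/\veps_2|\leq \frd\frf^* t$ and integrating through the known bounds on $\veps_i$, then estimate the bracket by $\min\{2,k^2|\zeta_1-\zeta_2|\}\re^{-k^2(\zeta_1\wedge\zeta_2)}$ and split the Fourier sum at the crossover $k^2 \sim 1/|\zeta_1-\zeta_2|$; the low-frequency piece contributes the smallness factor $\frd$ while the high-frequency exponential tail supplies the $\log^2(1-t/T_*)$ factor. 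Part (v) is maximal regularity: the same Duhamel formula together with $(1+|k|)|\hat{\tilde g}_{i,k}|\lesssim \frg^*$ yields $|k|^{2+s}|\hat\xi_{i,k}(\zeta)| \lesssim |k|^{s-1}\frg^*$, square-summable for $s < 1/2$, while the $k=0$ mode $\hat\xi_{i,0}(\zeta) = \int_0^\zeta \hat{\tilde g}_{i,0}$ contributes the linear $\zeta$ growth and gives the $\frc_s + \zeta$ structure; the $H^2$-Hölder modulus on $\xi_i$ follows from the same threshold split.

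Part (vi) is the main obstacle. In original time, Duhamel yields
$$
\delta\xi(t) = \int_0^t \bigl[\re^{(\zeta_1(t)-\zeta_1(u))\partial_{xx}} g_1(u)\veps_1(u) - \re^{(\zeta_2(t)-\zeta_2(u))\partial_{xx}} g_2(u)\veps_2(u)\bigr]\,\rd u,
$$
which I will split into three pieces capturing (a) the change of forcing $g_1 \to g_2$, (b) the change of scale $\veps_1 \to \veps_2$, and (c) the change of time rescaling $\zeta_1 \to \zeta_2$ inside the semigroup. Piece (b) is elementary from (iii). In piece (a), only a pointwise-in-time $L^2$ bound $\|g_1(t) - g_2(t)\|_{L^2} \leq \frd\frg^*$ is available, so a direct Cauchy-Schwarz in Duhamel produces a logarithmically divergent integral $\int_0^t \veps_1(u)/(\zeta_1(t)-\zeta_1(u))\,\rd u$; I will regularize by splitting frequencies at the crossover $|k|_* \sim (\frg^*/(\frd\frg^*))^{1/2}\zeta^{-1/8}$ and invoking the uniform $W^{1,1}$ bound $(1+|k|)|\widehat{(g_1-g_2)}_k| \lesssim \frg^*$ on the high tail, which produces exactly the factor $1 + \log^+(\frg^*\zeta^{1/4}/(\frd\frg^*))$. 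Piece (c) is the nonhomogeneous analogue of (iv): I will leverage the improved $H^{2+s}$ regularity of $\xi_i$ from (v) to interpolate the small shift $|\zeta_1(t)-\zeta_2(t)|$ into an $H^2$ estimate, with the optimal exponent producing the $(\frd\frf^*/\frf^*)^{1/4}$ term. The explicit dependence of the prefactor $\mathfrak{C}^*(T_*/(\frf^*(T_*-t)),\frf^*)$ on $(1-t/T_*)^{-1}$ tracks the polynomial blow-up of $\veps_i$ and $\zeta_i$ as $t \to T_*$.
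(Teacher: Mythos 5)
Your overall strategy is the same as the paper's: solve the $\veps_i$ ODE explicitly, pass to heat time $\zeta_i$, and run everything through the Fourier--Duhamel representation, with $\Gamma_{\vth_0}$ defined exactly as in the paper and the $\frac14$-H\"older moduli in (iii) and (v) coming from the same series estimate $\sum_{k\geq1}k^{-2}(1-\re^{-k^2 T})^2\lesssim \sqrt{T}$. Where you genuinely diverge is in (iv) and (vi): the paper routes both through the appendix lemma on the auxiliary system $(\fra_i,\frb_i)$, its inverses $\frw_i=\zeta_i$, and the transition maps $T_{ij}$, whose closeness to the identity ($|T_{ij}(t)-t|\lesssim \frd\frf^* t^2\re^{\frf^*(t+T_{ij}(t))}$) is the engine behind the $\log$-type prefactors; you instead control $|\zeta_1(t)-\zeta_2(t)|$ directly from $1/\veps_i(t)=1/\veps_0+\int_0^t\mu_{f_i}$ and compare the two heat propagators frequency-by-frequency. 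This is a legitimate simplification --- your bound $\min\{2,k^2|\zeta_1-\zeta_2|\}\re^{-k^2(\zeta_1\wedge\zeta_2)}$ reproduces the paper's key intermediate quantity (their $D_+(t)/2t$ in heat time) without any transition-map machinery --- and in (vi) your split into forcing/scale/propagator differences mirrors the paper's $\mathrm{I}_k+\mathrm{II}_k$ decomposition, with your frequency splitting (low modes via the $L^2$ smallness $\frd\frg^*$, high modes via the $W^{1,1}$ decay $|k||\hat g_k|\lesssim\frg^*$) playing the role of the paper's splitting of the Duhamel integral in \emph{time} at a distance $\veps$ from the endpoint with the optimization $\veps_*=(\frd\frg^*/\frg^*)^4$. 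Both routes yield the $\frd\frg^*\bigl(1+\log^{+}(\frg^*\zeta^{1/4}/\frd\frg^*)\bigr)$ term.

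Two soft spots to repair before this is a proof. First, your crossover $|k|_*\sim(\frg^*/\frd\frg^*)^{1/2}\zeta^{-1/8}$ in piece (a) of (vi) is mis-calibrated: with that cutoff the high-frequency tail contributes of order $\frg^*|k|_*^{-1/2}\sim(\frg^*)^{3/4}(\frd\frg^*)^{1/4}\zeta^{1/16}$, which is far larger than $\frd\frg^*$ when $\frd\ll1$; balancing $\frg^*K^{-1/2}\sim\frd\frg^*$ forces $K\sim(\frg^*/\frd\frg^*)^{2}$, and only then does the low-frequency logarithm $\log(\zeta K^2)$ collapse to the stated $\log^{+}(\frg^*\zeta^{1/4}/\frd\frg^*)$. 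Second, in piece (c) spatial interpolation of the uniform $H^{2+s}$ bound from (v) does not by itself convert the shift $|\zeta_1-\zeta_2|$ into an $H^2$ estimate; what you actually need is the \emph{temporal} modulus --- e.g.\ bound $|\re^{-k^2 A}-\re^{-k^2 B}|\leq 1-\re^{-k^2\rho^*}$ with $\rho^*:=\sup_u|\zeta_1(t)-\zeta_1(u)-\zeta_2(t)+\zeta_2(u)|\lesssim \frd\frf^* t^2(1-t/T_*)^{-\frc}$, and sum as in (v) to produce $\frg^*(\rho^*)^{1/4}$, which is exactly how the $(\frd\frf^*/\frf^*)^{1/4}$ term arises in the paper (there via $\frg^*D_{\pm}^{1/4}$). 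With those adjustments your plan closes, and it is arguably more elementary than the paper's, since it dispenses with the transition-map lemma entirely.
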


With these preliminaries in hand, we now proceed to show local existence of \eqref{eq:genev} via the Leray-Schauder fixed-point theorem. Fix an initial condition $(\Phi_0,\bveps_0) \in \cH^{2}_{m} \times \R^{m}$ for which the family of $m$ mappings 
$$(\Phi,\bveps) \mapsto (\cF_{i}[(\Phi,\bveps)],\mu_i[(\Phi,\bveps)])$$
obey a local $W^{1,1}(\T)$ bound and $H^{1}(\T) \mapsto L^{2}(\T)$ Lipschitz property locally near this initial condition. In other words, there exists some $\delta>0$ and a constant $\mathfrak{F}>0$ so that the bounds
\begin{align*}
\|\cF_i[(\Phi,\bveps)]\|_{W^{1,1}(\T)} &\leq \mathfrak{F}^{*} \qquad \text{and} \qquad \|\cF_i[(\Phi,\bveps)] - \cF_i[(\hat \Phi,\hat \bveps)] \|_{L^2(\T)} \leq \mathfrak{F}^{*}\|(\Phi,\bveps) - (\hat \Phi,\hat \bveps)\|_{ \cH^{2}_{m} \times \R^{m} } \\
|\mu_i[(\Phi,\bveps)]| &\leq \mathfrak{F}^{*} \qquad \text{and} \qquad \quad \qquad \, |\mu_i[(\Phi,\bveps)] - \mu_i[(\hat \Phi,\hat \bveps)] |\leq \mathfrak{F}^{*}\|(\Phi,\bveps) - (\hat \Phi,\hat \bveps)\|_{ \cH^{2}_{m} \times \R^{m} }
\end{align*}
hold on $B_{\delta}\big( (\Phi_0,\bveps_0)\big)$ for some $\delta>0$ small enough. Let $\frX := C\left( [0,\infty); \cH^{2}_{m} \times \R^{m} \right)$ denote the Banach space of temporal trajectories $(\Phi(t),\bveps(t))$ and endow it
$$
\left\| (\Phi,\bveps) \right\|_{\frX} := \sup_{t \geq 0} \;\, \left\| (\Phi(t),\bveps(t)) \right\|_{\cH^{2}_{m} \times \R^{m}}
$$
with the sup-norm. Define $\mathcal{K}_{\delta} \subset \frX$ as the set of trajectories 
$$
\mathcal{K}_{\delta} := \left\{ (\Phi,\bveps) \in \frX : \sup_{t\geq 0} \;\, \left\| (\Phi(t),\bveps(t)) - (\Phi_0,\bveps_0) \right\|_{\cH^{2}_{m} \times \R^{m}} \leq \delta \right\}
$$
that remain in the prescribed $\delta$-neighborhood of the initial condition. Note that for each $i \in [m]$ proposition \ref{prop:linearA} part ii) furnishes a continuous, increasing function $\Gamma_i : \mathbb{R}^{+} \mapsto \R^{+}$ obeying the properties
$$
\Gamma_i(0) = 0 \qquad \text{and} \qquad \Gamma_i(t) \nearrow \Gamma_i(\infty) = \| (\Phi_0)_i \|_{\dot H^{2}(\T) },
$$
and so the function $\Gamma_{\Phi_0}(t) := \max\{ \Gamma_1(t),\ldots,\Gamma_{m}(t) \}$ obeys
$$
\Gamma_{\Phi_0}(0) \qquad \text{and} \qquad \Gamma_{\Phi_{0}}(t) \nearrow \Gamma_{\Phi_0}(\infty) = \|\Phi_0\|_{\dot \cH^{2}_{m}}
$$
and depends only on the initial data. If we set $H(x) := \max\{x,x^2\}$ this function induces a corresponding modulus of continuity
$$
\omega(t,s) := \Gamma_{\Phi_0}\left( 2\|\bveps_0\|_{\R^{m}}|t-s| \right) + \mathfrak{F}^{*}\left[ H\left( 2\|\bveps_0\|_{\R^{m}}\right)|t-s| + \left( 2\|\bveps_0\|_{\R^{m}}|t-s| \right)^{\frac14} \right]
$$
for temporal trajectories that depends only on $\mathfrak{F}^{*}$ and the initial data. Define $\mathcal{K}_{\re} \subset \frX$ as those trajectories
$$
\mathcal{K}_{\re} := \left\{ (\Phi,\bveps) \in \frX : \sup_{t \neq s} \;\, \frac{ \|(\Phi(t),\bveps(t)) - (\Phi(s),\bveps(s))\|_{\cH^{2}_{m} \times \R^{m}} }{\omega(t,s)} \leq \sqrt{2} \right\}
$$
for which $\omega$ provides such a temporal modulus of continuity. Finally, given some finite time horizon $T>0$ let $\mathcal{K}_T \subset \frX$ denote the subset
$$
\mathcal{K}_{T} = \left\{ (\Phi,\bveps) \in \frX : \forall t \geq T, \;\,(\Phi(t),\bveps(t)) = (\Phi(T),\bveps(T)) \right\} 
$$
of trajectories that remain constant after this fixed, finite time. All three sets $\mathcal{K}_{\delta},\mathcal{K}_{\re},\mathcal{K}_T$ are convex and closed with respect to convergence in $\frX,$ and so their intersection
$$
\mathcal{K} := \mathcal{K}_{\delta} \cap \mathcal{K}_{\re} \cap \mathcal{K}_{T}
$$
defines a closed, convex subset. Moreover, $\mathcal{K}$ is non-empty since it contains the constant trajectory based at the initial condition. We may then try to construct a mapping $\mathcal{A}: \mathcal{K} \mapsto \mathcal{K}$ with a fixed point whose restriction to $[0,T]$ defines a mild solution.

Proposition \ref{prop:linearA} makes this task feasible. We may first appeal to the boundedness and Lipschitz assumptions on $\mu_i,\cF_i$ to conclude that any given trajectory $(\Phi,\bveps) \in \mathcal{K}$ induces forcing functions
$$
\nu_i(t) := \mu_{i}[(\Phi(t),\bveps(t))]\in \R \qquad \text{and} \qquad g_i(t) := \cF_i[(\Phi(t),\bveps(t))] \in W^{1,1}(\T)
$$
that obey the bounds
\begin{align*}
\sup_{t \geq 0} \;\, |\nu_i(t)| &\leq \mathfrak{F}^{*} \qquad \qquad \qquad\quad\quad\;\;\; \sup_{t \geq 0} \;\, \|g_i(t)\|_{W^{1,1}(\T)} \leq \mathfrak{F}^{*} \\
|\nu_i(t)-\nu_i(s)| &\leq \mathfrak{F}^{*}\omega(t,s) \qquad \qquad \qquad \|g_i(t)-g_i(s)\|_{L^{2}(\T)} \leq \mathfrak{F}^{*}\,\omega(t,s)
\end{align*}
globally in time. Proposition \ref{prop:linearA} part i) furnishes a unique mild solution $(\Psi(t),\bsigma(t))$ to 
\begin{align*}
\partial_t \psi_i = \sigma_i \ddot \psi_i + \sigma_i g_i, \qquad \rd_t \sigma_i = -2\sigma^2_i\nu_i \qquad \text{and} \qquad (\Psi(0),\bsigma(0)) = (\Phi_0,\bveps_0)
\end{align*}
existing on $[0,T]$ provided $T>0$ satisfies
$$
0 < T < T_{*} := \min_{i \in [m]} \;\, T^{i}_{*} \qquad \qquad T^{i}_{*} := \frac1{(\bveps_0)_i \mathfrak{F}^{*}},
$$
and so for any such $T>0$ a fixed point of the mapping
\begin{align*}
\mathcal{A}[(\Phi,\bveps)](t)  := \begin{cases} \big( \Psi(t),\bsigma(t)\big) &\quad \text{if} \quad 0 \leq t \leq T \\
\big( \Psi(T) , \bsigma(T) \big) &\quad \text{else}
\end{cases}
\end{align*}
would, in fact, yield a mild solution locally in time. For convenience, we first recall a standard result
\begin{theorem}[Leray-Schauder]
Assume $\mathcal{K} \subset \frX$ is a non-empty, closed and convex subset of a Banach space $\frX$. If $\mathcal{A}: \mathcal{K} \mapsto \mathcal{K}$ is continuous and $\mathcal{A}(\mathcal{K})$ is precompact in $\mathcal{K}$ then $\mathcal{A}$ has a fixed point.
\end{theorem}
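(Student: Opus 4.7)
The plan is to reduce the claim to the finite-dimensional Brouwer fixed point theorem via a standard Schauder-projection approximation. Since $\mathcal{A}(\mathcal{K})$ is precompact, for each integer $n \geq 1$ I can extract a finite $n^{-1}$-net $\{y_1^{(n)}, \ldots, y_{k_n}^{(n)}\} \subset \mathcal{A}(\mathcal{K}) \subset \mathcal{K}$. Using this net I would build the Schauder projection
\[
P_n(y) := \frac{ \sum_{i=1}^{k_n} m_i(y)\, y_i^{(n)} }{ \sum_{i=1}^{k_n} m_i(y) }, \qquad m_i(y) := \max\bigl\{0,\, n^{-1} - \|y - y_i^{(n)}\|_{\frX}\bigr\},
\]
which maps $\mathcal{A}(\mathcal{K})$ continuously into the convex hull $C_n := \mathrm{conv}\{y_1^{(n)}, \ldots, y_{k_n}^{(n)}\}$. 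The denominator is strictly positive on $\mathcal{A}(\mathcal{K})$ by the net property, so $P_n$ is continuous, and since $P_n(y)$ is a convex combination of net points all within distance $n^{-1}$ of $y$, the uniform estimate $\|P_n(y) - y\|_{\frX} \leq n^{-1}$ holds.

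Next I would consider the composition $T_n := P_n \circ \mathcal{A}$ restricted to $\mathcal{K} \cap C_n$. This set is non-empty (it contains each $y_i^{(n)}$), compact, convex, and sits inside a finite-dimensional affine subspace, while $T_n$ maps it continuously into itself: the image lies in $C_n$ by construction and in $\mathcal{K}$ because $C_n \subset \mathcal{K}$ by convexity of $\mathcal{K}$. Brouwer's fixed point theorem in this finite-dimensional setting then furnishes a point $x_n \in \mathcal{K} \cap C_n$ satisfying $x_n = P_n(\mathcal{A}(x_n))$.

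Finally, I would pass to the limit. Precompactness of $\mathcal{A}(\mathcal{K})$ permits a subsequence $n_j$ along which $\mathcal{A}(x_{n_j}) \to y^{*}$ in $\frX$, and closedness of $\mathcal{K}$ places $y^{*} \in \mathcal{K}$. The bound $\|x_{n_j} - \mathcal{A}(x_{n_j})\|_{\frX} = \|P_{n_j}(\mathcal{A}(x_{n_j})) - \mathcal{A}(x_{n_j})\|_{\frX} \leq n_j^{-1}$ then forces $x_{n_j} \to y^{*}$ as well, and continuity of $\mathcal{A}$ yields $\mathcal{A}(x_{n_j}) \to \mathcal{A}(y^{*})$. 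Uniqueness of limits gives $\mathcal{A}(y^{*}) = y^{*}$. The only genuinely technical step is the construction of the Schauder projection and the verification of its three properties; Brouwer's theorem on $\mathcal{K} \cap C_n$ is taken as the sole external input, and the limit argument is routine compactness.
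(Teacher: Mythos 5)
Your argument is correct: it is the classical proof of the Schauder fixed-point theorem, reducing to Brouwer via the Schauder projection $P_n$ built from a finite $n^{-1}$-net of the precompact image, applying Brouwer on the compact convex finite-dimensional set $C_n\subset\mathcal{K}$, and passing to the limit using precompactness, the estimate $\|P_n(y)-y\|_{\frX}\leq n^{-1}$, closedness of $\mathcal{K}$, and continuity of $\mathcal{A}$. Note that the paper itself offers no proof of this statement; it is quoted as a standard external result whose hypotheses are then verified for the solution map, so your write-up supplies the omitted classical argument (with Brouwer's theorem as the only external input) rather than diverging from anything in the text.
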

\noindent and then set upon the somewhat lengthy task of verifying its hypotheses.

That $\mathrm{im}(\mathcal{A})\subset\mathcal{K}_{T}$ follows directly from the definition. By proposition \ref{prop:linearA} part iii) the components $\psi_i(t),\sigma_i(t)$ of the solution $(\Phi(t),\bsigma(t))$ obey the bounds
\begin{align*}
|\sigma_i(t) - \sigma_i(s)| &\leq (\bveps_0)_{i}\left(1 - \frac{t}{T^{i}_{*}}\right)^{-2}\frac{|t-s|}{T^{i}_*} \qquad \qquad \zeta_i(t) := \int^{t}_{0} \sigma_i(s) \, \rd s\\
2^{-\frac12}\|\psi_i(t) - \psi_i(s)\|_{H^{2}(\T)} &\leq \Gamma_i\left( \zeta_i(t)-\zeta_i(s) \right) + \mathfrak{F}^{*}\left( \zeta_i(t)-\zeta_i(s) \right)^{\frac14} + \mathfrak{F}^{*}(\zeta_i(t)-\zeta_i(s)) 
\end{align*}
on $[0,T_*)$ as well. As a consequence, for any $T>0$ satisfying
$$
T \leq T_{0} :=  \min\left\{ 1, \min_{i \in [m]} \; \frac{\delta}{(\bveps_0)_i} \right\}\frac{T_{*}}{4} 
$$
the uniform upper bounds
$$
\sup_{t \in [0,T]}\;\,\|\bsigma(t) - \bveps_0\|_{\R^{m}} \leq \delta \qquad \text{and} \qquad \sup_{t \in [0,T]} \;\,\|\bsigma(t)\|_{\R^{m}} \leq 2\|\bveps_0\|_{\R^{m}}
$$
necessarily hold. But then the uniform bounds
\begin{align*}
|\zeta_i(t)-\zeta_i(s)| \leq 2\|\bveps_0\|_{\R^{n}}|t-s| \qquad \text{and} \qquad \|\bsigma(t) - \bsigma(s)\|_{\R^{m}} \leq \mathfrak{F}^{*}\left( 2\|\bveps_0\|_{\R^{m}} \right)^2|t-s|
\end{align*}
hold for all $s,t \in [0,T]$ as well, and so
\begin{align*}
2^{-\frac12}\|\psi_i(t) - \psi_i(s)\|_{H^{2}(\T)} &\leq \omega(t,s)\\
\|\bsigma(t) - \bsigma(s)\|_{\R^{m}} &\leq \omega(t,s)
\end{align*}
for all $s,t \in [0,T]$ by definition of the modulus of continuity. As $\alpha(t) := \omega(t,0)$ is strictly increasing, provided
$$
0 < T \leq \min\left\{ T_0 , \alpha^{-1}\left( \frac{\delta}{\sqrt{2}}\right) \right\} := T\left( \mathfrak{F}^{*},\delta,\Phi_0, \bveps_0\right)
$$
the uniform bound
$$
\sup_{t \in [0,T]} \;\, \|\Psi(t) - \Phi_0\|_{\cH^{2}_{m}} \leq \delta
$$
is valid. The mapping $\mathcal{A}: \mathcal{K} \mapsto \frX$ therefore maps $\mathcal{K}$ to itself whenever $T = T\big( \mathfrak{F}_{*},\delta,\Phi_0,\bveps_0 \big) > 0$ is small enough.

It suffices to show that $\mathcal{A}$ is continuous in $\frX$ and that $\mathrm{im}(\mathcal{A}) \subset \mathcal{K}_{T}$ is pre-compact. To show continuity, fix $(\Phi_j,\bveps_j) \in \mathcal{K}_{T}$ for $j=1,2$ arbitrary, and define the corresponding functions
$$
\nu_{i,j}(t) := \mu_{i}[(\Phi_j(t),\bveps_j(t))]\in \R \qquad \text{and} \qquad g_{i,j}(t) := \cF_i[(\Phi_j(t),\bveps_j(t))] \in W^{1,1}(\T)
$$
as before. For $i \in [m]$ fixed let $\nu_j = \nu_{i,j}, g_{j} = g_{i,j}$ and $\sigma_j,\psi_j$ the corresponding mild solutions that define the mapping. Then $\sigma_1(0) = \sigma_2(0) := \sigma(0)$ and so the bound
$$
\left| \sigma_{1}(t) - \sigma_{2}(t)\right| \leq \left(\frac{ \sigma(0) }{1 - \frac{t}{T_{*}}}\right)^{2}T \|\nu_1 - \nu_2\|_{C([0,T])}
$$
follows by direct integration. The Lipschitz assumption on the mappings $\mu_i$ then reveals that the bound
$$
\|\sigma_1 - \sigma_2\|_{C([0,T])} \leq \mathfrak{C}^{*}(T,T_*,\sigma(0))\mathfrak{F}^{*} \|(\Phi_1,\bveps_1) - (\Phi_2,\bveps_2)\|_{\frX}
$$
holds for $\mathfrak{C}^{*}(T,T_*,\sigma(0))$ some continuous function. A similar bound
$$
\|\zeta_1 - \zeta_2\|_{C([0,T])} \leq \mathfrak{C}^{*}(T,T_*)\mathfrak{F}^{*} \|(\Phi_1,\bveps_1) - (\Phi_2,\bveps_2)\|_{\frX} \qquad \zeta_i(t) := \int^{t}_{0} \sigma_i(s) \, \rd s
$$
for the $\zeta_i$ then follows by integrating in time. Now decompose $\psi_j = \psi^{h}_{j} + \psi^{nh}_{j}$ into its homogeneous and non-homogeneous parts. Proposition \ref{prop:linearA} part iv) yields the bound
$$
\|\psi^{h}_{1}(\cdot,t) - \psi^{h}_{2}(\cdot,t)\|_{H^{2}(\T)} \leq \mathfrak{C}^{*}\left(T,T_{*},\Phi_0\right)\frac{ \|\nu_1 - \nu_2\|_{C([0,\infty))}}{\mathfrak{F}^{*}} \leq \mathfrak{C}^{*}\left(T,T_{*},\Phi_0,\mathfrak{F}^{*}\right)\|(\Phi_1,\bveps_1) - (\Phi_2,\bveps_2)\|_{\frX}
$$
by appealing to the Lipschitz property of the $\mu_i$ once again. Similarly, proposition \ref{prop:linearA} part vi) yields an analogous bound
\begin{align*}
\|\psi^{h}_{1}(\cdot,t) - \psi^{h}_{2}(\cdot,t)\|_{H^{2}(\T)} &\leq \mathfrak{C}^{*}(T,T_*,\mathfrak{F}^*)L_{T}\left(  \|g_1 - g_2\|_{C([0,\infty);L^{2}(\T)} , \|\nu_1 - \nu_2\|_{C([0,\infty)} , \mathfrak{F}^{*} , \|\bveps_0\|_{\R^n} \right) \\
L_{T}(x,y,p,q) &:= x \left( 1 + \log^{+}\left( \frac{2pqT}{x} \right)  \right) + \frac{y}{p} \vee \left( \frac{y}{p} \right)^{\frac14}
\end{align*}
for the non-homogeneous part of the solution. The Lipschitz hypothesis on $\mathcal{F}_i,\mu_i$ gives
\begin{align*}
 \|g_1 - g_2\|_{C([0,\infty);L^{2}(\T))} &\leq \mathfrak{F}^{*}\|(\Phi_1,\bveps_1) - (\Phi_2,\bveps_2)\|_{\frX} \\
 \|\nu_1 - \nu_2\|_{C([0,\infty))} &\leq \mathfrak{F}^{*}\|(\Phi_1,\bveps_1) - (\Phi_2,\bveps_2)\|_{\frX},
\end{align*}
and so by the triangle inequality an overall bound
$$
\sup_{0 \leq t \leq T} \;\, \left\| (\Psi_1(t),\bsigma_1(t)) - (\Psi_2(t),\bsigma_2(t)) \right\|_{\cH^{2}_{m} \times \R^{m}} = O\left( \|(\Phi_1,\bveps_1) - (\Phi_2,\bveps_2)\|^{\frac14}_{\frX} \right)
$$
whenever $\|(\Phi_1,\bveps_1) - (\Phi_2,\bveps_2)\|_{\frX}$ is small enough. By construction of the mapping this shows that
$$
\left\| \mathcal{A}[(\Phi_1,\bveps_1)] - \mathcal{A}[(\Phi_2,\bveps_2)]\right\|_{\frX} = O\left( \|(\Phi_1,\bveps_1) - (\Phi_2,\bveps_2)\|^{\frac14}_{\frX} \right)
$$
whenever $\|(\Phi_1,\bveps_1) - (\Phi_2,\bveps_2)\|_{\frX}$ is small enough as well, so in particular $\mathcal{A} : \mathcal{K} \mapsto \mathcal{K}$ is continuous.

It remains to show that $\mathcal{A}: \mathcal{K}_{T} \subset \frX \mapsto \mathcal{K}_{T}$ has pre-compact image. Take $(\Psi_{\ell},\bsigma_{\ell}):= \mathcal{A}[(\Phi_{\ell},\bveps_{\ell})] \in \mathrm{im}(\mathcal{A})$ any sequence; it suffices to show the existence of an $\frX$ convergent sub-sequence. For $0 \leq s \leq t \leq T$ the $\bsigma_{\ell}$ are equicontinuous
$$
\|\bsigma_{\ell}(t) - \bsigma_{\ell}(s)\|_{\R^{m}} \leq \|\bsigma(0)\|_{\R^{m}}\mathfrak{C}^{*}(T,T_{*})(t-s)
$$
and uniformly bounded; in particular, they admit a subsequence (still denoted by $\ell$) and a limit $\bsigma_{\infty} \in C([0,T];\R^{m})$ so that the uniform convergence
$$
\|\bsigma_{\ell} - \bsigma_{\infty}\|_{C_m(T)} \to 0
$$
holds, and so by defining $\bsigma_{\infty}(t) = \bsigma(T)$ as a constant for $t>T$ the global in time convergence
\begin{align}\label{eq:compact1}
\|\bsigma_{\ell} - \bsigma_{\infty}\|_{C_m(\infty)} \to 0
\end{align}
follows by construction of the mapping. By the Lipschitz hypothesis and the definition of $\mathcal{K}_{T}$ the functions $\nu_{i,\ell}(t) := \mu_{i}[ (\Phi_{\ell}(t),\bveps_{\ell}(t)) ]$ obey the uniform bound $|\nu_{i,\ell}(t)| \leq \mathfrak{F}^{*}$ and the equicontinuity property
$$
|\nu_{i,\ell}(t) -  \nu_{i,\ell}(s)| \leq \mathfrak{F}^{*}\|(\Phi(t),\bveps(t)) - (\Phi(s),\bveps(s))\|_{\cH^{2}_{m} \times \R^{m}} \leq \sqrt{2} \mathfrak{F}^{*}\omega(t,s),
$$
and so there exists a further subsequence (still denoted by $\ell$) and a limit $\nu_{i,\infty}$ so that the uniform convergence
$$
\|\nu_{i,\ell}(t) - \nu_{i,\infty}(t)\|_{C([0,T])} \to 0
$$
holds. In particular, by construction of the mapping $\mathcal{A}$ and the set $\mathcal{K}_{T}$ the subsequence $\nu_{i,\ell}$ is Cauchy
$$
\|\nu_{i,\ell} - \nu_{i,k}\|_{C_{m}(\infty)} \to 0 \qquad \text{as} \qquad k,\ell \to \infty
$$
globally in time. By proposition \ref{prop:linearA} part iv) the homogeneous portions $\Psi^{h}_{k},\Psi^{h}_{\ell}$ of the subsequence $\Psi_k = \Psi^{h}_k + \Psi^{nh}_{k}$ obey
$$
\sup_{t \in [0,T] } \;\, \| \Psi^{h}_{k}(\cdot,t) - \Psi^{h}_{\ell}(\cdot,t)\|_{\cH^{2}_{m}} \leq \mathfrak{C}^{*}(T,T^{*},\Phi_0)\frac{\|\nu_{i,\ell} - \nu_{i,k}\|_{C([0,\infty);\R^{m})}}{\mathfrak{F}^{*}}
$$
and are therefore Cauchy. By completeness there exists a limit $\Psi^{h}_{\infty} \in C([0,T],\cH^{2}_{m})$ so that convergence
\begin{align}\label{eq:compact2}
\|\Psi^{h}_{\ell} - \Psi^{h}_{\infty}\|_{C^{2}_{m}(T)} \to 0
\end{align}
holds locally in time. Finally, for any $0 < s < 1/2$ proposition \ref{prop:linearA} part v) shows that the non-homogeneous parts $\Psi^{nh}_{\ell}$ of $\Psi_{\ell}$ obey
\begin{align*}
\|\Psi^{nh}_{\ell}(t)\|_{\cH^{2+s}_{m}} &\leq \mathfrak{F}^{*}\big( \frc_{s} + 2\|\bveps_0\|_{\R^{m}}T \big) \\
\|\Psi^{nh}_{\ell}(t) - \Psi^{nh}_{\ell}(s)\|_{\cH^{2}_{m}} &\leq \mathfrak{F}^{*}\left( \left(2\|\bveps_0\|_{\R^{m}}(t-s)\right)^{\frac14} + 2\|\bveps_0\|_{\R^{m}}(t-s) \right)
\end{align*}
for all $0 \leq s \leq t \leq T$ and $\frc_s>0$ some finite constant. By compactness of the embedding $\cH^{2+s}_{m} \subset\subset \cH^{2}_{m}$ when $s>0$ there exists, by passing to a further subsequence if necessary, a limit $\Psi^{nh}_{\infty} \in C^{2}_{m}(T)$ so that the uniform convergence
\begin{align}\label{eq:compact3}
\|\Psi^{nh}_{\ell} - \Psi^{nh}_{\infty}\|_{C^{2}_{m}(T)} \to 0
\end{align}
holds. Define a limit function $\Psi_{\infty}(t) = \Psi^{h}(t) + \Psi^{nh}(t)$ if $0 \leq t \leq T$ and $\Psi_{\infty}(t) = \Psi_{\infty}(T)$ otherwise. The definition of the mapping $\mathcal{A}$ and the limit function $\Psi_{\infty}$ combine with the convergences (\ref{eq:compact2},\ref{eq:compact3}) to give convergence
$$
\|\Psi_{\ell} - \Psi_{\infty}\|_{C^{2}_{m}(\infty)} = \|\Psi_{\ell} - \Psi_{\infty}\|_{C^{2}_{m}(T)} \leq \|\Psi^{h}_{\ell} - \Psi^{h}_{\infty}\|_{C^{2}_{m}(T)} + \|\Psi^{nh}_{\ell} - \Psi^{nh}_{\infty}\|_{C^{2}_{m}(T)}
$$
globally in time. When combined with \eqref{eq:compact1} this yields an $\frX$ convergent subsequence
$$
\| (\Psi_{\ell},\bsigma_{\ell}) - (\Psi_{\infty},\bsigma_{\infty})\|_{\frX} \to 0
$$
as desired. Thus $\mathcal{A}$ has a fixed point that, by construction, gives a mild solution on $[0,T]$ for $T>0$ small enough. All together, we have shown ---
\begin{theorem}[Local Existence]\label{thm:locmild}
Fix constants $\delta,\mathfrak{F}^{*} > 0$ and an initial datum $(\Phi_0,\bveps_0) \in \cH^{2}_{m} \times \R^{m}_{+}$ arbitrarily. Assume that the mappings $\cF_i : \cH^2_m \times \R^m \mapsto W^{1,1}(\T)$ and $\mu_i:  \cH^2_m \times \R^m \mapsto \R$ obey the uniform estimates
\begin{align*}
\|\cF_i[(\Phi,\bveps)]\|_{W^{1,1}(\T)} &\leq \mathfrak{F}^{*} \qquad \text{and} \qquad \|\cF_i[(\Phi,\bveps)] - \cF_i[(\hat \Phi,\hat \bveps)] \|_{L^2(\T)} \leq \mathfrak{F}^{*}\|(\Phi,\bveps) - (\hat \Phi,\hat \bveps)\|_{ \cH^{2}_{m} \times \R^{m} } \\
|\mu_i[(\Phi,\bveps)]| &\leq \mathfrak{F}^{*} \qquad \text{and} \qquad \quad \qquad \, |\mu_i[(\Phi,\bveps)] - \mu_i[(\hat \Phi,\hat \bveps)] |\leq \mathfrak{F}^{*}\|(\Phi,\bveps) - (\hat \Phi,\hat \bveps)\|_{ \cH^{2}_{m} \times \R^{m} }
\end{align*}
locally in $B_{\delta}\big( (\Phi_0,\bveps_0) \big)$ with respect to the $\cH^{2}_{m} \times \R^{m}$ topology. Then there exists $T = T\big(\mathfrak{F}_{*},\delta,\Phi_0,\bveps_0\big)>0$ so that the coupled system
$$
\left.\begin{aligned}
\partial_{t} \phi_i(t) &= \veps_i \ddot \phi_i + \veps_i \cF_i[(\Phi,\bveps)]  &&\,\,\rd_{t} \veps_i = -2\veps^2_i \mu_i[(\Phi,\bveps)] \quad\\
\phi_i(0) &= (\Phi_0)_{i} && \veps_i(0) = (\bveps_0)_i \quad
\end{aligned}\right\rbrace \quad \text{for all} \quad i \in [m]
$$
has a mild solution $(\Phi(t),\bveps(t))$ on $[0,T]$ that remains in $B_{\delta}\big( (\Phi_0,\bveps_0) \big)$.
\end{theorem}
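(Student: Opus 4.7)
The plan is to apply the Leray--Schauder fixed point theorem on the Banach space $\mathfrak{X} := C([0,\infty);\mathcal{H}^{2}_{m} \times \mathbb{R}^{m})$ endowed with the sup-norm. The candidate map $\mathcal{A}$ sends a trajectory $(\Phi,\bveps) \in \mathfrak{X}$ to the extension-by-freezing of the unique mild solution $(\Psi,\bsigma)$ guaranteed by Proposition~\ref{prop:linearA}(i) for the \emph{linearised} system driven by the a priori scalar/function pair $\nu_{i}(t):=\mu_{i}[(\Phi(t),\bveps(t))]$ and $g_{i}(t):=\mathcal{F}_{i}[(\Phi(t),\bveps(t))]$; any fixed point of $\mathcal{A}$, restricted to $[0,T]$, is thereby a mild solution of the coupled nonlinear system by construction. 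The admissible lifespan is $T < T_{*} := \min_{i} ((\bveps_{0})_{i} \mathfrak{F}^{*})^{-1}$, shrunk further (in a way depending only on $\delta, \Phi_{0}, \bveps_{0}, \mathfrak{F}^{*}$) to validate all subsequent estimates.

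The key design choice is a closed, convex, non-empty subset $\mathcal{K}=\mathcal{K}_{\delta} \cap \mathcal{K}_{\re} \cap \mathcal{K}_{T} \subset \mathfrak{X}$: the set $\mathcal{K}_{\delta}$ confines trajectories to the $\delta$-ball around $(\Phi_{0},\bveps_{0})$, so that the local $W^{1,1}$-bound and $\mathcal{H}^{2}_{m}\times\mathbb{R}^{m} \to L^{2}$-Lipschitz hypotheses on $\mathcal{F}_{i},\mu_{i}$ remain valid along every input; $\mathcal{K}_{\re}$ imposes a temporal modulus of continuity $\omega(t,s)$ assembled from the function $\Gamma_{\Phi_{0}}$ of part (ii), together with the H\"older-$\tfrac14$ and Lipschitz terms furnished by part (iii); and $\mathcal{K}_{T}$ freezes trajectories after time $T$. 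Invariance $\mathcal{A}(\mathcal{K}) \subset \mathcal{K}$ follows directly from Proposition~\ref{prop:linearA}(iii), provided $T$ is small enough to beat $\delta$ simultaneously in all three constraints.

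Continuity of $\mathcal{A} : \mathcal{K} \to \mathcal{K}$ is established by decomposing each output $\psi_{i} = \psi_{i}^{h} + \psi_{i}^{nh}$ into homogeneous and non-homogeneous parts. The $\bsigma$-differences are controlled by directly integrating the Riccati-type ODE for $\veps_{i}$ together with the Lipschitz property of $\mu_{i}$; the $\psi_{i}^{h}$-differences come from part (iv), and the $\psi_{i}^{nh}$-differences from part (vi), whose logarithmic and $\tfrac14$-power losses absorb into an overall H\"older-$\tfrac14$ estimate
\[
\| \mathcal{A}[(\Phi_{1},\bveps_{1})] - \mathcal{A}[(\Phi_{2},\bveps_{2})] \|_{\mathfrak{X}} \leq \mathfrak{C}^{*}\, \|(\Phi_{1},\bveps_{1}) - (\Phi_{2},\bveps_{2})\|_{\mathfrak{X}}^{1/4}
\]
valid throughout $\mathcal{K}$ once $T$ is sufficiently small.

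The main obstacle is pre-compactness of $\mathcal{A}(\mathcal{K})$, and I would handle it component by component. The scalars $\bsigma_{\ell}$ are uniformly bounded and equi-Lipschitz on $[0,T]$, so Arzel\`a--Ascoli supplies a uniformly convergent sub-sequence. The induced scalar forcings $\nu_{i,\ell}$ inherit the modulus $\omega$ via Lipschitzness of $\mu_{i}$ and therefore converge along a further sub-sequence, and part (iv) then upgrades this into Cauchyness of the homogeneous images $\Psi^{h}_{\ell}$ in $C([0,T];\mathcal{H}^{2}_{m})$. For the non-homogeneous images $\Psi^{nh}_{\ell}$ I would invoke Proposition~\ref{prop:linearA}(v) to secure a uniform $\mathcal{H}^{2+s}_{m}$-bound plus temporal equicontinuity in $\mathcal{H}^{2}_{m}$; the compact embedding $\mathcal{H}^{2+s}_{m} \hookrightarrow \mathcal{H}^{2}_{m}$ combined with Arzel\`a--Ascoli then delivers the remaining $\mathcal{H}^{2}_{m}$-convergence. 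Extending the limits by freezing after time $T$ recovers $\mathfrak{X}$-convergence, so Leray--Schauder supplies a fixed point of $\mathcal{A}$, and unwinding the definition proves the theorem.
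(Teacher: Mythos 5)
Your proposal is correct and mirrors the paper's own argument: the same Leray--Schauder setup on $\frX$ with the convex set $\mathcal{K}_{\delta}\cap\mathcal{K}_{\re}\cap\mathcal{K}_{T}$, invariance and the modulus $\omega$ from Proposition \ref{prop:linearA} part iii), continuity via parts iv) and vi) yielding the H\"older-$\tfrac14$ estimate, and precompactness via Arzel\`a--Ascoli together with part v) and the compact embedding $\cH^{2+s}_{m}\subset\subset\cH^{2}_{m}$. No substantive differences from the paper's proof.
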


\subsection*{Regularity} Local existence for the shape dynamic \eqref{eq:shapedynamic} is an immediate by-product of this local existence result. Specifically, a $C^{2}_{m}(T)$ mild solution $(\Phi,\bveps)$ exists whenever the selection $(\Phi_0,\bveps_0,\cG)$ of initial data $(\Phi_0,\bveps_0)$ and family of kernels $\cG$ define a compatible triple. Higher regularity of these solutions, as well as the existence of classical solutions, follows in the usual way from a few additional estimates. 

We begin by recalling the structure of the mappings $\mathcal{F}_i : \cH^{2}_{m} \times \R^{m}_{+} \mapsto W^{1,1}(\T)$ for the dynamic \eqref{eq:shapedynamic}, which take the form
\begin{align*}
\mathcal{F}_{i}\left[ (\Phi,\bveps) \right] &:= \po\big( a_i \big) \dot \phi_i + F_i\left[ (\Phi,\bveps) \right]\dot \phi^{\perp}_{i} + \mu_{a_i}\phi_i \\
 a_i &:= |\ddot \phi_i|^2 + \langle \ddot \phi_i,\dot \phi^{\perp}_{i} \rangle F_i\left[ (\Phi,\bveps) \right],
\end{align*}
and that the non-local forcings $F_i : \cH^{2}_{m} \times \R^{m}_{+}\mapsto H^{1}(\T)$ obey a uniform $H^{1}(\T)$ estimate and $H^{1}(\T) \mapsto L^{2}(\T)$ Lipschitz property
\begin{align}\label{eq:regbound1}
\left.\begin{aligned}
\| F_i\left[(\Phi,\bveps)\right] \|_{H^{1}(\T)} &\leq \mathfrak{F}^{*} \\
\| F_i[(\Psi, \bsigma)] - F_i[(\Phi,\bveps)]\|_{L^{2}(\T)} &\leq \mathfrak{F}^{*}\left(  \|\Psi - \Phi \|_{\cH^{1}_m(\T)} + \|\bsigma -  \bveps\|_{\R^m} \right) \quad
\end{aligned}\right\rbrace 
\quad \text{on} \quad B_{\delta}\left( (\Phi_0,\bveps) \right)
\end{align}
locally near a compatible initial condition. By taking $\mathfrak{F}^{*}$ larger if necessary, we may assume that the composite mapping $\mathcal{F}_i$ obeys the uniform $W^{1,1}(\T)$ bound
\begin{align}\label{eq:regbound2}
\| \mathcal{F}_i\left[(\Phi,\bveps)\right] \|_{W^{1,1}(\T)} \leq \mathfrak{F}^{*} 
\end{align}
locally near a compatible initial condition as well. By theorem \ref{thm:locmild} we may assume that for $t \in [0,T]$ the trajectory $(\Phi(t),\bveps(t))$ remains in the neighborhood  $B_{\delta}\left( (\Phi_0,\bveps) \right)$ where both \eqref{eq:regbound1} and \eqref{eq:regbound2} apply, and that the uniform bounds
$$
0 < \bveps_{*} \leq \veps_{i}(t) \leq \bveps^{*}
$$
hold as well. We may therefore decompose the mappings
\begin{align*}
\mathcal{F}_{i}\left[ (\Phi,\bveps) \right] &\;= \cT_i\left[ (\Phi,\bveps) \right] + \cR_{i}\left[ (\Phi,\bveps) \right] \\
\mathcal{R}_{i}\left[ (\Phi,\bveps) \right] &:= F_i\left[ (\Phi,\bveps) \right]\dot \phi^{\perp}_{i} + \mu_{a_i}\phi_i
\end{align*}
into a $W^{1,1}(\T)$ transport part $\cT_i := \po\big(a_i\big)\dot \phi^{\perp}_i$ and an $H^{1}(\T)$ reaction part; the embedding $H^{1}(\T) \subset L^{\infty}(\T)$ furnishes a uniform bound
\begin{align}\label{eq:reacH1}
\sup_{0 \leq t \leq T} \;\,\| \cR_i\left[ (\Phi(t),\bveps(t)) \right] \|_{H^{1}(\T)} \leq \mathfrak{C}^{*}\left( \|\Phi\|_{C^{2}_{m}(T)},\mathfrak{F}^{*} \right)
\end{align}
along any mild solution trajectory. 

We shall exploit this structure by using (the proof of) proposition \ref{prop:linearA} as a starting point. Given some function $\Phi \in \cH^{0}_{m}$ and an exponent $p \geq 0$ we shall use the semi-norms
\begin{align*}
\| \Phi \|_{\dot \cV^{p}_{m} } := \max_{i \in [m]} \left\{ \sup_{k \in \Z_0} |k|^{p} \left| \widehat \phi_{i,k} \right| \right\} \qquad \text{and} \qquad \| \Phi \|_{\dot \cV^{<p}_{m} } := \max_{i \in [m]} \left\{ \sup_{k \in \Z_0} \frac{ |k|^{p} }{\log(1 + |k|)} \left| \widehat \phi_{i,k} \right| \right\}
\end{align*}
to quantify the decay-rate of its Fourier coefficients. Proposition \ref{prop:linearA} part v) yields a worst-case decay estimate
\begin{align}\label{eq:decay}
\sup_{k \in \Z_0} \, |k|^{3} \left|\widehat \phi_{i,k}(t) \right| \leq \min\left\{ \|\Phi_0\|_{\dot \cH^{2}_m} \zeta^{-\frac12}_i(t) , \|\Phi_0\|_{\dot \cH^{3}_{m}} \right\} + \mathfrak{F}^{*} 
\end{align}
along any mild solution trajectory, and so the $\dot \cV^{3}_m$ semi-norm of the solution remains finite on any time interval $0 < \tau < T$ bounded away from the origin. Additionally, if $\Phi_0 \in \cH^{3}_{m}$ then we may take $\tau = 0$ and obtain a uniform in time $\dot \cV^{3}_m$ estimate. On any time interval $[\tau,T]$ for which
$$
\max\left\{ \sup_{\tau \leq t \leq T} \;\, \| \Phi(t) \|_{\dot \cV^{3}_{m}}, \|\Phi(t)\|_{\cH^{2}_{m}} \right\} := \Phi^{*}_{\tau,T} < +\infty 
$$
we may appeal to \cite{knot,SIMA} (c.f. lemma 2.1 of the former) to obtain a corresponding uniform $\dot \cV^{<2}_m$ estimate
\begin{align}\label{eq:decay2}
\sup_{\tau \leq t \leq T} \;\, \|\left(\cT_{1}\left[(\Phi(t),\bveps(t))\right],\ldots,\cT_{m}\left[(\Phi(t),\bveps(t))\right]\right) \|_{\dot \cV^{<2}_m }\leq \mathfrak{C}^{*}\left( \Phi^{*}_{\tau,T}, \mathfrak{F}^{*} \right),
\end{align}
for the transport part of the forcing. Now set $\phi(t) = \phi_i(t), \veps = \veps_i(t),\mathcal{T}(t) = \mathcal{T}_{i}\left[ (\Phi(t),\bveps(t)) \right]$ and $\mathcal{R}(t) = \mathcal{R}_{i}\left[ (\Phi(t),\bveps(t)) \right]$ for some $i \in [m]$ and recall that
\begin{align*}
\hat \phi_k(t) - \hat \phi_{k}(s) &\;=  \int^{t}_{s} \re^{-k^2\left( \zeta(t) - \zeta(z) \right) }\left( \widehat{ \mathcal{T} }_k(z) + \widehat{ \mathcal{R} }_k(z)\right)\veps(z) \, \rd z \qquad \zeta(t) = \int^{t}_{0} \veps(s) \, \rd s \\
&:=\mathrm{I}_{k}(s,t) + \mathrm{II}_{k}(s,t)
\end{align*}
by definition of a mild solution. If $s,t \in [\tau,T]$ then the upper bound
$$
\left| \mathrm{I}_{k}(s,t) \right| \leq \mathfrak{C}^{*}\left( \Phi^{*}_{\tau,T}, \mathfrak{F}^{*} \right)\frac{ \log\left(1 + |k| \right) }{k^2}\left( \zeta(t) - \zeta(s) \right)
$$
holds by the $\dot \cV^{<2}_{m}$ bound \eqref{eq:decay2} and direct integration. In particular, the $\dot H^{1}(\T)$ norm
$$
\left( \sum_{k \in \Z_0} k^2 \left| \mathrm{I}_{k}(s,t) \right|^{2} \right)^{\frac12} \leq \mathfrak{C}^{*}\left( \Phi^{*}_{\tau,T}, \mathfrak{F}^{*} \right)\left( \zeta(t) - \zeta(s) \right)\left( \sum_{k \in \Z_0} \frac{ \log^{2}(1 + |k|)}{k^2} \right)^{\frac12}
$$ 
is Lipschitz in time. An application of Minkowski's inequality
$$
\left( \sum_{k \in \Z_0} k^2 \left| \mathrm{II}_{k}(s,t) \right|^{2} \right)^{\frac12} \leq \int^{t}_{s} \left( \sum_{k \in \Z_0} k^2 \left| \widehat \cR_{k}(z) \right|^2 \right)^{\frac12} \veps(s)
$$
then combines with the $H^{1}(\T)$ bound \eqref{eq:reacH1} and the triangle inequality to show that the solution itself
$$
\| \Phi(t) - \Phi(s)\|_{\cH^{1}_{m}} \leq \mathfrak{C}^{*}\left( \Phi^{*}_{\tau,T}, \mathfrak{F}^{*} \right)\left( \zeta(t) - \zeta(s) \right)
$$
is similarly Lipschitz in time. 

We shall use this temporal Lipschitz estimate to obtain an $H^{3}(\T)$ bound for the solution. Fix $\tau < t \leq T$ and note that
\begin{align*}
\hat \phi_k(t) &\;= \re^{-k^2\left( \zeta(t) - \zeta(\tau) \right)} \hat \phi_{k}(\tau) + \int^{t}_{\tau} \re^{-k^2\left( \zeta(t) - \zeta(z) \right) }\left( \widehat{ \mathcal{T} }_k(z) + \widehat{ \mathcal{R} }_k(z)\right)\veps(z) \, \rd z \\
&:= \mathrm{I}_{k}(t) + \mathrm{II}_{k}(t) + \mathrm{III}_{k}(t)
\end{align*}
by definition of mild solution. The $\dot \cV^{3}_{m}$ bound \eqref{eq:decay} gives an easy $\dot H^{3}(\T)$ estimate
\begin{align*}
\| \mathrm{I}_{k}(t) \|_{\dot H^{3}(\T)} = \left( \sum_{k \in \Z_0} k^6 \re^{-2k^2\left( \zeta(t) - \zeta(\tau) \right) } |\hat \phi_{k}(\tau)|^{2} \right)^{\frac12} &\leq \Phi^{*}_{\tau,T} \left( \sum_{k \in \Z_0} \re^{-2k^2\left(\zeta(t) - \zeta(\tau) \right)} \right)^{\frac12} \\
&\leq \mathfrak{C}^{*}\left( \Phi^{*}_{\tau,T} \right) \left( \zeta(t) - \zeta(\tau) \right)^{-\frac14}
\end{align*}
for the first term, while the $\dot \cV^{<2}_{m}$ bound \eqref{eq:decay2} gives a similar $\dot H^{3}(\T)$ estimate
\begin{align*}
\| \mathrm{II}_{k}(t) \|_{\dot H^{3}(\T)} &\leq \mathfrak{C}^{*}\left( \Phi^{*}_{\tau,T}, \mathfrak{F}^{*} \right)\left( \sum_{k \in \Z_0} k^{6} \left| \int^{t}_{\tau} \re^{-k^{2}\left( \zeta(t) - \zeta(s) \right)}\frac{ \log(1+|k|)}{|k|^{2}} \veps(s) \, \rd s\right|^{2} \right)^{\frac12} \\
& \leq \mathfrak{C}^{*}\left( \Phi^{*}_{\tau,T}, \mathfrak{F}^{*} \right) \left( \sum_{k \in \Z_0}  \frac{ \log^2(1+|k|)}{k^2} \right)^{\frac12} \leq \mathfrak{C}^{*}\left( \Phi^{*}_{\tau,T}, \mathfrak{F}^{*} \right) 
\end{align*}
for the second term. Now write the reaction term as
\begin{align*}
\cR(t) &= \mu_{a}(t)\phi(t) + F\left[ (\Phi(t),\bveps(t) ) \right] \dot \phi^{\perp} := \cR^{(1)}(t) + \cR^{(2)}(t) \qquad \widehat \cR_{k}(t) = \widehat \cR^{(1)}_{k}(t) + \widehat \cR^{(2)}_{k}(t),
\end{align*}
and note that the $H^{3}(\T)$ bound
$$
\left( \sum_{k \in \Z_0} k^{6} \left| \int^{t}_{\tau} \re^{-k^{2}\left( \zeta(t) - \zeta(s) \right)} \widehat \cR^{(1)}_{k}(z) \veps(s) \, \rd s\right|^{2} \right)^{\frac12} \leq \mathfrak{C}^{*}\left( \Phi^{*}_{\tau,T} \right)
$$
follows as before from the $\dot \cV^{3}_{m}$ estimate. Now perform the decomposition
$$
\int^{t}_{\tau} \re^{-k^{2}\left( \zeta(t) - \zeta(s) \right) }\widehat \cR^{(2)}_{k}(s) \veps(s) \, \rd s = \int^{t}_{\tau} \re^{-k^{2}\left( \zeta(t) - \zeta(s) \right) }\left( \widehat \cR^{(2)}_{k}(s) - \widehat \cR^{(2)}_{k}(t) \right) \veps(s) \, \rd s + \frac{1 - \re^{-k^2\left( \zeta(t) - \zeta(\tau) \right)} }{k^{2}} \widehat \cR^{(2)}_{k}(t)
$$
and use the $H^{1}(\T)$ bound \eqref{eq:reacH1} to obtain
$$
\left( \sum_{k \in \Z_0} k^{6} \left|\frac{1 - \re^{-k^2\left( \zeta(t) - \zeta(\tau) \right)} }{k^{2}} \widehat \cR^{(2)}_{k}(t)\right|^{2} \right)^{\frac12} \leq \mathfrak{C}^{*}\left( \Phi^{*}_{\tau,T} , \mathfrak{F}^{*} \right),
$$
and so an $H^{3}(\T)$ bound will follow from showing that the series
$$
\sum_{k \in \Z_0} k^{6}\left| \int^{t}_{\tau} \re^{-k^{2}\left( \zeta(t) - \zeta(s) \right) }\left( \widehat \cR^{(2)}_{k}(s) - \widehat \cR^{(2)}_{k}(t) \right) \veps(s) \, \rd s \right|^{2}
$$
converges. For any $f \in C([\tau,T];L^{1}(\T))$ the simple inequality
$$
\left| \hat f_{k}(t) - \hat f_{k}(s) \right| = \frac1{2\pi} \left| \int_{\T} \left( f(x,s) - f(x,t) \right) \re^{-ikx} \right| \leq \| f(t) - f(s) \|_{L^{2}(\T)}
$$
holds, and so by the $H^{1}(\T) \mapsto L^{2}(\T)$ property of $F[(\Phi,\bveps)]$ we may infer
\begin{align*}
\left| \widehat \cR^{(2)}_{k}(s) - \widehat \cR^{(2)}_{k}(t) \right| \leq \| \cR^{(2)}(t) - \cR^{(2)}(s) \|_{L^{2}(\T)} &\leq \mathfrak{F}^{*}\left( \| \Phi(t) - \Phi(s) \|_{\cH^{1}_{m}} + \|\bveps(t) - \bveps(s)\|_{\R^{m}_{+}} \right) \\
&\leq\mathfrak{C}^{*}\left( \Phi^{*}_{\tau,T},\bveps_{*},\bveps^{*}, \mathfrak{F}^{*} \right)\left( \zeta(t) - \zeta(s) \right)
\end{align*}
since $\Phi$ is an $H^{1}(\T)$ Lipschitz function in time and $\bveps(t)$ is $C^{1}$ in time. A temporal change of variables then gives
\begin{align*}
\left| \int^{t}_{\tau} \re^{-k^{2}\left( \zeta(t) - \zeta(s) \right) }\left( \widehat \cR^{(2)}_{k}(s) - \widehat \cR^{(2)}_{k}(t) \right) \veps(s) \right| &\leq \mathfrak{C}^{*}\left( \Phi^{*}_{\tau,T},\bveps_{*},\bveps^{*}, \mathfrak{F}^{*} \right)\int^{t}_{\tau} \re^{-k^{2}\left( \zeta(t) - \zeta(s) \right)}\left( \zeta(t) - \zeta(s) \right) \veps(s) \, \rd s \\
&= \frac{\mathfrak{C}^{*}\left( \Phi^{*}_{\tau,T},\bveps_{*},\bveps^{*}, \mathfrak{F}^{*} \right)}{k^{4}} \int^{k^{2}\left( \zeta(t) - \zeta(\tau) \right) }_{0} z \re^{-z} \, \rd z,
\end{align*}
and so the desired $H^{3}(\T)$ bound
$$
\left( \sum_{k \in \Z_0} k^{6}\left| \int^{t}_{\tau} \re^{-k^{2}\left( \zeta(t) - \zeta(s) \right) }\left( \widehat \cR^{(2)}_{k}(s) - \widehat \cR^{(2)}_{k}(t) \right) \veps(s) \, \rd s \right|^{2} \right)^{\frac12} \leq \mathfrak{C}^{*}\left( \Phi^{*}_{\tau,T},\bveps_{*},\bveps^{*}, \mathfrak{F}^{*} \right)
$$
indeed follows. All together, we obtain an $H^{3}(\T)$ estimate
$$
\sup_{\tau \leq t \leq T} \;\, \| \Phi(t) \|_{\cH^{3}_{m}} \leq \mathfrak{C}^{*}\left( \Phi^{*}_{\tau,T} \right) \left( \zeta(t) - \zeta(\tau) \right)^{-\frac14} + \mathfrak{C}^{*}\left( \Phi^{*}_{\tau,T},\bveps_{*},\bveps^{*}, \mathfrak{F}^{*} \right)
$$
for the solution itself. Moreover, if $\Phi_{0} \in \cH^{3}_{m}$ we may modify the first term to obtain a corresponding estimate
$$
\sup_{0 \leq t \leq T} \;\, \| \Phi(t) \|_{\cH^{3}_{m}} \leq \|\Phi_0\|_{\cH^{3}_{m}} + \mathfrak{C}^{*}\left( \Phi^{*}_{0,T},\bveps_{*},\bveps^{*}, \mathfrak{F}^{*} \right)
$$
across the entire time interval.
\subsection*{Uniqueness}
Consider two mild solutions $(\Phi,\bveps)$ and $(\Psi,\bsigma)$ to the evolution. If $\Phi_{0} = \Psi_0 \in \cH^{3}_{m}$ then we may demonstrate uniqueness via a straightforward energy estimate. Fix some $j \in [m]$ arbitrary and compare the shape evolutions
\begin{align*}
\partial_{t} \phi_{j} &= \veps_{j}\left( \ddot \phi_j + \po\big( a_j \big) \dot \phi_j + F_{j}\left[ (\Phi,\bveps) \right] + \mu_{a_j} \phi_j \right) \qquad &&a_{j} = |\ddot \phi_j|^{2} + \langle \ddot \phi_j, F_{j}\left[ (\Phi,\bveps) \right] \rangle \\
\partial_{t} \psi_{j} &= \sigma_{j}\left( \ddot \psi_j + \po\big( b_j \big) \dot \psi_j + F_{j}\left[ (\Psi,\bsigma) \right] + \mu_{b_j} \psi_j \right) \qquad &&b_{j} = |\ddot \psi_j|^{2} + \langle \ddot \psi_j, F_{j}\left[ (\Psi,\bsigma) \right] \rangle
\end{align*}
along with the length evolutions
\begin{align*}
\rd_{t} \veps_i = 2 \veps^{2}_{i} \mu_{a_i} \qquad \rd_{t} \sigma_i = 2 \veps^{2}_{i} \mu_{b_i} \qquad i \in [m]
\end{align*}
for some family of $m$ generic mappings $F_i$ that obey the $H^{1}(\T)$ bound and $H^{1}(\T) \mapsto L^{2}(\T)$ Lipschitz property
\begin{align}\label{eq:regbound2}
\left.\begin{aligned}
\| F_i\left[(\Phi,\bveps)\right] \|_{H^{1}(\T)} &\leq \mathfrak{F}^{*} \\
\| F_i[(\Psi, \bsigma)] - F_i[(\Phi,\bveps)]\|_{L^{2}(\T)} &\leq \mathfrak{F}^{*}\left(  \|\Psi - \Phi \|_{\cH^{1}_m(\T)} + \|\bsigma -  \bveps\|_{\R^m_{+}} \right) \quad
\end{aligned}\right\rbrace 
\quad \text{on} \quad B_{\delta}\left( (\Phi_0,\bveps) \right)
\end{align}
locally near the initial condition. By the regularity assertion we may fix $T>0$ so that both \eqref{eq:regbound2} and the bounds
$$
\| \Phi \|_{C^{3}_{m}(T)} + \|\Psi\|_{C^{3}_{m}(T)} \leq \Phi^{*} \qquad \text{and} \qquad 0 < \bveps_{*} \leq \veps_i(t) \leq \bveps^{*}
$$
hold uniformly in time.

Fix $i \in [m]$ and consider the difference $\xi = \phi_i - \psi_i$ (subscripts omitted for the sake of notational convenience) between mild solutions. Differentiating gives the equality
\begin{align*}
\partial_t \dot \xi &= \veps \left[ \dddot \xi + a \dot\phi - b \dot\psi +  \po\left( a \right) \ddot \phi - \po\left( b \right) \ddot \psi + \dot F\left[ (\Phi,\bveps) \right] - \dot F\left[ (\Psi,\bsigma) \right] \right] \\
&+ \left( \sigma - \veps \right)\left[ \dddot \psi + b \dot \psi + \po\big( b \big) \ddot \psi + \dot F\left[ (\Psi,\bsigma) \right] \right] := \veps Q + \left( \sigma - \veps \right)R
\end{align*}
valid in the $L^{2}(\T)$ sense. Taking the inner product with $\dot \xi$ and integrating over $\T$ yields
\begin{align*}
\int_{\T} \langle Q , \dot \xi \rangle \, \rd x &= - \int_{\T} | \ddot \xi |^{2} \, \rd x - \int_{\T} \langle F\left[ (\Phi,\bveps) \right] -  F\left[ (\Psi,\bsigma) \right]  , \ddot \xi \rangle \, \rd x \\
&+ \int_{\T} \langle a \dot \phi - b \dot \psi , \dot \xi \rangle \, \rd x + \int_{\T} \langle \po\left( a \right) \ddot \phi - \po\left( b \right) \ddot \psi , \dot \xi \rangle \, \rd x := -\left( \mathrm{I} + \mathrm{II} \right) + \mathrm{III} + \mathrm{IV}
\end{align*}
after an integration by parts.

We shall bound the latter three in terms of the first in a straightforward manner using the generic inequality
$$
ab \leq \frac{ a^{p} }{p \lambda^{p}} + \frac{\lambda^{q} b^{q} }{q} \qquad \text{for} \qquad q = \frac{p}{p-1},\; p \geq 1,\; \lambda > 0
$$
and the Gagliardo-Nirenberg embeddings \eqref{eq:gns} as the main tools. An inequality 
$$
|\mathrm{II}| \leq \mathfrak{F}^{*}\left( \|\Phi - \Psi\|^{2}_{H^{1}(\T)} + \| \bveps - \bsigma \|^{2}_{\R^{m}_{+}} \right) + \frac12 \int_{\T} |\ddot \xi|^{2} \, \rd x
$$
for the second term follows easily from the $H^{1}(\T) \mapsto L^{2}(\T)$ Lipschitz hypothesis. Decompose the third term as 
$$
\mathrm{III} = \int_{\T} a |\dot \xi|^{2} \, \rd x + \int_{\T} (a-b)  \langle\dot \xi , \dot \psi \rangle \, \rd x  := \mathrm{III}^{ {\rm A}} + \mathrm{III}^{ {\rm B}}
$$
and note $a \in H^{1}(\T) \subset L^{\infty}(\T)$ thanks to the uniform $H^{3}(\T)$ bound on $\phi$ and the assumption \eqref{eq:regbound2} on the forcing; the uniform bound
$$
\mathrm{III}^{ {\rm A}} \leq \mathfrak{C}^{*}\left( \mathfrak{F}^{*} , \Phi^{*} \right) \| \Phi - \Psi\|^{2}_{H^{1}(\T)}
$$
then easily follows. The finer decomposition
$$
\mathrm{III}^{ {\rm B}} = \int_{\T} \langle \ddot \phi + \ddot \psi , \ddot \xi \rangle \langle \dot \xi , \dot \psi \rangle \, \rd x + \int_{\T} \langle \ddot \phi , F\left[ (\Phi,\bveps)\right] -  F\left[ (\Psi,\bsigma)\right]\rangle \langle \dot \xi , \dot \psi \rangle \, \rd x + \int_{\T} \langle F\left[ (\Psi,\bsigma)\right] , \ddot \xi \rangle\langle \dot \xi,\dot \psi \rangle \, \rd x
$$
combines with the  $H^{3}(\T)$ bound and the assumptions \eqref{eq:regbound2} in a similar way to yield
$$
\mathrm{III}^{ {\rm B}} \leq \frac14 \int_{\T} |\ddot \xi|^{2} \, \rd x + \mathfrak{C}^{*}\left( \Phi^{*},\mathfrak{F}^{*}\right)\left( \|\Phi - \Psi\|^{2}_{H^{1}(\T)} + \| \bveps - \bsigma \|^{2}_{\R^{m}_{+}} \right),
$$
and so we obtain an overall upper bound
$$
\mathrm{III} \leq \frac14 \int_{\T} |\ddot \xi|^{2} \, \rd x + \mathfrak{C}^{*}\left( \Phi^{*},\mathfrak{F}^{*}\right)\left( \|\Phi - \Psi\|^{2}_{H^{1}(\T)} + \| \bveps - \bsigma \|^{2}_{\R^{m}_{+}} \right)
$$
for the third term. Decompose the fourth term
$$
\mathrm{IV} = \int_{\T} \po\big(a \big) \langle \ddot \xi , \dot \xi \rangle \, \rd x + \int_{\T} \po\big(a-b\big)\langle \ddot \psi , \dot \xi \rangle \, \rd x
$$
and apply the straightforward estimate
$$
\mathrm{IV} \leq \|a\|_{L^{\infty}(\T)} \| \ddot \xi\|_{L^{2}(\T)} \|\dot \xi\|_{L^{2}(\T)} + \|\ddot \psi\|_{L^{\infty}(\T)} \| \po\big(a-b) \|_{L^{2}(\T)} \|\dot \xi\|_{L^{2}(\T)},
$$
then combine the bounds 
\begin{align*}
\po\big( f \big) \leq 2 \|f\|_{L^{1}(\T)} \quad \text{and} \quad
\|a - b\|_{L^{1}(\T)} \leq \mathfrak{C}^{*}\left( \Phi^{*},\mathfrak{F}^{*} \right)\left(  \| \ddot \xi \|_{L^{2}(\T)} + \|F\left[ (\Phi,\bveps) \right] - F\left[(\Psi,\bsigma)\right] \|_{L^{2}(\T)} \right)
\end{align*}
together with the Lipschitz assumption \eqref{eq:regbound2} to obtain
$$
\mathrm{IV} \leq \frac18 \int_{\T} |\ddot \xi|^{2} \, \rd x + \mathfrak{C}^{*}\left( \Phi^{*},\mathfrak{F}^{*}\right)\left( \|\Phi - \Psi\|^{2}_{H^{1}(\T)} + \| \bveps - \bsigma \|^{2}_{\R^{m}_{+}} \right)
$$
for the fourth and final term. Now as the remainder
$$
R = \dddot \psi + b \dot \psi + \po\big( b \big) \ddot \psi + \dot F\left[ (\Psi,\bsigma) \right] \qquad \|R\|_{L^{2}(\T)} \leq \mathfrak{C}^{*}\left( \Phi^{*},\mathfrak{F}^{*} \right)
$$
obeys a uniform $L^{2}(\T)$ estimate, the upper bound
$$
\left( \sigma - \veps \right) \int_{\T} \langle \dot \xi , R \rangle \leq \mathfrak{C}^{*}\left( \Phi^{*},\mathfrak{F}^{*}\right)\left( \|\Phi - \Psi\|^{2}_{H^{1}(\T)} + \| \bveps - \bsigma \|^{2}_{\R^{m}_{+}} \right)
$$
follows easily by Cauchy-Schwarz. All together, we conclude that the energy estimate
$$
\frac{\rd}{\rd t} \left(\frac12 \int_{\T} |\dot \phi_i - \dot \psi_i|^{2} \, \rd x \right) \leq - \frac{ \bveps_{*} }{8} \int_{\T} |\ddot \phi_i - \ddot \psi_i|^{2}\, \rd x + \mathfrak{C}^{*}\left( \bveps^{*},\Phi^{*},\mathfrak{F}^{*}\right)\left( \|\Phi - \Psi\|^{2}_{H^{1}(\T)} + \| \bveps - \bsigma \|^{2}_{\R^{m}_{+}} \right)
$$
holds for $i \in [m]$ arbitrary. Arguing as above reveals that the difference in means obeys a similar inequality
$$
\frac{\rd}{\rd t}\frac12  \left( \int_{\T} (\phi_i - \psi _i) \, \rd x \right)^{2} \leq \frac{\bveps_{*}}{16}\int_{\T} |\ddot \phi_i - \ddot \psi_i|^2 \, \rd x + \mathfrak{C}^{*}\left(\bveps_{*}, \bveps^{*},\Phi^{*},\mathfrak{F}^{*}\right)\left( \|\Phi - \Psi\|^{2}_{H^{1}(\T)} + \| \bveps - \bsigma \|^{2}_{\R^{m}_{+}} \right),
$$
and so in fact we may conclude the descent inequality
\begin{align}\label{eq:unique1}
E'(t) &\leq - \frac{\bveps_{*}}{16m} \sum_{i \in [m] } \int_{\T} |\ddot \phi_i - \ddot \psi_i|^{2}\, \rd x +  \mathfrak{C}^{*}\left(\bveps_{*}, \bveps^{*},\Phi^{*},\mathfrak{F}^{*}\right)\left( E(t) + \| \bveps - \bsigma \|^{2}_{\R^{m}_{+}} \right) \\
E(t) &:= \frac1{m} \sum^{m}_{i=1} \| \phi_i - \psi_i\|^{2}_{H^{1}(\T)} \nonumber
\end{align}
holds. A simple comparison estimate on $\veps_i - \sigma_i$ shows
$$
\frac{\rd}{\rd t} \frac{(\veps_i - \sigma_i)^{2} }{2}\leq  \frac{\bveps_{*}}{16}\int_{\T} |\ddot \phi_i - \ddot \psi_i|^{2}\, \rd x +  \mathfrak{C}^{*}\left(\bveps_{*}, \bveps^{*},\Phi^{*},\mathfrak{F}^{*}\right)\left( \|\Phi - \Psi\|^{2}_{H^{1}(\T)} + \|\bveps - \bsigma\|^{2}_{\R^{m}_{+}} \right),
$$
which after averaging, adding to \eqref{eq:unique1} and applying Gronwall's inequality yields uniqueness. Combining this argument with our existence and regularity results then yields the main theorem of this section.

\begin{theorem}[Local Well-Posedness] Fix constants $\delta,\mathfrak{F}^{*} > 0$ and an initial datum $(\Phi_0,\bveps_0) \in \cH^{2}_{m} \times \R^{m}_{+}$ arbitrarily. Assume that the mappings $F_i : \cH^2_m \times \R^{m}_{+} \mapsto H^1(\T)$ obey the $H^{1}(\T)$ bound and $H^{1}(\T) \mapsto L^{2}(\T)$ Lipschitz property
\begin{align*}
\left.\begin{aligned}
\| F_i\left[(\Phi,\bveps)\right] \|_{H^{1}(\T)} &\leq \mathfrak{F}^{*} \\
\| F_i[(\Psi, \bsigma)] - F_i[(\Phi,\bveps)]\|_{L^{2}(\T)} &\leq \mathfrak{F}^{*}\left(  \|\Psi - \Phi \|_{\cH^{1}_m(\T)} + \|\bsigma -  \bveps\|_{\R^m_{+}} \right) \quad
\end{aligned}\right\rbrace 
\quad \text{on} \quad B_{\delta}\left( (\Phi_0,\bveps) \right)
\end{align*}
locally near the initial condition. If $a_i := |\ddot \phi_i|^2 + \langle \ddot \phi_i , \dot \phi^{\perp}_i\rangle F_i[(\Phi,\bveps)]$ for $i \in [m]$ then the following hold ---
\begin{enumerate}[{\rm i)}]
\item There exists $T = T(\mathfrak{F}^{*},\delta,\Phi_0,\bveps_0)$ so that the coupled system
$$
\left.\begin{aligned}
\partial_{t} \phi_i &= \veps_i\left( \ddot \phi_i + \po\big(a_i\big) \dot \phi_{i} + F_i[(\Phi,\bveps)]\dot \phi^{\perp}_i + \mu_{a_i}\phi_i\right)&&\,\,\rd_{t} \veps_i = 2\veps^2_i \mu_{a_i} \quad\\
\phi_i(0) &= (\Phi_0)_{i} && \veps_i(0) = (\bveps_0)_i \quad
\end{aligned}\right\rbrace \quad \text{for all} \quad i \in [m]
$$
has a mild solution $(\Phi,\bveps) \in C^{2}_{m}(T) \times C_m(T)$ that remains in the neighborhood $B_{\delta}$ of the initial datum. In addition, the bounds
\begin{align*}
\Phi^{*}_{\tau,T} &:= \sup_{\tau \leq t \leq T}\;\,\|\Phi(t)\|_{\dot \cV^{3}_{m}} \leq \min\left\{ \|\Phi_0\|_{\dot \cH^{2}_{m}}(\bveps\tau)^{-\frac12} , \|\Phi_0\|_{\dot \cH^{3}_{m}}\right\} + \mathfrak{F}^{*} \qquad \text{and} \qquad \\
0 &\,< \bveps_{*} \leq \veps_i(t) \leq \bveps^{*} < +\infty
\end{align*}
hold on $[0,T]$ for any $\tau >0$ arbitrary. If $\Phi_0 \in \cH^{3}_m$ then the solution is unique.
\item For any $\tau \leq s \leq t \leq T,$ any solution from {\rm i)} obeys the estimates
\begin{align*}
\| \Phi(t) - \Phi(s) \|_{\cH^{1}_{m}}  &\leq \frC^{*}\left(\Phi^{*}_{\tau,T},\mathfrak{F}^*,\bveps^{*}\right)(t-s) \qquad \text{and} \qquad \\
\| \Phi(t) \|_{\cH^{3}_{m}} &\leq \frC^{*}\left(\Phi^{*}_{\tau,T},\mathfrak{F}^{*},\bveps_{*},\bveps^*\right)\left(1 + \min\left\{ \tau^{-\frac14}, \|\Phi_0\|_{\cH^{3}_{m}} \right\} \right)
\end{align*}
uniformly in time. Any such solution is H\"{o}lder continuous in space-time with
\begin{align*}
\|\Phi(t) - \Phi(s)\|_{\cH^{2+\vth}_m} &\leq \frC^{*}\left(\vth,\Phi^{*}_{\tau,T},\mathfrak{F}^*,\bveps_{*},\bveps^{*}\right)|t-s|^{\frac{1-\vth}{2}} \qquad \text{and} \qquad \\
\|\Phi(t) - \Phi(s)\|_{C^{2,\alpha}(\T)} &\leq \frC^{*}\left(\alpha,\Phi^{*}_{\tau,T},\mathfrak{F}^*,\bveps_{*}\bveps^{*}\right)|t-s|^{ \frac{1 - 2\alpha}{4} }
\end{align*}
for any $0 < \alpha \leq \frac12,$ so in particular $\Phi_{t} \in C^{\frac{1-2\alpha}{4}}([\tau,T];C^{\alpha}(\T))$ and the solution is classical.
\item If $|\dot \phi_i(x,0)| \equiv 1$ for some $i \in [m]$ then $|\dot \phi_i(x,t)| \equiv 1$ for as long as the solution exists.
\end{enumerate}
\end{theorem}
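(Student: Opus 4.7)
The plan is to reduce parts (i) and (ii) to the existing local existence result (Theorem \ref{thm:locmild}) and to the regularity/uniqueness machinery already developed in this section, and to treat part (iii) by a direct scalar transport calculation on $|\dot\phi_i|^2$.

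For parts (i) and (ii), the first task is to verify that, under the hypotheses on the $F_i$, the composite mappings
$$\cF_i[(\Phi,\bveps)] := \po(a_i)\dot\phi_i + F_i[(\Phi,\bveps)]\dot\phi^\perp_i + \mu_{a_i}\phi_i, \qquad \mu_i[(\Phi,\bveps)] := \mu_{a_i}$$
satisfy the uniform $W^{1,1}(\T)$ bound, the $\cH^2_m\times\R^m\mapsto L^2(\T)$ Lipschitz property, and the analogous bounds on $\mu_i$ required as input to Theorem \ref{thm:locmild}. The delicate point is the quadratic piece $|\ddot\phi_i|^2$ inside $a_i$, which one controls using $\||\ddot\phi_i|^2-|\ddot\psi_i|^2\|_{L^1(\T)}\leq\|\ddot\phi_i+\ddot\psi_i\|_{L^2(\T)}\|\ddot\phi_i-\ddot\psi_i\|_{L^2(\T)}$ together with the uniform $\cH^2_m$ bound enforced on $B_\delta$; the remaining products dispatch via the Gagliardo--Nirenberg embeddings \eqref{eq:gns} and the elementary bound $\|\po(f)\|_{L^\infty(\T)}\leq 2\|f\|_{L^1(\T)}$. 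Theorem \ref{thm:locmild} then furnishes a mild solution $(\Phi,\bveps)\in C^2_m(T)\times C_m(T)$ inside $B_\delta((\Phi_0,\bveps_0))$, with positivity and upper bounds on $\veps_i$ obtained by continuity after possibly shrinking $T$. The $\dot\cV^3_m$ bound is exactly \eqref{eq:decay}, and uniqueness (when $\Phi_0\in\cH^3_m$) is the Gronwall energy argument on $\xi=\phi_i-\psi_i$ already carried out.

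The $\cH^1_m$ temporal Lipschitz estimate and the uniform $\cH^3_m$ bound asserted in (ii) are precisely the output of the Duhamel calculation developed in the text, combining the $\dot\cV^{<2}_m$ bound \eqref{eq:decay2} on the transport term with the $H^1(\T)$ bound on the reaction term and Minkowski's inequality. The spatial--H\"older estimates then follow by interpolating between the $\cH^1_m$ Lipschitz-in-time and the uniform $\cH^3_m$ bounds --- which yields the $\cH^{2+\vartheta}_m$ estimate with exponent $(1-\vartheta)/2$ --- and then invoking the Sobolev embedding $H^{2+\vartheta}(\T)\hookrightarrow C^{2,\alpha}(\T)$ for $\alpha<\vartheta-1/2$, which matches the stated $C^{2,\alpha}$ H\"older exponent $(1-2\alpha)/4$ in the limit.

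For part (iii), set $\rho_i:=|\dot\phi_i|^2$. Differentiating the shape equation in $x$ and using $\po(a_i)'=a_i-\mu_{a_i}$ gives
$$\dot\phi_{i,t} = \veps_i\bigl(\dddot\phi_i + a_i\dot\phi_i + \po(a_i)\ddot\phi_i + \dot F_i\dot\phi^\perp_i + F_i\ddot\phi^\perp_i\bigr),$$
and taking the inner product with $\dot\phi_i$, using $\langle\dot\phi_i,\dot\phi^\perp_i\rangle=0$, the antisymmetry identity $\langle\dot\phi_i,\ddot\phi^\perp_i\rangle=-\langle\ddot\phi_i,\dot\phi^\perp_i\rangle$, the definition $a_i=|\ddot\phi_i|^2+F_i\langle\ddot\phi_i,\dot\phi^\perp_i\rangle$, and the algebraic identity $\langle\dot\phi_i,\dddot\phi_i\rangle=\tfrac12\rho_i''-|\ddot\phi_i|^2$ produces a cancellation of all $|\ddot\phi_i|^2$ and $F_i$-dependent contributions, leaving
$$\partial_t\rho_i = \veps_i\bigl(\rho_i'' + \po(a_i)\rho_i' + 2a_i(\rho_i-1)\bigr).$$
Since $a_i$ and $\po(a_i)$ are bounded and continuous on $\T\times[0,T]$ by the $\cH^3_m$ regularity and the $H^1(\T)$ bound on $F_i$, this is a linear parabolic equation for $w_i:=\rho_i-1$ with bounded coefficients and zero initial data; a standard $L^2(\T)$ energy estimate together with Gronwall's inequality force $w_i\equiv 0$, i.e.\ $|\dot\phi_i|\equiv 1$ for as long as the solution exists. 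The genuine obstacle in the whole proof is the first verification step --- checking that the quadratic nonlinearity $|\ddot\phi_i|^2$ inside the transport coefficient $\po(a_i)$ preserves the local $\cH^2_m\mapsto L^2(\T)$ Lipschitz property required by Theorem \ref{thm:locmild} --- since everything else reduces to routine Duhamel, interpolation, or standard parabolic uniqueness.
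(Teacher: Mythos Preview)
Your proposal is correct and follows essentially the same route as the paper: parts (i) and (ii) are assembled from Theorem \ref{thm:locmild}, the Duhamel-based $\dot\cV^3_m$ and $\cH^3_m$ estimates, the $\cH^1_m$ temporal Lipschitz bound, interpolation plus Sobolev embedding for the H\"older statements, and the energy-Gronwall uniqueness argument; part (iii) is the same scalar parabolic equation for $|\dot\phi_i|^2$ (the paper writes it for $z=\tfrac12|\dot\phi_i|^2$) followed by linear uniqueness. The only cosmetic difference is that the paper invokes ``uniqueness of the linear initial value problem'' directly for (iii) rather than spelling out the $L^2$ energy estimate, and the paper treats the verification of \eqref{eq:loc1}--\eqref{eq:loc3} as routine rather than as the main obstacle.
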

\begin{proof}
We have shown {\rm i)} and the first part of {\rm ii)} already. Appealing first to Sobolev embedding and then to interpolation on the $\cH^{s}_{m}$ norm gives the bound
$$
\| \Phi(t) - \Phi(s) \|_{C^{2,\vth-1/2}(\T)} \leq \frc_{\alpha} \| \Phi(t) - \Phi(s) \|_{\cH^{2+\vth}(\T)} \leq  \frC^{*}\left(\alpha,\Phi^{*}_{\tau,T},\mathfrak{F}^*,\bveps_{*},\bveps^{*}\right) |t-s|^{\frac{1-\vth}{2}},
$$
and as a consequence, the claimed continuity in space-time. The claimed regularity of $\Phi_t$ then follows from the differential equation itself. Finally, {\rm iii)} follows from the fact that
$$
z_t = \veps \left( z_{xx} + a(2z-1) + \po\big(a\big)z_x \right) \qquad \text{for} \qquad z(x,t) := \frac12 |\dot \phi(x,t)|^2,
$$
holds in the $L^{2}(\T)$ sense by a straightforward differentiation. So, if $z(x,0) \equiv 1/2$ then $z(x,t) = 1/2$ for all $0 \leq t \leq T$ by uniqueness of the linear initial value problem.
\end{proof}

We conclude this section with a few remarks. First, continuation is a straightforward matter. In the usual way, we may continue the trajectory from $[0,T]$ to some larger time interval $[0,T+\delta T]$ as long as $(\Phi(T),\bveps(T),\cG)$ form a compatible triple. In other words, if we cannot continue the solution then a loss of $\cH^{2}$ regularity, a loss of embeddedness, or the occurrence of intersections between distinct interfaces must occur. Of course, the precise conditions that allow for continuation depend on the choice of non-local kernels. For example, if we consider only regular kernels then only a loss of $\cH^{2}$ regularity would prevent continuation. (If the speed degenerates $\sigma_i(T) = 0$ for some $i \in [m]$ but $\|\Phi(T)\|_{\cH^{2}_m}$ is finite, we simply remove any trivial, length-zero interface from the system before continuation). Second, at this level of generality we cannot conclude anything beyond local well-posedness of the dynamic. Both loss of embeddedness and finite-time blow-up can occur, even when working with smooth kernels.

\section{Embeddedness, Kernel Distortions and Distortion Measures}\label{sec:Embeded}
With existence in hand, we turn our attention to the dynamics of geometric properties of the interfaces $\ga_i \in \Gamma$ under the evolution. We focus mainly on embeddedness, for which the distortion
$$
\delta_{\infty}(\ga) := \sup_{y\neq x} \;\, \frac{D_{\ga}(x,y)}{|\ga(x) - \ga(y)|}
$$
provides a quantitative measure. We have two motivations for this focus. Our primary motivation arises from a modeling concern; as the dynamic \eqref{eq:shapedynamic} provides a reduced description for the motion of a collection of $m$ coupled, mutually interacting \emph{interfaces}, model validity demands that each shape-scaling pair $(\phi_i(\cdot,t),\sigma_i(t))$ should represent a closed, embedded curve. Analytical concerns also drive our inquiry into embeddedness. Indeed, for singular interaction kernels $g_{ij}$ we require an embedded interface (c.f. proposition \ref{prop:nlest0}) in order to guarantee sufficient regularity of the nonlocal forcing. Moreover, non-embedded curves may form curvature singularities in finite time. For such initial data, the dynamic \eqref{eq:shapedynamic} is only locally well-posed even if the interfacial speeds $\sigma_i(t)$ never vanish. In this way, embeddedness and global well-posedness are intimately tied.

To set the stage we recall that embeddedness is well-understood for pure curve shortening flow. More specifically, the distortion-like quantity
\begin{align}\label{eq:pseudist}
\Delta^{2}(\psi) := \sup_{x, z \neq 0} \;\, \frac{2\, \sigma^{2}(\psi)\left(1 - \cos\left( \sigma^{-1}(\psi)\int^{x+z}_{x} |\dot \psi(u)| \, \rd u\right) \right)}{|\psi(x) - \psi(y)|^2}
\end{align}
which we refer to as the \emph{pseudo-distortion} for lack of a better terminology, decays monotonically in time. It is equivalent to the distortion in the sense that the bounds
$$
\Delta(\psi) \leq \delta_{\infty}(\psi) \leq \frac{\pi}{2} \Delta(\psi)
$$
hold for all closed curves, so a planar motion by mean curvature preserves the embeddedness of the initial condition. To formulate our task in the general case, recall that the shape-scaling dynamic \eqref{eq:shapedynamic} furnishes a family of $m$ unit speed curves $\phi_i(\cdot,t) \in C^{2,\frac12}(\T)$ at each instant in time. By scale and parametrization invariance of \eqref{eq:pseudist}, we therefore wish to understand how the supremum
$$
\Delta^2(\phi) := \sup_{x, z \neq 0}\;\, K\left(|\phi(x) - \phi(x+z)|^2, 2(1-\cos z) \right) \qquad \text{for} \qquad K(u,v) = v u^{-1}
$$
evolves in time. We may also interpret the curvature bound of \cite{AB11} in a similar way. To over-simplify the argument somewhat, we may take a kernel of the form
\begin{align}\label{eq:generickernel}
K(u,v) = g\left(v, \frac{u}{v} \right)
\end{align}
and then show that the analogous supremum
\begin{align}\label{eq:Kdist0}
\Delta_{K}(\phi) := \;\, \sup_{x, z \neq 0}\;\, K\left(|\phi(x) - \phi(x+z)|^2, 2(1-\cos z) \right)
\end{align}
remains bounded in time under a planar curve shortening flow. If the function $g(\ell,r)$ behaves like $(1-r)/\ell$ near $r=1$ then a simple Taylor expansion shows
\begin{align}
K\left(|\phi(x) - \phi(x+z)|^2, 2(1-\cos z) \right) \approx \frac1{12}\left( \kappa^{2}_{\phi}(x) - 1 \right)_{+} \qquad \text{if} \qquad z \approx 0
\end{align}
and so the global-in-time curvature bound $\kappa^{2}_{\phi}(x,t) \leq 1 + 12\,\Delta_{K}(\phi(\cdot,0))$ immediately follows. In the presence of either normal growth $c_i \neq 0$ or a non-zero interaction $g_{ij} \neq 0$ between interfaces, the distortion-like quantities (\ref{eq:pseudist},\ref{eq:Kdist0}) need not remain finite. We therefore want some machinery to track the temporal \emph{evolution} of such quantities, so that we may make the mechanisms by which a loss of embeddedness occurs as transparent as possible. Our interest mainly lies in the pseudo-distortion \eqref{eq:pseudist}, but we shall treat the general case (\ref{eq:generickernel},\ref{eq:Kdist0}) since it will not require too much additional effort.

To motivate the argument, consider a simple case where we have some smooth function $u(x,t)$ of $x \in \T, t > 0$ and a differentiable, one-parameter family $x(t), t > 0$ of points that realize its supremum. We may then differentiate to find an evolution for the maximum,
$$
\frac{\rd}{\rd t} \; \max_{x \in \T} \;\, u(x,t) = u_{t}(x(t),t) + u_{x}(x(t),t)x^{\prime}(t) = u_{t}(x(t),t) = \int_{\T} u_t(x,t) \, \rd \pi_{t}
$$
where the probability measure $\pi_t = \delta(x - x(t))$ has support on the set of maximizers. The integral version
$$
\max_{x \in \T} \;\, u(x,T) = \max_{x \in \T} \;\, u(x,0) + \int^{T}_{0} \left( \int_{\T} u_t(x,t) \, \rd \pi_{t} \right) \, \rd t
$$
of this identity holds in the general case, and still provides a suitable way of formulating how the maximum evolves in time. We shall therefore work with the analogous identity
\begin{align}\label{eq:repr}
\Delta_{K}\left( \phi(T) \right) &\;= \Delta_{K}\left( \phi(0) \right) + 2 \int^{T}_{0} \left( \int_{\T} K_{u}\left( |\delta \phi(x,z,t)|^2 , 2(1-\cos z) \right)\langle \delta \phi(x,z,t),\delta \phi_t(x,z,t)\rangle \, \rd \pi_{t} \right) \, \rd t \\
\delta f(x,z,t) &:= f(x+z,t) - f(x,t),\nonumber
\end{align}
for \eqref{eq:Kdist0}, where the probability measures $\pi_t$ have support on the collection of $\Delta_{K}$-realizing pairs $(x,z)$ that attain the supremum. We then develop a ``calculus'' for such pairs $(x,z)$ that allow us to estimate integrals of the form
$$
\int_{\T} K_{u}\left( |\delta \phi(x,z,t)|^2 , 2(1-\cos z) \right)\langle \delta \phi(x,z,t),\delta \phi_t(x,z,t)\rangle \, \rd \pi_{t},
$$
and in this way obtain a means to track distortion-like quantities in time. 

Most of this section involves proving the representation \eqref{eq:repr} for a reasonably wide class of kernels. We conclude this section with a few examples to illustrate the utility of the the approach. In particular, it reveals the precise situations that lead to a loss of embededdness, how finite-time singularities then occur, and how non-local forcing can prevent a loss of embeddedness.

\subsection*{Distortion Kernels:} Fix some immersion $\phi \in C^{2,\alpha}(\T)$ that has unit speed. We choose to define a distortion kernel
$$
K(u,v) = g\left( v , \frac{u}{v} \right)
$$
in terms of some generic function $g(\ell,r)$ of the (squared) chord-length $v(z) = 2(1 - \cos z)$ along the standard circle and of the ratio
$$
r_{\phi}(x,z) = \frac{u_{\phi}(x,x+z)}{v(z)} := \frac{|\phi(x) - \phi(x+z)|^2}{2(1-\cos z)}
$$
between chord-length along $\phi$ and chord-length along the circle. On occasion we will parametrize the kernel in terms of the change of variables
$$
q(\ell,\alpha) := g\left( \ell , 1 + \alpha \ell \right) \qquad\qquad \left(\alpha > -\ell^{-1}\right)
$$
when we wish to examine the behavior of $K$ near the diagonal $u=v$ in the original variables. Given any such kernel we refer to the supremum
$$
\Delta_{K}(\phi) := \sup_{x \in \T, z \in \T \setminus\{0\} } \; f_{K,\phi}(x,z) \qquad \text{for} \qquad f_{K,\phi}(x,z) := K\left( u_{\phi}(x,x+z) , v(z) \right)
$$
as the kernel distortion or \emph{$K$-distortion} of the embedding. For general immersions $\ga \in C^{2,\alpha}(\T)$ we compute the $K$-distortion $\Delta_{K}(\ga)$ simply by using 
$$
u_{\ga}(x,x+z) = \frac{ |\ga(x) - \ga(x+z) |^{2} }{\sigma^{2}(\ga)} \qquad \text{and} \qquad v_{\ga}(x,x+z) := 2\left(1 - \cos\left( \frac1{\sigma(\ga)} \int^{x+z}_{x} |\dot \ga(u)| \, \rd u\right) \right)
$$
and then taking the supremum. If we view an embedded curve $\phi$ as an invertible function between $\mathbb{S}^{1}$ and its image $\mathrm{im}\big(\phi\big)$ then a uniform lower bound $r_{\phi}(x,x+z) \geq r_{*}(\phi) > 0$ holds precisely when $\phi$ has a Lipschitz inverse. We therefore assume that $g(\ell,r) \to \infty$ as $r \to 0$ for each $\ell$ fixed so that a bound on $\Delta_{K}(\phi)$ guarantees that $\phi \in H^{2}(\T)$ has finite distortion. In this way the kernel distortion $\Delta_{K}(\phi)$ furnishes a family of scale-invariant and parametrization-invariant quantifications of embeddedness. If, in addition, the function $g(\ell,r)$ behaves like $(1-r)/\ell$ near $r=1$  then a bound on $\Delta_{K}(\phi)$ also induces a bound on the bending energy.

We shall provide a calculus for computing with these distortion-like quantities for a relatively broad set of kernels. We simply ask that the kernel $K$ obeys appropriate regularity, monotonicity and convexity properties. For regularity, we assume the function $g(\ell,r)$ obeys the properties 
\begin{enumerate}[\indent\indent \rm R1)]
\item If $\ell,r>0$ then $g(\ell,r)$ is continuously differentiable.
\item The one-sided limits
$$
q_{0}(\alpha) := \lim_{ \ell \downarrow 0 } \;\,q(\ell,\alpha)\qquad \text{and} \qquad q_{1}(\alpha) := \lim_{\ell \downarrow 0} \;\, \partial_\ell q(\ell,\alpha)
$$
exist uniformly on any compact set.
\end{enumerate}
For monotonicity and convexity we assume that the function $g(\ell,r)$ obeys the properties
\begin{enumerate}[\indent\indent \rm M1)]
\item For each $0 < r \leq 1$ fixed, the function $f_{r}(\ell) := g(\ell,r)$ is non-increasing. For each $\ell > 0$ fixed, the function $f_{\ell}(r) := g(\ell,r)$ satisfies $f_{\ell}(0) = \infty, f^{\prime}_{\ell}(r)<0$ and is convex.
\item If $\alpha < 0$ then $q_1(\alpha)$ is strictly positive.
\end{enumerate}
\noindent Some simple examples of distortion kernels
\begin{align*}
&(\text{Pseudo-Distortion}) \quad &&K(u,v) = uv^{-1} && g(\ell,r) = r^{-1} && q(\ell,\alpha) = (1 + \alpha \ell)^{-1} \\
&(\text{M\"{o}bius}) \quad &&K(u,v) = u^{-1} - v^{-1} && g(\ell,r) = \ell^{-1}\left( r^{-1} - 1 \right) && q(\ell,\alpha) = -\alpha(1+\alpha \ell)^{-1}  \\
&(\text{KL Divergence}) \quad &&K(u,v) = v^{-1}\log\left( vu^{-1} \right)&& g(\ell,r) = -\ell^{-1}\log r && q(\ell,\alpha) = -\ell^{-1}\log(1 + \alpha \ell)
\end{align*}
help to elucidate the definition. We may easily verify the identities 
\begin{align*}
&q_0(\alpha) = 1,  &&q_0(\alpha) = -\alpha, \qquad\qquad \text{and} \qquad &&q_0(\alpha) = -\alpha,\\ 
&q_{1}(\alpha) = -\alpha, &&q_{1}(\alpha) = \alpha^{2}, \qquad\qquad\,\, \text{and} \qquad &&q_{1}(\alpha) = \alpha^{2}/2,
\end{align*}
respectively, for each of our three exemplars of distortion kernels. More generally, consider any separated function of the form $g(\ell,r) = \vrho(r)/\ell$ for some function $\vrho$ with $\vrho(1)=0$ and $\vrho(r)$ decreasing and convex. Then $q_0(\alpha) = \vrho^{\prime}(1)\alpha$ while $q_{1}(\alpha) = \vrho^{\prime\prime}(1) \alpha^2/2,$ so if $\vrho^{\prime\prime}(1) \neq 0$ then $g$ will satisfy both monotonicity properties; the property M2) is redundant in this case. The analytical reason for the definition is that the property R2) allows us to define the kernel $K$ and its derivatives along the diagonal. For example, if $\phi \in C^{2,\alpha}(\T)$ has unit speed then we may Taylor expand $f_{x}(z) := |\phi(x+z) - \phi(x)|^2$ near $z = 0$ to obtain the error estimates
\begin{align*}
\left| |\phi(x+z) - \phi(x)|^{2} - z^{2}\left(1 - \frac{z^2}{12}\kappa^{2}_{\phi}(x) \right) \right| &\leq \frac{ \|\phi\|^{2}_{C^{2,\alpha}(\T)} }{6} |z|^{4+\alpha} \\
\left| r_{\phi}(x,z) - 1 + \frac{z^2}{12}\left(\kappa^{2}_{\phi}(x)-1\right) \right| &\leq \frac{ 3\pi^{2} \|\phi\|^{2}_{C^{2,\alpha}(\T)} }{24} |z|^{2+\alpha} + \frac{z^4}{120}
\end{align*}
that hold for $z \in \T$ uniformly. We may therefore conclude the derivative identities
\begin{align}\label{eq:diaglimits}
\lim_{z\rightarrow 0}f_{K,\phi}(x,z)  &= q_{0}\left( \frac1{12}\left( 1 - \kappa^2_\phi(x)\right) \right) \nonumber \\
\lim_{z\rightarrow 0}\frac{ \partial_{z} f_{K,\phi}(x,z) }{2 z} &= q_{1}\left( \frac1{12}\left( 1 - \kappa^2_\phi(x)\right) \right)
\end{align}
and, in addition, that the argument $f_{K,\phi}$ of the $K$-distortion $\Delta_{K}(\phi)$ varies continuously along the diagonal. In a similar vein, if $\phi_{j} \to \phi$ in $C^{2,\alpha}(\T)$ then the uniform convergence
\begin{align}\label{eq:unifcv}
 \left\| f_{K,\phi_j} - f_{K,\phi}\right\|_{C(\T \times \T)} < \veps \qquad \mbox{whenever}\qquad \| \phi_j - \phi \|_{C^{2,\alpha}(\T)} < \delta\left( \veps, \|\phi\|_{C^{2,\alpha}(\T)} \right)
\end{align}
holds and so the $K$-distortion $\Delta_{K}(\phi)$ varies continuously with respect to the $C^{2,\alpha}(\T)$ topology. As the ratio $r_{\phi}(x,z)$ remains bounded away from the origin for embedded curves, we may use the regularity hypothesis R1) to conclude that $f_{K,\phi}$ varies continuously on its domain. In particular, the set
\begin{align*}
\mathcal{D}_{K}\left(\phi\right)  := \left\{ (x,z) \in \T \times \T : f_{K,\phi}(x,z) = \Delta_{K}(\phi) \right\}
\end{align*}
of \emph{$\Delta_{K}$-realizing pairs} is non-empty. 

\subsection*{Calculus of Distortion Pairs} The regularity and monotonicity properties of distortion kernels will allow us to establish some additional structure on the set $\mathcal{D}_{K}\left(\phi\right)$ of realizing pairs. To accomplish this, we begin by showing that the $K$-distortion $\Delta_{K}$ attains its unique minimum at the unit circle. Recall from \cite{Wirtinger} that if $\phi \in C^{0,1}(\T)$ has unit speed then the Poincar\'{e}-type inequality
$$
\fint_{\T} \frac{ |\phi(x+z) - \phi(x)|^{2} }{2(1-\cos z)} \, \rd x = 1 - h_{\phi}(z) \qquad h_{\phi}(z) \geq 0
$$
holds, and $h_{\phi} \equiv 0$ if and only if $\phi$ parametrizes the unit circle. For each $z \in \T, \,z \neq 0$ fixed the monotonicity property M1) gives
\begin{align*}
g\left( 2(1-\cos z),1 \right) &\leq g\left( 2(1-\cos z) , \fint_{\T} \frac{ |\phi(x+z) - \phi(x)|^{2} }{2(1-\cos z)} \, \rd x \right) \\
& \leq \fint_{\T} g\left( 2(1-\cos z) , \frac{ |\phi(x+z) - \phi(x)|^{2} }{2(1-\cos z)} \right)\, \rd x \leq \Delta_{K}(\phi)
\end{align*}
by convexity, Jensen's inequality and the definition of $\Delta_{K}(\phi)$ as a supremum. The monotonicity property M1) and the regularity property R2) show that function $q(\ell,0)$ is non-increasing and has $q_0(0)$ as its supremum. All together, we may conclude that
$$
q_{0}(0) = \Delta_{K}\left( \phi_{ {\rm circ}} \right) = \sup_{z \neq 0} \;\, q\left( 2(1-\cos z),0\right) = \sup_{z \neq 0} \; \, g\left( 2(1-\cos z),1 \right) \leq \Delta_{K}(\phi)
$$
holds for the $K$-distortion. 

Now assume that  $\phi \in C^{2,\alpha}(\T)$ is non-circular. By the monotonicity assumption M1) the function $q_{0}(\alpha)$ is non-increasing. Then $\kappa^{2}_{\phi}(x) > 1$ for at least one $x \in \T$ and so both inequalities
\begin{align*}
q_{0}\left( \frac1{12}\left( 1 - \kappa^{2}_{\phi}(x)\right) \right) \geq q_{0}(0) \quad \text{and} \quad q_{1}\left( \frac1{12}\left( 1 - \kappa^{2}_{\phi}(x)\right) \right) > 0
\end{align*}
hold by the second monotonicity assumption. By the limits \eqref{eq:diaglimits}, for any $\veps > 0$ there exists $\delta > 0$ so that
$$
\frac{ \partial_{z} f_{K,\phi}(x,u) }{2u} \geq (1-\veps)q_{1}\left( \frac1{12}\left( 1 - \kappa^{2}_{\phi}(x)\right) \right) 
$$
on $0 \leq u \leq \delta,$ so we must have
\begin{align}\label{eq:circunique}
f_{K,\phi}(x,\delta) &= f_{K,\phi}(x,0) + \int^{\delta}_{0} \frac{ \partial_{z} f_{K,\phi}(x,u) }{2u} 2u \, \rd u \\
&\geq q_{0}\left( \frac1{12}\left( 1 - \kappa^{2}_{\phi}(x)\right) \right) + (1-\veps)q_{1}\left( \frac1{12}\left( 1 - \kappa^{2}_{\phi}(x)\right) \right)\delta^{2} > q_{0}(0) \nonumber
\end{align}
and therefore the inequality $\Delta_{K}(\phi) > \Delta_{K}(\phi_{ {\rm circ}} )$ follows. In other words, $\Delta_{K}$ attains its minimum uniquely at the unit circle. All together, we have
\begin{lemma}\label{lem:distmin}
Assume that the kernel $K(u,v)$ satisfies the regularity properties {\rm R1), R2), M1), M2)} and that $\phi \in C^{2,\alpha}(\T)$ has unit speed. Then the inequality
$$
\Delta_{K}\left(\phi\right) > \Delta_{K}\left( \phi_{ {\rm circ}} \right) = q_0(0)
$$
holds unless $\phi$ parametrizes the unit circle.
\end{lemma}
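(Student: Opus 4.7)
The plan is to prove two separate estimates: a universal lower bound $\Delta_K(\phi)\geq q_0(0)$ valid for every unit-speed $\phi \in C^{2,\alpha}(\T)$, together with a matching upper bound for the circle that makes this an equality, and then to promote the lower bound to a strict inequality whenever $\phi$ is non-circular. The circle is easy: for $\phi=\phi_{\mathrm{circ}}$ one has $r_{\phi_{\mathrm{circ}}}(x,z)\equiv 1$, hence $f_{K,\phi_{\mathrm{circ}}}(x,z)=q(2(1-\cos z),0)$, and M1) (monotonicity of $g(\cdot,1)$) together with R2) (existence of the limit $q_0$) identifies the supremum as $q_0(0)$.

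For the universal lower bound I would use the Wirtinger-type identity from \cite{Wirtinger} that is already recorded in the text, namely
\[
\fint_{\T}\frac{|\phi(x+z)-\phi(x)|^2}{2(1-\cos z)}\,\rd x = 1 - h_\phi(z), \qquad h_\phi(z)\geq 0.
\]
Fix $z\neq 0$ and apply Jensen's inequality to the convex, decreasing function $r\mapsto g(2(1-\cos z),r)$ (convexity and monotonicity come from M1)). This yields
\[
g\!\left(2(1-\cos z),\,1-h_\phi(z)\right) \;\leq\; \fint_\T g\!\left(2(1-\cos z),\,r_\phi(x,z)\right)\rd x \;\leq\; \Delta_K(\phi),
\]
and the decreasing monotonicity of $g(\ell,\cdot)$ plus $h_\phi\geq 0$ lets me replace the first argument by $g(2(1-\cos z),1)=q(2(1-\cos z),0)$. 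Taking the supremum over $z$, monotonicity of $q(\cdot,0)$ and R2) give exactly $q_0(0)\leq\Delta_K(\phi)$.

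To upgrade this to a strict inequality when $\phi$ is not the unit circle, I would work near the diagonal. By the standard bending-energy inequality $\ell(\phi)\kappa_2^2(\phi)\geq (2\pi)^2$ with equality only for circles (recorded in Section~\ref{sec:PreMat}), a non-circular unit-speed $\phi$ must satisfy $\kappa_\phi^2(x_0)>1$ at some point $x_0$. Set $\alpha_0:=\frac{1}{12}(1-\kappa_\phi^2(x_0))<0$. The diagonal identities \eqref{eq:diaglimits} give $f_{K,\phi}(x_0,0)=q_0(\alpha_0)\geq q_0(0)$ (by monotonicity of $q_0$) and $\partial_z f_{K,\phi}(x_0,z)/(2z)\to q_1(\alpha_0)>0$ as $z\to 0$, the strict positivity coming from M2). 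For any $\varepsilon\in(0,1)$ one therefore finds $\delta>0$ so that the Taylor-type computation \eqref{eq:circunique} gives
\[
f_{K,\phi}(x_0,\delta) \;\geq\; q_0(\alpha_0) + (1-\varepsilon)q_1(\alpha_0)\delta^2 \;>\; q_0(0),
\]
and hence $\Delta_K(\phi)>q_0(0)=\Delta_K(\phi_{\mathrm{circ}})$, as claimed.

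The main obstacle is purely bookkeeping: one must verify that the various regularity hypotheses are used exactly where needed and that the one-sided diagonal limits $q_0,q_1$ can be substituted for $g,\partial_\ell g$ without regularity issues at $\ell=0$. The Jensen step is clean, but requires careful use of both pieces of M1) (the decreasing-in-$\ell$ monotonicity to eliminate $h_\phi$, and the convexity-in-$r$ for Jensen). The strict-inequality step depends critically on M2), which is the only axiom ruling out degenerate kernels for which $\alpha\mapsto q_0(\alpha)$ is flat near $0$; without it, a curve with a small high-curvature bump could fail to beat the circle.
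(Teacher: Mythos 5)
Your proposal is correct and follows essentially the same route as the paper: the Wirtinger identity combined with Jensen's inequality and the monotonicity/convexity in M1) gives $q_0(0)=\Delta_{K}(\phi_{{\rm circ}})\leq\Delta_{K}(\phi)$, and strictness comes from the diagonal limits \eqref{eq:diaglimits} together with M2) applied at a point where $\kappa^2_{\phi}>1$, exactly as in \eqref{eq:circunique}. The only cosmetic difference is that you justify the existence of such a point via the equality case of the bending-energy inequality, a standard fact that the paper simply asserts.
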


We now turn our attention to the structure of the set
$$
\mathcal{D}_{K}\left(\phi\right)  := \left\{ (x,z) \in \T \times \T : f_{K,\phi}(x,z) = \Delta_{K}(\phi) \right\}
$$
of $\Delta_{K}$-realizing pairs in the interesting case where $\phi$ is non-circular. We first show that no point of the form $(x,0)$ realizes the supremum, and that the ratio
$$
1 \geq r_{\phi}(x_0,z_0) := \frac{|\phi(x) - \phi(x+z)|^2}{2(1-\cos z)}
$$
does not exceed unity at any such pair. To see this, let $(x_{0},z_{0}) \in \mathcal{D}_{K}(\phi)$ and assume that $z_{0} = 0$ for the sake of contradiction. Then at a point $x_{*} \in \T$ of maximal squared curvature we must have $\kappa^{2}_{\phi}(x_*) > 1$ since $\phi$ is non-circular, and moreover that
$$
\Delta_{K}(\phi) = f_{K,\phi}(x_0,0) = q_{0}\left( \frac1{12}\left(1 - \kappa^{2}_{\phi}(x_0) \right) \right) \leq q_{0}\left( \frac1{12}\left(1 - \kappa^{2}_{\phi}(x_*)\right) \right) = f_{K,\phi}(x_{*},0)
$$
since $q_{0}$ is a non-increasing function. The fact that $\kappa^{2}_{\phi}(x_*) > 1$ and the inequality \eqref{eq:circunique} combine to show that
$$
f_{K,\phi}(x_{*},\delta) > f_{K,\phi}(x_{*},0) \geq f_{K,\phi}(x_{0},0) = \Delta_{K}(\phi)
$$
must hold for all $\delta > 0$ small enough. This contradiction shows $z_0 \neq 0$ whenever $(x_0,z_0)$ form a $\Delta_{K}$-realizing pair. Similarly, if $r_{\phi}(x_0,z_0) > 1$ then M1) would imply the contradiction
$$
\Delta_{K}(\phi) = g\left( 2(1-\cos z_0), r_{\phi}(x_0,z_0) \right) \leq g\left( 2(1-\cos z_0) , 1 \right) \leq q_{0}(0) = \Delta_{K}(\phi_{ {\rm circ}} ),
$$
and so $r_{\phi}(x_0,z_0) \leq 1$ must also hold at any realizing pair. In particular, by R1) the kernel $K(u,v)$ is continuously differentiable at any realizing pair.

With these facts in hand, we may establish a few derivative identities that hold at distortion points. These identities, while straightforward, prove essential when we turn our attention to estimating the evolution of distortion-like quantities in time. So, fix $(x,z) \in \mathcal{D}_{K}(\phi)$ any $\Delta_{K}$-realizing pair. As $z \neq 0$ the function $f_{K,\phi}(x,z)$ is differentiable in a neighborhood of any such pair. If we make the change of variables
$$
u(x,y) = |\phi(x) - \phi(y)|^{2}, \quad v(x,y) = 2\left( 1 - \cos(y-x) \right) \quad \text{and} \quad f(x,y) := f_{K,\phi}(x,y-x)
$$
then the derivative identities
\begin{align}\label{eq:deronce}
f_x = K_u u_{x} + K_v v_x \qquad \text{and} \qquad f_y = K_u u_y + K_v v_y
\end{align}
follow immediately from the chain rule. In particular, we have
\begin{align}\label{eq:distpointder1}
u_x = -\frac{K_v}{K_u} v_x \qquad \text{and} \qquad u_y = -\frac{K_v}{K_u} v_y
\end{align}
at any $\Delta_{K}$-realizing pair. Let $\vv(x,y) \propto \phi(y) - \phi(x)$ denote the unit vector that defines the chord between a realizing pair. For planar curves $\phi$ we may use  \eqref{eq:distpointder1} to write
$$
\left.
\begin{aligned}
\dot \phi(x) = c \vv(x,y) + s \vv^{\perp}(x,y) \\
\dot \phi(y) = c \vv(x,y) \pm s \vv^{\perp}(x,y)
\end{aligned}
\quad \right\}
\qquad c^{2} + s^{2} = 1 \qquad \text{and} \qquad 2c^{2} = \frac{K^2_v}{K^2_u}\frac{v(4-v)}{2u}
$$
for $c,s \in \R$ appropriate constants; we must take the minus branch at a supremum. To see this, assume $\dot \phi(x) = \dot \phi(y)$ for the sake of contradiction. As $\phi$ forms a closed, embedded curve, we may find a point $u \in \T$ so that $\phi(u)$ intersects the open line segment
$$
L = \left\{ \phi(x) + t \vv(x,y) : 0 < t < 1 \right\}
$$
by applying the mean value theorem applied to the signed-distance function. Upon relabelling coordinates and using periodicity,  we may assume $x < u < y$ and that $0 < y-x \leq \pi$ without loss of generality. After defining $d_{a,b} := |\phi(a) - \phi(b)|, \ell_{a,b} := (b-a)$ and $r_{a,b} := d^{2}_{a,b}/2(1-\cos \ell_{a,b})$ for $a < b$ we conclude that the identities
$$
d_{x,y} = d_{x,u} + d_{u,y} \qquad \text{and} \qquad \ell_{x,y} = \ell_{x,u} + \ell_{u,y} 
$$
must hold. But then
$$
\Delta_{K}(\phi) = g\left( 2(1 - \cos \ell_{x,y}), r_{x,y} \right) \geq g\left( 2(1-\cos \ell_{x,u}) , r_{x,u} \right)
$$
by definition of the $K$-distortion. As $r_{x,y} \leq 1$ and $0 \leq \ell_{x,y} \leq \pi$ the monotonicity property M1) then implies
$$
g\left( 2(1-\cos \ell_{x,u}) , r_{x,u} \right) \leq g\left( 2(1 - \cos \ell_{x,y}), r_{x,y} \right) \leq g\left( 2(1 - \cos \ell_{x,u}), r_{x,y} \right),
$$
and so $r_{x,u} \geq r_{x,y}$ by appealing to the strict monotonicity hypothesis M1) once again. This happens only if
\begin{align*}
&d_{x,u} \sin\left( \frac{\ell_{x,u} + \ell_{u,y}}{2} \right) \geq \left( d_{x,u} + d_{u,y} \right) \sin \left( \frac{\ell_{x,u}}{2} \right) \quad \longrightarrow \\
&d_{x,u} \sin\left( \frac{\ell_{x,u}}{2} \right)\left( \cos \left( \frac{\ell_{u,y}}{2} \right) - 1\right)  + d_{x,u}  \sin\left( \frac{\ell_{u,y}}{2} \right)\cos\left( \frac{\ell_{x,u}}{2}\right) \geq d_{u,y} \sin \left( \frac{\ell_{x,u}}{2} \right) \quad \longrightarrow \\
&d_{x,u} \sin \left( \frac{\ell_{u,y}}{2} \right) > d_{u,y} \sin \left( \frac{\ell_{x,u}}{2} \right)
\end{align*}
and so $r_{x,u} > r_{u,y}$ must hold. A similar argument shows $r_{u,y} > r_{x,u}$ must hold as well, a contradiction. Thus the set of equalities
$$
\left.
\begin{aligned}
\dot \phi(x) = c \vv(x,y) + s \vv^{\perp}(x,y) \\
\dot \phi(y) = c \vv(x,y) - s \vv^{\perp}(x,y)
\end{aligned}
\quad \right\}
\qquad c^{2} + s^{2} = 1 \qquad \text{and} \qquad 2c^{2} = \frac{K^2_v}{K^2_u}\frac{v(4-v)}{2u}
$$
must hold at any $\Delta_{K}$-realizing pair. Finally, if we further assume $g(\ell,r)$ is $C^{2}$ for $\ell,r>0$ then differentiate \eqref{eq:deronce} once more and combine the trigonometric identities
\begin{align*}
v_{x} = -v_{y}, v_{xx} = v_{yy} = -v_{xy}, \quad  v_{xx} = 2 - v \quad \text{and} \quad v^2_{x} = v(4-v)
\end{align*}
with the critical point relation \eqref{eq:distpointder1} to find that the second-derivative identities
\begin{align}\label{eq:2ndderiv}
f_{xx} &= \left( \frac{K_{uu} K^2_v}{K^2_u} - 2 \frac{K_{uv}K_v}{K_u} + K_{vv}\right)v(4-v) + K_{v}(2-v) + 2 K_u\left( 1 + \langle \phi(x) - \phi(y),\ddot \phi(x)\rangle\right) \nonumber \\
f_{yy} &= \left( \frac{K_{uu} K^2_v}{K^2_u} - 2 \frac{K_{uv}K_v}{K_u} + K_{vv}\right)v(4-v)  +  K_{v}(2-v)+ 2 K_u\left( 1 + \langle \phi(y) - \phi(x),\ddot \phi(y)\rangle\right) \\
f_{xy} &= \left( \frac{K_{uu} K^2_v}{K^2_u} - 2 \frac{K_{uv}K_v}{K_u} + K_{vv}\right)v(v-4) + K_{v}(v-2) + K_u \left(2 - \frac{K^2_v}{K^2_u}\frac{v(4-v)}{u} \right) \nonumber
\end{align}
must hold at a $\Delta_{K}$-realizing maximum. In particular, the inequality
\begin{align}\label{eq:2ndineq}
f_{xx}(x,y) + f_{yy}(x,y) \leq - \left(  (f_{xx}-f_{yy})^2 + 4 f_{xy}^2  \right)^{\frac12}(x,y) \leq - 2|f_{xy}|(x,y)
\end{align}
must hold by the $2^{{\rm nd}}$ derivative test. The following lemma summarizes these conclusions.

\begin{lemma}[Distortion Point Calculus]\label{lem:distcalc} Assume that $K$ satisfies {\rm R1), R2), M1), M2)} and that $\phi \in C^{2,\alpha}(\T)$ has unit speed. 
If $q_0(0) < \Delta_{K}(\phi) < \infty$ and $(x,z) \in \mathcal{D}_{K}(\phi)$ then the following hold ---
\begin{enumerate}[{\rm i)}]
\item The ratio
$$
r = \frac{u}{v} =: \frac{ |\phi(y) - \phi(x)|^2 }{2\left(1 - \cos z\right)}
$$
obeys the bounds $0 < r \leq 1,$ and arc-length distance $|z| = |y-x|$ between $\phi(x)$ and $\phi(y) = \phi(x+z)$ does not vanish. 
\item The tangent-angle and normal-angle relations
\begin{align*}
\langle \vv(x,y), \tang(y) \rangle &= \langle \vv(x,y), \tang(y) \rangle = -\frac{K_v}{K_u}\frac{\sin z}{\sqrt{u}} && \vv(x,y) = \frac{\phi(y) - \phi(x)}{|\phi(y) - \phi(x)|}\\
-\langle \vv(x,y) , \nrml(y) \rangle &= \langle \vv(x,y), \nrml(x) \rangle = \pm\sqrt{1 - \frac{K^2_v}{K^2_u}\frac{v(4-v)}{4u}}
\end{align*}
hold for a planar curve.
\item If, in addition, $g(\ell,r)$ is twice differentiable for $\ell,r>0$ then second derivative identities (\ref{eq:2ndderiv},\ref{eq:2ndineq}) hold.
\end{enumerate}
\end{lemma}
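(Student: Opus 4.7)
The plan is to dispatch the three parts in sequence, leveraging the derivations already developed in the discussion preceding the lemma.

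For part (i), I would argue by contradiction in two steps. First, if a realizing pair had $z=0$, then $f_{K,\phi}(x_0,0) = q_{0}\!\left(\tfrac{1}{12}(1-\kappa_\phi^2(x_0))\right)$, and since $\Delta_K(\phi) > q_0(0)$ forces $\phi$ to be non-circular via Lemma \ref{lem:distmin}, a point of maximum curvature $x_*$ has $\kappa_\phi^2(x_*) > 1$; hypothesis M2) then gives $q_1\!\left(\tfrac{1}{12}(1-\kappa_\phi^2(x_*))\right) > 0$, and the on-diagonal Taylor expansion \eqref{eq:diaglimits} produces a small $\delta > 0$ for which $f_{K,\phi}(x_*,\delta)$ strictly exceeds $\Delta_K(\phi)$ — as carried out in \eqref{eq:circunique}. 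Second, if $r_\phi(x_0,z_0) > 1$, hypothesis M1) yields $f_{K,\phi}(x_0,z_0) \leq g(2(1-\cos z_0),1) \leq q_0(0) < \Delta_K(\phi)$, again a contradiction.

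For part (ii), the first-order identities are immediate from criticality. Writing $f(x,y) = K(u(x,y),v(x,y))$ with $u(x,y) = |\phi(x)-\phi(y)|^2$ and $v(x,y) = 2(1-\cos(y-x))$, the interior-critical condition $f_x = f_y = 0$ yields \eqref{eq:distpointder1}, from which the explicit tangent-angle relation $\langle \vv, \tang(x)\rangle = \langle \vv, \tang(y)\rangle = -(K_v/K_u)\sin z/\sqrt{u}$ follows by using $u_x = -2\sqrt{u}\,\langle \vv(x,y),\dot\phi(x)\rangle$ and unit speed. The main obstacle is fixing the sign in the orthogonal decomposition $\dot \phi(y) = c\vv \pm s\vv^\perp$. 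To rule out $\dot\phi(x) = \dot\phi(y)$, I would invoke embeddedness: applying the mean value theorem to the signed distance to the chord $L = \{\phi(x) + t\vv : 0 < t < 1\}$ yields an interior point $u \in (x,y)$ whose image lies on $L$, so chord-length and arclength split additively, and the three chord/arc ratios satisfy $r_{x,y},r_{x,u},r_{u,y}$. The strict monotonicity of $g(\ell,\cdot)$ in M1), coupled with a symmetric trigonometric manipulation, then yields the contradictory pair $r_{x,u} > r_{u,y}$ and $r_{u,y} > r_{x,u}$.

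For part (iii), I differentiate the chain-rule identity \eqref{eq:deronce} once more, then substitute the trigonometric identities $v_{xx} = v_{yy} = -v_{xy} = 2-v$ and $v_x^2 = v(4-v)$ together with the first-order criticality $u_x = -(K_v/K_u)v_x$ to obtain the stated forms for $f_{xx}$, $f_{yy}$, $f_{xy}$ in \eqref{eq:2ndderiv}. The inequality \eqref{eq:2ndineq} is the standard second-derivative test: the Hessian $H$ at an interior local maximum is negative semidefinite, so $\mathrm{tr}(H) \leq 0$ and $\det(H) \geq 0$, whence
\[
f_{xx} + f_{yy} \;\leq\; -\sqrt{(f_{xx}-f_{yy})^2 + 4 f_{xy}^2} \;\leq\; -2|f_{xy}|.
\]
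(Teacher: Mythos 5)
Your proposal is correct and follows the paper's own argument essentially verbatim: the same two contradictions for part (i), the same criticality relations \eqref{eq:distpointder1} plus the embeddedness/chord-intersection argument (mean value theorem for the signed distance, additive splitting of chord and arc length, and the monotonicity of $g(\ell,\cdot)$ yielding the contradictory pair $r_{x,u}>r_{u,y}$ and $r_{u,y}>r_{x,u}$) to fix the sign in part (ii), and the same second differentiation with the second-derivative test for part (iii). The only cosmetic omission is the lower bound $r>0$, which---as in the paper---is immediate from $\Delta_{K}(\phi)<\infty$ together with the requirement $f_{\ell}(0)=\infty$ in {\rm M1)}.
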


\subsection*{Distortion Measures:} To finish up, we shall prove the identity \eqref{eq:repr} for the dynamics of kernel distortions along a one-parameter family $\Phi := \{ \phi(t) : t \in [a,b] \}$ of unit speed embeddings. We may freely assume that no $\phi(t)$ is circular, for we know the value $\Delta_{K} = q_0(0)$ of the kernel distortion exactly for all such circular points. Given a distortion kernel $K$ and a unit speed embedding $\phi \in C^{2,\alpha}(\T)$ we may let
$$
\mathfrak{Z}_{*}\left( \phi \right) := \min \left\{ |z| : (x,z) \in \mathcal{D}_{K}(\phi) \right\}
$$
denote the smallest distance along $\T$ between any distortion realizing pair. As $f_{K,\phi}(x,z)$ varies continuously when $\phi \in C^{2,\alpha}(\T)$ this minimum is well-defined. Moreover, the uniform convergence \eqref{eq:unifcv} suffices to show lower semi-continuity
$$
\| \phi_{j} - \phi \|_{C^{2,\alpha}(\T)} \to 0 \qquad \longrightarrow \qquad \liminf_{j \to \infty} \; \, \mathfrak{Z}_{*}\left( \phi_j \right) \geq \mathfrak{Z}_{*}\left( \phi \right)
$$
of this distance with respect to the $C^{2,\alpha}(\T)$ topology. As a consequence, if the one-parameter family $\Phi$ of unit speed embeddings varies continuously in $C^{2,\alpha}(\T)$ then the minimum
$$
\mathfrak{Z}_{*}\left( \Phi \right) := \min_{t \in [a,b]} \;\, \mathfrak{Z}_{*}\left( \phi(t) \right) 
$$
is attained at some $a \leq t_{*} \leq b,$ and is non-vanishing since $\phi(t_*)$ is non-circular. Moreover, as $\Delta_{K}(\phi(t))$ varies continuously on $[a,b]$ the minimum
$$
\mathfrak{E}_{*}\left( \Phi \right) := \min_{t \in [a,b]} \;\, \Delta_{K}\left( \phi(t) \right)
$$
is also attained and therefore finite.

Let us now turn our attention toward computing the evolution of the $K$-distortions $\Delta_{K}(\phi(t))$ in time, which involves the integration formula
\begin{align}\label{eq:measuretwo}
\frac{ \Delta_{K}\left( \phi(b) \right) - \Delta_{K}\left( \phi(a) \right)}{2} &\;=  \int^{b}_{a} \left( \int_{\T} K_{u}\left( |\delta \phi(x,z,t)|^2 , 2(1-\cos z) \right)\langle \delta \phi(x,z,t),\delta \phi_t(x,z,t)\rangle \, \rd \pi_{t} \right) \, \rd t \nonumber \\
\delta f(x,z,t) &:= f(x+z,t) - f(x,t)
\end{align}
against a corresponding one-parameter family $\pi_{t} : [a,b] \mapsto \mathcal{P}( \T \times \T)$ of probability measures. By such a \emph{distortion measure} for $\phi$ we mean a Borel probability measure $\pi$ on $\T \times \T$ that has support $\mathrm{supp}(\pi) \subset \mathcal{D}_{K}\left(\phi\right)$ on $\Delta_{K}$-realizing pairs. Under the further assumption that the time derivatives
$$
\Phi^{\prime} = \left\{ \phi_{t} : t \in [a,b] \right\}
$$
vary continuously, we may proceed with a relatively straightforward construction of just such a one-parameter family $\pi_t$ of distortion measures.
\begin{theorem}[Existence of Distortion Measures]\label{thm:dist}
Assume that $K$ satisfies {\rm R1), R2)} and that $\Phi \in C\left( [a,b] ; C^{2,\alpha}(\T) \right)$ defines a one-parameter family of unit speed embeddings. Assume the time derivatives $\Phi^{\prime} \in C\left([a,b];C(\T)\right)$ define a one-parameter family of continuous functions, and that $\mathfrak{Z}_{*}(\Phi)$ does not vanish. Then there exists a family $\pi_{t} : [a,b] \mapsto \mathcal{P}( \T \times \T)$ of distortion measures for which \eqref{eq:measuretwo} holds.
\end{theorem}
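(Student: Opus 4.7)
The plan is to interpret the formula as an envelope identity for $M(t) := \Delta_K(\phi(t))$ applied to the parametric family $F(x,z,t) := f_{K,\phi(t)}(x,z)$, via Danskin's theorem combined with a measurable selection of maximizers. Throughout, write $A(t) := \mathcal{D}_K(\phi(t))$ and $Z_* := \{(x,z) \in \T \times \T : |z| \geq \mathfrak{Z}_*(\Phi)\}$.

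The first step is to upgrade the hypotheses to uniform regularity of $F$ and $\partial_t F$ near the set of maximizers. By lemma \ref{lem:distcalc} i), every $(x,z) \in A(t)$ satisfies $|z| \geq \mathfrak{Z}_*(\Phi)$ and $r_{\phi(t)}(x,z) \leq 1$; meanwhile the distortion bound $|\phi(t)(x+z) - \phi(t)(x)| \geq |z|/\delta_{\infty}(\phi(t))$ for unit-speed embeddings, together with the local Lipschitz dependence of $\delta_{\infty}$ on the $H^{2}(\T)$-topology, forces a uniform lower bound $r_{\phi(t)}(x,z) \geq r_* > 0$ over $\bigcup_{t \in [a,b]} A(t)$. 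Consequently the pair $(|\delta\phi|^2, 2(1-\cos z))$ takes values in a compact subset $U \subset\subset (0,\infty)^2$ for all maximizers at all times; on $U$ the kernel $K$ is $C^1$ by R1). Combined with $\Phi' \in C([a,b]; C(\T))$, both $F$ and the derivative
\begin{align*}
\partial_t F(x,z,t) = 2\, K_u\bigl(|\delta\phi|^2, 2(1-\cos z)\bigr)\,\langle \delta\phi(x,z,t), \delta\phi_t(x,z,t)\rangle
\end{align*}
are jointly continuous and uniformly bounded on $Z_* \times [a,b]$.

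Next I would invoke Danskin's theorem applied to the maximum of the continuous function $F$ over the compact set $Z_*$: the multifunction $A(\cdot)$ is upper hemicontinuous with nonempty compact values; $M$ is Lipschitz on $[a,b]$; and $M$ has one-sided derivatives
\begin{align*}
\partial_{+} M(t) = \max_{(x,z) \in A(t)} \partial_t F(x,z,t), \qquad \partial_{-} M(t) = \min_{(x,z) \in A(t)} \partial_t F(x,z,t).
\end{align*}
Rademacher's theorem gives $M'(t)$ for a.e. $t$; at every such $t$ these two one-sided derivatives coincide, forcing $\partial_t F(\cdot,\cdot,t)$ to be constant on $A(t)$ with common value $M'(t)$. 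The Kuratowski-Ryll-Nardzewski selection theorem then furnishes a Borel measurable map $t \mapsto (x_t, z_t) \in A(t),$ and the Dirac family $\pi_t := \delta_{(x_t, z_t)}$ defines a measurably-varying probability measure on $\T \times \T$ with $\mathrm{supp}(\pi_t) \subset \mathcal{D}_K(\phi(t))$.

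Finally, since $M$ is Lipschitz and hence absolutely continuous, the fundamental theorem of calculus together with the Danskin identity yields
\begin{align*}
\frac{M(b) - M(a)}{2} = \frac12 \int_a^b M'(t)\,\rd t = \frac12 \int_a^b \partial_t F(x_t,z_t,t)\,\rd t = \int_a^b \int_{\T \times \T} K_u\bigl(|\delta\phi|^2, 2(1-\cos z)\bigr)\,\langle \delta\phi, \delta\phi_t\rangle\,\rd \pi_t\,\rd t,
\end{align*}
which is precisely the claimed representation. The main obstacle is the uniform-regularity step at the outset: one must simultaneously preclude approach of maximizers toward the diagonal (handled by $\mathfrak{Z}_*(\Phi) > 0$) and degeneration of the ratio $r_{\phi(t)}$ to zero (handled by uniform embeddedness and $H^{2}$-continuity of $\delta_{\infty}$). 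Once both bounds are secured, the Danskin-plus-measurable-selection machinery is essentially routine.
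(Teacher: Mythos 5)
Your argument is correct, but it takes a genuinely different route from the paper. You prove the identity as an envelope (Danskin) formula: after fixing the compact domain $Z_{*}=\{|z|\geq \mathfrak{Z}_{*}(\Phi)\}$, on which embeddedness plus compactness give $u=|\delta\phi|^{2}$ and $v=2(1-\cos z)$ values in a compact subset of $(0,\infty)^{2}$ where $K$ is $C^{1}$ by {\rm R1)}, you get $F$ and $\partial_{t}F$ jointly continuous, conclude $t\mapsto\Delta_{K}(\phi(t))$ is Lipschitz, apply Danskin to identify the one-sided derivatives with $\max/\min$ of $\partial_{t}F$ over $\mathcal{D}_{K}(\phi(t))$ (note all realizing pairs lie in $Z_{*}$ precisely because $\mathfrak{Z}_{*}(\Phi)>0$, so the restricted maximum is $\Delta_{K}(\phi(t))$), and then use a.e.\ differentiability, a Kuratowski--Ryll-Nardzewski selection (legitimate, since the argmax multifunction has closed graph into a compact metric space) and the fundamental theorem of calculus for Lipschitz functions. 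The paper instead regularizes the supremum by $L^{2p}$ powers shifted by $\lambda<\mathfrak{E}_{*}(\Phi)$ and cut off near the diagonal, interprets the normalized powers as probability measures $\nu_{p}$ on $\T\times\T\times[a,b]$, passes to a weak-$*$ limit, shows the limit charges only realizing pairs, and disintegrates in time to produce $\pi_{t}$; this avoids any selection theorem and any discussion of differentiability of $\Delta_{K}(\phi(t))$, and it produces genuinely ``spread-out'' measures rather than Dirac masses. Your approach buys two extra facts worth noting: $\Delta_{K}(\phi(t))$ is Lipschitz in $t$ with derivative given pointwise a.e.\ by the envelope formula, and $\partial_{t}F(\cdot,\cdot,t)$ is constant on $\mathcal{D}_{K}(\phi(t))$ for a.e.\ $t$ (so in fact \eqref{eq:measuretwo} then holds for \emph{any} measurable family of distortion measures, not just your selected Diracs). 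Two small points to tidy: state the uniform lower bound $r_{\phi(t)}\geq r_{*}>0$ on all of $Z_{*}\times[a,b]$ rather than only on the maximizer set, since Danskin needs $\partial_{t}F$ continuous on the whole product (the same distortion/compactness inequality gives this immediately); and record explicitly that measurability of $t\mapsto(x_{t},z_{t})$ is what makes the inner integral in \eqref{eq:measuretwo} a measurable function of $t$.
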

\begin{proof}
The regularity and embeddedness assumptions combine with \eqref{eq:unifcv} to show that the function
$$
f(x,z,t) = f_{K,\phi(t)}(x,z) := K\left( |\phi(x+z,t) - \phi(x,t)|^{2} , 2(1 - \cos z) \right)
$$
is jointly continuous in all of its arguments. In particular, $\mathfrak{Z}_{*}(\Phi)$ is well-defined and positive by assumption. Similarly, the kernel distortions
$$
\Delta_{K}\left( \phi(t) \right) \geq \mathfrak{E}_{*}(\Phi) > -\infty
$$
are uniformly bounded below. So, take any fixed $\lambda < \mathfrak{E}_{*}(\Phi)$ and define the $L^{2p}$-regularization
$$
E_{p}(x,z,t) := \left( f_{K,\phi(t)}(x,z)  - \lambda \right)^{2p}_{+}\chi^{2p}(z) \quad \text{and} \quad E_{p}(t) := \left[ \int_{\T \times \T} E_{p}(x,z,t) \, \rd x \rd z \right]^{\frac1{2p}} 
$$
of the kernel distortion, with $\chi(z)$ some continuous cut-off function with 
$$
\begin{cases}
\chi(z) = 1 & \text{on} \qquad \mathfrak{Z}_{*}\left( \Phi \right) \leq |z|,\\
\chi(z) = 0 & \text{on} \qquad \mathfrak{Z}_{*}\left( \Phi \right) \geq 2|z|
\end{cases}
$$
and $0 < \chi(z) < 1$ otherwise. By construction of $E_p(x,z,t)$ and the definitions of $\mathfrak{Z}_{*}\left( \Phi \right),\chi(z)$ it follows that
\begin{align*}
E_{p}(t)  \quad \overset{ p \to \infty }{\longrightarrow}  \quad
\sup_{ (x,z) \in \T \times \T } \;\, (f_{K,\phi(t)}(x,z) - \lambda)_{+}\chi(z) = \Delta_{K}\left(\phi(t)\right) - \lambda =: E_{\infty}(t)
\end{align*}
since $\mathfrak{Z}_{*}\left( \Phi \right)$ does not vanish. Finally, note that for $p > 1$ the integral identity
$$
\int^{b}_{a} \int_{\T \times \T} \zeta(x,z,t) \, \rd  \nu_{p} := \fint^{b}_{a} \int_{\T \times \T} \frac{ \left( f_{K,\phi(t)}(x,z)  - \lambda \right)^{2p-2}_{+}\chi^{2p-2}(z) }{\left[ E_{p-1}(t)\right]^{2p-2}}\zeta(x,z,t) \, \rd x \rd z \rd t,
$$
on $\zeta \in C(\T \times \T \times [a,b])$ defines a sequence $\nu_{p}  \in \mathcal{P}(\T \times \T \times [a,b] )$ of Borel probability measures.

Now as $\phi_{t} \in C(\T \times [a,b])$ is jointly continuous by assumption, the differentiation formula
$$
\partial_{t} E_{p}(x,z,t) = 4p\left( f_{K,\phi(t)}(x,z)  - \lambda \right)^{2p-1}_{+}K_{u}\left(  |\delta \phi(x,z,t)|^{2},v(z) \right) \langle \delta \phi(x,z,t) , \delta \phi_{t}(x,z,t) \rangle \chi^{2p}(z)
$$
is valid since, by R1), the composition $K\left(  |\delta \phi(x,z,t)|^{2},v(z) \right)$ is differentiable in regions where $z$ does not vanish. As a consequence, the fundamental theorem of calculus gives
\begin{align}\label{eq:FTCdist}
\frac{ E_{p}(b) - E_{p}(a) }{2(b-a)} &\;= \int_{\T \times \T \times [a,b]} R_{p}(t)h(x,z,t)  \, \rd \nu_{p} \qquad \qquad R_{p}(t) := \frac{ E^{2p-2}_{p-1}(t) }{E^{2p-1}_{p}(t)}  \\
\qquad h(x,z,t) &:= \left( f_{K,\phi(t)}(x,z)  - \lambda \right)_{+}K_{u}\left(  |\delta \phi(x,z,t)|^{2},v(z) \right) \langle \delta \phi(x,z,t) , \delta \phi_{t}(x,z,t) \rangle \chi^{2}(z) \nonumber
\end{align}
for the evolution of the regularized kernel distortion. As $\phi_{t} \in C(\T \times [a,b])$ and $z$ remains bounded away from the origin unless $h$ vanishes, the regularity assumption R1) and the embeddedness assumption ensure that the function $h(x,z,t)$ is jointly continuous in all of its variables and uniformly bounded. An application of H\"{o}lder's inequality shows that
$$
\frac{ E^2_{p}(t) }{E^{2}_{\infty}(t)  } \leq E_{p}(t)R_{p}(t) \leq (2\pi)^{\frac1{p}},
$$
and so $R_{p}(t)E_{p}(t) \to 1$ pointwise. Recall that the function
$$
f(x,z,t) = \left( f_{K,\phi(t)}(x,z) - \lambda \right)^{2}_{+} \chi^{2}(z)
$$
is continuous in all of its arguments. As such, there exists some uniform $\delta>0$ so that the implication
$$
|x-y|^2 + |z - u|^2 < \delta^2 \qquad \longrightarrow \qquad |f(x,z,t) - f(y,u,t)| < \frac{ \left[ \mathfrak{E}_{*}(\Phi) - \lambda \right]^{2}}{2} \leq \frac{ E^{2}_{\infty}(t) }{2}
$$
holds. Applying this at a point $(y,u,t)$ realizing the maximum $f(y,u,t) = E^{2}_{\infty}(t)$ gives the uniform lower bound
\begin{align*}
E^{2p}_{p}(t) &= \int_{\T \times \T} f^{p}(x,z,t) \, \rd x \rd z \geq \int_{ B_{\delta}(y,u) } f^{2p}(x,z,t) \, \rd x \rd z \geq \frac{ E^{2p}_{\infty}(t) }{2^{2p}} \pi \delta^{2} \\
E_{p}(t) & \geq \frac{ \mathfrak{E}_{*}(\Phi) - \lambda}{2}\left( \pi \delta^{2} \right),
\end{align*}
and so $R_{p}(t)$ is uniformly bounded in time. From the definition of $\nu_{p}$ the limit
$$
\left| \int_{\T \times \T \times [a,b]} \left(R_{p}(t) - E^{-1}_{\infty}(t) \right)h(x,z,t)  \, \rd \nu_{p}\right| \leq \frac{ \|h\|_{L^{\infty}(\T \times \T \times [a,b])} }{b-a} \int^{b}_{a} \left|R_{p}(t) - E^{-1}_{\infty}(t)\right| \, \rd t \to 0
$$
therefore follows from the dominated convergence theorem. As the measures $\nu_{p}$ defined on Borel subsets of $\T \times \T \times [a,b]$ are clearly tight probability measures, there exists a subsequence of $p \in \N$ and a limit measure $\nu \in \mathcal{P}\left( \T \times \T \times [a,b] \right)$ defined on the Borel sets so that the weak-$*$ convergence
$$
\int_{\T \times \T \times [0,T] } \phi(x,y,t) \, \rd \nu_{p} \to \int_{\T \times \T \times [0,T] } \phi(x,y,t) \, \rd \nu  \quad \text{for all} \quad \phi \in C(\T \times \T \times [0,T])
$$
holds. Passing to the limit in \eqref{eq:FTCdist} then gives the evolution identity
$$
\frac{\Delta_{K}\left( \phi(b) \right) - \Delta_{K}\left( \phi(a) \right)}{2(b-a)} = \int_{\T \times \T \times [a,b] } E^{-1}_{\infty}(t)h(x,z,t) \, \rd \nu
$$
for the kernel distortion.

By the disintegration theorem (c.f. \cite{AFP} Thm 2.28 or \cite{AGS} Thm 5.3.1), there exists a probability measure $\mu$ on $[a,b]$ and a $\mu$-a.e. unique family of Borel probability measures $\pi_{t} : [a,b] \mapsto \mathcal{P}(\T \times \T)$ so that if $\phi \in L^{1}( \rd \nu)$ is Borel then the identity
$$
\int_{\T \times \T \times [a,b] } \phi(x,y,t) \, \rd \nu =  \int^{b}_{a} \left( \int_{\T \times \T} \phi(x,y,t) \, \rd \pi_t \right) \, \rd \mu
$$
holds. Take $\phi(x,y,t) = \zeta(t)$ for $\zeta$ any continuous function. Then by the definition of $\nu_{p}$ and weak-$*$ convergence
\begin{align*}
\fint^{b}_{a} \zeta(t) \, \rd t = \int_{\T \times \T \times [a,b] } \phi(x,y,t) \, \rd \nu_{p} \quad \to \quad  \int_{\T \times \T \times [a,b] } \phi(x,y,t) \, \rd \nu = \int^{b}_{a} \zeta(t) \left( \int_{\T \times \T} \rd \pi_{t}\right) \, \rd \mu 
\end{align*}
and so $\mu$ agrees with the normalized Lebesgue measure. Define the subset
$$
A:= \left\{ (x,z,t) \in \T \times \T \times [a,b] : f_{K,\phi(t)}(x,z) = \Delta_{K}\left( \phi(t) \right) \right\}
$$
and note that $A$ is closed since $f_{K,\phi(t)}(x,z)$ is jointly continuous in all of its variables. So for any point $(x_0,z_0,t_0) \in A^{c}$ there exists some $\delta > 0$ small enough so that
\begin{align*}
B &:= \left\{ (x,z,t) \in \T \times \T \times [a,b] :  |x - x_0| + |z - z_0| + |t-t_0| \leq \delta \right\} \subset A^{c} \\
r &:= \max_{B} \;\, \frac{ f_{K,\phi(t)}(x,z) - \lambda }{\Delta_{K}(\phi(t)) - \lambda} < 1,
\end{align*}
which upon applying H\"{o}lder's inequality and passing to the limit gives
\begin{align*}
\nu\left( \mathrm{int}(B) \right) &\leq \liminf_{p \to \infty} \nu_{p}\left( B \right) = \liminf_{p \to \infty}\;\, \frac1{b-a} \int_{B} \frac{ \left( f_{K,\phi(t)}(x,z)  - \lambda \right)^{2p-2}\chi^{2p-2}(z) }{\left[ E_{2p-2}(t)\right]^{2p-2}} \, \rd x \rd z \rd t \\
& \leq r^{q} \liminf_{p \to \infty} \;\, (2\pi)^{\frac{q}{2p-2}}\fint^{b}_{a} \frac{ E^{q}_{\infty}(t) }{E^{q}_{p-1}(t)} \, \rd t = r^{q}
\end{align*}
for any $q>1$ fixed. Sending $q \to \infty$ shows that $\nu\left( \mathrm{int}(B) \right) = 0,$ so $(x_0,y_0,t_0) \notin \mathrm{supp}(\nu)$ and the identity
\begin{align*}
\int_{\T \times \T \times [a,b] } \phi(x,z,t) \, \rd \nu &= \int_{\T \times \T \times [a,b] } \phi(x,z,t)\mathbf{1}_{A}(x,z,t) \, \rd \nu \\
&= \int^{b}_{a} \left( \int_{\T \times \T} \phi(x,z,t) \mathbf{1}_{A}(x,z,t) \, \rd \pi_t \right) \, \rd \mu
\end{align*}
therefore holds. For almost every $a \leq t \leq b$ the restricted measures
$$
\tilde{\pi}_{t} := \pi_{t} \mathbf{1}_{A}
$$
on $\T \times \T$ are therefore distortion measures. All-together, after relabelling $\tilde \pi_t$ to $\pi_t$ this gives a one-parameter family $ \pi_t : [a,b] \mapsto \mathcal{P}(\T \times \T)$ of distortion measures so that the claimed evolution identity
$$
\Delta_{K}\left( \phi(b) \right) - \Delta_{L}\left( \phi(a) \right) = \int^{b}_{a} \left( \int_{\T \times \T} 2 K_{u}\left(  |\delta \phi(x,z,t)|^{2},v(z) \right) \langle \delta \phi(x,z,t) , \delta \phi_{t}(x,z,t) \rangle \, \rd \pi_t \right) \, \rd t
$$
holds for the kernel distortion.
\end{proof}
\begin{remark}
Our formulation of theorem \ref{thm:dist} isolates a reasonably generic set of structural properties on $K$ and $\Phi$ whose imposition will guarantee the existence of distortion measures. In many cases we may discharge these assumptions easily; we simply impose {\rm R1), R2), M1), M2)} on the kernel together with the geometric assertions that all $\phi(t)$ are embedded and non-circular. Nevertheless, in certain circumstances we might guarantee $\mathfrak{Z}_{*}(\Phi) > 0$ ``by hand,'' weaken the monotonicity properties {\rm M1), M2)} and yet still have an evolution formula for the corresponding $K$ distortion.
\end{remark}
\subsection*{Applications}
We may now proceed to leverage this result to study embeddedness. We will first give a quick proof of the Huisken monotonicity formula using distortion measures, which will serve as a point of contrast. We will then examine how the addition of a normal driving force can, even in the absence of non-locality, induce a finite-time lack of embeddedness.

To begin, recall that we have the evolution
$$
\phi_{t} = \veps(t)\left( \ddot \phi + \po\big( a \big)\dot \phi + \mu_{a} \phi \right) \qquad a = |\ddot \phi|^2
$$
for a pure motion by mean curvature. On an interval $[0,T]$ with $\phi$ embedded and non-circular, we may take the pseudo-distortion kernel $K(u,v) = vu^{-1}$ and apply theorem \ref{thm:dist} to conclude
$$
\frac{\Delta_{K}\left( \phi(T) \right) - \Delta_{K}\left( \phi(0) \right)}{2} = - \int^{T}_{0} \Delta_{K}\left( \phi(t) \right) \left( \int_{\T \times \T} \frac{ \langle \delta \phi(x,z,t) , \delta \phi_{t}(x,z,t) \rangle }{|\delta \phi(x,z,t)|^{2}} \, \rd \pi_{t} \right) \rd t
$$
for $\pi_t$ some family of distortion measures. By the distortion point calculus (c.f. lemma \ref{lem:distcalc}), we must have
\begin{align*}
\frac{\langle \delta \phi(x,z,t), \dot \phi(x+z,t) \rangle}{|\delta \phi(x,x+z,t)|^2} = \frac{\langle \delta \phi(x,z,t), \dot \phi(x,t) \rangle}{|\delta \phi(x,x+z,t)|^2} = \frac{\sin z}{2(1-\cos z)} \quad\;\text{and} \quad\;
-\frac{ \langle \delta \phi(x,z,t), \ddot \delta \phi(x,z,t) \rangle }{|\delta \phi(x,z,t)|^{2}} \leq 1
\end{align*}
on the set of distortion points. We therefore have the lower bound
$$
\frac1{\veps(t)}\frac{ \langle \delta \phi(x,z,t) , \delta \phi_{t}(x,z,t) \rangle }{|\delta \phi(x,z,t)|^{2}} \geq \frac{\sin z}{2(1-\cos z)} \int^{x+z}_{x} |\ddot \phi(u,t)|^{2} \, \rd u + \mu_{a(t)}\left( 1 - \frac{z \sin z}{2(1- \cos z)} \right) - 1
$$
on the support of any distortion measure. Now as $\Delta_{K}(\phi(t)) \geq 1 = q_0(0)$ we know
\begin{align*}
\cos\left( \vth(x+z,t) - \vth(x,t) \right) &= \langle \dot \phi(x+z,t), \dot \phi(x,t) \rangle = \frac{1 + \cos(z)}{\Delta_{K}\left( \phi(t) \right)} - 1 \\
|z| \leq \cos^{-1}\left( \frac{1 + \cos(z)}{\Delta_{K}\left( \phi(t) \right)} - 1 \right) &= \left| \int^{x+z}_{x} \dot \vth(u,t) \, \rd u \right| \leq |z|^{\frac12} \left| \int^{x+z}_{x} |\ddot \phi(u,t)|^{2} \, \rd u \right|^{\frac12}
\end{align*}
and so we may conclude that the lower bound
$$
\frac1{\veps(t)}\frac{ \langle \delta \phi(x,z,t) , \delta \phi_{t}(x,z,t) \rangle }{|\delta \phi(x,z,t)|^{2}} \geq \left( \mu_{a(t)} - 1 \right)\left( 1 - \frac{z \sin z}{2(1- \cos z)} \right)
$$
holds on the support of any distortion measure. In addition, we have the Poincar\'{e} inequality
$$
1 = \fint_{\T} |\dot \phi(u,t)|^{2} \, \rd u \leq \fint_{\T} |\ddot \phi(u,t)|^{2} \, \rd u = \mu_{a(t)}
$$
and we know that for $0 \leq z \leq \pi$ the function
$$
h(z) := \left( 1 - \frac{z \sin z}{2(1- \cos z)} \right) 
$$
is non-negative and increasing. Finally, for $(x,z) \in \mathcal{D}_{K}(\phi(t))$ we may use the expansion
\begin{align*}
\langle \phi(x+z) - \phi(x),\dot \phi(x)\rangle = z - R(x,z) \qquad \qquad R(x,z) &= \int^{x+z}_{x} \int^{u}_{x}(x+z - u)\langle \ddot \phi(v), \ddot \phi(u) \rangle \, \rd v \rd u \\
|R(x,z)| & \leq \frac{ z^2}{\sqrt{12}} \int^{x+z}_{x} |\ddot \phi(u,t)|^{2} \, \rd u
\end{align*}
together with lemma \ref{lem:distcalc}, ii) to show that the separation $|z|$ between any realizing pair obeys the lower bound
\begin{align}\label{eq:distlwr}
|z| \geq \sqrt{12}\left(1 - \frac{1}{\Delta_{K}(\phi(t))} \right)\left( \int^{x+z}_{x} |\ddot \phi(u,t)|^{2} \, \rd u \right)^{-1} \geq \frac{\left(1 - \frac{1}{\Delta_{K}(\phi(t))} \right)}{2 \mu_{a(t)}},
\end{align}
and so all together we obtain the inequality
$$
\Delta_{K}\left( \phi(T) \right) \leq \Delta_{K}\left( \phi(0) \right) - \int^{T}_{0} \veps(t)\left( \mu_{a(t)} - 1 \right) \Delta_{K}\left( \phi(t) \right) h\left( \frac{\left(1 - \frac{1}{\Delta_{K}(\phi(t))} \right) }{2 \mu_{a(t)}} \right) \, \rd t
$$
for the pseudo-distortion. In particular, the pseudo-distortion decreases monotonically as long as $\phi(t)$ remains non-circular.

\begin{figure}[t]
\centering
\includegraphics[height=2in]{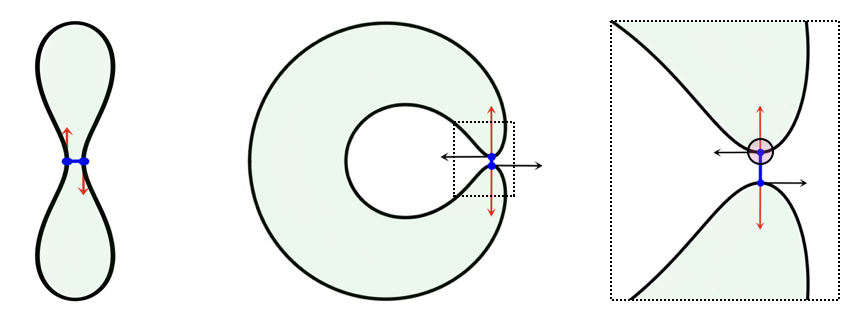}
\caption{Examples of distortion realizing pairs. At left, the interior chord leads defines a negatively ($-$) oriented $o(x,z) = -1$ pair, while at center the exterior chord leads to a positively ($+$) oriented $o(x,z) = 1$ pair. the orientation. The magnification at right shows a typical length-scale at which a nonlocal self-repulsion acts. For the configuration at left, the nonlocal self-interaction further drives a thinning of the neck and leads to a loss of embeddedness. For the configuration at center, a strong enough repulsion (relative to the size of the normal growth) prevents self-intersections and preserves the embeddedness of the interface.}\label{fig2}
\end{figure}

The situation changes rather markedly once we include either normal growth or non-local forcing. We illustrate this by investigating embeddedness for the evolution
\begin{align*}
\phi_{t} &= \veps(t)\left( \ddot \phi + \po\big( a \big)\dot \phi + F\dot \phi^{\perp} + \mu_{a} \phi \right) \qquad &&\;\,\qquad a = |\ddot \phi|^2 + \langle \ddot \phi,\dot \phi^{\perp} \rangle F \\
\rd_{t} \veps &= 2 \veps^{2} \mu_{a} \qquad \qquad 
&&F(x,t) = \veps^{-\frac12}(t)\left( c + f(x,t) \right)
\end{align*}
describing a motion by mean curvature together with an external normal forcing. The parameter $c<0$ reflects a constant normal growth force, while the generic function $f(x,t)$ stands in for the contribution of a non-local forcing. We select the orientation of $\dot \phi^{\perp}$ so that it represents the inward normal $\nrml$ to the region enclosed by the embedding. As before, we may write the pseudo-distortion evolution
$$
\Delta_{K}\left( \phi(T) \right) = \Delta_{K}\left( \phi(0) \right) - \int^{T}_{0} \Delta_{K}\left( \phi(t) \right)\left( \int_{\T \times \T} \frac{ \langle \delta \phi(x,z,t) , \delta \phi_{t}(x,z,t) \rangle }{|\delta \phi(x,z,t)|^{2}} \, \rd \pi_{t} \right) \rd t
$$
for such an interface in terms of some appropriate family $\pi_{t}$ of distortion measures. Applying the distortion point calculus once again leads to the identities
\begin{align*}
\frac{ \langle \delta \phi(x,z,t), \delta \ddot \phi(x,z,t)\rangle}{|\delta \phi(x,z,t)|^2} &= o(x,z) \sqrt{1 - \frac{1 + \cos z}{2 \Delta_{K}(\phi(t))}}\left( \frac{ \kappa(x+z,t) +\kappa(x,t) }{|\delta \phi(x,z,t)|} \right) \geq - 1
\\
\frac{ \langle \delta \phi(x,z,t), \delta P_{0}\big(a\big)\dot \phi(x,z,t) \rangle}{|\delta \phi(x,z,t)|^{2}} &= \frac{ \sin z }{2(1 - \cos z)}\left( \int^{x+z}_{x} a(u,t) \, \rd u \right) - \frac{ z \sin z}{2(1-\cos z)} \mu_{a(t)}\\
\veps^{\frac12}(t)\frac{\langle \delta \phi(x,z,t),\delta F \dot \phi^{\perp}(x,z,t)  \rangle}{|\delta \phi(x,z,t)|^{2}} &= o(x,z) \sqrt{1 - \frac{1 + \cos z}{2 \Delta_{K}(\phi(t))}}\left( \frac{ f(x+z,t) + f(x,t) + 2c }{|\delta \phi(x,z,t)|} \right),
\end{align*}
where $\kappa(x,t) = \dot \vth(x,t)$ denotes the normalized curvature of the interface and the choice of orientation $o(x,z) \in \{-1,1\}$ depends upon whether the chord $\vv(x,z,t) \propto \phi(x+z,t) - \phi(x,t)$ between a realizing pair $(x,z) \in \mathcal{D}_{K}(\phi(t))$ lies \emph{interior} ($-$) or  \emph{exterior} ($+$) to the region enclosed by the interface. If we let $k := \sqrt{\veps}\kappa$ denote the physical (i.e. unnormalized) curvature of the interface and
$$
v(x,t) = k(x,t) + c + f(x,t)
$$
the total normal velocity then we obtain
\begin{align}\label{eq:odekernel}
\frac{\rd }{\rd t} \Delta_{K}( t ) = &-\veps^{\frac12}(t)\Delta_{K}(t) \int_{\T \times \T} o(x,z)\sqrt{1 - \frac{1 + \cos z}{2 \Delta_{K}(t)}}\left(\frac{ v(x,t) + v(x+z,t) }{|\delta \phi(x,z,t)|}\right) \, \rd \pi_t \nonumber \\
& - \veps(t) \Delta_{K}( t ) \int_{\T \times \T}\left( \frac{\sin z \int^{x+z}_{x} a(u,t) \, \rd u}{2(1 - \cos z)} + h(z) \mu_{a(t)}\right)  \, \rd \pi_t
\end{align}
for the evolution of the pseudo-distortion. (Of course, we really mean that \eqref{eq:odekernel} holds in an integral sense.)

\begin{figure}[t]
\centering
\includegraphics[height=2in]{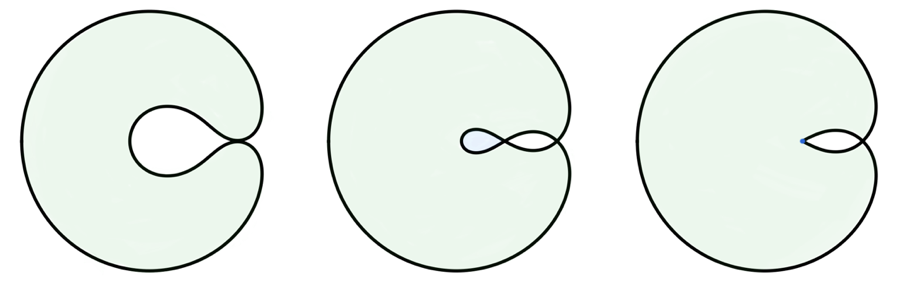}
\caption{Time slices of a numerical simulation for a boundary evolving solely under curvature and normal growth. A smooth, embedded initial condition eventually forms a self-intersection, loses embeddedness, then forms a singularity in finite time.}\label{fig3}
\end{figure}
The evolution \eqref{eq:odekernel} reveals a mechanism by which pure normal growth (i.e. $c<0$ and $f=0$) may cause a finite-time loss of embeddedness. By choosing initial conditions appropriately (as in fig xxx), on $[0,T]$ for $T>0$ small enough we may assume that the set of realizing pairs takes the form
$$
\mathcal{D}_{K}\left( \phi(t) \right) = \left\{ (x_0(t),z_0(t)) , (x_0 (t)+ z_0(t), - z_0(t) ) \right\},
$$
or in other words, that the supremum occurs at a unique pair $\phi(x_0(t)+z_0(t),t),\phi(x_0(t),t)$ of points along the interface. By selecting the initial condition appropriately, we may also arrange that the chord $\phi(x_0(t)+z_0(t),t) - \phi(x_0(t),t)$ between these points lies exterior to the region enclosed by the interface, and so $o(x,z) = 1$ on the set of realizing pairs. The physical curvature $k = \sqrt{\veps(t)}\kappa$ of the interface evolves according to
\begin{align*}
k_{t} = \veps(t)\left( \ddot k + \po\big( a \big) \dot k \right) + k^{2}v
\end{align*}
for as long as the solution exists, so in particular, the maximal curvature $k_{ {\rm max}}$ cannot increase as long as the velocity $v = k+c$ remains negative. If we take $c<0$ small enough to have a negative initial velocity $v_0$ then the inequalities
$$v(x,t) \leq v(x,0) \leq \max_{x \in \T} \; \, v(x,0) := v^{*}_{0} < 0 \qquad \text{and} \qquad \max_{x \in \T} \;\, k(x,t) \leq \max_{x \in \T}\;\, k(x,0) := k^{*}_{0} < |c|$$
will hold for all such times. Finally, by taking $c<0$ small enough we may assume that on $[0,T]$ the length of the interface increases. In other words, the interfacial growth condition 
$$
\mu_{a(t)} \leq 0 \qquad \qquad \longleftrightarrow \qquad \qquad \fint_{\T} \kappa^{2}(u,t)\, \rd u \leq |c|\sigma(t) \fint_{\T} \kappa(u,t) \, \rd u =  |c| \sigma(t)
$$
holds on $[0,T]$ for $T>0$ small enough. Appealing to the length evolution
$$
\sigma(t)\sigma'(t) = -\fint_{\T} \kappa^{2}(u,t)\, \rd u + |c|\sigma(t) \fint_{\T} \kappa(u,t) \, \rd u \leq \sigma(t)|c|
$$
then shows that the speed $\sigma(t) \leq \sigma_0 + |c|t$ of the interface grows no worse than linearly in time. Applying the lower bound \eqref{eq:distlwr} then shows
$$
\int_{\T} \left( \frac{\sin z \int^{x+z}_{x} a(u,t) \, \rd u}{2(1 - \cos z)} \right) \, \rd \pi_{t} \geq -\mathfrak{c}_{*}\veps^{-\frac12}(t) \qquad \text{on} \qquad [0,T]
$$
for $\mathfrak{c}_{*} > 0$ some fixed constant. We therefore obtain an overall lower bound
\begin{align*}
\frac{\rd}{\rd t} \Delta_{K}(t) &\geq \veps^{\frac12}(t)\Delta_{K}(t)\left[ \Delta^{\frac12}_{K}(t)\int_{\T} \frac{ |v^{*}_{0}|}{\left|\sin \frac{z}{2}\right|}\sqrt{1 - \frac{1+\cos z}{2\Delta_K(t)}}\, \rd \pi_{t} - \mathfrak{c}_{*}\right] \\
&\geq \veps^{\frac12}(t)\left( \frac{|v^{*}_{0}|}{2} \Delta^{\frac32}_{K}(t) - \mathfrak{c}_{*} \Delta_{K}(t) \right)
\end{align*}
provided we take $\Delta_{K}(0)$ large enough. Quantifying these choices carefully shows that $\Delta_{K}(t)$ blows up in finite time, hence embeddedness is lost. 

Figure \ref{fig3} shows the general dynamic behind this mechanism. The chord formed by the unique realizing pair $(x(t),z(t))$ eventually merge, producing a pair of self-intersections. As the loop formed by one of these self-intersections has uniformly negative inward velocity, it must then collapse and form a curvature singularity in finite time. In particular, \eqref{eq:shapedynamic} is not globally well-posed in general.

\begin{figure}[t]\label{fig4}
\centering
\includegraphics[height=0.9in]{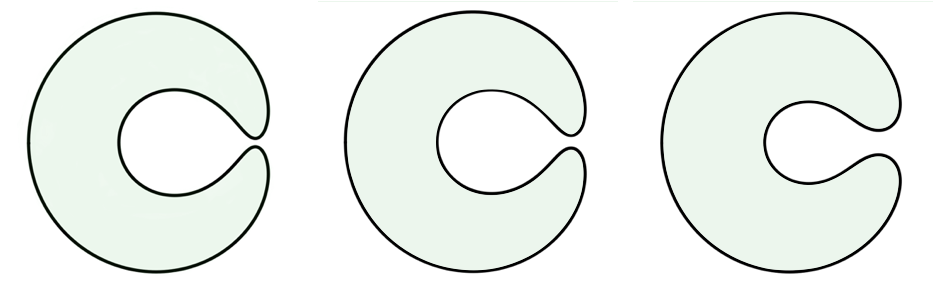}
\includegraphics[height=0.9in]{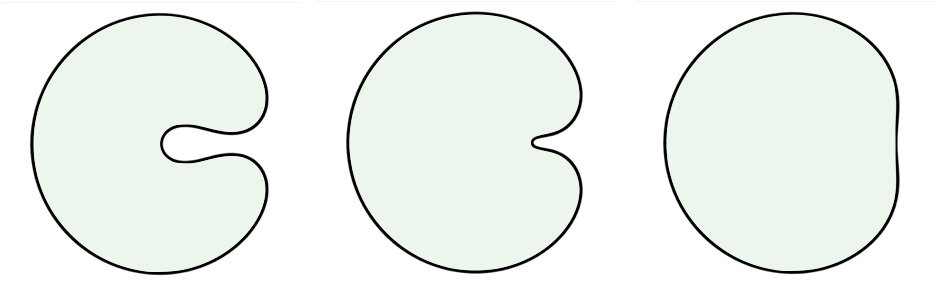}
\caption{For the same initial condition as in figure~\ref{fig3}, including a nonlocal self-repulsion prevents self-intersection. Nonlocality thus restores embeddedness and global existence for this particular initial condition.}\label{fig4}
\end{figure}

Of course, this mechanism persists under the inclusion $F = (c+f)/\sqrt{\veps}$ of a small but non-trivial nonlocal term $f$ into the overall normal forcing. However, in certain regimes nonlocality permits both interfacial growth and can prevent a loss of embeddedness. As an illustration, we assume a similar setup where $o(x,z) = 1$, or in other words, that the set of distortion points $\mathcal{D}_{K}(\phi(t))$ contains only exterior chords. Following [PRE], we consider a force
$$
f(x,t) = \int_{\T} g\left( \frac{|\psi(x,t) - \psi(x+z,t)|^{2}}{2} \right) |\dot \psi(y,t)| \, \rd y \qquad g(s) = g_{\alpha,\beta}(s) = \frac{ \beta }{\sqrt{2\pi \alpha^{2}}} \mathrm{exp}\left( -\frac{s}{2\alpha^2 } \right)
$$
on the interface $\psi = \sigma \phi$ driven by a Gaussian kernel, and assume that the model parameters $(c,\alpha,\beta)$ obey the conditions
$$
\beta  < |c| < 2\beta \qquad \text{and} \qquad c<0
$$
for \emph{arrested front formation}. Generically, the lower bound $\beta < |c|$ allows (but does not necessarily guarantee) that the length of the interface grows in time. As before, we therefore assume that the interfacial growth condition $\mu_{a(t)} \leq 0$ holds. The upper bound $|c| < 2\beta$ ensures, in certain parameter regimes, that the strength of the nonlocal force can prevent distinct points from colliding. To see this, fix a pair $(x,z) \in \mathcal{D}_{K}(\phi(t))$ with $z>0$ (the case $z<0$ follows from identical considerations) and some $0 < \vth < 1,$ then decompose the nonlocal force $f(x,t) = \mathrm{I} + \mathrm{II} + \mathrm{III}$ into three pieces, the first two of which
\begin{align*}
\mathrm{I} &:= \int^{\vth z}_{-\vth z} g\left( \frac12|\psi(x,t) - \psi(x+u,t)|^2 \right) |\dot \psi(u,t)| \, \rd u \\
\mathrm{II} &:= \int^{(2-\vth)z}_{\vth z} g\left( \frac12|\psi(x,t) - \psi(x+u,t)|^2 \right) |\dot \psi(u,t)| \, \rd u
\end{align*}
encode self-repulsive forces centered around the distortion points. The third piece $\mathrm{III} \geq 0$ encodes some non-negative remainder including both the remaining self-repulsive forces and any additional nonlocal repulsive forces due to the presence of other interfaces in the system. For the first term, the inequality $|\psi(x,t) - \psi(x+u,t)| \leq \sigma(t)|u|$ induces a corresponding lower bound
$$
\mathrm{I} \geq \frac{\beta \sigma(t)}{\sqrt{2\pi \alpha^{2}} } \int^{\vth z}_{-\vth z} \mathrm{exp}\left( -\frac{u^2}{2 \alpha^2 \veps(t)} \right) \, \rd u \geq \beta \;\mathrm{erf}\left( \frac{ \sigma(t) \vth z }{\sqrt{2} \alpha} \right)
$$
for the first integral. Similarly, if we let $d := |\phi(x+z,t) - \phi(x,t)|$ denote the distance between a realizing pair and use the bound $|\psi(x,t) - \psi(x+z,t)| \leq \sigma(t)(d + |u-z|)$ then we obtain an analogous lower bound
\begin{align*}
\mathrm{II} \geq \beta  \left[ \mathrm{erf}\left( \frac{ \sigma(t)\left( d + (1-\vth)z \right)}{\sqrt{2}\alpha} \right) - \mathrm{erf}\left( \frac{ \sigma(t) d }{\sqrt{2}\alpha} \right) \right]
\end{align*}
for the second integral. Combining these estimates and optimizing in $0 < \vth < 1$ yields an overall lower bound
$$
f(x,t) \geq 2 \beta \left[ \mathrm{erf}\left( \frac{ \sigma(t)(d+z)}{2\sqrt{2} \alpha} \right) - \frac12 \mathrm{erf}\left( \frac{ \sigma(t) d }{\sqrt{2}\alpha} \right) \right]
$$
for the nonlocal forcing. By the lower bound \eqref{eq:distlwr}, if the dimensionless ratio 
$$
\frac{\alpha}{\sigma(t)\kappa^2_{2}(\phi)}
$$
is small enough relative to the parameter gap $2\beta - |c|$ then the nonlocal force $f(x,t) > |c|$ exceeds the normal driving force $|c|$ whenever the pseudo-distortion $\Delta_{K}(t)$ is large enough. For such a range of parameter values, we therefore have $f(x+z,t) + f(x,t) + 2c > 0$ on $\mathrm{supp}(\pi_t)$ for $\Delta_{K}(t)$ large enough. Appealing once again to the distortion point calculus gives the inequality
$$
-\frac{ \langle \delta \phi(x,z,t), \delta \ddot \phi(x,z,t) \rangle }{|\delta \phi(x,z,t)|^2} \leq 1,
$$
and so overall we have
$$
-\veps^{\frac12}(t)\Delta_{K}(t) \int_{\T \times \T} o(x,z)\sqrt{1 - \frac{1 + \cos z}{2 \Delta_{K}(t)}}\left(\frac{ v(x,t) + v(x+z,t) }{|\delta \phi(x,z,t)|}\right) \, \rd \pi_t \leq \veps(t)\Delta_{K}(t)
$$
whenever the pseudo-distortion $\Delta_K(t)$ gets large. Now, assume that the inequality
\begin{align}\label{eq:badcase}
\int^{x+z}_{x} a(u,t) \, \rd u \leq 0 \qquad \longleftrightarrow \qquad \int^{x+z}_{x} \kappa(u,t)\left[ \kappa(u,t) + F(u,t) \right] \, \rd u \leq 0
\end{align}
holds for $(x,z) \in \mathcal{D}_{K}(\phi(t))$ our fixed realizing pair. Then the lower bound
$$
z \leq \int^{x+z}_{x} \kappa^{2}(u,t) \, \rd u
$$
still holds by the distortion point calculus (c.f. the argument for a pure motion by mean curvature), while the upper bound
$$
\int^{x+z}_{x} \kappa^{2}(u,t)\, \rd u \leq z\|F(t)\|^2_{L^{\infty}(\T)}
$$
follows from \eqref{eq:badcase} and Cauchy-Schwarz. For such a pair $(x,z) \in \mathcal{D}_{K}(\phi(t))$ this upper bound gives
\begin{align*}
-\frac{ \sin z}{2(1 - \cos z)}\int^{x+z}_{x} a(u,t) \, \rd u \leq \|F(t)\|^2_{L^{\infty}(\T)},
\end{align*}
while if \eqref{eq:badcase} fails this bound holds trivially. As $h(z) \leq 1$ on $\T$ and $\mu_{a(t)} \leq 0$ by assumption, we therefore have the upper bound
$$
\frac{\rd}{\rd t}\log\left( \sqrt{\veps(t)} \Delta_{K}(t) \right) \leq \veps(t)\left( 1 + \|F(t)\|^{2}_{L^{\infty}(\T)} \right)
$$
on the evolution of the pseudo-distortion when $\Delta_K(t)$ is large enough. In contrast to the case of a constant normal forcing $F = c \sigma(t)$ with $c<0,$ embededness is preserved as long as the normal force $F$ remains bounded.

Figure \ref{fig4} shows the general dynamic behind this mechanism. We use the same initial condition as before, but include a nonlocal self-repulsive force. The chord formed by the unique realizing pair $(x(t),z(t))$ eventually merge, producing a pair of self-intersections. As the loop formed by one of these self-intersections has uniformly negative inward velocity, it must then collapse and form a curvature singularity in finite time. In particular, \eqref{eq:shapedynamic} is not globally well-posed in general.

We conclude this section with a few observations regarding the overall picture. For geometries with purely exterior realizing pairs (e.g. figure \ref{fig2} at center), the addition of a seemingly benign constant normal growth $F = c \nrml$ may cause a loss of embeddedness and subsequent formation of curvature singularities. Incorporating a repulsive nonlocal force $F = (c + f)\nrml$ can, but need not, restore embeddedness. The underlying mechanism is that of arrested front formation. The situation reverses for geometries with purely interior realizing pairs (e.g. figure \ref{fig2} at left), in that a repulsive nonlocal force $F = f\nrml$ will lead to loss of embeddedness as the width of the ``bottleneck'' collapses. This illustrates a mechanism akin to cell-death in a bacteria colony. Incorporating a constant normal growth $F = (c + f)\nrml,$ i.e. cell-birth, can then restore embeddedness and prevent collapse. The same set of model parameters may lead to either scenario. As these examples illustrate, embeddedness is heavily dependent upon both model parameters and the geometry of the initial datum. 

\section{Numerical Approximation}\label{sec:Numeric} 

We conclude with documentation for the numerical scheme that we have implemented and made publicly available. We use the tangent angle formulation of the dynamic
\begin{align}\label{eq:evonum}
\partial_{t} \vth_i &= \veps_i \left( \ddot \vth_i + \po\big( a_i \big) \dot \vth_i - \dot F_i \right) 
\qquad\qquad\quad \dot \veps_i = 2 \veps^2_i \mu_{a_i}  &&\;\dot \frc_i = \veps^{\frac12}_i \mu_{\vv_i} \\
a_i &= \dot \vth^2_i - F_i\dot \vth_i &&\vv_i =  \po\big(a_i\big) \tang_i  + F_i \nrml_i \nonumber
\end{align}
for simulation purposes, which we supplement with boundary conditions
$$
\vth_i(x,t) = \eta_i(x,t) + k_i(x+\pi)
$$
determined by enforcing periodicity of the $\eta_i$ and their derivatives. The initial conditions $\vth_i(x,0)$ determine the constant winding numbers $k_i \in \Z$ of each interface. Finally, the definitions
$$
\psi_i(x,t) := \frc_i(t) + \frac1{\sqrt{\veps_i(t)}} \pc \tang_i(x,t) \qquad \text{and} \qquad \tang_i(x,t) := \begin{bmatrix} - \sin \vth_i(x,t) \\ \;\;\;\cos \vth_i(x,t) \end{bmatrix}
$$
allow us to compute the nonlocal forcing
\begin{align}\label{eq:nonlocnum}
F_i(x,t) = \frac1{\sqrt{ \veps_i(t)}}\left( c_i + \sum^{m}_{j=1} \frac1{\sqrt{\veps_j(t)}} \int_{\T} g_{ij}\left( \frac12|\psi_i(x,t) - \psi_j(x,t)|^{2} \right)\,\rd y + F^{ {\rm ext} }_{i}(x,t) \right)
\end{align}
from knowledge of interfacial lengths $\veps_i$, their centers of mass $\frc_i$ together with angular variables $\vth_i$ that represent the unit tangent. The functions $F^{ {\rm ext}}_i \in H^{1}(\T),\,i \in [m]$ represent some generic external normal forcing applied to each interface. This formulation has two main advantages. First, in contrast to \eqref{eq:basic} it exhibits a linear diffusive term rather than a quasilinear one. Second, in contrast to \eqref{eq:shapedynamic} it exhibits lower-order rather than leading-order nonlinearities. For these reasons we may avoid a diffusive time-step restriction relatively easily, while still retaining an efficient and easy-to-implement procedure.

The main computational cost comes from evaluating the nonlocal forcing \eqref{eq:nonlocnum}, so we prefer to compute them only once per time-step. We therefore approximate the total nonlinear forcing
$$
 G_i := \po\big( a_i \big) \dot \vth_i  - \dot F_i 
$$
in an explicit, multi-step manner. Specifically, assume we have a discrete collection of times $\{t_0,\ldots,t_n\}$ with corresponding time-steps $\rd t_{k} := t_{k+1} - t_{k}$ between consecutive times. We also assume a history of approximations
$$
\vth^{k}_{i}(x) \simeq \vth_i(x,t_n) \qquad a^{k}_{i}(x) \simeq a_i(x,t_k) \qquad \text{and} \qquad F^{k}_{i}(x) \simeq F_{i}(x,t_k)
$$
to the angular variables, transport coefficients and nonlocal forcings, respectively. Any function $h(x,t)$ for which we have a history allows us to construct a second-order extrapolation
\begin{align*}
h(x,t) \simeq \mathrm{H}^{n}(x,t) := P_{1}\left(t-t_n;h^{n}(x),h^{n-1}(x),\rd t_{n-1}\right) \qquad \text{on} \qquad [t_n,t_{n+1}]
\end{align*}
of this function for use in the next time step, where the straightforward linear interpolation
\begin{align*}
P_1\left(s;v_{0},v_{-1},\tau_{-1}\right) &= v_0 \left(1 + \frac{s}{\tau_{-1}} \right) - \frac{s}{ \tau_{-1} }v_{-1}
\end{align*}
suffices for the extrapolation. Given a new time $t_{n+1} = t_{n} + \rd t_{n}$ and these approximations, we may then try to use an approximate form of the evolution (\ref{eq:evonum},\ref{eq:nonlocnum}) to advance the solution variables 
$$\left(\vth^n_i,\veps^n_i,\frc^n_i\right) \quad \to \quad \left(\vth^{n+1}_i,\veps^{n+1}_i,\frc^{n+1}_i\right)$$
over the current time interval.

We accomplish this using a relatively simple update strategy. Fix $i \in [m]$ and let $\vth = \vth_i $ denote the corresponding angular variable. Recall that if we define a function
$$
\frw(t) = t_n + \int^{t}_{t_n} \veps(s) \, \rd s
$$
as well as its inverse map $\fra \circ \frw(t) = t$ then the temporal change of variables $\vth(x,t) = u\left(x,\frw(t) \right)$ converts the angular evolution
$$
u_{t}(x,t) = u_{xx}(x,t) + G\big(x,\fra(t) \big)
$$
into a generic forced heat equation. Now consider the task of advancing 
$$u(x,t_n) \quad \to \quad u\big(x,\frw(t_n + \rd t_n) \big) =: u\big(x , t_n + \rd \frw_n \big)$$
over the current time interval. The second-order approximations 
\begin{align*}
a(x,t) \simeq \mathrm{A}^{n}(x,t) &:= P_{1}\left(t-t_n;a^{n}(x),a^{n-1}(x),\rd t_{n-1}\right) \\
\vth(x,t) \simeq \Theta^{n}(x,t) &:= P_{1}\left(t-t_n; \vth^{n}(x), \vth^{n-1}(x),\rd t_{n-1}\right)\\
F(x,t) \simeq \mathrm{F}^{n}(x,t) &:= P_{1}\left(t-t_n; F^{n}(x), F^{n-1}(x),\rd t_{n-1}\right)
\end{align*}
allow us to define an approximate forcing 
$$v^{n}(x,t) = \mathrm{G}^{n}\big(x,\fra(t)\big) \qquad \text{with} \qquad \mathrm{G}^{n}(x,t) := \po\left( \mathrm{A}^{n}(x,t) \right) \dot{\Theta}^{n}(x,t) - \dot{\mathrm{F}}^{n}(x,t)$$ 
over $[t_n,t_n + \rd w_n]$ that we may compute at any instant in time. The decomposition $\vth(x,t) = \eta(x,t) + k(x+\pi)\,, k \in \Z$ with $\eta$ periodic induces an analogous decomposition $u(x,t) = \xi(x,t) + k(x+\pi)$ with $\xi$ periodic. We may therefore write the solution driven by the approximate forcing
$$
\hat \xi_{\ell}(t_n + \rd \frw_n) = \re^{-\ell^2 \rd \frw_n} \hat \xi_{\ell}(t_n) + \int^{t_n + \rd \frw_n}_{t_n} \re^{-\ell^2(t_n + \rd \frw_n - s)} \hat v_{\ell}(s) \, \rd s
$$
explicitly. A simple quadrature approximation over $[t_n,t_n + \rd \frw_n]$ gives an estimate of the contribution
$$
\frac1{\rd \frw_n} \int^{t_n + \rd \frw_n}_{t_n} \re^{-\ell^2(t_n + \rd \frw_n - s)} \hat v_{\ell}(s) \, \rd s
\simeq E_{1}\left(-\ell^2 \rd \frw_n \right) \hat v_\ell(t_n) + E_{2}\left(-\ell^2 \rd \frw_n \right)\left(  \hat v_{\ell}(t_n + \rd \frw_n) - \hat v_{\ell}(t_n) \right)
$$
$$
E_0(x) = \re^{x} \qquad \qquad  x E_{j+1}(x) := E_j(x) - E_j(0)
$$
from the non-homogeneous forcing. Now as $t_n = \frw(t_n)$ and $t_n + \rd \frw_n = \frw(t + \rd t_n)$ we have the identities
$$
v(x,t_n) = \mathrm{G}^{n}(x,t_n) \qquad \text{and} \qquad v(x,t_{n} + \rd \frw_n) = \mathrm{G}^{n}(x,t_n + \rd t_n),
$$
and so we may affect the update
\begin{align*}
\hat \xi_{\ell}(t_{n} + \rd \frw_n) &\gets E_0\left( - \ell^{2} \rd \frw_n\right) \hat \eta^{n}_{\ell} + \rd \frw_n\left[  E_1\left( - \ell^{2} \rd \frw_n\right)\widehat{\mathrm{G}}_\ell(t_n) + E_2\left( - \ell^{2} \rd \frw_n\right)   \left(  \widehat{\mathrm{G}}_{\ell}(t_{n+1}) - \widehat{\mathrm{G}}_{\ell}(t_{n}) \right) \right]
\end{align*}
if we know the increment $\rd \frw_{n}$ at each step. Performing an inverse transform then gives a means to update
$$
\vth^{n}(x) \quad \longrightarrow \quad \vth^{n+1}\left(x\right) \simeq \xi(x,t_n + \rd \frw_n) + k(x+\pi)
$$
 the corresponding approximation for the angular variables, as well as the tangents and normals
 $$
 \tang^{n+1}(x) = \begin{bmatrix} -\sin \vth^{n+1}(x) \\ \;\;\;\cos \vth^{n+1}(x) \end{bmatrix} \qquad \text{and} \qquad  \nrml^{n+1}(x) = \begin{bmatrix} \cos \vth^{n+1}(x) \\ \sin \vth^{n+1}(x) \end{bmatrix}
 $$
 to the interface.

It therefore suffices to determine the increment $\rd \frw_n$ to close the evolution across the current time interval. We simply approximate
$$
\mu_{a}(t) \simeq \mu_{A^{n}}(t)
$$
and analytically solve the corresponding approximate evolution $\sigma \dot \sigma = - \mu_{A^n}$ to update the speed
\begin{align*}
\sigma^{n+1} = \left( \left(\sigma^{n}\right)^{2} -  \int^{t_{n+1}}_{t_n} \mu_{A^n}(s)\, \rd s \right)^{\frac12} = \left( \left(\sigma^{n}\right)^{2} - \rd t_n\left[2 \mu_{a^{n}} + \frac{\rd t_n}{\rd t_{n-1}}\big(\mu_{a^n}-\mu_{a^{n-1}}\big)\right]\right)^{\frac12}
\end{align*}
of the interface. The time step restriction
$$
2\,\rd t_{n} \leq \frac{ \left( \sigma^n\right)^{2} }{\left| \mu_{a^{n}} \right| + \sqrt{\left| \mu_{a^{n}} \right| + \left|\Delta \mu_{a^{n}} \right| \frac{ \left( \sigma^n\right)^{2} }{2} } } \qquad \text{with} \qquad \Delta \mu_{a^{n}} := \frac{ \mu_{a^{n}} - \mu_{a^{n-1}} }{\rd t_{n-1}}
$$
guarantees that the updated speed $\sigma^{n+1}$ remains positive. We then use the definition
$$
\veps^{n+1} =  \left(\sigma^{n+1}\right)^{-2}
$$
and use the approximation
$$
\rd \frw_n = \int^{t_{n+1}}_{t_n} \veps(s) \, \rd s \simeq \rd t_{n} \left( \frac{\veps^n + \veps^{n+1} }{2} \right)
$$
to find the increment. Finally, we may update the center of mass
$$
\frac{\frc^{n+1} - \frc^{n}}{\rd t_n} = \frac{ \sqrt{\veps^{n+1}} \mu_{\vv^{n+1}} + \sqrt{\veps^n} \mu_{\vv^n} }{2} \qquad \text{with} \qquad \vv^{k} = \po\big(A^{n}(t_k)\big) \tang^{k} + \mathrm{F}^{n}(t_k) \nrml^{k}
$$
to compute the new forcing coefficients then move to the next time step.  The computational results displaying the difficulties in establishing global well-posedness from Section~\ref{sec:Embeded} were produced using this numerical scheme.

\section{Appendix: Proofs of Technical Lemmas}\label{sec:Appendix}

\begin{lemma}[Lemma \ref{lem:chofvar}]\label{lem:chofvarapp}
Assume that $f \in H^{1}(\T)$ and that $\ga_i \in W^{1,1}(\T), i \in \{0,1\}$ are immersions. Let
$$
\eta_i(x) := -\pi + \frac{2\pi}{\ell(\ga_i)} \int^{x}_{-\pi} |\dot \ga_i(z)| \, \rd z \qquad \text{and} \qquad \xi_i \circ \eta_i(x) = x
$$
denote their transition maps to constant speed coordinates. Then the estimate
$$
\|f - f \circ \eta_1 \circ \xi_0\|_{L^{2}(\T)} \leq \mathfrak{C}\|\dot f\|_{L^{2}(\T)}\frac{\| \dot \ga_1 - \dot \ga_0\|_{L^{1}(\T)}}{\ell(\ga_0) \vee \ell(\ga_1)}
$$ 
holds for $\mathfrak{C}>0$ a universal constant.
\end{lemma}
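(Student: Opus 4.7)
The plan is to reduce the estimate to a pointwise control on the difference $\eta_0(y)-\eta_1(y)$, combined with a Fubini argument to handle the resulting ``double integral'' of $|\dot f|^2$.

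\textbf{Step 1: Change of variables.} First I would reparametrize the integral on the left via $y = \xi_0(x),$ so that $x = \eta_0(y)$ and $\rd x = \dot\eta_0(y)\,\rd y$. This converts the target quantity into
\[
\|f - f\circ\eta_1\circ\xi_0\|^{2}_{L^2(\T)} \;=\; \int_{\T} \big| f(\eta_0(y)) - f(\eta_1(y))\big|^{2}\, \dot\eta_0(y)\,\rd y.
\]
Writing $f(\eta_0(y)) - f(\eta_1(y)) = \int_{\eta_1(y)}^{\eta_0(y)} \dot f(z)\,\rd z$ and applying Cauchy--Schwarz in the inner integral gives
\[
|f(\eta_0(y)) - f(\eta_1(y))|^{2} \;\leq\; |\eta_0(y) - \eta_1(y)| \int_{\eta_1(y)\wedge\eta_0(y)}^{\eta_1(y)\vee\eta_0(y)} |\dot f(z)|^{2}\,\rd z.
\]

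\textbf{Step 2: Pointwise bound on $\eta_0-\eta_1$.} Setting $M := \|\dot\gamma_1-\dot\gamma_0\|_{L^1(\T)}/\big(\ell(\gamma_0)\vee\ell(\gamma_1)\big)$, I would decompose
\[
\eta_0(y)-\eta_1(y) \;=\; \frac{2\pi}{\ell(\gamma_0)}\int^{y}_{-\pi}\big(|\dot\gamma_0| - |\dot\gamma_1|\big)\,\rd z \;+\; 2\pi\Big(\tfrac{1}{\ell(\gamma_0)} - \tfrac{1}{\ell(\gamma_1)}\Big)\int^{y}_{-\pi}|\dot\gamma_1|\,\rd z.
\]
The first term is bounded by $2\pi\|\dot\gamma_1-\dot\gamma_0\|_{L^1}/\ell(\gamma_0)$ via the reverse triangle inequality; the second is bounded by $2\pi\|\dot\gamma_1-\dot\gamma_0\|_{L^1}/\ell(\gamma_0)$ using $|\ell(\gamma_0)-\ell(\gamma_1)|\leq \|\dot\gamma_1-\dot\gamma_0\|_{L^1}$ and $\int^{y}_{-\pi}|\dot\gamma_1| \leq \ell(\gamma_1)$. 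Swapping the roles of $\gamma_0$ and $\gamma_1$ in the identity above produces the symmetric bound with $\ell(\gamma_1)$ in place of $\ell(\gamma_0)$; taking the smaller of the two yields the uniform bound $|\eta_0(y)-\eta_1(y)| \leq \mathfrak{C}\,M$.

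\textbf{Step 3: Fubini and a second change of variables.} Plugging Step 2 into Step 1 and exchanging the order of integration via Fubini reduces the whole estimate to bounding, for each $z \in \T$, the quantity
\[
J(z) \;:=\; \int_{\T} \mathbf{1}_{[\eta_1(y)\wedge\eta_0(y),\,\eta_1(y)\vee\eta_0(y)]}(z)\, \dot\eta_0(y)\,\rd y.
\]
Changing variables $u = \eta_0(y)$ (so $\rd u = \dot\eta_0(y)\,\rd y$) and noting that $|\eta_0(\xi_0(u)) - \eta_1(\xi_0(u))| = |u - \eta_1(\xi_0(u))| \leq \mathfrak{C}M$ by Step 2, the indicator forces $u \in [z-\mathfrak{C}M, z+\mathfrak{C}M]$, whence $J(z) \leq 2\mathfrak{C}M$. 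Combining everything,
\[
\|f - f\circ\eta_1\circ\xi_0\|^{2}_{L^2(\T)} \;\leq\; \mathfrak{C}M \cdot 2\mathfrak{C}M \cdot \|\dot f\|^{2}_{L^{2}(\T)},
\]
which is the claim after taking square roots.

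\textbf{Main obstacle.} The only place that requires genuine care is Step 3: the change of variables $u = \eta_0(y)$ is only monotone (not isometric), so we must exploit that $\dot\eta_0$ appears as the integrating factor to get a clean $\rd u$. The pointwise uniform control $|\eta_0(\xi_0(u)) - \eta_1(\xi_0(u))| \leq \mathfrak{C}M$ from Step 2 then does the job without needing any quantitative lower bound on $\dot\eta_i$; in particular, no appeal to $\frs_*(\gamma_i)$ is necessary, which is consistent with the hypothesis $\gamma_i \in W^{1,1}(\T)$ (rather than a stronger immersion hypothesis).
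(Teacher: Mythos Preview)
Your proof is correct and takes a genuinely different route from the paper's. Both arguments share the same pointwise estimate $\|\eta_0-\eta_1\|_{L^\infty(\T)} \leq \mathfrak{C}M$ (your Step~2), but they diverge in how they convert this into the $L^2$ bound. The paper extends $|\dot f|$ periodically to $[-3\pi,3\pi]$, invokes the Hardy--Littlewood maximal function $h^*$, and uses the pointwise inequality
\[
|f(x) - f\circ\eta_1\circ\xi_0(x)| \;\leq\; 2\,\varepsilon(x)\,h^*(x), \qquad \varepsilon(x) := |\eta_1\circ\xi_0(x)-x|,
\]
together with the $L^2(\R)$ boundedness of the maximal operator. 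You instead perform the change of variables $y=\xi_0(x)$, apply Cauchy--Schwarz in the inner integral, and then use Fubini plus a second change of variables $u=\eta_0(y)$ to bound the measure of the ``active'' set $J(z)$ by $2\mathfrak{C}M$. Your approach is more elementary and self-contained---no maximal function machinery is needed---while the paper's route yields a genuine pointwise bound (before squaring and integrating), which could be useful in other contexts. Your remark that no lower bound on $\dot\eta_i$ is required is also well taken: the weight $\dot\eta_0$ is exactly absorbed by the change of variables in Step~3, so the argument works under the minimal $W^{1,1}$ hypothesis.
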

\begin{proof}
Define the function
$$
h(z) := \mathbf{1}_{[-3\pi,3\pi]}|\dot f(z)| \qquad \text{with} \qquad \|h\|_{L^{2}(\R)} = 3\|\dot f\|_{L^{2}(\T)}
$$
by extending $|\dot f(z)|$ from $I_{\pi}$ to all of $\R$ using periodicity. Let
$$
h^{*}(x) := \sup_{r > 0} \, \frac1{2r}\int^{x+r}_{x-r} h(z) \, \rd z
$$ 
denote its maximal function, and recall that
$$
\|h^{*}\|_{L^{2}(\R)} \leq \mathfrak{C} \|h\|_{L^{2}(\R)} \leq \mathfrak{C} \|\dot f\|_{L^{2}(\T)}
$$
since on $L^{2}(\R)$ the maximal function defines a bounded operator. Now let $\veps(x) := |\eta_1 \circ \xi_0(x) - x|$ denote the deviation of the composite map from the identity. Taylor's theorem gives the pointwise estimate
$$
|f(x) - f \circ \eta_1 \circ \xi_0(x)| \leq \int^{x + \veps(x)}_{x - \veps(x)} |\dot f(z)| \, \rd z,
$$
and since $|\veps(x)| \leq 2\pi$ for $x \in I_{\pi}$ the pointwise estimate
$$
|f(x) - f \circ \eta_1 \circ \xi_0(x)| \leq \int^{x + \veps(x)}_{x - \veps(x)} h(z)\, \rd z \leq 2\veps(x)h^{*}(x) 
$$
holds for $x \in I_{\pi}$. Integrating over $I_{\pi}$ then shows that
$$
\|f - f \circ \eta_1 \circ \xi_0\|_{L^{2}(\T)} \leq 2 \|\veps\|_{L^{\infty}(\T)}\|h^{*}\|_{L^{2}(\T)} \leq \mathfrak{C} \|\veps\|_{L^{\infty}(\T)}\|\dot f\|_{L^{2}(\T)}
$$
for $\mathfrak{C}>0$ some universal constant. Now $\|\veps\|_{L^{\infty}(\T)} = \|\veps \circ \eta_0\|_{L^{\infty}(\T)} = \|\eta_1 - \eta_0\|_{L^{\infty}(\T)},$ and since $v(x) := \eta_1(x) - \eta_0(x)$ vanishes at $x = -\pi$ the Sobolev embedding $\|v\|_{L^{\infty}(\T)} \leq \|\dot v\|_{L^{1}(\T)}$ applies. The simple estimate
\begin{align*}
\|\dot v\|_{L^{1}(\T)} = \int_{\T} \left| \frac{2\pi}{\ell(\ga_0)}|\dot \ga_0(x)| -  \frac{2\pi}{\ell(\ga_1)}|\dot \ga_1(x)| \right| \, \rd x \leq \frac{2\pi}{\ell(\ga_0)}\left( \|\dot \ga_1 - \dot \ga_0\|_{L^{1}(\T)} + |\ell(\ga_1)-\ell(\ga_0)| \right)
\end{align*}
then proves the lemma.
\end{proof}

\begin{lemma}[Lemma \ref{lem:chofvar}]\label{lem:equivboundapp}
Assume $\ga_0,\ga_1 \in H^{2}(\T)$ are immersions and let $\psi_i := \ga_i \circ \xi_i$ denote their constant speed representations. Then the map $\ga_i \mapsto \psi_i$ is locally Lipschitz with respect to the $H^{1}(\T)$ topology, in the sense that the bound
$$
\|\psi_1 - \psi_0\|_{H^{1}(\T)} \leq \mathfrak{C}^{*}\left( \frac{\ell(\ga_0)\vee\ell(\ga_1)}{\frs_{*}(\ga_0)\wedge \frs_{*}(\ga_1)},\frac{\|\ddot \ga_0\|_{L^{2}(\T)}\vee\|\ddot \ga_1\|_{L^{2}(\T)}}{\frs_{*}(\ga_0)\wedge \frs_{*}(\ga_1)}\right)\|\ga_1 - \ga_0\|_{H^{1}(\T)}
$$
holds for $\mathfrak{C}^{*} : \R^{2} \mapsto \R$ some continuous, increasing function of its arguments.
\end{lemma}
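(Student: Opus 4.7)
The plan is to estimate $\|\psi_1 - \psi_0\|_{L^2(\T)}$ and $\|\dot\psi_1 - \dot\psi_0\|_{L^2(\T)}$ separately, using the identities
$$\psi_i = \gamma_i \circ \xi_i \qquad \text{and} \qquad \dot\psi_i = \sigma(\gamma_i)\,\tang_{\gamma_i}\!\circ\xi_i, \qquad \sigma(\gamma_i) := \ell(\gamma_i)/(2\pi),$$
so that the problem reduces to bounding a handful of compositional differences. Throughout I would freely use the Gagliardo--Nirenberg embeddings \eqref{eq:gns} and the elementary bound $\|\dot\gamma_i\|_{L^\infty(\T)} \leq \sigma(\gamma_i) + \frac{1}{\sqrt{2\pi}}\|\ddot\gamma_i\|_{L^1(\T)} \leq \sigma(\gamma_i) + \|\ddot\gamma_i\|_{L^2(\T)}$ to control quantities like $\|\dot\eta_i\|_{L^\infty(\T)} = 2\pi \|\dot\gamma_i\|_{L^\infty(\T)}/\ell(\gamma_i)$ by continuous, increasing functions of the two ratios appearing in the statement.

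For the $L^2$ piece I would decompose
$$\psi_1 - \psi_0 = (\gamma_1 - \gamma_0)\circ \xi_1 + \bigl(\gamma_0 \circ \xi_1 - \gamma_0 \circ \xi_0\bigr).$$
The first summand yields an $L^2$ norm that, after the change of variables $y = \eta_1(x)$, is an integral against the density $\dot\eta_1$ and so is controlled by $\|\dot\eta_1\|_{L^\infty}\|\gamma_1-\gamma_0\|_{L^2}^{2}$. For the second summand I would first change variables via $y = \eta_0(x)$ to rewrite it as $\|\dot\eta_0\|_{L^\infty}^{1/2}\|\gamma_0 - \gamma_0 \circ \xi_1\circ\eta_0\|_{L^2}$, and then apply lemma \ref{lem:chofvarapp} (reading its indices $0 \leftrightarrow 1$, which is admissible by the symmetry of its hypothesis and conclusion) with $f = \gamma_0$. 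The output is a factor of $\|\dot\gamma_0\|_{L^2}\|\dot\gamma_1-\dot\gamma_0\|_{L^1}/(\ell(\gamma_0)\vee\ell(\gamma_1))$, bounded by $\mathfrak{C}^{*}(\cdots)\|\gamma_1 -\gamma_0\|_{H^1(\T)}$.

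For the first-order piece, I would split
$$\dot\psi_1 - \dot\psi_0 = \bigl(\sigma(\gamma_1) - \sigma(\gamma_0)\bigr)\tang_{\gamma_1}\circ\xi_1 + \sigma(\gamma_0)\bigl[(\tang_{\gamma_1}-\tang_{\gamma_0})\circ\xi_1 + \tang_{\gamma_0}\circ\xi_1 - \tang_{\gamma_0}\circ\xi_0\bigr].$$
The scalar factor obeys $|\sigma(\gamma_1)-\sigma(\gamma_0)| \leq \frac{1}{2\pi}\|\dot\gamma_1 - \dot\gamma_0\|_{L^1}$, and the pointwise identity
$$\tang_{\gamma_1}-\tang_{\gamma_0} = \frac{\dot\gamma_1 - \dot\gamma_0}{|\dot\gamma_1|} + \dot\gamma_0\cdot\frac{|\dot\gamma_0|-|\dot\gamma_1|}{|\dot\gamma_0||\dot\gamma_1|}$$
together with a change-of-variables argument identical to the previous paragraph controls the middle term by $\mathfrak{C}^{*}(\cdots)\|\dot\gamma_1-\dot\gamma_0\|_{L^2}/(\frs_{*}(\gamma_0)\wedge\frs_{*}(\gamma_1))$. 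For the last summand I would reapply lemma \ref{lem:chofvarapp} with $f = \tang_{\gamma_0}$; here the relevant Sobolev quantity is $\|\dot\tang_{\gamma_0}\|_{L^2} \leq 2\|\ddot\gamma_0\|_{L^2}/\frs_{*}(\gamma_0)$, obtained from $\dot\tang_{\gamma_0} = \ddot\gamma_0/|\dot\gamma_0| - \dot\gamma_0 (|\dot\gamma_0|)_x/|\dot\gamma_0|^2$.

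The main obstacle is not a single deep estimate, but a bookkeeping exercise: verifying that every constant spawned by changes of variables, by $H^1\hookrightarrow L^\infty$, and by bounds on $|\dot\gamma_i|^{-1}$ consolidates into a continuous, coordinate-wise increasing function of precisely the two ratios $\ell(\gamma_0)\vee\ell(\gamma_1)/(\frs_{*}(\gamma_0)\wedge\frs_{*}(\gamma_1))$ and $\|\ddot\gamma_0\|_{L^2}\vee\|\ddot\gamma_1\|_{L^2}/(\frs_{*}(\gamma_0)\wedge\frs_{*}(\gamma_1))$ advertised in the statement. In particular, every appearance of $\|\dot\gamma_i\|_{L^\infty}$ must be re-expressed via Gagliardo--Nirenberg in a way that isolates these invariants, and one must keep the factor $|\ell(\gamma_1)-\ell(\gamma_0)|$ from leaking into the constant rather than into the $\|\gamma_1-\gamma_0\|_{H^1}$ factor. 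Once these accounting details are dispatched, summing the three pieces above yields the claimed local Lipschitz estimate.
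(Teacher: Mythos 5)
Your argument is correct in substance but follows a genuinely different route from the paper's. The paper treats the derivative term by interpolating $\ga(\cdot,\vth) = \vth\ga_1 + (1-\vth)\ga_0$ and integrating the transport identity \eqref{eq:cspdder} for the constant speed representation along the interpolation; since the linear interpolant can fail to be an immersion when tangents are anti-parallel, this forces a dichotomy between compatibly and incompatibly oriented pairs, the latter handled by observing that $\|\dot\ga_1-\dot\ga_0\|_{L^{2}(\T)}$ is then bounded below so the trivial bound $\|\dot\psi_1-\dot\psi_0\|_{L^{2}(\T)} \leq \ell(\ga_0)\vee\ell(\ga_1)$ suffices. Your direct differentiation $\dot\psi_i = \sigma(\ga_i)\,\tang_{\ga_i}\circ\xi_i$ and three-term splitting avoids both the interpolation and the orientation case split entirely, and each piece is controlled by quantities --- $\|\dot\eta_i\|_{L^{\infty}(\T)}$, $\sigma(\ga_i)/\big(\frs_{*}(\ga_0)\wedge\frs_{*}(\ga_1)\big)$, $\|\dot\tang_{\ga_0}\|_{L^{2}(\T)} \leq 2\|\ddot\ga_0\|_{L^{2}(\T)}/\frs_{*}(\ga_0)$ --- that do consolidate into the two advertised scale-invariant ratios. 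This is arguably the more elementary argument; what the paper's interpolation buys is that essentially the same computation is reused later (in proposition \ref{prop:nlest}) to differentiate nonlocal forces along the path, whereas your argument is purpose-built for this lemma.

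One imprecision to repair: the substitution you want is $y=\xi_0(x)$ (not $y=\eta_0(x)$), and the inner map that then appears is $\xi_1\circ\eta_0$, which is neither $\eta_1\circ\xi_0$ nor its index-swapped version $\eta_0\circ\xi_1$, so lemma \ref{lem:chofvarapp} does not apply verbatim to the composition terms. The fix is immediate: rerun the maximal-function argument of that lemma with the deviation estimate $|\xi_1\circ\eta_0(x) - x| = |\xi_1(\eta_0(x)) - \xi_1(\eta_1(x))| \leq \mathrm{Lip}(\xi_1)\,\|\eta_1-\eta_0\|_{L^{\infty}(\T)}$, which costs an extra factor $\mathrm{Lip}(\xi_1) \leq \ell(\ga_1)/\big(2\pi\,\frs_{*}(\ga_1)\big)$, harmless since it is bounded by the first argument of $\mathfrak{C}^{*}$; equivalently, estimate $\|f\circ\xi_1 - f\circ\xi_0\|_{L^{2}(\T)}$ before changing variables via $\|\xi_1-\xi_0\|_{L^{\infty}(\T)} \leq \mathrm{Lip}(\xi_1)\|\eta_1-\eta_0\|_{L^{\infty}(\T)}$. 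With that adjustment the remaining bookkeeping closes as you describe.
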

\begin{proof}
Use the identity $\ga_i = \psi_i \circ \eta_i$ and the triangle inequality to find
\begin{align*}
\|\psi_0 - \psi_1\|_{L^{\infty}(\T)} &= \|\ga_0 - \psi_1 \circ \eta_0\|_{L^{\infty}(\T)} \leq \|\ga_0 - \ga_1\|_{L^{\infty}(\T)} + \|\ga_1 - \psi_1 \circ \eta_0\|_{L^{\infty}(\T)} \\
&= \|\ga_1 - \ga_0\|_{L^{\infty}(\T)} + \|\psi_1 - \psi_1 \circ \eta_0 \circ \xi_1\|_{L^{\infty}(\T)},
\end{align*}
then use the fact that $\psi_1$ is $\ell(\ga_1)/2\pi$-Lipschitz and lemma \ref{lem:chofvarapp} to obtain an estimate
$$
\|\psi_0 - \psi_1\|_{L^{\infty}(\T)} \leq \|\ga_1 - \ga_0\|_{L^{\infty}(\T)} + \|\dot \ga_1 - \dot \ga_0\|_{L^{1}(\T)} + |\ell(\ga_1) - \ell(\ga_0)|
$$
that suffices for the $L^{\infty}(\T)$ norm. It therefore suffices to estimate the $L^{2}(\T)$ norm of the derivatives. 

Consider first the case that $\ga_1,\ga_0$ are \emph{compatibly oriented}, i.e. that the inequality $\langle \dot \ga_1(x),\dot \ga_0(x) \rangle \geq 0$ holds. Then $\ga(x,\vth) := \vth \ga_1(x) + (1-\vth)\ga_0(x)$ defines a one-parameter family of immersions, and moreover the lower bound
\begin{align}\label{eq:lowerbndsapp}
|\dot \ga(x,\vth)| \geq \frac{ \frs_{*}(\ga_0) \wedge \frs_{*}(\ga_1) }{\sqrt{2}}
\end{align}
holds. An immediate consequence is the uniform bound
\begin{align}\label{eq:spdbndapp}
|\dot \xi(x,\vth)| \leq \frac{ \ell(\ga_1) \vee \ell(\ga_2)}{\frs_{*}(\ga_1) \wedge \frs_{*}(\ga_2)}
\end{align}
for the transition to constant speed coordinates. Let $\psi(x,\vth) = \ga( \xi(x,\vth), \vth)$ denote the constant speed representation of this family, and so in particular the identities $\psi(x,1) = \psi_1(x),$ $\psi(x,0) = \psi_0(x)$
$$
\dot \psi_1(x) - \dot \psi_0(x) = \int^{1}_{0} \dot \psi_{\vth}(x,\vth) \, \rd \vth
$$
all hold. Define $\Delta \ga(x) := \ga_1(x) - \ga_0(x)$ and apply \eqref{eq:cspdder} to obtain
$$
\psi_{\vth} = \vv - \po\big( \langle \dot \vv , \tang_{\psi} \rangle ) \tang_{\psi}  \qquad \vv(x,\vth) = \Delta \ga\left( \xi(x,\vth) \right),
$$
where $\tang_{\psi}(x,\vth) \propto \dot \psi(x,\vth)$ denotes the unit tangent of the interpolant. Differentiate and apply the triangle inequality to obtain the bound
$$
\|\dot \psi_{\vth}\|_{L^{2}(\T)} \leq \|\dot \vv - \big( \langle \dot \vv, \tang_{\psi} \rangle - \mu_{\langle \dot \vv , \tang_{\psi}\rangle}  \big)\tang_{\psi}\|_{L^{2}(\T)} + \|\po\left( \langle \dot \vv , \tang_{\psi} \rangle\right) \dot \tang_{\psi}\|_{L^{2}(\T)},
$$
then use Jensen's inequality to establish the upper bound
$$
\|\dot \vv - \big( \langle \dot \vv, \tang_{\psi} \rangle - \mu_{\langle \dot \vv , \tang_{\psi}\rangle}  \big)\tang_{\psi}\|_{L^{2}(\T)} \leq \|\dot \vv\|_{L^{2}(\T)}
$$
for the first term. Apply a change of variables and \eqref{eq:spdbndapp} to obtain the final estimate
$$
\|\dot \vv\|_{L^{2}(\T)} \leq \left( \frac{\ell(\ga_1)\vee\ell(\ga_0)}{\frs_{*}(\ga_1)\wedge \frs_{*}(\ga_0)} \right)^{\frac12}\|\dot \ga_1 - \dot \ga_0\|_{L^{2}(\T)}
$$
for the first term. Use the simple bound
\begin{align*}
\|\po\left( \langle \dot \vv , \tang_{\psi} \rangle\right) \dot \tang_{\psi}\|_{L^{2}(\T)} &\leq \|\po\big(\langle \dot \vv,\tang_{\psi}\rangle \big)\|_{L^{\infty}(\T)}\|\dot \tang_{\psi}\|_{L^{2}(\T)}\\
&\leq 2\|\dot \vv\|_{L^{1}(\T)}\|\dot \tang_{\psi}\|_{L^{2}(\T)} = 2 \|\dot \ga_1 - \dot \ga_0\|_{L^{1}(\T)} \|\dot \tang_{\psi}\|_{L^{2}(\T)}
\end{align*}
for the second term, then use \eqref{eq:lowerbndsapp} and the fact that
$$
\|\dot \tang_{\psi}\|^{2}_{L^{2}(\T)} \leq \frac{\ell(\vth)}{2\pi}\int_{\T} \frac{|\ddot \ga(x,\vth)|^{2}}{|\dot \ga(x,\vth)|^{3}} \, \rd x \leq \left( \frac{ \ell(\ga_0)\vee \ell(\ga_1)}{\frs_{*}(\ga_0)\wedge \frs_{*}(\ga_1)}\right)\left( \frac{\|\ddot \ga_0\|_{L^{2}(\T)} \vee \|\ddot \ga_1\|_{L^{2}(\T)} }{\frs_{*}(\ga_0)\wedge \frs_{*}(\ga_1)} \right)^{2}
$$
to prove the lemma in this case.

It remains to consider the case that $\ga_0,\ga_1$ are not compatibly oriented. Then there exists some $x$ for which $\langle \dot \ga_1(x),\dot \ga_0(x)\rangle <0,$ and so necessarily the lower bound
\begin{align}\label{eq:bignormapp}
\|\dot \ga_1 - \dot \ga_0\|^2_{L^{\infty}(\T)} \geq |\dot \ga_1(x) - \dot \ga_0(x)|^2 = |\dot \ga_1(x)|^2 + |\dot \ga_0(x)|^2  - 2 \langle \dot \ga_1(x) ,\dot \ga_0(x) \rangle \geq 2\left( \frs_{*}(\ga_0) \wedge \frs_{*}(\ga_0) \right)^2
\end{align}
must hold. Use the embedding \eqref{eq:gns} to obtain the upper bound
\begin{align*}
\|\dot \ga_1 - \dot \ga_0\|_{L^{\infty}(\T)} &\leq \|\dot \ga_1 - \dot \ga_0\|^{\frac12}_{L^{2}(\T)}\|\ddot \ga_1 - \ddot \ga_0\|^{\frac12}_{L^{2}(\T)} \leq \sqrt{2}\|\dot \ga_1 - \dot \ga_0\|^{\frac12}_{L^{2}(\T)}\left( \|\ddot \ga_0\|_{L^{2}(\T)} \vee  \|\ddot \ga_1\|_{L^{2}(\T)} \right)^{\frac12}
\end{align*}
and then combine it with \eqref{eq:bignormapp} to conclude that the lower bound
$$
2 \leq \frac{\|\ddot \ga_0\|_{L^{2}(\T)} \vee \|\ddot \ga_1\|_{L^{2}(\T)}}{\frs_{*}(\ga_0)\wedge \frs_{*}(\ga_1)}\frac{ \|\dot \ga_1 - \dot \ga_0\|_{L^{2}(\T)} }{\frs_{*}(\ga_0) \wedge \frs_{*}(\ga_1)}
$$
must hold as well. Then use the trivial upper bound
$$
\|\dot \psi_1 - \dot \psi_0\|_{L^{2}(\T)} \leq \ell(\ga_0)\vee\ell(\ga_1) \leq\frac{\|\ddot \ga_0\|_{L^{2}(\T)} \vee \|\ddot \ga_1\|_{L^{2}(\T)}}{\frs_{*}(\ga_0)\wedge \frs_{*}(\ga_1)}\frac{\ell(\ga_0)\vee\ell(\ga_1)}{\frs_{*}(\ga_0) \wedge \frs_{*}(\ga_1)} \|\dot \ga_1 - \dot \ga_0\|_{L^{2}(\T)} 
$$
to prove the lemma.
\end{proof}

\begin{lemma}\label{lem:mapsapp}
For $i=1,2$ let $f_i : \R^{+} \mapsto \R$ denote continuous functions obeying the global bounds
$$
\max\left\{ \sup_{t \geq 0}\;\, |f_1(t)| , \sup_{t \geq 0}\;\, |f_2(t)| \right\}  \leq \mathfrak{f}^{*} \qquad \text{and} \qquad \sup_{t \geq 0}\;\, |f_1(t) - f_2(t)| \leq \frd \frf^{*},
$$
and let $\fra_i,\frb_i: \R^{+} \mapsto \R$ denote any corresponding $C^{1}(\R^{+})$ solutions to the systems
\begin{align}\label{eq:sysapp}
\dot \fra_i(t) &= \frb_i(t) \qquad \qquad \quad \quad \;\,\,\fra_i(0) = 0 \nonumber \\
\dot \frb_i(t) &= \frb_i(t) f_i( \fra_i(t) ) \qquad \quad \frb_i(0) = \frb_{*} >0
\end{align}
of ordinary differential equations. Then the following hold ---
\begin{enumerate}[{\rm i)}]
\item The $\fra_i(t)$ are strictly increasing and obey the bound
$$
\frac{ \frb_* }{\frf^{*} }\left( 1 - \re^{-\frf^{*}t} \right) \leq \fra_i(t) \leq \frac{\frb_*}{\frf^{*}}\left( \re^{ \frf^{*}t } - 1 \right)
$$
for all time.
\item The inverse maps $\frw_i\big( \fra_i(t) ) = t$ are well-defined on the common interval
$$
0 \leq t < \frac{\frb_*}{\frf^{*}} := T_{*},
$$
are $C^{2}\big( [0,T_{*}) \big)$ and obey the bound
$$
\frac1{\frf^{*}}\log\left( 1 + \frac{\frf_*}{\frb_*}t \right) \leq \frw_i(t) \leq \frac1{\frf^{*}}\log\left( \frac1{1 - \frac{\frf_*}{\frb_*}t} \right)
$$
for as long as they are well-defined.
\item The transition maps
$$
T_{12}(t) := \frw_1\big( \fra_2(t) ) \qquad \text{and} \qquad T_{21}(t) := \frw_2\big( \fra_1(t) )
$$
are well-defined on the common interval
$$
0 \leq t < \frac{\log 2}{\frf^*} := T_{**},
$$
are $C^{2}\big( [0,T_{**}) \big)$ with $T_{ij}(0) = 0, \dot T_{ij}(0) = 1$ and obey the bounds
\begin{align*}
&\left| \frac{T_{ij}(t) - t}{t^2} \right| \leq \frac{\frd \frf^{*} }{2} \re^{\frf^{*}(t+T_{ij}(t))} \leq \frac{\frd \frf^{*} }{2}\left( \frac{\re^{\frf^{*} t} }{2 - \re^{\frf^* t} } \right) \qquad \text{and} \\
&\left| \frac{\dot T_{ij}(t)-1}{t}\right| \leq  \frd \frf^{*} \re^{\frf^{*}(t+T_{ij}(t))}  \leq \frd \frf^{*} \frac{\re^{\frf^{*}t} }{2 - \re^{\frf^* t} }
\end{align*}
for as long as they are well-defined.
\item The maps $T_{+}(t) := \max\{ T_{12}(t),T_{21}(t) \}$ and $T_{-}(t) := \min\{ T_{12}(t),T_{21}(t) \}$ are locally Lipschitz on $[0,T_{**}),$ are continuously differentiable outside of a countable set in $[0,T_{**}),$ are ordered $T_{-}(t) \leq t \leq T_{+}(t)$ and obey the bounds in $\mathrm{iii})$ for as long as they are well-defined.
\end{enumerate}
\end{lemma}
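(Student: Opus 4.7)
The plan is to reparametrize the system by the state variable $u := \fra_i(t)$ rather than time, which converts the nonlinear ODE for $\frb_i$ into an \emph{affine} one for $B_i(u) := \frb_i(\frw_i(u))$. Specifically, the chain rule combined with $\dot\frb_i = \frb_i f_i(\fra_i)$ yields $B_i'(u) = f_i(u)$ with $B_i(0) = \frb_*$, so
\[
B_i(u) = \frb_* + \int_0^u f_i(v)\,\rd v, \qquad |B_i(u) - \frb_*| \leq \frf^* u, \qquad |B_2(u) - B_1(u)| \leq \frd\frf^* u.
\]
In this parametrization every bound in the lemma reduces to an explicit integral estimate.

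For (i), I would integrate $\dot\frb_i = \frb_i f_i(\fra_i)$ to obtain $\frb_i(t) = \frb_* \exp(\int_0^t f_i\circ \fra_i)$, use $|f_i|\leq\frf^*$ to sandwich $\frb_i$ between $\frb_* e^{\pm \frf^* t}$, and then integrate $\dot\fra_i = \frb_i$; strict monotonicity follows from $\frb_i>0$. For (ii), the identity $\frw_i(u) = \int_0^u \rd v/B_i(v)$ combined with $\frb_* - \frf^* v \leq B_i(v) \leq \frb_* + \frf^* v$ gives the two claimed logarithmic bounds by direct integration, and $\frw_i$ is $C^2$ as the inverse of a $C^2$ function with nonvanishing derivative.

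The heart of the argument is (iii). From $\fra_1(T_{12}(t)) = \fra_2(t) =: u$, one gets $\dot T_{12}(t) = B_2(u)/B_1(u)$, hence
\[
|\dot T_{12}(t)-1| \;=\; \frac{|B_2(u) - B_1(u)|}{B_1(u)} \;\leq\; \frac{\frd \frf^* u}{\frb_* - \frf^* u} \;=\; \frac{\frd\,\alpha}{1-\alpha}, \qquad \alpha := u/T_*.
\]
The upper bound from (i) gives $\alpha \leq e^{\frf^* t} - 1$, which is $<1$ exactly when $t < T_{**}$, explaining the radius of the interval. I would then combine three elementary facts: (a) $\alpha \leq e^{\frf^* t} - 1 \leq \frf^* t\,e^{\frf^* t}$ (from $e^x - 1 \leq x e^x$), (b) applying (ii) at $u = \fra_2(t)$ gives $e^{\frf^* T_{12}(t)} \leq 1/(1-\alpha)$, and (c) $1/(1-\alpha) \leq 1/(2 - e^{\frf^* t})$. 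Multiplying (a) and (b) yields $|\dot T_{12}(t)-1| \leq \frd\frf^* t\,e^{\frf^*(t+T_{12}(t))}$, which is the claimed bound on $(\dot T_{12}-1)/t$. Integrating $T_{12}(t) - t = \int_0^t (\dot T_{12}(s) - 1)\,\rd s$ and using that $T_{12}$ is strictly increasing (so $e^{\frf^*(s+T_{12}(s))} \leq e^{\frf^*(t+T_{12}(t))}$ for $s \leq t$), I would pull the exponential out of the integral to obtain
\[
|T_{12}(t) - t| \;\leq\; \frd\frf^* \,e^{\frf^*(t+T_{12}(t))} \int_0^t s\,\rd s \;=\; \tfrac{\frd \frf^*}{2}\, t^2\, e^{\frf^*(t+T_{12}(t))},
\]
with the secondary bound following by substituting $e^{\frf^* T_{12}(t)} \leq 1/(2-e^{\frf^* t})$ from (ii).

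For (iv), I would first observe that $T_{12}$ and $T_{21}$ are mutual inverses, since $T_{12}(T_{21}(t)) = \frw_1(\fra_2(\frw_2(\fra_1(t)))) = \frw_1(\fra_1(t)) = t$. As both are strictly increasing $C^2$ maps that fix $0$, this yields the equivalence $T_{12}(t) \geq t \iff \fra_2(t) \geq \fra_1(t) \iff T_{21}(t) \leq t$, which places them on opposite sides of the identity pointwise, hence $T_-(t) \leq t \leq T_+(t)$. The max/min of two $C^2$ functions is automatically locally Lipschitz, and is $C^1$ away from the coincidence set $\{T_{12} = T_{21}\}$; unless they agree on an open interval (in which case both equal $t$ there and the claims are trivial), this set is discrete, hence countable. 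The pointwise bounds in (iii) transfer immediately to $T_\pm$.

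The main obstacle is the coupling in Step~3: the raw estimate $\frd\alpha/(1-\alpha)$ does not obviously factor into the symmetric form $\frf^* t\, e^{\frf^*(t+T)}$, and a naive integration of $|\dot z|$ produces a bound in terms of $t$ alone (tight in $t$ but wasteful in the $T$-dependence). The resolution is the key trick of splitting $\alpha \leq e^{\frf^* t}-1$ factor-wise via $e^x-1 \leq xe^x$ to extract an $\frf^* t$, and absorbing the remaining $e^{\frf^* t}$ together with $1/(1-\alpha)$ into $e^{\frf^*(t+T)}$ via the inverse-map bound from (ii). Once this is done, monotonicity of $T_{12}$ permits the exponential factor to exit the $t$-integration, yielding the stated $t^2$ bound with the correct constant $\tfrac{1}{2}$.
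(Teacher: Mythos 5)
Your reparametrization by $u=\fra_i(t)$, under which $B_i(u):=\frb_i(\frw_i(u))$ satisfies the affine equation $B_i'(u)=f_i(u)$, is a genuinely different and cleaner route than the paper's, which stays in the time variable and extracts the identity $\dot T(t)=1/(1-r(t))$ through an integration by parts. Your identity $\dot T_{12}(t)=B_2(u)/B_1(u)$ with $u=\fra_2(t)$ is correct, and parts i), ii) as well as the mutual-inverse and ordering observations in iv) go through as you describe.

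The key step in iii), however, has a directional error. From $|\dot T_{12}(t)-1|\le \frd\,\alpha/(1-\alpha)$ you claim that multiplying (a) and (b) yields $|\dot T_{12}(t)-1|\le\frd\frf^{*}t\,\re^{\frf^{*}(t+T_{12}(t))}$. But (b) is the inequality $\re^{\frf^{*}T_{12}(t)}\le 1/(1-\alpha)$, which points the wrong way: to replace the factor $1/(1-\alpha)$ by $\re^{\frf^{*}T_{12}(t)}$ you would need $1/(1-\alpha)\le\re^{\frf^{*}T_{12}(t)}$, and this is false in general (take $f_1\equiv\frf^{*}$, so $B_1(v)=\frb_{*}+\frf^{*}v$ and $\re^{\frf^{*}T_{12}(t)}=1+\alpha<1/(1-\alpha)$). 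As written, your facts (a), (b), (c) only give the outer bound $\frd\frf^{*}t\,\re^{\frf^{*}t}/(2-\re^{\frf^{*}t})$, not the intermediate bound $\frd\frf^{*}t\,\re^{\frf^{*}(t+T_{ij}(t))}$ that the lemma asserts and that is used downstream in the proof of Proposition \ref{prop:linearAapp}. The fix is small: bound the denominator by the time-domain estimate $B_1(u)=\frb_1(T_{12}(t))\ge\frb_{*}\re^{-\frf^{*}T_{12}(t)}$ (from the exponential formula in part i)), so that $|\dot T_{12}(t)-1|\le \frd\frf^{*}u\,\re^{\frf^{*}T_{12}(t)}/\frb_{*}\le \frd\frf^{*}t\,\re^{\frf^{*}(t+T_{12}(t))}$ using $u\le\frb_{*}t\,\re^{\frf^{*}t}$; your facts (b) and (c) then correctly supply the secondary comparison $\re^{\frf^{*}(t+T_{12}(t))}\le\re^{\frf^{*}t}/(2-\re^{\frf^{*}t})$, and the integration with the monotone exponential factor pulled out gives the $t^2/2$ bound as you describe. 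Separately, in iv) the claim that the coincidence set $\{T_{12}=T_{21}\}$ is discrete unless the maps agree on an interval is not correct topologically (a closed set with empty interior can be uncountable); the right countable exceptional set is, as in the paper, the set where $\fra_1=\fra_2$ but $\frb_1\ne\frb_2$, which consists of isolated points, while at coincidence points with $\frb_1=\frb_2$ both transition maps have derivative $1$, so $T_{\pm}$ are differentiable there in any case.
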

\begin{proof}
The positivity of the $\frb_i$ follows directly from integrating
$$
\frb_i(t) = \frb_i(0) \re^{\int^{t}_{0} f_i( \fra_i(s) ) \, \rd s } \qquad \text{where} \qquad  \frb_i(0) = \frb_{*} > 0,
$$
and so the first conclusion of the lemma follows immediately from the inequalities
$$
\frac{\frb_*}{\frf^{*}}\left( 1 - \re^{-\frf^* t} \right) = \frb_* \int^{t}_{0} \re^{-\frf^* s } \, \rd s \leq \int^{t}_{0} \frb_i(s) \, \rd s = \fra_i(t) \leq  \frb_* \int^{t}_{0} \re^{\frf^* s } \, \rd s = \frac{\frb_*}{\frf^*}\left( \re^{\frf^*t} - 1 \right).
$$
In particular, $[0,T_*) \subset \mathrm{range}( \fra_i )$ and thus $[0,T_{*}) \subset \mathrm{dom}( \frw_i )$ as claimed in the second part of the lemma. The claimed bounds on $\frw_i$ then follow by inverting the inequalities
$$
\frac{\frb_*}{\frf^*}\left( 1 - \re^{-\frf^* \frw_i(t)} \right) \leq t \leq \frac{\frb_*}{\frf^*}\left( \re^{\frf^* \frw_i(t)} - 1 \right)
$$
for the inverse maps. For the third statement, let $T(t) = T_{21}(t)$ denote one of the transition maps; the argument for $T_{12}(t)$ is identical. That $T(0) = 0$ follows immediately; the identity
$$
\dot T(t) = \dot \frw_2 \big( \fra_1(t) \big) \dot \fra_1(t) = \frac{ \frb_1(t) }{\frb_2( T(t) )}
$$
also follows immediately from the fact that $\dot \frw_2( \fra_2(t) )\frb_2(t)  = 1,$ the definition of $T(t)$ and the chain rule. Thus
$$
\dot T(0) = \frac{ \frb_1(0) }{\frb_2( 0 )} = \frac{\frb_*}{\frb_*} = 1
$$
as claimed. Moreover, if $t < T_{**}$ then $\fra_1(t) < T_{*}$ by the first part of the lemma, and so $T(t)$ is well-defined.

To prove the bound on the transition maps, put $g_i(t) := f_i( \fra_1(t) )$ and note that the chain rule gives
$$
\ddot T(t) = \frac{ \dot \frb_1(t) \frb_2( T(t) ) - \frb_1(t) \dot \frb_2(T(t))\dot T(t) }{\frb^2_2( T(t) )} = \frac{ \frb_1(t) }{\frb_2(T(t))}\left( f_1 \circ \fra_1(t)  - f_2 \circ \fra_2 \circ T(t) \dot T(t)  \right) = \dot T(t)\left( g_1(t) - g_2(t)\dot T(t) \right)
$$
since $\fra_2 \circ T(t) = \fra_2 \circ \frw_2 \circ \fra_1(t) = \fra_1(t)$ by definition of the transition map. The identity
\begin{equation}\label{eq:tdfirst}
\mathrm{exp}\left( \int^{t}_{0} g_2(s) \dot T(s) \, \rd s \right) \dot T(t) = \mathrm{exp}\left( \int^{t}_{0} g_1(s) \, \rd s \right) 
\end{equation}
then follows by integrating. Set $g_{12}(s) := g_1(s) - g_2(s),$ multiplying by $g_2(t)$ and integrate once again to deduce the chain of equalities
\begin{align}\label{eq:tdsecnd}
\mathrm{exp}\left( \int^{t}_{0} g_2(s) \dot T(s) \, \rd s \right) &= 1 + \int^{t}_{0} g_2(s)\mathrm{exp}\left( \int^{s}_{0} g_1(z) \, \rd z \right) \, \rd s \nonumber \\
&= 1 + \int^{t}_{0} \mathrm{exp}\left( \int^{s}_{0} g_{12}(z) \, \rd z \right) \frac{\rd}{\rd s} \left[ \mathrm{exp}\left( \int^{s}_{0} g_{2}(z) \, \rd z \right)\right] \, \rd z \nonumber \\
&= \mathrm{exp}\left( \int^{t}_{0} g_1(s) \, \rd s \right) - \int^{t}_{0} g_{12}(s) \mathrm{exp}\left(\int^{s}_{0} g_1(z) \, \rd z \right) \, \rd s
\end{align}
due to an integration by parts. A change of variables $s = T(z)$ gives the relation
$$
\frb_2( T(t) ) = \frb_2(0)\mathrm{exp}\left( \int^{T(t)}_{0} f_2\big( \fra_2(s) \big) \, \rd s \right) = \frb_1(0) \mathrm{exp}\left( \int^{t}_{0} g_2(s)\dot T(s) \, \rd s \right),
$$
which combines with (\ref{eq:tdfirst},\ref{eq:tdsecnd}) to yield the equality
$$
\dot T(t) = \frac{1}{1 - r(t)} \qquad \text{where} \qquad r(t) := \mathrm{exp}\left( -\int^{t}_{0} g_1(s) \, \rd s \right)\int^{t}_{0} g_{12}(s) \mathrm{exp}\left(\int^{s}_{0} g_1(z) \, \rd z \right) \, \rd s
$$
for the transition map. Thus
$$
\frac{ \frb_1(t) }{\frb_2(T(t))} = \dot T(t) = \frac1{1 - r(t)},
$$
and so in particular the identity
\begin{align}\label{eq:dotTbound}
\dot T(t) = 1 + r(t)\frac{ \frb_1(t) }{\frb_2(T(t))} &= 1 + \mathrm{exp}\left( - \int^{t}_{0} g_2(s) \dot T(s) \, \rd s \right)\int^{t}_{0} g_{12}(s) \mathrm{exp}\left(\int^{s}_{0} g_1(z) \, \rd z \right) \, \rd s \nonumber \\
&:= 1 +  \mathrm{exp}\left( - \int^{t}_{0} g_2(s) \dot T(s) \, \rd s \right)h_{12}(t)
\end{align}
necessarily holds. To conclude, note first that $T = \frw_2 \circ \fra_1$ is a composition of increasing functions, so $\dot T > 0$ and the trivial bound
$$
T(t) = \frw_2\big( \fra_1(t) \big) \leq \frw_2\left( \frac{\frb_*}{\frf^*}\left( \re^{\frf^* t} - 1 \right) \right) \leq \frac1{\frf^*}\log\left( \frac1{2 - \re^{\frf^* t} } \right)
$$
holds by the first two parts of the lemma. In particular, the bound
\begin{align*}
\mathrm{exp}\left( -\int^{t}_{0} g_2(s) \dot T(s) \, \rd s \right) \leq \re^{ \frf^{*} T(t) } \leq \frac1{2 - \re^{\frf^* t} }
\end{align*}
as well as the simple estimate
$$
|h_{12}(t)| \leq \frd \frf^{*} \, t\,\re^{\frf^{*}t}
$$
must hold on $[0,T_{**}),$ which when combined with \eqref{eq:dotTbound} show that the differential inequalities
\begin{align*}
1 - \frd \frf^{*} t \,\re^{\frf^{*}(t+T(t))}  &\leq \dot T(t) \leq 1 + \frd \frf^{*} t \,\re^{\frf^{*}(t+T(t))}\\
1 - \frd \frf^{*} \frac{t \,\re^{\frf^{*}t} }{2 - \re^{\frf^* t} } &\leq \dot T(t) \leq 1 + \frd \frf^{*} \frac{t \,\re^{\frf^{*}t} }{2 - \re^{\frf^* t} }
\end{align*}
must hold on $[0,T_{**})$ as well. The claimed bound
$$
\left| \frac{T(t) - t}{t^2} \right| \leq  \frac{\frd \frf^{*} }{2} \re^{\frf^{*} (t+T(t))} \leq \frac{\frd \frf^{*} }{2}\left( \frac{\re^{\frf^{*} t} }{2 - \re^{\frf^* t} } \right)
$$
then follows by integrating in time. The fourth statement follows my checking that the piecewise maps 
\begin{align*}
T_{+}(t) := \max\left\{ T_{12}(t) , T_{21}(t) \right\} \qquad \text{and} \qquad T_{-}(t) := \min\left\{ T_{12}(t) , T_{21}(t) \right\}
\end{align*}
have the desired properties. Both are continuously differentiable outside of the set
$$
\mathcal{N} := \left\{ t \geq 0 : \fra_1(t) = \fra_2(t) , \dot \fra_1(t) \neq \dot \fra_2(t) \right\},
$$
which consists of purely isolated points and is therefore countable. The remaining properties then follow trivially from the earlier statements in the proposition.
\end{proof}

\begin{proposition}[Proposition \ref{prop:linearA}]\label{prop:linearAapp}
For $i=1,2$ let $f_i\in C([0,\infty);L^{1}(\T))$ and $g_i \in L^{\infty}([0,\infty);W^{1,1}(\T))$ obey the global bounds
\begin{align*}
\sup_{t \geq 0}\;\, |\mu_{f_i}(t)| &\leq \frf^{*}, \qquad\, \text{and} \qquad \quad \,\sup_{t \geq 0}\;\,\left| \mu_{f_1}(t) - \mu_{f_2}(t)\right| \leq \frd \frf^{*} \\
\sup_{t \geq 0}\;\, \|g_i(\cdot,t)\|_{W^{1,1}(\T)} &\leq \frg^{*}, \qquad \text{and} \qquad \|g_1(\cdot,t) - g_2(\cdot,t)\|_{L^{2}(\T)} \leq \frd \frg^*,
\end{align*}
and let $\vth_0 \in H^{2}(\T)$ and $\veps_0 > 0$ denote arbitrary initial data. Then the following hold ---
\begin{enumerate}[{\rm i)}]
\item For any time $T < T_{*}$ with
$$
T_{*} := \frac{1}{ \veps_0 \frf^{*}},
$$
there exist unique mild solutions $\vth_i \in C( [0,T];H^{2}(\T) ), \veps_{i} \in C^{1}([0,T];\R)$ to the initial value problems
\begin{align*}
\partial_{t} \vth_{i} &= \veps_i(t)\ddot \vth_i + \veps_i(t)g_i &&\vth(x,0) = \vth_0(x) \in H^{2}(\T)\\
\dot \veps_i(t) &= - \veps^2_i(t)\mu_{f_i}(t) && \;\,\,\, \veps_i(0) = \veps_0 > 0,
\end{align*}
and the solution $\vth_i$ exists for as long as $\veps_i$ remains finite.
\item There exists a continuous, increasing function $\Gamma_{\vth_0} : \R^{+} \mapsto \R^{+},$ depending only on the initial datum $\vth_0,$ so that the properties
$$
\Gamma_{\vth_0}(0) = 0 \qquad \text{and} \qquad \Gamma_{\vth_0}(t) \nearrow \Gamma_{\vth_0}(\infty) = \| \vth_0 \|_{\dot{H}^{2}(\T)}
$$
hold.
\item The solutions $\vth_i,\veps_i$ obey the bounds
\begin{align*}
|\veps_i(t) - \veps_i(s)| & \leq \veps_0\left(1 - \frac{t}{T_{*}}\right)^{-2}\frac{t-s}{T_{*}}\\
\left| \mu_{\vth_i}(t) - \mu_{\vth_i}(s)\right| &\leq \frg^{*}\left( \zeta_i(t) - \zeta_i(s) \right),\qquad \quad \zeta_i(t) := \int^{t}_{0} \veps_i(s) \, \rd s,\\
\| \vth_i(\cdot,t) - \vth_i(\cdot,s) \|_{\dot{H}^{2}(\T)} &\leq \Gamma_{\vth_0}\left( \zeta_i(t) - \zeta_i(s) \right) + \frg^{*}\left( \zeta_i(t) - \zeta_i(s) \right)^{\frac14}
\end{align*}
for all pairs of times $0 \leq s < t$ at which the solutions exist.
\item If $g_1=g_2=0$ then the corresponding homogeneous solutions $\vth^{h}_{i}$ obey the difference bound
$$
\| \vth^{h}_1(\cdot,t) - \vth^{h}_2(\cdot,t)\|_{H^{2}(\T)} \leq  \frac{ \frd\frf^{*} }{4\frf^{*}}\left(1 - \frac{t}{T_*}\right)^{-1}\log^2\left(1-\frac{t}{T^*}\right)\|\vth_0\|_{\dot H^{2}(\T)}
$$
for as long as both exist. 
\item For any $0 < s < 1/2$ the differences $\xi_i := \vth_{i} - \vth^{h}_{i}$ obey the estimates
$$
\|\xi_i(\cdot,t)\|_{H^{2+s}(\T)} \leq \frg^{*}\big( \frc_{s} + \zeta_i(t) \big)
$$
for $0 < \frc_s < \infty$ a finite constant depending only on the modulus $s$ of regularity. Moreover, the estimate
$$
\|\xi_i(\cdot,t) - \xi_i(\cdot,s)\|_{H^{2}(\T)} \leq \frg^{*}\left( \big(\zeta_i(t) - \zeta_i(s)\big)^{\frac14} + \zeta_i(t) - \zeta_i(s) \right)
$$
holds for all pairs of times $0 \leq s < t$ at which the solutions exist.
\item The non-homogeneous solution map $(f_i,g_i) \mapsto \xi_i$ is H\"older continuous with respect to the $H^{2}(\T)$ topology, in the sense that the difference $\delta \xi = \xi_1 - \xi_2$ obeys the bound
\begin{align*}
&\|\delta \xi(\cdot,t) \|_{H^{2}(\T)} \leq \mathfrak{C}^{*}\left( \frac{T_*}{\frf^*(T_*-t)},\frf^* \right)\left[ \frd \frg^{*} \left( 1 + \log^{+}\left( \frac{\frg^* }{\frd \frg^{*}}\zeta^{\frac14}_1(t) \vee \zeta^{\frac14}_2(t) \right) \right) + \frac{\frd \frf^*}{\frf^*} \vee \left( \frac{\frd \frf^*}{\frf^*} \right)^{\frac14}\right]
\end{align*}
on the common interval $0\leq t < T_*$  where both solutions exist.
\end{enumerate}
\end{proposition}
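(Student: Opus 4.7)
The plan is to reduce the coupled $(\vth_i,\veps_i)$ system to a scalar ODE plus a standard forced heat equation via the time-change $\zeta_i(t) := \int_0^t \veps_i(s)\,\rd s$, and then extract all six parts of the proposition from direct Fourier computations combined with the map-comparison estimates of Lemma~\ref{lem:mapsapp}. Setting $\beta_i(t) := 1/\veps_i(t)$ linearizes the length ODE to $\dot\beta_i = \mu_{f_i}$, giving the explicit formula $\beta_i(t) = 1/\veps_0 + \int_0^t \mu_{f_i}(s)\,\rd s$. This immediately yields $T_*$ as the blow-up time, the asserted bounds on $\veps_i$ and $\zeta_i$, and the Lipschitz estimate $|\dot\veps_i| \le \veps_i^2\,\frf^*$ appearing in (iii). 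With $\veps_i$ known, the change of variable converts the PDE to $\partial_\tau u_i = \partial_x^2 u_i + g_i\!\circ\!\fra_i$ in the $\tau = \zeta_i(t)$ variable, whose unique $H^2$ mild solution is given by the standard Fourier representation. This settles (i).

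For (ii) I would take $\Gamma_{\vth_0}(\tau) := \bigl(2\pi\sum_{k\in\Z_0} k^4\bigl(1-\re^{-k^2\tau}\bigr)^2 |\hat\vth_{0,k}|^2\bigr)^{1/2}$; the required monotonicity and limits follow from monotone/dominated convergence. For (iii) I would split $\vth_i = \vth_i^h + \xi_i$ into homogeneous and non-homogeneous pieces: the former contributes $\Gamma_{\vth_0}(\zeta_i(t)-\zeta_i(s))$ to the $\dot{H}^2$ modulus, while the latter gives the $(\zeta_i(t)-\zeta_i(s))^{1/4}$ remainder after using that $\|g_i(t)\|_{W^{1,1}(\T)} \le \frg^*$ translates through integration by parts to the pointwise Fourier bound $|\hat g_{i,k}(t)| \le \frg^*/|k|$. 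The mean-part Lipschitz modulus is immediate from $\rd_t \mu_{\vth_i} = \veps_i \mu_{g_i}$. The very same Fourier estimate settles (v): $\||k|^{2+s}\hat\xi_{i,k}(t)\|_{\ell^2}^2 \lesssim \frg^{*2}\sum_{k\in\Z_0} |k|^{2s-2}$, which converges exactly when $s<1/2$.

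Parts (iv) and (vi) are the technical heart of the proposition. For both, Lemma~\ref{lem:mapsapp} (or directly the identity for $\beta_i$) yields a quantitative comparison of the form
\begin{equation*}
|\zeta_1(t) - \zeta_2(t)| \le \frac{\frd\frf^*}{\frf^{*2}}\,\Psi\!\left(\frac{t}{T_*}\right), \qquad \Psi(u) := \frac{u}{1-u} + \log(1-u).
\end{equation*}
For (iv), the homogeneous difference has Fourier multiplier $\re^{-k^2\zeta_1}-\re^{-k^2\zeta_2}$, which I would bound by $\min(2,\,k^2|\zeta_1-\zeta_2|)\re^{-k^2\zeta_-}$ and optimize over $k$; the asserted $(1-t/T_*)^{-1}\log^2(1-t/T_*)$ factor comes from combining the natural $\zeta$-scale $-T_*\log(1-t/T_*)$ with the $\Psi(t/T_*)$ scale above.

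Part (vi) is where I expect the main obstacle. The strategy is a decomposition
\begin{equation*}
\xi_1(t) - \xi_2(t) = \bigl[\xi_1(t) - \xi_{1,g_2}(t)\bigr] + \bigl[\xi_{1,g_2}(t) - \xi_2(t)\bigr],
\end{equation*}
where $\xi_{1,g_2}$ denotes the non-homogeneous solution of the $\veps_1$-driven heat equation forced by $g_2$. The second bracket isolates the effect of changing time-change and reduces, via the $\zeta$-continuity from (v) combined with the $|\zeta_1-\zeta_2|$ bound above, to a contribution of size $(\frd\frf^*/\frf^*)\vee(\frd\frf^*/\frf^*)^{1/4}$. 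The first bracket sees only $g_1-g_2$; the logarithmic factor $\log^+\!\bigl((\frg^*/\frd\frg^*)\,\zeta^{1/4}\bigr)$ in the statement arises here from a frequency split at a threshold $K$ applied to Duhamel's formula, with low frequencies $|k|\le K$ contributing $\frd\frg^*\sum_{|k|\le K}|k|^{-1}\sim\frd\frg^*\log K$ from the $L^2$-Lipschitz hypothesis, and high frequencies $|k|>K$ controlled by the uniform $\frg^*$ bound with gain $K^{-1}$; the optimal $K\sim (\frg^*/\frd\frg^*)\,\zeta^{1/4}$ produces the logarithm. The delicate bookkeeping will be to ensure that each substitution only contributes to the single continuous prefactor $\mathfrak{C}^*(T_*/[\frf^*(T_*-t)],\frf^*)$ advertised in the statement, for which the explicit form of $\Psi$ and of the maps from Lemma~\ref{lem:mapsapp} is essential.
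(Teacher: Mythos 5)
Your handling of (i)--(iii) and (v) is essentially the paper's own argument: explicit integration of the length ODE, the Duhamel/Fourier representation of the mild solution, the same choice of $\Gamma_{\vth_0}$, the coefficient bound $|\hat g_{i,k}|\leq \frg^{*}/(2\pi|k|)$ from the $W^{1,1}$ hypothesis, and the series estimate $\sum_{k}k^{-2}\big(1-\re^{-k^2\tau}\big)^2\lesssim\sqrt{\tau}$. For (iv), your bound $|\zeta_1(t)-\zeta_2(t)|\leq \frd\frf^{*}\Psi(t/T_*)/\frf^{*2}$ is correct, and your multiplier estimate produces $\|\vth^h_1-\vth^h_2\|_{\dot H^2}\lesssim |\zeta_1-\zeta_2|\,\zeta_-^{-1}\|\vth_0\|_{\dot H^2}$, which is exactly the paper's intermediate quantity $D_{+}(t)/(2t)$ obtained without the transition maps of Lemma \ref{lem:mapsapp}. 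Be aware, however, that $\Psi(u)/\log(1+u)$ is linear in $u$ near $u=0$ while the advertised factor $(1-u)^{-1}\log^2(1-u)$ is quadratic there, so the literal stated form will not drop out of the combination you describe; what your route (and, for that matter, the paper's own intermediate estimate, which is likewise linear in the transformed time near zero) actually yields is a bound of the shape $\frac{\frd\frf^*}{\frf^*}\,\Psi(t/T_*)/\log(1+t/T_*)$.

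The genuine gap is in (vi). First, the frequency split cannot produce the logarithm. From the hypotheses you only get $|\widehat{\delta g}_k(s)|\lesssim \frd\frg^{*}$ with no decay in $k$ (decay of size $\frd\frg^{*}/|k|$ would require a $W^{1,1}$ bound on $g_1-g_2$ with the small constant $\frd\frg^{*}$, which is not assumed), so after the Duhamel factor $\int_0^t\re^{-k^2(\zeta_1(t)-\zeta_1(s))}\veps_1(s)\,\rd s\leq k^{-2}$ the low modes contribute $\big(\sum_{1\leq|k|\leq K}(\frd\frg^{*})^2\big)^{1/2}\sim \frd\frg^{*}K^{1/2}$ to the $\dot H^{2}$ norm, not $\frd\frg^{*}\log K$, while the high modes contribute $\sim\frg^{*}K^{-1/2}$; optimizing in $K$ gives only the geometric mean $\sqrt{\frg^{*}\,\frd\frg^{*}}$, which is far weaker than $\frd\frg^{*}\big(1+\log^{+}(\frg^{*}\zeta^{1/4}/\frd\frg^{*})\big)$ when $\frd\frg^{*}\ll\frg^{*}$. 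The logarithm comes from a split in \emph{time}, not frequency: after the substitution $u=\zeta_1(s)$ write the Duhamel integral as $\int_0^{z-\veps}+\int_{z-\veps}^{z}$, estimate the far piece by Minkowski's inequality together with the uniform-in-$k$ smoothing bound $k^{2}\re^{-k^{2}(z-u)}\leq (\re(z-u))^{-1}$ so that the $L^{2}$ Lipschitz hypothesis integrates to $\tfrac{\frd\frg^{*}}{2}\log(z/\veps)$, estimate the near piece with the $W^{1,1}$ bound to get $2\frg^{*}\veps^{1/4}$, and then optimize $\veps=(\frd\frg^{*}/\frg^{*})^{4}$. Second, your reduction of the other bracket via ``$\zeta$-continuity'' overlooks that $g_2$ is merely $L^{\infty}$ in time: an estimate like (v) compares one solution at two times, whereas here the two Duhamel integrals evaluate $g_2$ at different physical times $\fra_1(u)$ and $\fra_2(u)$, and no temporal modulus of continuity for $g_2$ is available. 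The paper's substitution $s=\frm(u)$ with $\frm=\min\{\fra_1,\fra_2\}$ is precisely what aligns the physical time at which $g_2$ is sampled in both integrals, pushing the entire mismatch into exponential weights, the Jacobians $\dot T_{\pm}$ and the tails $D_{\pm}$, all of which Lemma \ref{lem:mapsapp} controls; some device of this kind is unavoidable in your second bracket as well.
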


\begin{proof}
Fix $f = f_i, g = g_i, \veps = \veps_i , \vth = \vth_i, \zeta = \zeta_i$ for some $i \in \{1,2\}$. The Fourier coefficient
$$
\mu_{f}(t) = \fint_{\T} f(x,t) \, \rd x
$$
defines a continuous function of time by hypothesis, and trivially obeys the bound
$$
|\mu_{f}(t)| \leq \frf^{*}
$$
by hypothesis. The existence and $C^{1}$ regularity of a unique $\veps$ on $[0,T_{*})$ follows by explicitly solving the ODE
$$
\dot \veps(t) = -\veps^2(t) \mu_{f}(t) \qquad \longrightarrow \qquad \veps(t) = \frac{\veps_0}{1 + \veps_0 \int^{t}_{0} \mu_{f}(s) \, \rd s}
$$
and performing the simple estimate
$$
1 + \veps_0 \int^{t}_{0} \mu_f(s) \, \rd s \geq 1 - \veps_0 \frf^{*} t
$$
on the denominator from below. The temporal Lipschitz bound
$$
|\veps(t) - \veps(s)| \leq \veps_0\left(1 - \frac{t}{T_*}\right)^{-2}\frac{t-s}{T_{*}}
$$
follows by explicitly computing $\veps(t)-\veps(s)$ and estimating the denominator in a similar manner.

The Fourier coefficients $\{ \hat \vth_k(t) \}_{k \in \Z}$ of any mild solution must then satisfy
\begin{align*}
\hat \vth_{k}(t) &= \re^{-k^2 \zeta(t)} \hat \vth_{k}(0) + \int^{t}_{0} \re^{-k^2( \zeta(t) - \zeta(s) ) } \veps(s) \hat g_{k}(s) \, \rd s \qquad \text{where} \qquad
\zeta(t) := \int^{t}_{0} \veps(z) \, \rd z,
\end{align*}
and the fact that $\vth \in C([0,T_*);H^{2}(\T))$ will follow by proving the remaining statements of the proposition. For the second statement, simply define $\Gamma_{\vth_0}$ as the function
$$
\Gamma_{\vth_0}(t) := \left( \sum_{k \in \Z_0} \left( 1 - \re^{-k^2t} \right)^{2} k^4 |\hat \vth_k(0)|^2 \right)^{\frac12},
$$
and note that since $\vth_0 \in H^{2}(\T)$ it has the requisite properties by the dominated convergence theorem. For $0 \leq s < t$ the trivial estimate
$$
|\hat \vth_{0}(t) - \hat \vth_{0}(s)| = \left| \int^{t}_{s} \veps(s) \hat g_0(s) \, \rd s \right| \leq \frac{\frg^{*}}{2\pi}\left( \zeta(t) - \zeta(s) \right)
$$
gives the first conclusion in the third statement. To see the second conclusion in the third statement, note the identity
\begin{align*}
\hat \vth_{k}(t) - \hat \vth_{k}(s) &= \re^{-k^2 \zeta(s)}\left( \re^{-k^2( \zeta(t) - \zeta(s) )} - 1 \right)\hat \vth_k(0) + \int^{t}_{s} \re^{-k^2( \zeta(t) - \zeta(z) ) } \veps(z) \hat g_{k}(z) \, \rd z \\
& + \left( \re^{-k^2(\zeta(t) - \zeta(s))} - 1 \right) \int^{s}_{0} \re^{-k^2( \zeta(s) - \zeta(z) ) } \veps(z) \hat g_{k}(z) \, \rd z := \mathrm{I}_{k} + \mathrm{II}_{k} + \mathrm{III}_{k}
\end{align*}
holds, and therefore so does the estimate
$$
\| \vth(\cdot,t) - \vth(\cdot,s)\|_{\dot H^{2}(\T)} \leq \left( \sum_{k \in \Z_0} k^4 |\mathrm{I}_k|^2 \right)^{\frac12} + \left( \sum_{k \in \Z_0} k^4 |\mathrm{II}_k|^2 \right)^{\frac12} + \left( \sum_{k \in \Z_0} k^4 |\mathrm{III}_k|^2 \right)^{\frac12} 
$$
by the triangle inequality. The desired inequality then follows by estimating each sum in turn. The first sum is easily estimated from above by
$$
\left( \sum_{k \in \Z} k^4 |\mathrm{I}_k|^2 \right)^{\frac12} \leq \Gamma_{\vth_0}\left( \zeta(t) - \zeta(s) \right),
$$
while for $k \ne0 $ the basic inequalities $2\pi |k||\hat g_k(z)| \leq \frg^*$ and
$$
\left| \int^{t}_{s} \re^{-k^2( \zeta(t) - \zeta(z) )} \veps(z) \hat g_k(z) \, \rd z \right| \leq \frac{\frg^{*}}{2\pi} \int^{t}_{s} \re^{-k^2( \zeta(t) - \zeta(z) )} \veps(z) \, \rd z = \frac{\frg^{*}}{2\pi} |k|^{-3} \left( 1 - \re^{-k^2( \zeta(t) - \zeta(s))} \right)
$$
gives the corresponding estimates
\begin{align*}
\left( \sum_{k \in \Z} k^4 |\mathrm{II}_k|^2 \right)^{\frac12} &\leq \frac{\frg^{*}}{2\pi} \left( \sum_{k \in \Z_0} \left(\frac{1 - \re^{-k^2(\zeta(t) - \zeta(s))}}{k}\right)^{2} \right)^{\frac12} \qquad \text{and} \\
\left( \sum_{k \in \Z} k^4 |\mathrm{III}_k|^2 \right)^{\frac12} &\leq \frac{\frg^{*}}{2\pi} \left( \sum_{k \in \Z_0} \left(\frac{1 - \re^{-k^2(\zeta(t) - \zeta(s))}}{k}\right)^{2}\left( 1 - \re^{-k^2\zeta(s)} \right)^{2} \right)^{\frac12}
\end{align*}
for the second and third sums, respectively. For any $T > 0$ the real-valued function $f(u) := u^{-1}\left( 1 - \re^{-uT}\right)$ is decreasing, and so the estimate
\begin{align}\label{eq:seriesest}
\sum_{k \geq 1} k^{-2}\left(1 - \re^{-k^2T}\right)^2 &\leq \sum_{k \geq 1} k^{-2}\left( 1 - \re^{-k^2 T} \right) \leq \int^{\infty}_{0} u^{-2}\left( 1 - \re^{-u^2 T} \right) \, \rd u = \sqrt{\pi T}
\end{align}
holds by the integral test. Combining the estimates for all three terms therefore gives the inequality
$$
\|\vth(\cdot,t) - \vth(\cdot,s)\|_{\dot H^{2}(\T)} \leq \Gamma_{\vth_0}\left( \zeta(t)-\zeta(s) \right) + \frg^{*}\left( \zeta(t) - \zeta(s) \right)^{\frac14}
$$
claimed in the third statement.

To prove the fourth statement of the proposition, let $\fra_i,\frb_i$ denote any $C^{1}(\R^{+})$ solutions to the systems
\begin{align*}
\dot \fra_i(t) &= \frb_i(t) \qquad \qquad \quad \quad \;\;\;\,\;\fra_i(0) = 0 \nonumber \\
\dot \frb_i(t) &= \frb_i(t) \mu_{f_i}\big( \fra_i(t) \big) \qquad \quad \frb_i(0) = \frac1{\veps_0}
\end{align*}
of ordinary differential equations. Let $\frw_i$ denote the corresponding inverses of $\fra_i$ and $T_{ij} := \frw_i \circ \fra_j$ the correponding transition maps. By lemma \ref{lem:mapsapp}, as long as
$$
t < T_{**} := \frac{\log 2}{\frf^{*}}
$$
both transition maps are well-defined and obey 
$$
\zeta_i \big( \fra_i(t) \big) = t
$$
on $[0,T_{**})$ by the argument following \eqref{eq:timemaps}, so $\zeta_i = \frw_i$ must hold on $[0,T_{*})$ as well. Pick $0 < z < T_{*}$ arbitrary, so that $z \in \mathrm{dom}(\frw_i)$ and thus $z = \fra_1(t)$ for $t = \frw_1(z),$ and assume $\fra_1(t) \geq \fra_2(t)$ without loss of generality. Then since $\zeta_1(z) = \frw_1 \circ \fra_1(t) = t$ and $\zeta_2(z) = \frw_2 \circ \fra_1(t) = T_{+}(t)$ the identities
\begin{align*}
\hat \vth^{h}_{1,k}(z) = \re^{-k^2 t} \hat \vth_{k}(0) \qquad \text{and} \qquad 
\hat \vth^{h}_{2,k}(z) = \re^{-k^2 T_{+}(t)} \hat \vth_{k}(0)
\end{align*}
must hold for the homogeneous solutions. Let $T_{+}(t) = t + D_{+}(t)$ for $D_{+}(t) \geq 0$ and note that the relation
$$
\|\vth^{h}_2(\cdot,z) - \vth^{h}_1(\cdot,z)\|_{\dot H^{2}(\T) } = \left( \sum_{k \in \Z_0} k^4 \re^{-2 k^2 t }\left(1 - \re^{-k^2 D_{+}(t)}\right)^{2}|\hat \vth_k(0)|^2 \right)^{\frac12}
$$
holds for the $\dot H^{2}(\T)$ semi-norm. The fact that $1-e^{-x} \leq x$ and the fact that $D_{+}(t) \geq 0$ combine to justify the simple estimate
\begin{align*}
k^4 \re^{-2 k^2 t }\left(1 - \re^{-k^2 D_{+}(t)}\right)^{2}|\hat \vth_k(0)|^2 &= \left(\frac{ D^{2}_{+}(t) }{t^2}\right) \big(k^4 t^2 \big) \re^{-2 k^2 t }\left(\frac{1 - \re^{-k^2 D_{+}(t)}}{k^{2}D_{+}(t)}\right)^{2} k^4|\hat \vth_k(0)|^2\\
& \leq \left(\frac{ D^{2}_{+}(t) }{t^2}\right) \big(k^4 t^2 \big) \re^{-2 k^2 t }  k^4|\hat \vth_k(0)|^2,
\end{align*}
while the fact that $x^2\re^{-2x} \leq 1/4$ for $x>0$ shows that in fact
$$
k^4 \re^{-2 k^2 t }\left(1 - \re^{-k^2 D_{+}(t)}\right)^{2}|\hat \vth_k(0)|^2 \leq \left( \frac{D_{+}(t)}{2t}\right)^{2} k^4|\hat \vth_k(0)|^{2}
$$
must hold as well. Appealing to lemma \ref{lem:mapsapp} part iv) and summing gives the overall estimate
$$
\|\vth^{h}_{1}(\cdot,z) - \vth^{h}_{2}(\cdot,z)\|_{\dot{H}^{1}(\T)} \leq \frac{ \frd\frf^{*}\,t }{4} \mathrm{exp}\left( \frf^{*}(t + T_{+}(t))\right)\|\vth_0\|_{\dot H^{2}(\T)}
$$
for the seminorm. Now $z = \fra_1(t),$ $t = \frw_1(z)$ and $T_{+}(t) = \frw_2(z)$ and so lemma \ref{lem:mapsapp} gives the temporal estimates,
\begin{align*}
&z \geq T_{*}\left( 1 - \re^{- \frf^* t } \right) \qquad  \qquad  &&\re^{\frf^* t} \leq \log\left(\frac1{1 - z/T^{*}}\right) \\
&\re^{ \frf^{*} T_{+}(t) } = \re^{ \frf^{*} \frw_2(z)} \leq \frac1{1 - z/T_{*}} \qquad \qquad &&t = \frw_1(z) \leq \frac1{\frf^{*}} \log\left(\frac1{1 - z/T^{*}}\right)
\end{align*}
which in turn imply the overall bound
$$
\|\vth^{h}_{1}(\cdot,z) - \vth^{h}_{2}(\cdot,z)\|_{\dot{H}^{2}(\T)} \leq \frac{ \frd\frf^{*} }{4\frf^{*}}\frac1{1 - z/T_*}\log^2\left(\frac1{1-z/T^*}\right)\|\vth_0\|_{\dot H^{2}(\T)}
$$
for the seminorm. But as the means of $\vth_1$ and $\vth_2$ coincide for all time, the full $H^{1}(\T)$ norm of the difference is, in fact, bounded as claimed.

For the fifth statement, let $\xi = \xi_i$ denote one of the differences and $g = g_i$ the corresponding forcing. Then
$$
\hat \xi_{k}(t) = \int^{t}_{0} \re^{-k^2(\zeta(t) - \zeta(s))} \veps(s) \hat g_{k}(s) \, \rd s,
$$
and so if $k \neq 0$ then the estimate
$$
|\hat \xi_{k}(t)| \leq \frac{\frg^{*}}{2\pi}\left( \frac{1 - \re^{-k^{2}\zeta(t)}}{|k|^3}\right)
$$
holds as before. The $\dot H^{2+s}(\T)$ semi-norm therefore satisfies
$$
\|\xi(\cdot,t)\|_{\dot H^{2+s}(\T)} \leq \frac{\frg^{*}}{2\pi} \left( \sum_{k \in \Z_0} k^{4+2s}\frac{\big(1 - \re^{-k^2\zeta(t)}\big)^{2}}{k^{6}} \right)^{\frac12} \leq \frac{\frg^{*}}{\sqrt{2}\pi} \left( \sum_{k \geq 1} k^{2s-2} \right)^{\frac12} := \frc^{*}_{s} \frg^{*},
$$
for some $\frc_{s},$ as $s < 1/2$ so the series is summable; the mean clearly obeys
$$
|\mu_{\xi}(t)| \leq \frac{\frg*}{2\pi}\zeta(t),
$$
and so the claimed bound in $H^{2+s}$ holds.  Similarly, the final bound follows by bounding
\begin{align*}
&|\hat \xi_k(t) - \hat \xi_k(s)| \leq \frac{\frg^{*}}{2\pi} \int^{t}_{s} \re^{-k^{2}(\zeta(t) - \zeta(s))}\veps(s) \, \rd s, \\
 &\left(\sum_{k \in \Z_0} k^4|\hat \xi_k(t) - \hat \xi_k(s)|^{2} \right)^{\frac12} \leq \frac{\frg^{*}}{2\pi}\left( \sum_{k \in \Z_0} \left(\frac{1 - \re^{-k^2(\zeta(t)-\zeta(s))}}{k}\right)^{2}\right)^{\frac12} \leq \frg^{*}\big( \zeta(t) - \zeta(s) \big)^{\frac14}
\end{align*}
and summing as before. 

For the sixth and final statement, let $\delta \xi := \xi_1 - \xi_2$ denote the between non-homogeneous solutions and recall that
\begin{align*}
\widehat{\delta \xi}_{k}(t) &= \int^{t}_{0} \re^{-k^{2}(\zeta_1(t) - \zeta_1(s))}\veps_{1}(s) \hat g_{1,k}(s) \, \rd s - \int^{t}_{0} \re^{-k^2(\zeta_2(t) - \zeta_2(s))}\veps_2(s)\hat g_{2,k}(s) \, \rd s
\end{align*}
by definition of mild solution. First, set $\delta g = g_1 - g_2$ and note that the difference in means obeys
$$
\left|\widehat{\delta \xi}_{0}(t)\right| = \left| \int^{t}_{0} \veps_1(s) \hat g_{1,0}(s) - \int^{t}_{0} \veps_2(s) \hat g_{2,0}(s) \right| = \left| \int^{z}_{0} \left( \widehat{ \delta g}_k \left( \fra_1(u) \right) + (1 - \dot T_{21}(u)) \hat g_{2,0}\big( \fra_1(u) \big) \right)\, \rd u \right|
$$
after applying the change of variables $z = \zeta_1(t),u = \zeta_1(s)$ to obtain the second equality. Apply the triangle inequality and lemma \ref{lem:mapsapp} to obtain the upper bound
$$
\left|\widehat{\delta \xi}_{0}(t)\right| \leq \frac{ \frd \frg^{*}(t) }{\sqrt{2\pi}} z + \frac{ \frg^{*} \frd \frf^{*}}{2\pi}\frac{z^2}{2} \re^{ \frf^{*}(z + T_{21}(z)) },
$$
then use lemma \ref{lem:mapsapp} to obtain the bounds and overall estimate
\begin{align}\label{eq:meanHolder}
z &\leq \frac1{\frf^{*}}\log\left(\frac1{1-\frac{t}{T_*}}\right) \qquad \text{and} \qquad \re^{\frf^{*}(z + T_{\pm}(z))} \leq \frac1{1 - \frac{t}{T_{*}}}\log\left(\frac1{1-\frac{t}{T_*}}\right), \nonumber \\
\left|\widehat{\delta \xi}_{0}(t)\right| &\leq \frac{\frd \frg^{*}(t)}{\sqrt{2\pi}\frf^{*}} \log\left( \frac{T_*}{T_*-t} \right) + \frac{\frg^*}{4\pi}\frac{\frd \frf^{*}}{\frf^{*}}\log^3\left( \frac{T_*}{T_*-t} \right)\left( \frac{T_*}{\frf^*(T_*-t)}\right)
\end{align}
for the difference in means. Next, for $k \neq 0$ perform the decomposition
\begin{align*}
\widehat{\delta \xi}_{k}(t) &= \int^{t}_{0} \re^{-k^2(\zeta_1(t) - \zeta_1(s))}\veps_1(s)\widehat {\delta g}_{k}(s) \, \rd s + \\
&\int^{t}_{0} \left(\re^{-k^2(\zeta_1(t) - \zeta_1(s))}\veps_1(s) - \re^{-k^2(\zeta_2(t) - \zeta_2(s))}\veps_2(s) \right)\hat g_{2,k}(s) \, \rd s  := \mathrm{I}_{k} + \mathrm{II}_{k}
\end{align*}
of the difference. For the first term, let $z := \zeta_1(t)$ and use the change of variables $u = \zeta_1(s)$ to uncover
$$
\mathrm{I}_k = \int^{z-\veps}_{0} \re^{-k^2(z-u)} \widehat{ \delta g}_k \left( \fra_1(u) \right) \, \rd u +  \int^{z}_{z-\veps} \re^{-k^2(z-u)} \widehat{ \delta g}_k \left( \fra_1(u) \right) \, \rd u =: \mathrm{I}^{\veps}_k + \overline{ \mathrm{I}^{\veps}_k }
$$
for $0 \leq \veps \leq z$ an arbitrary parameter. Apply Minkowski's inequality to obtain the upper bound
\begin{align*}
\left\| \mathrm{I}^{\veps}_{k} \right\|_{\dot H^{2}(\T)} &= \sqrt{2\pi} \left( \sum_{k \in \Z_0} \left| \int^{z-\veps}_{0} k^2 \re^{-k^2(z-u)} \widehat{ \delta g}_k\left( \fra_1(u) \right) \, \rd u \right|^2 \right)^{\frac12} \\
& \leq \sqrt{2\pi} \int^{z-\veps}_{0} \left( \sum_{k \in \Z_0} k^4 \re^{ -2k^2(z-u) } \left| \widehat{ \delta g}_{k}\left( \fra_1(u) \right) \right|^2 \right)^{\frac12} \, \rd u
\end{align*}
and then apply the trivial inequality
$$
k^4 \re^{-2k^2(z-u)} \leq \left( \frac1{\re(z-u)} \right)^2
$$
together with an explicit integration to obtain the overall bound
\begin{align*}
\left\| \mathrm{I}^{\veps}_{k} \right\|_{\dot H^{2}(\T)} &\leq \frac1{\re} \int^{z-\veps}_{0} \frac1{z-u} \left\| \delta g\left( \cdot , \fra_1(u) \right) \right\|_{L^{2}(\T)} \, \rd u \leq \frac{\frd \frg^{*}(t)}{2}\log \frac{z}{\veps}
\end{align*}
for the first sub-term. Note that for $k \neq 0$ the uniform bound
$$
\left| \overline{ \mathrm{I}^{\veps}_k } \right| \leq \frac{\frg^{*}}{\pi |k|^3}\left( 1 - \re^{-k^2 \veps} \right)
$$
holds by direct integration, and so the overall estimate
$$
\left\| \overline{ \mathrm{I}^{\veps}_k }  \right\|_{\dot H^{2}(\T) } \leq \frac{2\,\frg^{*} }{\sqrt{\pi} }\left( \sum_{k \geq 1} k^{-2}\left( 1 - \re^{-k^2 \veps} \right) \right)^{\frac12} \leq  2\,\frg^{*} \veps^{\frac14}
$$
holds for the second sub-term. All-together, the bound
$$
\left\| \mathrm{I}_{k} \right\|_{\dot H^{2}(\T)} \leq \frac{\frd \frg^{*}(t)}{2}\log \frac{z}{\veps} + 2\,\frg^{*} \veps^{\frac14}
$$
holds for $0 \leq \veps \leq z$ arbitrary. Now in the case that the choice
$$
\veps_* := \left( \frac{ \frd \frg^*(t)}{\frg^* } \right)^{4}
$$
is valid, i.e. $\veps_* \leq z$, then the overall upper bound
$$
\left\| \mathrm{I}_{k} \right\|_{\dot H^{2}(\T)} \leq 2\, \frd \frg^{*}(t) \left( 1 + \log^{+}\left( \frac{\zeta^{\frac14}_1(t) \frg^* }{\frd \frg^{*}(t)} \right) \right)
$$
holds. Otherwise, take $\veps=z$ and use $z \leq \veps_*$ to see that the overall upper bound
\begin{align}\label{eq:partialHolder}
\left\| \mathrm{I}_{k} \right\|_{\dot H^{2}(\T)} \leq 2\, \frd \frg^{*}(t) \leq 2\, \frd \frg^{*}(t) \left( 1 + \log^{+}\left( \frac{\zeta^{\frac14}_1(t) \frg^* }{\frd \frg^{*}(t)} \right) \right)
\end{align}
holds in this case as well. For the second term, if $t < T_{*}$ then $t \in \mathrm{dom}(\frw_i)$ and so $t = \fra_1(z), z = \frw_1(t)$ and $\fra_1(z) \geq \fra_2(z)$ without loss of generality. As $\zeta_i = \frw_i$ on $[0,T_{*}),$ the equality 
$$
\left| \mathrm{II}_{k} \right|= \left|\int^{\fra_1(z)}_0\left(\re^{-k^2(z - \frw_1(s))}\veps_1(s) - \re^{-k^2(T_{+}(z) - \frw_2(s))}\veps_2(s) \right)\hat g_{2,k}(s) \, \rd s \right|
$$
therefore holds for the second term. Define the functions $\frm(u) = \min\{ \fra_1(u),\fra_2(u) \}$ and $\frn(s) := \max\{\frw_1(s),\frw_2(s)\},$ so that $\frn \circ \frm(u) = u$ and $\dot \frn \big( \frm(u) \big) \dot \frm(u) = 1$ holds Lebesgue almost everywhere. Note that if there exists $z \leq u \leq T_{+}(z)$ with $\fra_1(u) < \fra_2(u)$ then $T_{+}(z) = \frw_2 \circ \fra_1(z) \leq \frw_2 \circ \fra_1(u) < u \leq z$ would hold since $\frw_2 \circ \fra_1$ is increasing, which is a contradiction. In other words, if $z \leq u \leq T_{+}(z)$ then $\frm(u) = \fra_2(u)$ and $T_{-}(u) = \frw_1 \circ \fra_2(u)$ must hold. Now perform the change of variables $s = \frm(u)$ to obtain the further decomposition
\begin{align*}
\left| \mathrm{II}_{k}\right| &= \left|\int^{z}_0\left(\re^{-k^2(z - \frw_1 \circ \frm(u))}\veps_1 \circ \frm(u) - \re^{-k^2(T_{+}(z) - \frw_2\circ \frm(u))}\veps_2 \circ \frm(u) \right)\hat g_{2,k}\big( \frm(u) \big)\dot \frm(u) \, \rd u \right. \\
& + \left. \int^{T_{+}(z)}_{z} \left( \re^{-k^2(z - T_{-}(u))} \dot T_{-}(u) - \re^{-k^2(T_{+}(z) - u)}\right)\hat g_{2,k}\big( \fra_2(u) \big) \, \rd u \right| := \left|\mathrm{II}^{{\rm i}}_{k} + \mathrm{II}^{{\rm ii}}_{k}\right|
\end{align*}
of the second term. Define $z + D_{+}(z) = T_{+}(z)$ and $u + D_{-}(u) = T_{-}(u),$ so that the bound on $|\dot D_{-}(u)|$ from lemma \ref{lem:mapsapp} yields the overall estimate
\begin{align*}
\left|\mathrm{II}^{{\rm i}}_{k}\right| &= \left| \int^{z}_{0}\re^{-k^2(z-u)}\left(1 - \re^{-k^2(D_{+}(z) - D_{-}(u))}\right)\left[ \1_{\fra_1\leq\fra_2} + \re^{-k^2 D_{-}(u)} \1_{\fra_1>\fra_2}\right]\hat g_{2,k}\big(\frm(u)\big) \, \rd u \right.\\
&+\left. \int^{z}_{0} \re^{-k^2(z-u)}\left[ \re^{-k^2(D_{+}(z) - D_{-}(u))}\1_{\fra_1\leq \fra_2} - \re^{-k^2 D_{-}(u)}\1_{\fra_1>\fra_2} \right]\dot D_{-}(u) \hat g_{2,k}\big( \frm(u) \big) \, \rd u\right| \\
& \leq \frac{\frg^{*}}{2\pi|k|^3} \left[ \left(1 - \re^{-k^2 D_{+}(z)}\right)\left(1 - \re^{-k^2 z}\right) + \frd \frf^{*} \re^{\frf^{*}(z+T_{-}(z))}z \right]
\end{align*}
for the first term in this decomposition. As $T_{-}\circ T_{+}(z) = z,$ a similar argument yields the estimate
\begin{align*}
\left| \mathrm{II}^{{\rm ii}}_{k}\right| &:= \left|\int^{T_{+}(z)}_{z} \left( \re^{-k^2(z - T_{-}(u))} \dot T_{-}(u) - \re^{-k^2(T_{+}(z) - u)}\right)\hat g_{2,k}\big( \fra_2(u) \big) \, \rd u\right| \\
&\leq \frac{\frg^{*}}{2\pi|k|^3}\left( 2 - \re^{-k^2D_{-}(z)} - \re^{-k^2 D_{+}(z)} \right)
\end{align*}
for the second term in the decomposition, and as a consequence, the overall upper bound
$$
\mathrm{II}_{k} \leq \frac{\frg^{*}}{\pi |k|^3}\left(1 - \re^{-k^2 D_{+}(z)} \right) + \frac{\frg^{*}}{2\pi |k|^3}\left(1 - \re^{-k^2 D_{-}(z)} +  \frd \frf^{*} \re^{\frf^{*}(z+T_{-}(z))}z \right)
$$
holds for the second term. The upper bound
$$
\left( \sum_{k \in \Z_0} |k|^4|\mathrm{II}_{k}|^2 \right)^{\frac12} \leq \frg^*\left( D^{\frac14}_{+}(z) + D^{\frac14}_{-}(z) + \frd \frf^{*}z \re^{\frf^{*}(z+T_{-}(z))} \right)
$$
then follows by summing each series as before. Now recall once again that lemma \ref{lem:mapsapp} gives the bounds
$$
D_{\pm}(z) \leq \frd \frf^{*}\frac{z^2}{2} \re^{\frf^{*}(z + T_{+}(z)) }, \qquad z \leq \frac1{\frf^{*}}\log\left(\frac1{1-\frac{t}{T_*}}\right) \qquad \text{and} \qquad \re^{\frf^{*}(z + T_{\pm}(z))} \leq \frac1{1 - \frac{t}{T_{*}}}\log\left(\frac1{1-\frac{t}{T_*}}\right),
$$
and so the overall upper bound
$$
\left( \sum_{k \in \Z_0} |k|^4|\mathrm{II}^{{\rm i}}_{k}|^2 \right)^{\frac12} \leq \frg^{*}\left( 2 \left(\frac{ \frd \frf^{*} }{\frf^{*}}\right)^{\frac14}\log^{\frac34}\left(\frac{T_{*}}{T_{*}-t}\right)\left(\frac{T_*}{\frf^{*}(T_* - t)}\right)^{\frac14} + \frac{\frd \frf^{*}}{\frf^{*}}\log^2\left(1 - \frac{t}{T_*}\right)\left( \frac{T_*}{T_*-t} \right) \right)
$$
holds. Combining this estimate with (\ref{eq:meanHolder},\ref{eq:partialHolder}) yields the claim.
\end{proof}

\bibliographystyle{plain}
\bibliography{InteractingCurvesBib}

\end{document}